\documentclass[a4paper,10pt,twoside,centertags,leqno]{article}
\usepackage{a4wide,amsmath,bbm,mathtools,mathrsfs,amsfonts,fancyhdr,color, ifthen, graphicx,stmaryrd, longtable,amssymb,latexsym,amsxtra,amscd,epic,array}
\usepackage[numbers,sort&compress]{natbib}
\usepackage[amsmath]{ntheorem}
\usepackage{hyperref}
\hypersetup{colorlinks=false}
\usepackage{hypernat}
\usepackage[latin1]{inputenc}
\pagestyle{fancy}
\setlength{\headheight}{25.16252pt}
\fancyhf{}
\fancyhead[ro,le]{\thepage}
\fancyhead[re]{\leftmark}
\fancyhead[lo]{\rightmark}
\newcommand{\MN}{\mathbb N}
\newcommand{\MZ}{\mathbb Z}
\newcommand{\MQ}{\mathbb Q}
\newcommand{\MR}{\mathbb R}
\newcommand{\MP}{\mathbb P}

\newcommand{\hg}{\ \hat{=}\ }
\newcommand{\qed}{\hfill$\Box$}

\DeclareMathOperator{\GL}{GL}
\DeclareMathOperator{\SL}{SL}
\DeclareMathOperator{\Sp}{Sp}

\DeclareMathOperator{\id}{id}
\DeclareMathOperator{\Wr}{Wr}

\DeclareMathOperator{\SO}{SO}

\DeclareMathOperator{\Sym}{Sym}
\DeclareMathOperator{\rank}{rank}
\DeclareMathOperator{\Sol}{Sol}
\DeclareMathOperator{\Gal}{Gal}

\DeclareMathOperator{\CC}{\mathbb{C}}
\DeclareMathOperator{\NN}{\mathbb{N}}

\DeclareMathOperator{\QQ}{\mathbb{Q}}
\DeclareMathOperator{\ZZ}{\mathbb{Z}}

\def\S{{\bf S}}

\def\Sp{{\rm Sp}}
\def\GO{{\rm GO}}
\def\rk{{\rm rk}}

\def\B{{\cal B}}

\def\MH{{\rm MH}}

\def\Sym{{\rm{Sym}}}

\newcommand{\si}{\sigma}

\newcommand{\ga}{\Gamma}

\newcommand{\Mod}{ {\rm Mod} }

\renewcommand{\theta}{{\vartheta}}
\def\K{{\cal K}}

\def\la{\lambda}

\def\GL{{\rm GL}}
\def\SL{{\rm SL}}

\def\im{{\rm im}}
\def\id{{\rm id}}

\def\S{{\bf S}}

\def\su{{\underline{s}}}

\def\T{{\bf T}}
\def\L{{L}}

\def\J{{\bf J}}
\def\Wr{{\rm Wr}}
\def\K{{\cal K}}

\def\MT{{\rm MT}}
\def\MC{{\rm MC}}

\newcommand{\ten}{{\otimes}}

\newcommand{\tr}{{\mbox{\rm {\small tr}}}}

\newcommand{\Jordan}{{\bf J}}

\newcommand{\diag}{{\rm diag}}

\newcommand{\La}{{\bf{\Lambda}}}

\newtheorem{defin}{Definition}[section]
\newtheorem{thm}[defin]{Theorem}
\newtheorem{lemma}[defin]{Lemma}
\newtheorem{cor}[defin]{Corollary}
\newtheorem{prop}[defin]{Proposition}
\newtheorem{rem}[defin]{Remark}
\newtheorem{ex}[defin]{Example}

\newtheorem*{conv}{Convention}
\newtheorem*{ethm}{Existence Theorem}
\newtheorem*{conj}{Conjecture}

\parindent0em
\parskip0.3ex

\makeatletter
\renewcommand{\subsection}{\@startsection{subsection}{2}%
{\z@}{-3.25ex plus -1ex minus-.2ex}{-1em}{\bf}} \makeatother

\begin{document}

\thispagestyle{empty}
\begin{center}
{\LARGE\bf On symplectically rigid local systems of rank four and Calabi-Yau operators}

\vspace{.25in} 
{\large {\sc Michael Bogner and 
Stefan Reiter}}
\footnote{Both authors were partially supported by the SFB/TR $45$ 'Periods, Moduli Spaces and Arithmetic of Algebraic Varieties' of the DFG.}\\
E-mail:
{\tt    mbogner@mathematik.uni-mainz.de,
 reiters@uni-mainz.de} 
\end{center}

\begin{abstract}
We classify all $\Sp_4(\mathbb{C})$-rigid, quasi-unipotent local systems and show that all of them have geometric origin. Furthermore, we investigate which of those having a maximal unipotent element are induced by fourth order Calabi-Yau operators. Via this approach, we reconstruct all known Calabi-Yau operators inducing a $\Sp_4(\mathbb{C})$-rigid monodromy tuple and obtain closed formulae for special solutions of them.
\end{abstract}

\section{Introduction}

Differential operators \textit{of geometric origin} are proposed to describe periods of families of complex algebraic varieties and have been studied quite extensively during the last fifty years. A special class of such operators are \textit{fourth order differential Calabi-Yau operators} which are related to families of Calabi-Yau threefolds having a large complex structure limit and $h^{2,1}=1$. A conjectural characterization of those operators from a purely differential algebraic point of view, together with a list of most of the known examples is stated in \cite{AESZ}. The majority of those operators is not constructed from a geometric situation, as only very few examples of this type are known at the moment. Thus it is natural to ask, which of the operators really are of geometric origin and what would be a geometric realization.

It is quite challenging to decide whether a given differential operator has geometric origin or not. The first order ones are exactly those, which have a non trivial algebraic solution, see e.g. \cite{Simpson2}. Furthermore, as observed by Y. Andr\'e in \cite[Chapter II]{Andre}, the class of geometric differential operators is preserved by a multitude of constructions as taking subquotients, direct sums, tensor products and Hadamard products. We call an operator which can be obtained in this way \textit{geometrically constructible}. 
An appropriate method to check whether an operator is geometrically constructible or not is provided by the following investigation of local solutions.

Given a differential operator $L$ of degree $n$ with coefficients in $\mathbb{C}(z)$ and singular locus $S$, a classical theorem due to Cauchy states that for each $x\in\mathbb{P}^1\setminus S$ we find a basis $F=\{f_1,\dots,f_n\}$ of the $n$-dimensional $\mathbb{C}$-vectorspace $\Sol(L)_{x}=\{L(f)=0\mid f \textrm{ is holomorphic in some disc around } x\}$. 
If we chose a closed path $\gamma$ starting at $x$, analytic continuation of $F$ around $\gamma$ yields a different basis $\tilde{F}$ of $\Sol(L)_{x}$. The change from $F$ to $\tilde{F}$ only depends on the homotopy class of $\gamma$, which reflects the elements of $S$ encircled by $\gamma$. The translation of Cauchy's theorem into $20$th century language thus states the following: the operator $L$ induces a local system $\mathbb{L}$ of rank $n$ on $\mathbb{P}^1\setminus S$ via
\[\mathbb{L}(U):=\{f\in\mathcal{O}_{\mathbb{P}^1\setminus S}(U)\mid L(f)=0\}.\] Furthermore, with respect to an arbitrary base point  $x_0\in\mathbb{P}^1\setminus S$ this local system naturally induces a representation \[\rho_{\mathbb{L}}\colon \pi_1\left(\mathbb{P}^1\setminus S,x_0\right)\to \GL(\mathbb{L}_{x_0})\] of $\pi_1\left(\mathbb{P}^1\setminus S,x_0\right)$, the so called \textit{monodromy representation}. Its image is called the \textit{monodromy group} associated to $L$. We may chose a set of generators $(\gamma_s)_{s\in S}\subset \pi_1(\mathbb{P}^1\setminus S,x_0)$, whose elements are just simple loops $\gamma_s$ around each $s\in S$. As $S$ is finite, it can be equipped with an ordering $I$ such that \[
\prod_{i\in I}\gamma_{s_i}=1\in\pi_1\left(\mathbb{P}^1\setminus S,x_0\right)\] holds. Thus the monodromy group is completely determined by the tuple \[(T_{s_i})_{i\in I}:=\left(\rho_{\mathbb{L}}(\gamma_{s_i})\right)_{i\in I}\] of linear maps, which fulfill $\prod_{i\in I}T_{s_i}=\id_{\mathbb{L}_{x_0}}$. 
This tuple $(T_s)_{s\in S}$ is called the \textit{monodromy tuple} associated to $L$ and represents the effect of analytic continuation of holomorphic solutions near $x$ around each singularity of $L$. We call a monodromy tuple to be of geometric origin, if it is induced by a differential operator of geometric origin.

The constructions preserving the geometric origin of an operator have counterparts on the level fuchsian systems and monodromy tuples, see \cite{katz96} and \cite{DR07}. Furthermore, taking tensor- and middle Hadamard products with rank one tuples of geometric origin is an invertible operation. Thus a tuple is of geometric origin, if we can produce a tuple of geometric origin out of it, using those invertible operations. 

As shown by N. Katz in \cite{katz96}, a subclass of monodromy tuples of geometric origin are the \textit{linearly rigid} ones, i.e. those, whose elements are quasi-unipotent, generate an irreducible subgroup in $\GL_n(\mathbb{C})$ and which, are up to simultaneous conjugation, completely determined by the Jordan forms of its elements. In particular, Katz shows that each tuple of this type can be reduced to a geometric tuple of rank one by a sequence of invertible operations introduced above. The most prominent examples of linearly rigid tuples are those induced by generalized hypergeometric differential equations and where studied by Levelt \cite{Levelt} and Beukers and Heckmann \cite{BH90} for instance.

One can extend the notion of rigidity from  $\GL_n(\mathbb{C})$ to any reductive complex algebraic group, but then a reduction a la Katz generally fails. Nevertheless, Simpson conjectured that each tuple of this type is of geometric origin, see \cite{Simpson92}.

We know that the elements of the monodromy tuples induced by a fourth order differential Calabi-Yau operator lie in $\Sp_4(\mathbb{C})$. By the discussion above, it seems to be promising to investigate those Calabi-Yau operators inducing an $\Sp_4(\mathbb{C})$-rigid monodromy tuple. A bit surprisingly, the classification of all $\Sp_4(\mathbb{C})$-rigid monodromy tuples reveals the following

\begin{ethm}[cf. Theorem \ref{cfg}]
 Each $\Sp_4(\mathbb{C})$-rigid tuple consisting
 of quasi-unipotent elements can be reduced to a tuple of rank one via geometric operations. In particular, it is geometrically constructible using only tuples of rank one and thus of geometric origin.
\end{ethm}

Section three of this article is devoted to the proof of the existence theorem via explicit constructions of those tuples using rational pullbacks, tensor- and Hadamard products of tuples of rank one. A review of all constructions involved, as well as basic facts concerning rigid monodromy tuples, is given in section two. To construct inducing operators of geometric origin, we translate the constructions to the level of differential operators directly rather than choosing an appropriate cyclic vector of the differential system. This is done in section four. The translation of the construction enables us to compute distinguished solutions of the resulting operators explicitly, which is discussed in section five. Finally, we state an explicit construction of those operators whose induced monodromy tuples have a maximally unipotent element in section six. A further investigation yields the following

\begin{conj}
An $\Sp_4(\mathbb{C})$-rigid tuple consisting
 of quasi-unipotent elements and having a maximally unipotent element is induced by a differential Calabi-Yau operator if and only if the elements of its second exterior power lie up to simultaneous conjugation in $\SO_5(\mathbb{Z})$. Furthermore, the inducing operator is unique.
\end{conj}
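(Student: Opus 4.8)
The statement has three ingredients: the implication from ``induced by a fourth order Calabi--Yau operator'' to ``second exterior power in $\SO_5(\mathbb{Z})$'', the converse, and uniqueness of the inducing operator. The plan is to dispatch the first and the third by structural arguments and to concentrate the real work on the converse.

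\emph{Forward direction.} Suppose the rigid tuple $(T_s)_{s\in S}$ underlies a fourth order Calabi--Yau operator $L$. Such an operator is symplectically self-dual, so the rank-six local system $\wedge^2\mathbb{L}$ contains the trivial rank-one sub-local system spanned by the symplectic form, and its orthogonal complement $\mathbb{L}'$ has rank five and inherits a non-degenerate symmetric pairing; hence the associated tuple already lies in $\SO_5(\mathbb{C})$, which is just the exceptional isogeny $\Sp_4\to\SO_5$. What remains is the passage from $\mathbb{C}$ to $\mathbb{Z}$. Here I would use that a Calabi--Yau operator is by design of geometric origin, carrying an integral polarised variation of Hodge structure modelled on $H^3$ of a one-parameter family of Calabi--Yau threefolds; $\wedge^2$ of the integral cohomology then produces a monodromy-invariant lattice $\Lambda\subset\mathbb{L}'$, and the point to prove is that $\Lambda$ is, after a change of basis, isometric to the standard one defining $\SO_5(\mathbb{Z})$ --- a step that already draws on the $N$-integrality properties encoded in the definition of \cite{AESZ}.

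\emph{Uniqueness.} Since $(T_s)_{s\in S}$ is $\Sp_4(\mathbb{C})$-rigid it is in particular irreducible, so every inducing operator has order exactly $4$ and is determined by the underlying connection together with the choice of a cyclic vector. The definition of a Calabi--Yau operator prescribes the analytic normal form at the maximally unipotent point --- the holomorphic solution normalised as $1+O(z)$ and the logarithmic solutions fixed accordingly --- which pins down that choice of cyclic vector, while rigidity fixes the local exponents, hence the entire local data, at every remaining singularity through the Jordan forms of the $T_s$. Matching the prescribed normal form at the MUM point against the rigidly determined local data elsewhere leaves no free accessory parameter, so the operator, if it exists, is unique.

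\emph{Converse and main obstacle.} By the classification Theorem~\ref{cfg} there are only finitely many $\Sp_4(\mathbb{C})$-rigid quasi-unipotent tuples with a maximally unipotent element, and for each of them the constructions of sections four and six --- rational pullbacks together with tensor and middle Hadamard products of rank-one tuples --- yield an explicit candidate operator $L$. The plan is to run through this finite list: for a tuple whose second exterior power lies in $\SO_5(\mathbb{Z})$ one checks the Calabi--Yau axioms for $L$, and among them order four, maximal unipotent monodromy, symplectic self-duality and the prescribed local exponents are immediate from the construction, so everything reduces to the $N$-integrality statements --- integrality of the holomorphic period $y_0$, of the mirror map $q(z)=\exp(y_1/y_0)$ and of the instanton numbers extracted from the Yukawa coupling. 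Integrality of $y_0$ can be read off from the closed formulae for special solutions obtained in section five together with a Landau-type criterion; the integrality of the mirror map and of the instanton numbers is the genuinely delicate point, and the content of the conjecture is precisely that the condition ``$\wedge^2\in\SO_5(\mathbb{Z})$'' governs these Dwork-type congruences. A natural route is to follow the orthogonal structure on the second exterior power down the geometric reduction to rank one, translating an integral $\wedge^2$ into arithmetic constraints on the rank-one building blocks (rationality of the relevant roots of unity, shape of the pullback data) and feeding these into $p$-adic estimates for the coefficients of $y_0$, $y_1$ and the Yukawa coupling via their Euler-type integral representations; conversely a tuple violating the $\SO_5(\mathbb{Z})$ condition should be shown, again on the finite list, to fail one of the integrality axioms. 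I expect this translation --- from the clean linear-algebraic and arithmetic condition on the second exterior power to the $p$-adic congruences demanded by \cite{AESZ} --- to be the main obstacle, with the identification of the $\wedge^2$-lattice in the forward direction a secondary difficulty.
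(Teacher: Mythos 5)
There is a fundamental mismatch here: the statement you are trying to prove is stated in the paper as a \emph{conjecture}, not a theorem, and the paper offers no proof of it. The authors explicitly say that although (CY-1)--(CY-3) can be verified for all the constructed families, they are ``in most of the cases not able to decide whether condition (CY-4) holds or not'' even with the Krattenthaler--Rivoal criteria; the $N$-integrality of the $q$-coordinate is precisely the open content. Your proposal does not close this gap --- you yourself defer it (``the genuinely delicate point'', ``the content of the conjecture is precisely\dots''), so what you have written is a programme, not a proof, and it cannot be accepted as one.

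Beyond that, several of the steps you treat as routine are themselves unsupported. (1) In the forward direction you argue that a Calabi--Yau operator ``is by design of geometric origin'' and carries an integral polarised VHS from $H^3$ of a family of threefolds; but the paper's definition of a Calabi--Yau operator is purely differential-algebraic and arithmetic (CY-1)--(CY-4), with no geometric realization assumed, so the monodromy-invariant lattice $\Lambda$ you want inside $\wedge^2\mathbb{L}$ has no a priori existence --- indeed whether such operators are of geometric origin is exactly the kind of question the paper is circling. (2) Your finiteness claim is false as stated: the $\Sp_4(\mathbb{C})$-rigid quasi-unipotent tuples with a maximally unipotent element form infinite families parametrized by rational parameters $a,b$ (the eigenvalue data), as in Theorems \ref{P11Op}--\ref{P21Op}; only after imposing the $\SO_5(\mathbb{Z})$ (or trace-integrality) condition, via results such as Corollary \ref{bspZ}, does one cut down to the finite lists in the tables, so the ``run through the finite list'' strategy presupposes part of what is to be proved. (3) The uniqueness argument is too quick: rigidity determines the Jordan forms of the local monodromies, hence the exponents only modulo $\mathbb{Z}$, and it does not by itself exclude inducing operators with shifted exponents or apparent singularities; pinning the operator down requires exactly the normalizations (CY-1), (CY-2) whose compatibility with the given tuple is part of the conjecture. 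In short, the paper proves the classification and construction results feeding into the conjecture (Theorem \ref{cfg}, the operators of Section 6, and (CY-1)--(CY-3) for them), but the equivalence with the $\SO_5(\mathbb{Z})$ condition and the uniqueness statement remain open both in the paper and in your proposal.
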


The construction of differential operators inducing the remaining monodromy tuples will be done in a subsequent article.

We thank Duco van Straten for various fruitfull discussions and suggestions concerning the content of this article.

\section{Rigidity and the middle convolution}

\subsection{Rigidity}

Here we recall the definition of  rigidity in various contexts and 
state  criteria how to read off rigidity via numerical invariants.

\begin{defin}
 \begin{enumerate}
  \item We call $\T$ a \textbf{tuple of rank $n$} if there exist an $r \in \NN$ and $T_i \in \GL_n(\CC), i=1,\ldots, r+1$ such that
         $\T=(T_1,\ldots,T_{r+1})$ and $T_1\cdots T_{r+1}=1$.
         Two tuples are equivalent if they are simultaneously
         conjugate by an element in $\GL_n(\CC).$
  \item  We call a tuple $\T$ \textbf{irreducible}  of rank $n$ 
         if $\T$ generates an irreducible subgroup
         $\langle \T \rangle:=\langle T_1,\ldots,T_{r+1} \rangle$
         of $\GL_n(\CC).$
  \item  We call a tuple $\T$ \textbf{quasi-unipotent} if the 
         eigenvalues of all its elements are roots of unity.
  \item An irreducible tuple $\T$  is called
        \textbf{symplectic}, resp. \textbf{orthogonal}, if 
        $\langle \T \rangle$
        respects a skew-symmetric, resp. a symmetric bilinear form.

\item  Let $G \leq \GL_n(\CC)$ be an irreducible reductive 
algebraic subgroup and
$\langle \T \rangle \leq G$ be irreducible.
We say that $\T$  
is \textbf{$G$-rigid}, if
the following {\it dimension formula} holds:
$$ \sum_{i=1}^{r+1} {\rm codim}(C_G(T_i))=2(\dim(G)-\dim(Z(G)),$$
where $C_G(T_i)$ denotes the 
centralizer of $T_i$ in $G,$
the codimension is taken relative to $G,$ and $Z(G)$ denotes the centre 
of $G.$

 \item  
     We call an irreducible tuple $\T$ of rank $n$
    \textbf{linearly rigid} if $\T$ is $\GL_n(\CC)$-rigid
  and \textbf{symplectically rigid}  if $\T$ is $\Sp_n(\CC)$-rigid.
\end{enumerate}
\end{defin}

The following lemma in \cite{Scott} is often helpful to decide whether a tuple $\T$
is irreducible.

\begin{lemma}[Scott]\label{Scott}
   Let $\T$  act on a vector space $V$. Then 
\begin{eqnarray*}   \sum_{i=1}^{r+1} \rk(T_i-1)&\geq& (\dim(V)-\dim(V^\T))+(\dim(V)-\dim(V^{\check{\T}})),  \end{eqnarray*}
  where $\check{\T}$ denotes the tuple corresponding to dual representation of $\T$ 
  and $V^\T$ the fixed space of $\T$.
   Moreover, if $\T$ is irreducible of rank $n$ then we have
 \begin{eqnarray*} 
   \sum_{i=1}^{r+1} \rk(T_i-1) & \geq & 2n \quad  \mbox{ (Scott formula) and} \\
  \sum_{i=1}^{r+1} \dim (C_{\GL_n(\CC)}(T_i)) &\leq & (r-1)^2 n^2+2 \quad \mbox{(dimension count)}. 
 \end{eqnarray*}
  \end{lemma}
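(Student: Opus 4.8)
The three estimates form a hierarchy: the first is Scott's inequality from \cite{Scott}, and the Scott formula and the dimension count are quick formal consequences of it. The plan is thus to recall (or simply quote) the first inequality and then deduce the other two.

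For the first inequality I would invoke \cite{Scott}; its cohomological incarnation, which I find cleanest, runs as follows. Let $\mathcal{V}$ be the rank-$n$ local system on $X:=\PP^1\setminus S$ determined by $\T$ --- it exists precisely because $\prod_i T_i=1$ --- and let $j\colon X\hookrightarrow\PP^1$ be the inclusion. A standard Euler-characteristic count on $\PP^1$ for the middle extension $j_*\mathcal{V}$ (using $H^0(X,\mathcal{V})=V^\T$, $H^2(X,\mathcal{V})=0$, Poincar\'e duality, the local drops $\rk(T_s-1)$ at the punctures, and $\chi_c(X)=2-|S|$) shows that the parabolic cohomology $\MH^1:=H^1(\PP^1,j_*\mathcal{V})$ has dimension
\[\dim\MH^1=\sum_{i=1}^{r+1}\rk(T_i-1)-2n+\dim V^\T+\dim V^{\check\T}.\]
Since $\dim\MH^1\ge 0$, this is exactly the first inequality. (Scott's original argument is elementary linear algebra, refining the tautological injection $V/V^\T\hookrightarrow\bigoplus_i\im(T_i-1)$ by means of the relation $T_1\cdots T_{r+1}=1$; either route works.) This is the only step that requires real work.

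Granting the first inequality, the Scott formula is immediate: if $\T$ is irreducible and nontrivial, then $V^\T$ is a $\langle\T\rangle$-submodule of the irreducible module $V$, hence zero, and similarly $V^{\check\T}=0$ because $\check\T$ is irreducible as well; substituting gives $\sum_i\rk(T_i-1)\ge n+n=2n$. (The hypothesis ``nontrivial'' only excludes the degenerate tuple $(1,\dots,1)$, for which the left-hand side vanishes.)

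For the dimension count I would apply the first inequality not to $\T$ but to the adjoint tuple $\operatorname{Ad}(\T)=(\operatorname{Ad}(T_1),\dots,\operatorname{Ad}(T_{r+1}))$ on $\mathfrak{gl}_n(\CC)$; this is again a tuple since $\prod_i\operatorname{Ad}(T_i)=\operatorname{Ad}(1)=\id$. Here $\rk(\operatorname{Ad}(T_i)-\id)=n^2-\dim C_{\GL_n(\CC)}(T_i)$, since $\ker(\operatorname{Ad}(T_i)-\id)$ is the Lie-algebra centraliser, a linear space of the same dimension as $C_{\GL_n(\CC)}(T_i)$. By Schur's lemma the $\langle\T\rangle$-invariants of $\mathfrak{gl}_n(\CC)=\End(V)$ are the scalars, so the fixed space of $\operatorname{Ad}(\T)$ is one-dimensional, and $\operatorname{Ad}(\T)$ is self-dual under the trace form, so the fixed space of the dual tuple is one-dimensional as well. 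The first inequality therefore gives $\sum_{i=1}^{r+1}\bigl(n^2-\dim C_{\GL_n(\CC)}(T_i)\bigr)\ge(n^2-1)+(n^2-1)$, that is,
\[\sum_{i=1}^{r+1}\dim C_{\GL_n(\CC)}(T_i)\ \le\ (r+1)n^2-2(n^2-1)\ =\ (r-1)n^2+2\ \le\ (r-1)^2n^2+2,\]
the final inequality holding because $r-1\le(r-1)^2$ for every integer $r$. So the whole lemma reduces to Scott's inequality, and the only genuine obstacle --- if one wishes to be self-contained rather than cite \cite{Scott} --- is the verification of the parabolic-cohomology dimension formula above (equivalently, of Scott's exact sequence).
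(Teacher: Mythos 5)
Your argument is correct; note, however, that the paper offers no proof of this lemma at all -- it is quoted from \cite{Scott} -- so the comparison is with the standard references rather than with an in-paper argument. Your reduction is the usual one: the first inequality is Scott's, and your cohomological verification is right (for the middle extension $j_*\mathcal{V}$ on $\PP^1$ one has $h^0=\dim V^{\T}$, $h^2=\dim V^{\check{\T}}$ by duality, and $\chi=2n-\sum_i\rk(T_i-1)$, so nonnegativity of $h^1$ gives exactly the claim); the Scott formula follows since irreducibility kills $V^{\T}$ and $V^{\check{\T}}$ (your caveat about the trivial rank-one tuple is the right one, and matches the paper's standing hypothesis elsewhere that at least two $T_i$ be nontrivial when $\dim V=1$); and the dimension count follows by applying the first inequality to the conjugation tuple on ${\frak{gl}}_n(\CC)$, using $\rk({\rm Ad}(T_i)-1)=n^2-\dim C_{\GL_n(\CC)}(T_i)$, Schur's lemma, and self-duality of $\End(V)$. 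One remark worth making: your adjoint argument actually proves the sharper bound $\sum_i\dim C_{\GL_n(\CC)}(T_i)\le (r-1)n^2+2$, and this sharper form is what the paper really uses in Table~2 -- e.g.\ the exclusion of $P_4(8,8,10,10)$ needs $36>2\cdot 16+2=34$, whereas $(r-1)^2n^2+2=66$ would give no contradiction -- so the exponent in the stated $(r-1)^2n^2+2$ is evidently a misprint, and your final weakening step via $r-1\le(r-1)^2$ is harmless but unnecessary.
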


\begin{thm}  \label{symrig}
\begin{enumerate}
\item Let $\T$  be irreducible of rank $n$.
   Then $\T$ is linearly rigid if and only if
 $\T$ is uniquely
 determined by the Jordan forms
 of its elements.
 \item   Let $\T$  be an irreducible symplectic tuple of rank $2m$.
  If there exist only finitely many  tuples $(h_1,\ldots,h_{r+1})$ with $h_1\cdots h_{r+1}=1$ and such that 
$h_i$ is conjugate in $\Sp_{2m}(\CC)$ to $T_i$ then $\T$ is $\Sp_{2m}(\CC)$-rigid, 
 i.e., the  { dimension formula} holds.

 \end{enumerate}
 \end{thm}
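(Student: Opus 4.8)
The plan is to prove both parts via a dimension count comparing the space of tuples with fixed local data (Jordan/conjugacy classes) to the group acting by simultaneous conjugation, using the infinitesimal deformation theory of the monodromy representation. This is the standard rigidity machinery of Katz and Simpson, which I would now set up concretely.

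First I would recall the deformation space. Fix a tuple $\T=(T_1,\ldots,T_{r+1})$ with $T_1\cdots T_{r+1}=1$ acting irreducibly on $V$. Consider the variety of tuples $(h_1,\ldots,h_{r+1})$ with $h_1\cdots h_{r+1}=1$ and $h_i$ conjugate (in $G$) to $T_i$; here $G=\GL_n(\CC)$ for part (i) and $G=\Sp_{2m}(\CC)$ for part (ii). The tangent space to the $i$-th conjugacy class $G/C_G(T_i)$ at $T_i$ has dimension $\dim G-\dim C_G(T_i)=\operatorname{codim}(C_G(T_i))$, and the multiplication-equals-$1$ constraint cuts this down; one gets that the expected dimension of the deformation space modulo simultaneous $G$-conjugation is governed by $\sum_i \operatorname{codim}(C_G(T_i)) - 2(\dim G-\dim Z(G))$. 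The precise statement I would use is that, by irreducibility, the relevant group cohomology $H^1$ of $\pi_1(\PP^1\setminus S)$ with coefficients in the adjoint representation $\mathfrak{g}$ (or $\mathfrak{gl}(V)$, resp.\ $\mathfrak{sp}(V)$) has dimension exactly $\sum_i \operatorname{codim}(C_G(T_i)) - 2(\dim G-\dim Z(G))$ — this uses that $H^0=Z(\mathfrak g)$ and $H^2$ is dual to $H^0$ by Poincaré duality on the punctured sphere, plus the Euler characteristic formula $\dim H^0-\dim H^1+\dim H^2 = \chi$, where the local terms at each puncture contribute $-\operatorname{codim}(C_G(T_i))$ (this last fact is where the reductive and self-dual structure of $G$ enters, and where for $\Sp_{2m}$ one must know the adjoint action on $\mathfrak{sp}$ has no extra invariants beyond what the symplectic centralizer sees).

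For part (i): linear rigidity means the dimension formula holds with $G=\GL_n$, $Z(G)$ one-dimensional, i.e.\ $H^1(\mathfrak{gl}(V))=0$. Then $\T$ admits no nontrivial infinitesimal deformations preserving local Jordan types, and a rigidity/openness argument (the deformation functor is unobstructed since $H^2$ vanishes once we quotient by scalars, or more elementarily: the orbit map from $G$ is submersive onto each component) shows the $G$-orbit of $\T$ is both open and closed in the fixed-local-data variety, hence $\T$ is the unique such tuple up to conjugation. Conversely, if $\T$ is the unique tuple with its Jordan forms, the fixed-local-data variety is a single $\GL_n$-orbit, so its dimension is $\dim\GL_n-\dim Z - (\text{stabilizer correction})$; matching this against the expected dimension above forces $H^1(\mathfrak{gl}(V))=0$, which is the dimension formula. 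Here irreducibility is used via Schur's lemma to pin down $\dim H^0=1$ and the stabilizer of $\T$ is exactly the scalars.

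For part (ii): now $G=\Sp_{2m}$, which is semisimple so $Z(G)$ is finite and $\dim Z(G)=0$. The hypothesis is that the fixed-local-data variety (conjugacy in $\Sp_{2m}$) is finite. A finite variety has dimension $0$; on the other hand every component has dimension at least $\dim H^1(\mathfrak{sp}(V))$ by the lower bound from deformation theory (the tangent space at $\T$ injects into, in fact equals, this $H^1$, and the variety is cut out inside a smooth ambient space of the expected codimension — the lower-bound half of "expected dimension" is unconditional). Hence $\dim H^1(\mathfrak{sp}(V))=0$, which by the Euler-characteristic computation above is exactly the dimension formula with $Z(G)=0$, i.e.\ $\Sp_{2m}(\CC)$-rigidity. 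Irreducibility again gives $\dim H^0(\mathfrak{sp}(V))=0$ (no nonzero symplectic endomorphism commuting with an irreducible group, other than scalars, and scalars $\pm1$ are the only ones in $\Sp$, contributing nothing to the Lie algebra), which is what makes the Euler characteristic collapse to $-\dim H^1$.

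The main obstacle is the precise cohomological bookkeeping at the punctures: showing that the local contribution of $T_i$ to the Euler characteristic of $\mathfrak g$-valued cochains is exactly $-\operatorname{codim}_G(C_G(T_i))$, and that no global obstructions ($H^2$) interfere beyond what duality predicts. For $\GL_n$ this is classical (Katz); for $\Sp_{2m}$ one must additionally verify that the $G$-invariant symmetric form on $\mathfrak{sp}$ identifies $H^2$ with the dual of $H^0$ correctly and that $C_G(T_i)$ computed inside $\Sp_{2m}$ has the expected codimension — i.e.\ that $T_i$ being quasi-unipotent and the form being nondegenerate, the fixed space of $\mathrm{Ad}(T_i)$ on $\mathfrak{sp}$ really is $\mathfrak{c}_{\mathfrak{sp}}(T_i)$ of the claimed dimension. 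Granting these local computations, both directions of (i) and the one direction of (ii) follow from the single identity $\dim H^1 = \sum_i\operatorname{codim}(C_G(T_i)) - 2(\dim G - \dim Z(G))$ together with the inequality $0 = \dim(\text{finite variety}) \ge \dim H^1 \ge 0$ in case (ii).
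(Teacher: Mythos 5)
The paper itself gives no argument here: part (i) is quoted from Katz \cite{katz96} (Deligne--Katz--Steenbrink) and part (ii) from Strambach--V\"olklein \cite{SV}, so you are attempting a genuine proof, and your Euler-characteristic bookkeeping ($h^0=\dim Z(\frak g)$, duality for $h^2$, local terms $-\operatorname{codim}C_G(T_i)$) is the right frame. The serious gap is in the forward direction of (i). Vanishing of $H^1$ plus the submersion/openness argument shows that the conjugation orbit of $\T$ is open and closed in the variety of tuples with the prescribed conjugacy classes and product $1$, i.e.\ that $\T$ is \emph{isolated} (locally rigid). It does not show that this variety is connected, so it cannot exclude finitely many pairwise non-conjugate irreducible tuples with the same Jordan forms; ``open and closed, hence unique'' is exactly the unjustified step, and it is precisely the nontrivial content of Katz's theorem. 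Katz's actual proof of cohomological $\Rightarrow$ physical rigidity is not a dimension count: given a second tuple $\GGG$ with the same local Jordan data, one computes $\chi\bigl(\PP^1,j_*\mathcal{H}om(\F,\GGG)\bigr)=\chi\bigl(\PP^1,j_*\mathcal{E}nd(\F)\bigr)=2$ (the Euler characteristic depends only on the local data), hence $h^0+h^2\geq 2$, so there is a nonzero global homomorphism in one direction or the other, which by irreducibility and equality of ranks is an isomorphism. Some global argument of this kind must replace your connectedness-free open--closed step.

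In (ii) the bookkeeping also compares the wrong spaces. Read literally, ``finitely many tuples'' is impossible: the $\Sp_{2m}(\CC)$-conjugation orbit of an irreducible tuple already has dimension $\dim\Sp_{2m}(\CC)$ (the stabilizer is $\{\pm 1\}$ by Schur), so the hypothesis must mean finitely many tuples up to simultaneous conjugation, and then the correct chain is: every nonempty fibre component of the multiplication map $C_1\times\cdots\times C_{r+1}\to\Sp_{2m}(\CC)$ over $1$ has dimension at least $\sum_i\dim C_i-\dim\Sp_{2m}(\CC)$ (your unconditional lower bound, but for the tuple variety, not for $H^1$), while finiteness of the set of orbits bounds that fibre's dimension by $\dim\Sp_{2m}(\CC)$; together this gives $\sum_i\operatorname{codim}C_{\Sp}(T_i)\leq 2\dim\Sp_{2m}(\CC)$, and the reverse inequality is your $h^0=h^2=0$ computation (equivalently nonnegativity of parabolic $H^1$). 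Your inequality ``$0=\dim(\text{finite variety})\geq\dim H^1$'' conflates the representation variety with its quotient; the tangent space of the tuple variety at $\T$ has dimension $\dim H^1+\dim\Sp_{2m}(\CC)-\dim H^0$, not $\dim H^1$. Finally, in the converse of (i) you should make explicit that the multiplication map lands in the coset $\det=1$, which is what produces the correction $2(\dim G-\dim Z(G))$ rather than $2\dim G-\dim Z(G)$. With these repairs your route is essentially a reconstruction of the cited arguments of Katz and Strambach--V\"olklein.
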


\begin{proof}
 \begin{enumerate}
 \item
 The first result goes back to Deligne, Katz and Steenbrink, see e.g. 
 \cite{katz96}.
 \item This statement can be found in \cite{SV}.\qed

 \end{enumerate}

\end{proof}

Alternatively one can consider a tuple
as a finite dimensional
$\CC[F_r]$-module.
For this let  $F_r$ denote  the free group on $r$ generators
$f_1,\ldots,f_{r}$.
Setting $f_{r+1}=(f_1\cdots f_r)^{-1}$
we can view an element
in $\Mod(\CC[F_r])$ as a pair $(\T,V),$ where $V$ is a
vector space over $\CC$ and $\T=(T_1,\ldots,T_{r+1})$ is a tuple in 
$\GL(V)^{r+1}$ such that $f_i$ acts on $V$ via $T_i$ for $ i=1,\ldots,r+1.$
We also assign to $\T$ a tuple $\su=\su_\T=(s_1,\ldots,s_r,s_{r+1}=\infty), $
where $s_1,\ldots,s_r$ are pairwise different elements in $\CC$
with an ordering $s_i<s_j$ in $\su$ if $i<j.$

In a geometric context one can also speak in terms of local systems,
as done in the introduction.

\subsection{Basic properties of the middle convolution}\label{katz}

In this section we recall some of the main properties of the middle convolution
functor $\MC$. 
This functor was introduced by Katz in \cite{katz96} in the category of perverse sheaves. 
A down to earth version
for Fuchsian systems and their monodromy group generators can be found
in \cite{DR07}. 
We recall the main properties of the
convolution that are are stated in  \cite[Section 2]{DR07}.\\

For $(\T,V)\in \Mod(\CC[F_r]),$ where
 $\T=(T_1,\ldots,T_{r+1})\in
\GL(V)^{r+1},$ and $\lambda \in \CC^\times$ one can construct an element
$(C_\lambda(\T),V^r)\in \Mod(\CC[F_r])$ as follows.
For
$k=1,\ldots,r,$ we define $B_k\in \GL(V^r)$ as an element that
maps a vector $(v_1,\ldots,v_r)^{\tr}$
$\in V^r$ to
\[ \left( \begin{array}{ccccccccc}
                  1 & 0 &  & \ldots& & 0\\
                   & \ddots &  & & &\\
                    & & 1 &&&\\
               \lambda (T_1-1) & \ldots& \lambda (T_{k-1}-1)  & \lambda T_{k} & (T_{k+1}-1) & \ldots
&   (T_r-1) \\
     &&&&1&&\\
               &   &  & && \ddots  &   \\
                   0 &  &  & \ldots& &0 & 1
          \end{array} \right)\left(\begin{array}{c}
v_1\\
\vdots\\
\\\vdots\\
\\\vdots\\
\\ v_r\end{array}\right)
.\]
Further we set $B_{r+1}=(B_1\cdots B_{r})^{-1}$.
The subspaces $\K:=\bigoplus_{i=1}^r\K_i,$ where
\[ \K_k = \left( \begin{array}{c}
          0 \\
          \vdots \\
          0 \\
          \ker(T_k-1) \\
            0\\
           \vdots \\
           0
        \end{array} \right)  \quad \mbox{({\it k}-th entry)},\, k=1,\dots,r,\]
and
\[     \L=\bigcap_{k=1}^r \ker (B_k-1)={\rm ker}(B_1\cdots B_r - 1).
\]
of $V^r$ are $\langle B_1,\ldots,B_r \rangle$-invariant.
If $\lambda \not=1$ we have
 $$\L=
\left\langle \left( \begin{array}{c}
            T_2 \cdots T_{r} v \\
                T_3 \cdots T_{r}v \\
               \vdots \\
                        v
            \end{array} \right) \mid v \in \ker(\lambda\cdot T_1\cdots T_r-1) \right\rangle$$
and $$\K+ \L=\K \oplus \L.$$

\begin{defin} Let $(\T,V)\in \Mod(\CC[F_r]).$
\begin{enumerate}

\item We call the $\CC[F_r]$-module $
C_{\lambda}(V):=(C_\lambda(\T),V^r):=((B_1,\ldots,B_{r+1}),V)$
 the \textbf{convolution} of $V$
with $\lambda,$
 where $\su_{C_\lambda(\T)}:=\su_\T.$

\item Let
$\MC_\lambda(\T):=(\tilde{B}_1,\dots,\tilde{B}_{r+1})\in
\GL(V^r/(\K+\L))^{r+1},$ where $\tilde{B}_k$ is induced by the action
of $B_k$ on $V^r/(\K+\L).$ The $K[F_r]$-module
$\MC_{\lambda}(V):=(\MC_\lambda(\T),V^r/(\K+\L))$
 is called the
\textbf{middle convolution} of $\T$ with
$\lambda.$
\end{enumerate}
\end{defin}

\begin{thm} \label{eigen} Let $(\T,V)\in \Mod(\CC[F_r])$ 
 be irreducible. Assume further that if  $\dim(V)=1$
 that at least two of the $T_i,\;i=1,\ldots,r,$ are non trivial.
 Let $\lambda \in \CC^\times.$

\begin{enumerate}

\item If $\lambda \neq 1$ then $${\rm dim}(\MC_\lambda(V))=
\sum_{k=1}^{r} \rk (T_k-1)-
 ({\rm dim}(V)-\rk(\lambda\cdot T_1\ldots T_r-1)).$$

\item If $\lambda_1,\, \lambda_2\in \CC^\times$  then
    \begin{eqnarray*}
\MC_{\lambda_2}\circ \MC_{\lambda_1}(V)\cong \MC_{\lambda_2\lambda_1}(V),&\mbox{where}& \MC_{1}(V)\cong V .
\end{eqnarray*}

\item $\MC_{\lambda}(V)$ is irreducible.
\end{enumerate}
\end{thm}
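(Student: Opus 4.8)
The plan is to establish the three claims of Theorem \ref{eigen} in order, relying throughout on the explicit matrix description of the convolution $C_\lambda(\T)$ and on the identification of the middle convolution as the quotient by the ``obvious'' invariant subspaces $\K$ and $\L$.

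\textbf{Part (i): the dimension formula.} First I would observe that $\dim \MC_\lambda(V) = \dim(V^r) - \dim(\K+\L) = r\cdot\dim(V) - \dim\K - \dim\L$, using the decomposition $\K+\L = \K\oplus\L$ valid for $\lambda\neq 1$. Next I would compute $\dim\K = \sum_{k=1}^r \dim\ker(T_k-1) = \sum_{k=1}^r(\dim(V)-\rk(T_k-1))$ directly from the block description of $\K_k$. Then, using the explicit generating set for $\L$ given in the excerpt, $\L=\langle (T_2\cdots T_r v,\ldots, v)^{\tr} \mid v\in\ker(\lambda T_1\cdots T_r-1)\rangle$, I would note that the linear map $v\mapsto (T_2\cdots T_r v,\ldots,v)^{\tr}$ is injective (its last coordinate is $v$), hence $\dim\L = \dim\ker(\lambda T_1\cdots T_r -1) = \dim(V)-\rk(\lambda T_1\cdots T_r-1)$. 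Substituting all three expressions gives $\dim\MC_\lambda(V) = r\dim(V) - \sum_{k=1}^r(\dim(V)-\rk(T_k-1)) - (\dim(V)-\rk(\lambda T_1\cdots T_r-1)) = \sum_{k=1}^r\rk(T_k-1) - (\dim(V)-\rk(\lambda T_1\cdots T_r-1))$, as claimed. This part is essentially bookkeeping once one has the description of $\L$.

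\textbf{Parts (ii) and (iii): composition law and irreducibility.} For these I would not attempt a self-contained matrix computation but instead invoke the cohomological interpretation: $\MC_\lambda$ is the monodromy incarnation of Katz's middle convolution functor $\mathrm{MC}_\lambda$ on the category of (irreducible) perverse sheaves on $\mathbb{A}^1$, as recalled in \cite{katz96} and worked out at the level of tuples in \cite{DR07}. Part (ii), the isomorphism $\MC_{\lambda_2}\circ\MC_{\lambda_1}(V)\cong \MC_{\lambda_2\lambda_1}(V)$ together with $\MC_1(V)\cong V$, is then the group law $\mathrm{MC}_{\lambda_2}\circ\mathrm{MC}_{\lambda_1}\cong \mathrm{MC}_{\lambda_1\lambda_2}$ for convolution with Kummer sheaves, transported through the dictionary between middle convolution of perverse sheaves and the above quotient construction; the identity $\MC_1=\mathrm{id}$ reflects that convolution with the trivial Kummer sheaf (i.e. the delta sheaf) is the identity. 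Part (iii) is Katz's theorem that $\mathrm{MC}_\lambda$ preserves irreducibility of perverse sheaves, again transported to the tuple setting — the hypothesis that at least two $T_i$ are nontrivial when $\dim V=1$ is exactly what is needed to exclude the degenerate sheaf-theoretic cases (Kummer sheaves themselves) where the construction would not land in the irreducible middle-extension situation. For readers preferring a hands-on argument, parts (ii) and (iii) can alternatively be extracted from the explicit analysis in \cite[Section 2]{DR07}, and I would cite that as the reference of record.

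\textbf{Main obstacle.} The genuinely delicate point is part (iii), irreducibility: the block matrices $B_1,\ldots,B_{r+1}$ visibly preserve $\K$ and $\L$, and it is the content of Katz's theorem that, after passing to the quotient $V^r/(\K\oplus\L)$, no further invariant subspaces survive. A direct linear-algebra proof would require showing that any $\langle B_1,\ldots,B_r\rangle$-invariant subspace of $V^r$ either is contained in $\K+\L$ or contains a complement, which is exactly where the irreducibility of the original tuple $\T$ and the non-degeneracy hypothesis enter; rather than reproducing this, I would lean on \cite{katz96} and \cite{DR07}. Parts (i) and (ii) I expect to be routine given the preparatory material already assembled above the theorem statement.
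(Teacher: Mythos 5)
Your proposal is fine: the paper itself states Theorem \ref{eigen} without proof, recalling it from \cite[Section 2]{DR07} (and ultimately \cite{katz96}), so delegating parts (ii) and (iii) to those references is exactly what the paper does. Your direct verification of (i) is correct bookkeeping given the facts the paper records just before the theorem (the explicit generating set for $\L$ when $\lambda\neq 1$, injectivity of $v\mapsto (T_2\cdots T_r v,\dots,v)^{\tr}$ via the last coordinate, and $\K+\L=\K\oplus\L$), and it reproduces the same computation found in the cited source.
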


Obviously, tensoring a linearly rigid tuple with a rank $1$ tuple preserves
linearly rigidity.
Nevertheless this operation plays an essential role in the study of linear rigid
tuples due to Katz' existence algorithm, see Thm.~\ref{linrig}.

\begin{defin} 
  Let $(\T_k,V_k)\in \Mod(\CC[F_{r_i}]),\; k=1,2,$
  be semisimple and $Set(\su)=Set(\su_{\T_1})\cup Set(\su_{\T_2})$, $|Set(\su)|=r+1$,
  where an ordering on $s_i<s_j$ in $\su$ is given by the rule:
  If $s_i,s_j \in Set(\su_{\T_k})$ then $s_i<s_j$ in $Set(\su_{\T_k})$ 
  for $k=1,2$. 
  Thus we consider
  $(\T_1,V_1)$ and $(\T_2,V_2)$ as elements in $ \Mod(\CC[F_{r}]),$
   where $T_{k,j}=1_{V_k}$ if $s_j \not \in Set(\su_{\T_k})$ for $k=1,2.$  
   Then we call
  \begin{eqnarray*}
   \MT(V_1,V_2)&=&V_1\ten V_2,\\
  \MT(\T_1,\T_2)=\MT_{\T_1}(\T_2)&=&(T_ {1,1} \ten T_ {2,1},\ldots,T_ {1,r+1} \ten T_ {2,r+1})
   \end{eqnarray*}
  the \textbf{middle tensor product} of $(\T_1,V_1)$ and $(\T_2,V_2)$.
\end{defin}

\begin{prop}\label{eigen2}
Let $(\T,V)\in \Mod(\CC[F_r])$ 
 be irreducible. Assume further that if  $\dim(V)=1$
 that at least two of the $T_i,\;i=1,\ldots,r,$ are non trivial.
\begin{enumerate}

\item
  If $\T$ is orthogonal,
  symplectic resp.,
   then $\MC_{-1}(\T)$ is
   symplectic,  orthogonal resp.

 \item Let $\T $ be  orthogonal or symplectic and
   $\La_1=(\lambda_1,\lambda_2,(\lambda_{1}\lambda_2)^{-1}),\;
   \La_2=(\lambda_1\lambda_2^{-1},\lambda_{1}^{-1}\lambda_2,1)$
   be  rank $1$ tuples such that $\su_{\La_1}=\su_{\La_2}=(s_i,s_j,s_{r+1})$.
 Then   
$$         \MT_{\La_1^{-1}}
\circ  \MC_{\lambda_1 \lambda_2}\circ  \MT_{\La_2}\circ  \MC_{(\lambda_1 \lambda_2)^{-1}}
   \circ   \MT_{\La_1}(\T)$$
is either orthogonal or symplectic.

\end{enumerate}
\end{prop}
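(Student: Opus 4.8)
\emph{Part (i).} The plan is to transport a given invariant form through the explicit model of $\MC_{-1}$ recalled above; this is essentially the method of Katz and of Dettweiler--Reiter. Let $\langle\,\cdot\,,\,\cdot\,\rangle$ be a nondegenerate $\langle\T\rangle$-invariant bilinear form on $V$ of type $\eps$, i.e.\ $\langle v,w\rangle=\eps\,\langle w,v\rangle$, with $\eps=-1$ if $\T$ is symplectic and $\eps=+1$ if $\T$ is orthogonal. On the convolution space $V^{r}$ one defines a bilinear form $\Psi$ by a block formula
\[\Psi\bigl((v_1,\dots,v_r),(w_1,\dots,w_r)\bigr)=\sum_{k,l=1}^{r}\langle v_k,\,M_{kl}\,w_l\rangle,\]
where the operators $M_{kl}\in\End(V)$ are assembled out of the scalar $\lambda=-1$ and of products of the $T_m$ exactly so as to force $B_k^{\tr}\Psi B_k=\Psi$ for every generator $B_k$ of $C_{-1}(\T)$; the explicit shape of the $B_k$ displayed above makes such a choice possible and unique up to a scalar. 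Three points then have to be verified: (a) $\langle B_1,\dots,B_{r+1}\rangle$-invariance of $\Psi$, a direct matrix computation using invariance of $\langle\,\cdot\,,\,\cdot\,\rangle$ under the $T_m$; (b) descent, i.e.\ $\K\oplus\L\subseteq\mathrm{rad}(\Psi)$, checked via the explicit descriptions of $\K_k=\ker(T_k-1)$ in its slot and of $\L$ given above, so that $\Psi$ induces a form $\bar\Psi$ on $V^{r}/(\K\oplus\L)=\MC_{-1}(V)$; (c) nondegeneracy, which follows because $\bar\Psi$ is invariant under the tuple $\MC_{-1}(\T)$ — irreducible by Theorem~\ref{eigen}\,(iii) — so $\mathrm{rad}(\bar\Psi)$ is an invariant subspace and $\bar\Psi$ is either $0$ or nondegenerate, the former being ruled out by evaluating on one suitable pair of vectors. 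Finally one reads off the type: swapping the two arguments of $\Psi$ and using $\langle v,w\rangle=\eps\langle w,v\rangle$ replaces the block $M_{kl}$ by $\eps$ times the $\langle\,\cdot\,,\,\cdot\,\rangle$-adjoint of $M_{lk}$, and the only asymmetry between the families $(M_{kl})$ and $(M_{lk})$ — the factor $\lambda$ multiplying, in each $B_k$, the entries to the left of the diagonal but not those to the right — forces $\Psi$ to be $(\eps\lambda)$-symmetric, i.e.\ of type $-\eps$. Hence $\MC_{-1}(\T)$ is symplectic when $\T$ is orthogonal and orthogonal when $\T$ is symplectic.

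\emph{Part (ii), reduction to self-duality.} Write $\T'=\Phi(\T)$, where $\Phi=\MT_{\La_1^{-1}}\circ\MC_{\lambda_1\lambda_2}\circ\MT_{\La_2}\circ\MC_{(\lambda_1\lambda_2)^{-1}}\circ\MT_{\La_1}$. Tensoring with a rank $1$ tuple leaves the lattice of invariant subspaces untouched, and $\MC_\mu$ preserves irreducibility by Theorem~\ref{eigen}\,(iii), so under the standing hypotheses every intermediate module, and in particular $\T'$, stays irreducible. Consequently the space of $\langle\T'\rangle$-invariant bilinear forms on the underlying module is at most one-dimensional by Schur's lemma; whenever it is nonzero, its generator is fixed up to sign by transposition, hence is symmetric or skew-symmetric, and then $\T'$ is orthogonal or symplectic. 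Thus it suffices to show that $\T'$ is self-dual, $\check\T'\cong\T'$.

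\emph{Part (ii), self-duality.} For this I would combine the duality rules for the two functors, $\check{\MT_\La(X)}\cong\MT_{\La^{-1}}(\check X)$ and $\check{\MC_\mu(X)}\cong\MC_{\mu^{-1}}(\check X)\otimes\La^{(\mu)}$ — where $\La^{(\mu)}$ is the rank $1$ correction tuple produced by the middle convolution at the finite singularities, see \cite{katz96,DR07} — together with the commutation relations between $\MC$ and $\MT$ recorded there. Applying the dual to $\T'=\Phi(\T)$ and pushing it inward past all five functors turns $\Phi$ into a composite of exactly the same shape, but with every $\MT$- and $\MC$-parameter replaced by its inverse and with the two twists $\La^{(\lambda_1\lambda_2)}$, $\La^{((\lambda_1\lambda_2)^{-1})}$ inserted; using $\check\T\cong\T$ one has to see that, for the specific choices $\La_1=(\lambda_1,\lambda_2,(\lambda_1\lambda_2)^{-1})$ and $\La_2=(\lambda_1\lambda_2^{-1},\lambda_1^{-1}\lambda_2,1)$, these corrections cancel against the inverted $\La_1^{\pm1}$, $\La_2^{\pm1}$ and the composite collapses back to $\Phi(\T)$, giving $\check\T'\cong\T'$. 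This character bookkeeping is the only genuine computation in the proof, and it is the step I expect to be the main obstacle: the values of $\La_1,\La_2$ are dictated precisely by the requirement that this cancellation take place, while everything else is formal and rests on the properties of $\MC$ and $\MT$ collected above.
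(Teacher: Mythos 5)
Your architecture is the right one---it is essentially what the source the paper itself leans on does (the paper's entire proof of this proposition is the citation of \cite[Thm.~5.14]{DR99}, with (i) likewise taken from the Katz/Dettweiler--Reiter theory rather than proved)---but as written the attempt has genuine gaps at exactly the points that constitute the content of the statement. In (i) you never exhibit the blocks $M_{kl}$: you posit that they can be ``assembled\dots exactly so as to force'' $B_k^{\tr}\Psi B_k=\Psi$ and that such a choice is ``possible and unique up to a scalar''. That existence claim \emph{is} the theorem; nothing in the displayed shape of the $B_k$ makes it automatic that the $r$ simultaneous conditions admit a common nonzero solution built from $\langle\,\cdot\,,\,\cdot\,\rangle$ and the $T_m$, and the subsequent steps---that $\K\oplus\L$ lies in the radical, that the descended form is not identically zero on $V^r/(\K\oplus\L)$, and above all the sign computation showing the induced form has type $-\eps$ when $\lambda=-1$---are each asserted rather than verified. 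The remark that ``the only asymmetry is the factor $\lambda$'' is a heuristic; the type flip is precisely the delicate part and can only be read off from an explicitly written $\Psi$ (this is the computation carried out in the Dettweiler--Reiter papers).

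In (ii) your reduction is sound: irreducibility is preserved (Theorem~\ref{eigen}(iii), plus the fact that tensoring with rank-one tuples does not change the lattice of invariant subspaces), so by Schur's lemma self-duality of the resulting tuple would indeed give ``orthogonal or symplectic'', which is all the proposition claims. But the self-duality itself is left open: the duality rule for $\MC_\mu$ is stated only up to an unspecified rank-one correction tuple, the needed commutation relations between $\MC$ and $\MT$ are invoked without being pinned down, and the cancellation for the particular $\La_1,\La_2$---which, as you yourself say, is the only genuine computation and is exactly what the peculiar shape of $\La_1,\La_2$ is designed to achieve---is deferred. Until that bookkeeping is done (or the statement is simply quoted from \cite[Thm.~5.14]{DR99}, as the paper does), your argument proves only the conditional ``if the five-step composite is self-dual, then it is orthogonal or symplectic''. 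So: right plan, correct reductions, but the two computations that actually carry the proof---the explicit invariant pairing for $\MC_{-1}$ in (i) and the duality/twist cancellation in (ii)---are missing.
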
  
  
\begin{proof}
For (ii) see \cite[Thm. 5.14]{DR99}.
\qed\end{proof}

 \begin{defin}{\rm 
 Let $\La=(\lambda^{-1},\lambda),\; \su_\La=(0,\infty),$
 be a rank $1$ tuple.
 Then we call
 \begin{eqnarray*}
  \MH_\lambda(\T) & :=& \MC_\lambda(\MT(\T,\La) )
 \end{eqnarray*}
 the \textbf{middle Hadamard} product of $\T$
 with $\lambda$.}
\end{defin}

 The above definition of the middle Hadamard product is motivated by the fact
 that the convolution of $f$ with $x^\mu,\; \lambda=\exp(2\pi i \mu),$ can formally be written as a Hadamard product
 \begin{eqnarray*}
  \int f(x) (y-x)^\mu \frac{dx}{y-x}= \int f(x)x^{\mu} \cdot (\frac{y}{x}-1)^{\mu-1} \frac{dx}{x}.
\end{eqnarray*}

Due to the relation between the convolution
and the Hadamard product we can switch between this both operations freely.

\begin{rem}
 Let $\T$ be  irreducible and $\lambda \in \CC^{\times}.$
 Let $\La=(\lambda,\lambda^{-1}),\; \su_\La=(0,\infty),$
 be a rank $1$ tuple.
 Then 
\begin{eqnarray*}
  \MC_\lambda(\T) & =& \MH_\lambda(\MT(\T,\La) ). 
\end{eqnarray*}
\end{rem}

The middle convolution yields
Katz Existence Theorem, cf. \cite{katz96}.

\begin{thm}\label{linrig}
 Any linearly rigid irreducible tuple $\T$
 of rank $n$
 can be reduced to a rank $1$ tuple via a suitable
 sequence of at most $n-1$ middle convolutions $\MC_{\lambda}$
 and middle tensor products $\MT_\La $ with rank one tuples $\La$. 
\end{thm}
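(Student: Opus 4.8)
The plan is to prove Theorem~\ref{linrig} by induction on the rank $n$, following Katz's original argument but phrased entirely in terms of the combinatorial invariants of the tuple rather than perverse sheaves. The base case $n=1$ is trivial, so assume $n\geq 2$ and that $\T=(T_1,\ldots,T_{r+1})$ is an irreducible linearly rigid tuple of rank $n$. If $n=1$ there is nothing to do; if some $T_i$ is the only nontrivial element (so $r$ effectively small), one checks the degenerate cases directly, so I may assume at least two of the $T_1,\ldots,T_r$ are nontrivial, which is exactly the hypothesis needed to apply Theorem~\ref{eigen} and Proposition~\ref{eigen2}.

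The core of the argument is a dimension-counting step. After replacing $\T$ by $\MT_\La(\T)$ for a suitable rank one tuple $\La$ — which changes the local eigenvalues but not linear rigidity, and is invertible — I want to arrange that $\prod_{i=1}^{r}T_i$ has an eigenvalue equal to some fixed $\mu\neq 1$ with large multiplicity, or more precisely that the ``defect'' $\dim(V)-\rk(\lambda T_1\cdots T_r-1)$ appearing in Theorem~\ref{eigen}(i) is positive for the chosen $\lambda=\mu$. Then by Theorem~\ref{eigen}(i), the rank of $\MC_\lambda(\T)$ drops strictly below $n$: we get
\[
\dim(\MC_\lambda(V))=\sum_{k=1}^r \rk(T_k-1)-\bigl(\dim(V)-\rk(\lambda T_1\cdots T_r-1)\bigr)<n,
\]
using the Scott formula from Lemma~\ref{Scott} (which gives $\sum_k \rk(T_k-1)\le$ something controlled by rigidity) together with the dimension-count inequality. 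The precise bookkeeping: for a linearly rigid tuple the dimension formula for $G=\GL_n$ forces $\sum_{i=1}^{r+1}\dim C_{\GL_n}(T_i)=(r-1)n^2+2$, and combining this with the description of $\MC_\lambda$ one shows that the new tuple $\MC_\lambda(\T)$ is again irreducible (Theorem~\ref{eigen}(iii)) and again linearly rigid (its invariants satisfy the same dimension formula — this is the computation Katz makes, using that middle convolution is, up to the choice of $\lambda$, an involution by Theorem~\ref{eigen}(ii)), and of strictly smaller rank.

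Having produced from $\T$ an irreducible linearly rigid tuple of rank $<n$ via one $\MT_\La$ followed by one $\MC_\lambda$, I apply the induction hypothesis to reduce that tuple to rank one in at most (new rank)$-1$ further steps; since (new rank)$\le n-1$ one checks the total number of middle convolutions used is at most $n-1$. Because each $\MT_\La$ with $\La$ of rank one and each $\MC_\lambda$ is invertible on irreducible modules (again Theorem~\ref{eigen}(ii) for $\MC$, and $\MT_{\La^{-1}}\circ\MT_\La=\id$ for the tensor step), the composite reduction is reversible, which is the form in which the theorem is used later.

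The main obstacle is the dimension-counting step: showing that one can always choose the scalar $\lambda$ (equivalently, the local eigenvalue $\mu$ of $\prod_{i\le r}T_i$ to be normalized away by $\MT_\La$) so that $\dim(V)-\rk(\lambda T_1\cdots T_r-1)>0$ while simultaneously keeping $\MC_\lambda(\T)$ linearly rigid and of rank strictly less than $n$. This is exactly where linear rigidity is essential and where Katz's argument is delicate: one has to rule out the possibility that every choice of $\lambda$ leaves the rank unchanged, which would contradict the rigidity dimension formula. The remaining steps — irreducibility and the quasi-unipotence of eigenvalues being preserved (roots of unity map to roots of unity under the explicit formulas for the $B_k$), and the invertibility — are comparatively routine given Theorems~\ref{eigen}, \ref{symrig} and Lemma~\ref{Scott}, and I would only sketch them.
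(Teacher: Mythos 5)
Your sketch leaves a genuine gap, and it is exactly at the step you yourself label ``the main obstacle'': the existence of a twist and a scalar $\lambda$ for which the middle convolution strictly drops the rank. (Note the paper does not prove Theorem~\ref{linrig} at all -- it cites \cite{katz96} -- so the only question is whether your argument stands on its own; as written it does not.) First, the condition you propose to arrange, namely that the defect $\dim(V)-\rk(\lambda T_1\cdots T_r-1)$ be positive, is not sufficient for $\dim \MC_\lambda(V)<n$: by Theorem~\ref{eigen}(i) the new rank is $\sum_{k=1}^{r}\rk(T_k-1)$ minus that defect, and if the tensor twist is not chosen so that each $T_k$, $k\le r$, has the eigenvalue $1$ with \emph{maximal geometric multiplicity}, the sum $\sum_{k\le r}\rk(T_k-1)$ can be as large as $rn$, so a positive defect still leaves the rank $\ge n$ whenever $r\ge 2$. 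Second, your appeal to Lemma~\ref{Scott} is backwards: the Scott formula is a \emph{lower} bound $\sum_{i}\rk(T_i-1)\ge 2n$, not an upper bound ``controlled by rigidity''; its role in Katz's argument is only to exclude the degenerate choice $\lambda=1$.

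The dimension count you defer is the heart of the proof and runs as follows. Linear rigidity gives $\sum_{i=1}^{r+1}\dim C_{\GL_n(\CC)}(T_i)=(r-1)n^2+2$, and the elementary inequality $\dim C_{\GL_n(\CC)}(T_i)\le n\,d_i$, where $d_i$ denotes the largest dimension of an eigenspace of $T_i$, then forces $\sum_{i=1}^{r+1}(n-d_i)\le 2n-1$. Now twist by the rank-one tuple that moves the chosen maximal-multiplicity eigenvalue of each $T_i$, $i\le r$, to $1$, and let $\lambda$ be the maximal-multiplicity eigenvalue of the resulting $T_{r+1}$, so that $\rk(\lambda T_1\cdots T_r-1)=n-d_{r+1}$. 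If this $\lambda$ were $1$, then $\sum_{i=1}^{r+1}\rk(T_i-1)\le 2n-1<2n$ would contradict the Scott formula, so $\lambda\ne 1$ and Theorem~\ref{eigen}(i) yields $\dim\MC_\lambda(V)=\sum_{i\le r}(n-d_i)-d_{r+1}\le n-1$. Only after this is in place do the remaining ingredients you cite (preservation of irreducibility by Theorem~\ref{eigen}(iii), preservation of the rigidity index via Proposition~\ref{propF} and Theorem~\ref{symrig}, invertibility of $\MC_\lambda$ and $\MT_\La$) let the induction run, and since every convolution drops the rank by at least one, at most $n-1$ convolutions occur. Without the centralizer--eigenspace inequality and the specific normalization of the twist, your induction never gets started, so the proposal as it stands is an outline of Katz's strategy rather than a proof.
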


This theorem results in an algorithm to check the existence
of a linearly rigid tuple with given Jordan forms.
Since
$\MC$ is multiplicative and $\La \ten \check{\La}$ is a trivial rank $1$ tuple,
we can invert each step in the algorithm.
 Thus we can  construct a matrix representation of $\T$.

\begin{ex}\label{Hyp}
  The tuple 
  \[ \T=(T_0,T_1,T_\infty):=\MH_{\beta} \circ \MH_{\beta^{-1}}  \circ \MH_{\alpha}\; ( 1,\alpha,\alpha^{-1}),\quad \alpha, \beta \in \CC^{\ast}\setminus \{1\}\]
 is a
  symplectic  tuple of rank $4$.
  Using the methods described in this section we can compute $\T$  explicitly.
 Setting 
$ A=\alpha +\alpha^{-1}-2,\;  B=\beta +\beta^{-1}-2$
  we get
  \begin{eqnarray*} T_0=\left(\begin{array}{cccc}
    1 & 1 &  0 &  0 \\ 
      0 &  1 &  -1 &  0 \\ 
      0 &  0 &  1 &  1 \\
      0 &  0 &  0 &  1    
      \end{array}\right),&&
  T_1=    \left(\begin{array}{cccc}
    1 & 0 &  0 &  0 \\ 
      0 &  1 &  0 &  0 \\ 
      0 &  0 &  1 &  0 \\
      AB  &AB&A+B &1   
     \end{array}\right).
 \end{eqnarray*}
  This is a special case of a monodromy tuple of a generalized hypergeometric differential equation.
  Those monodromy tuples were first described by Levelt \cite{Levelt}.
  A detailed study of the monodromy we refer to  the paper \cite{BH90} of  Beukers and Heckman.
\end{ex}

\subsection{The numerology of the middle convolution}
We recall the effect of the middle convolution 
on the Jordan  forms of the 
local monodromy, given by Katz in \cite{katz96}, Chap. 6:

For $i= 1,\ldots,r+1,$ we write 
 ${\J}(T_i)=\oplus_{\rho \in \CC } \; \rho \J(j)^{v(i,\rho,j)},\;v(i,\rho,j) 
 \in \NN_0,$ 
 as a direct sum of Jordan blocks $\rho \J(j)$ of size $j$ with respect to the eigenvalue $\rho$ with multiplicity $v(i,\rho,j)$.
We also write $T_0$ (resp. $T_\infty $) for the monodromy at $0$ (resp. $\infty$).

\begin{prop}\label{propF} Let $ \T$ be irreducible of  rank $n$ and
 $ \lambda \neq 1$.
The transformation of the Jordan forms of its elements
under the middle convolution
is given by
\begin{eqnarray*}
 {\J}(\MC_\la(T_i))&=&
    \bigoplus_{\substack{\rho \in \CC \setminus\{1,\la^{-1}\}}}
         \la \rho \J(j)^{v(i,\rho,j)}
    \bigoplus_{\substack{j\geq 2}} \la \J(j-1)^{v(i,1,j)}
    \\
  \;(i =1,\ldots, r) &&  \quad \; \bigoplus \J(j+1)^{v(i,\la^{-1},j)}\;
     \bigoplus \J(1)^{k_i}        \\ 
  \J(\MC_\la(T_{r+1}))&=&
    \bigoplus_{\rho \in \CC\setminus\{1,\la\}} 
    \la^{-1} \rho \J(j)^{v(r+1,\rho,j)}
    \bigoplus \J(j-1)^{v(r+1,\la,j)} \\
     &&
    \quad \; \bigoplus \la^{-1} \J(j+1)^{v(r+1,1,j)}
  \bigoplus \la^{-1} \J(1)^{k_{r+1}},
\end{eqnarray*}
where $k_j$ is determined by 
\begin{eqnarray*}\rk(\MC_\lambda(\T))&=&\sum_{i=1}^r \rk (T_i-1)+\rk(\lambda^{-1} T_\infty -1)-n.
\end{eqnarray*}

\end{prop}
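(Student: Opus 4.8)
The plan is to derive the Jordan form transformation directly from the explicit matrices $B_k$ defining the convolution $C_\lambda(\T)$, following Katz but working concretely with the Fuchsian picture of \cite{DR07}. First I would recall that $\MC_\lambda(\T)$ is obtained from $C_\lambda(\T)$ by passing to the quotient $V^r/(\K\oplus\L)$, and that by Theorem \ref{eigen} this quotient has the stated dimension; so the task is to track how each Jordan block of $T_i$ contributes to the Jordan block structure of $\tilde B_i$. For $i=1,\dots,r$ the matrix $B_i$ differs from the identity only in its $i$-th block row, so $B_i-1$ has image contained in the $i$-th block $V$ and kernel containing $\K_i=\ker(T_i-1)$ in the $i$-th slot plus everything outside the $i$-th slot; the key computation is to restrict the action of $B_i$ to a generalized eigenspace and read off, eigenvalue by eigenvalue, the size shift of Jordan blocks. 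The upshot, which I would verify block by block, is: a block $\rho\J(j)$ with $\rho\neq 1,\lambda^{-1}$ becomes $\lambda\rho\J(j)$; a block $\J(j)$ at the eigenvalue $1$ loses a dimension, becoming $\lambda\J(j-1)$ (blocks of size $1$ disappearing into the passage to the quotient); a block at $\lambda^{-1}$ gains a dimension, becoming $\J(j+1)$; and the remaining multiplicity of the eigenvalue $1$ is filled up with $k_i$ trivial blocks $\J(1)$ so that the total dimension matches $\dim(\MC_\lambda(V))$.

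Concretely, I would argue as follows. Fix $i\le r$ and decompose $V=\bigoplus_\rho V_{[\rho]}$ into generalized eigenspaces of $T_i$. Inside the $i$-th block coordinate the operator $B_i$ acts (after the obvious identifications) essentially as multiplication built from $\lambda T_i$ together with the off-diagonal correction terms $\lambda(T_j-1)$ and $(T_j-1)$ feeding in from the other coordinates; but those corrections land in the image of $B_i-1$, hence are killed when one computes the semisimplification and, more importantly, can be normalized away when computing the Jordan type of $\tilde B_i$ because of the $\K$- and $\L$-quotient. On the complement of the $i$-th coordinate, $B_i$ acts trivially, so the generalized eigenspace of $B_i$ for an eigenvalue $\mu\neq 1$ is supported entirely in the $i$-th coordinate and there it is the generalized $\mu$-eigenspace of $\lambda T_i$, i.e. of $T_i$ for eigenvalue $\mu/\lambda$ — this already gives the rule $\rho\J(j)\mapsto\lambda\rho\J(j)$ for $\rho\notin\{1,\lambda^{-1}\}$. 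The two exceptional eigenvalues require more care: at eigenvalue $\mu=1$ of $B_i$ (i.e. $\rho=\lambda^{-1}$ for the twisted eigenvalue, and separately the $\rho=1$ part of $T_i$) one must compute $\ker(B_i-1)$ and $\ker(B_i-1)^2$ precisely, using the formulas for $\K$ and $\L$, to see the $\pm1$ shift in block sizes. The case $i=r+1$ is handled the same way after conjugating by $B_1\cdots B_r$ and using $B_{r+1}=(B_1\cdots B_r)^{-1}$, which swaps the roles of $\lambda$ and $\lambda^{-1}$ and of the eigenvalues $1$ and $\lambda$, producing the displayed formula for $\J(\MC_\lambda(T_{r+1}))$.

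The normalization of the trivial blocks: once the shifts above are established, the number $k_i$ of added $\J(1)$'s is forced by dimension count. Using Theorem \ref{eigen}(i), $\dim\MC_\lambda(V)=\sum_{k=1}^r\rk(T_k-1)-\dim(V)+\rk(\lambda^{-1}T_\infty-1)$ (rewriting $\lambda\cdot T_1\cdots T_r=\lambda T_\infty^{-1}$, whose rank-minus-identity equals $\rk(\lambda^{-1}T_\infty-1)$), and comparing with the sum of the sizes of the nontrivial blocks coming from $\J(T_i)$ under the above rules pins down $k_i$; this is exactly the displayed defining relation for $\rk(\MC_\lambda(\T))$, i.e. $\rk(\MC_\lambda(\T))=\sum_{i=1}^r\rk(T_i-1)+\rk(\lambda^{-1}T_\infty-1)-n$.

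The main obstacle I expect is the careful bookkeeping at the two exceptional eigenvalues, where one has to compute $\ker(B_i-1)$ and $\ker(B_i-1)^2$ on a generalized eigenspace \emph{modulo} $\K\oplus\L$ and show the block size changes by exactly one; the off-diagonal entries of $B_i$ make this less transparent than in the diagonal toy model, and one must genuinely use irreducibility of $\T$ (so that $\K\oplus\L$ is the full "bad" subspace and nothing extra collapses) together with the explicit description of $\L$ for $\lambda\neq 1$. Everything else — the generic eigenvalue rule and the final dimension count — is essentially linear algebra once that core computation is in place. Since this is precisely the content of \cite[Chap.~6]{katz96}, the cleanest write-up is to reduce to Katz's statement via the dictionary between middle convolution of perverse sheaves and the matrix description in \cite{DR07}, citing \cite{katz96} for the block-shift formulas and only checking the normalization and the dimension bookkeeping here.

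\qed
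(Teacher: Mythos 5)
Your bottom line coincides with the paper's treatment: the paper gives no proof of Proposition \ref{propF} at all, but simply recalls the block-shift formulas from Katz, \cite{katz96} Chap.~6 (via the matrix description of \cite{DR07}), which is exactly the reduction you settle on in your last paragraph. So as a citation-based argument your proposal matches the source.

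Regarding the direct verification you sketch before falling back on Katz: the generic-eigenvalue rule ($\rho\J(j)\mapsto\la\rho\J(j)$ for $\rho\notin\{1,\la^{-1}\}$) is indeed routine, since for $\mu\neq 1$ the generalized $\mu$-eigenspace of $B_i$ is carried by the $i$-th slot where the diagonal block is $\la T_i$, and it survives the quotient by $\K+\L$ untouched. But the step you describe as "the corrections land in the image of $B_i-1$, hence \dots can be normalized away" is not a proof and is precisely where the content lies: the growth $\J(j)\mapsto\J(j+1)$ at $\rho=\la^{-1}$ cannot be produced either by the diagonal block $\la T_i$ alone or by passing to a quotient (quotients do not enlarge blocks); it comes from the interaction of the $i$-th slot with the other slots through the off-diagonal entries $\la(T_j-1)$, $(T_j-1)$, computed modulo the explicit $\K$ and $\L$ and using irreducibility. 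Likewise the shrink $\J(j)\mapsto\la\J(j-1)$ at $\rho=1$ needs the identification of $\K_i$ inside the relevant generalized eigenspace, and the case $i=r+1$ needs the description of $\L$. You correctly flag all of this as the main obstacle, but you do not carry it out, so the sketch would not stand on its own; since you close the gap by citing \cite[Chap.~6]{katz96}, the proposal is acceptable and in substance identical to what the paper does.
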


This also shows that
the middle convolution $MC_\lambda$ preserves linear rigidity
by Thm.~\ref{symrig}.

From the definition of the middle Hadamard product and the above
proposition we can derive  the Jordan forms of $\MH_\lambda(\T)$:

\begin{prop}\label{propH} Let $ \T$ be irreducible of  rank $n$ and
 $\lambda \neq 1.$
 The transformation of the Jordan forms of its elements
 under the middle Hadamard product is given by
\begin{eqnarray*}
   {\J}(\MH_\la(T_i))  &=&
 \bigoplus_{{ \rho \in \CC \setminus\{1,\la^{-1}\}}} \;
    \la \rho \J(j)^{v(i,\rho,j)} 
 \bigoplus \;\J(j+1)^{v(i,\la^{-1},j)} \\
  (i\neq 0,r+1) &&   \quad \;\;  \bigoplus_{j\geq 2} \;\; 
        \la \J(j-1)^{v(i,1,j)}
  \bigoplus \;\J(1)^{k_i} \\
 \J(\MH_\la(T_{0}))&=&
 \bigoplus_{{ \rho \in \CC \setminus\{1,\la^{-1}\}}} \;
    \la \rho \J(j)^{v(i,\rho,j)}
 \bigoplus \;\J(j+1)^{v(i,\la^{-1},j)} \\
    &&  \quad \;\;  \bigoplus_{j\geq 2} \; \;\;
        \la \J(j-1)^{v(i,1,j)}
  \bigoplus \;\J(1)^{k_i} \\
\J(\MH_\lambda(T_{r+1}))& =& 
  \bigoplus_{\rho \in \CC\setminus\{1,\lambda^{-1}\}}\; 
                   \rho \J(j)^{v(r+1,\rho,j)}  \quad 
        \bigoplus_{j\geq 2} \;
                  \J(j-1)^{v(r+1,1,j)} \\
 && \quad \; \;\bigoplus \; \lambda^{-1}  \J(j+1)^{v(r+1,\lambda^{-1},j)} 
                 \bigoplus \;\lambda^{-1}\J(1)^{k_{r+1}}
\end{eqnarray*}

where $k_j$ is determined by 
\begin{eqnarray*}\rk(\MH_\lambda(\T))&=&\sum_{T_{i}\neq T_{0} } \rk (T_i-1)+\rk(\lambda^{-1} T_{0} -1)-n.
\end{eqnarray*}

\end{prop}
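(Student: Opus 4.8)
The plan is to read off Proposition~\ref{propH} from the definition $\MH_\lambda(\T)=\MC_\lambda(\MT(\T,\La))$, where $\La=(\lambda^{-1},\lambda)$ with $\su_\La=(0,\infty)$, by first recording the (elementary) effect of the middle tensor product with a rank one tuple and then invoking Proposition~\ref{propF}. So the first step is to describe $\T':=\MT(\T,\La)$. Since $\La$ is one dimensional and supported on $\{0,\infty\}$, the tuple $\T'$ has the same singular locus as $\T$, with $T_i$ unchanged for every finite index $i\neq 0$, while at $0$ the local monodromy becomes $\lambda^{-1}T_0$ and at $\infty$ it becomes $\lambda T_{r+1}$. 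Hence the Jordan decomposition of $\T'$ is that of $\T$ with the eigenvalues at $0$ and at $\infty$ rescaled by $\lambda^{-1}$ and by $\lambda$ respectively, and all block sizes unchanged; in the multiplicity notation of Proposition~\ref{propF} this says $v'(0,\mu,j)=v(0,\lambda\mu,j)$, $v'(r+1,\mu,j)=v(r+1,\lambda^{-1}\mu,j)$ and $v'(i,\mu,j)=v(i,\mu,j)$ for the remaining finite indices. Since tensoring with a one dimensional representation does not alter the lattice of invariant subspaces, $\T'$ is still irreducible and satisfies the running hypothesis of Proposition~\ref{propF}, so that proposition applies to $\T'$.

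The second step is to apply Proposition~\ref{propF} to $\T'$ with the same parameter $\lambda$, noting that the distinguished point of the convolution is $\infty=s_{r+1}$. For a finite index $i\neq 0$ one has $T'_i=T_i$, so the finite-point clause of Proposition~\ref{propF} produces the claimed formula for $\J(\MH_\lambda(T_i))$ at once. At the point $0$ one applies that same finite-point clause to $T'_0=\lambda^{-1}T_0$ and then rewrites the multiplicities of $\T'$ in terms of those of $\T$ via $v'(0,\mu,j)=v(0,\lambda\mu,j)$; at $\infty$ one applies the special $\infty$-clause to $T'_{r+1}=\lambda T_{r+1}$ and translates back via $v'(r+1,\mu,j)=v(r+1,\lambda^{-1}\mu,j)$. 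This gives the transformation laws at $0$ and at $\infty$. The rank identity is obtained the same way: the rank formula of Proposition~\ref{propF} applied to $\T'$ reads $\rk(\MC_\lambda(\T'))=\sum_{\text{finite }i}\rk(T'_i-1)+\rk(\lambda^{-1}T'_{r+1}-1)-n$, and substituting $T'_0=\lambda^{-1}T_0$ together with $\lambda^{-1}T'_{r+1}=T_{r+1}$ rewrites the right hand side as $\rk(\lambda^{-1}T_0-1)+\sum_{T_i\neq T_0}\rk(T_i-1)-n$.

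The one point that needs genuine care — and essentially the only place a sign or an index can go astray — is keeping track of the three distinguished eigenvalues $1,\lambda,\lambda^{-1}$ as the pre-rescaling at $0$ and at $\infty$ permutes them before $\MC_\lambda$ acts. For instance a Jordan block of $T_0$ with eigenvalue $\lambda$ becomes a block of $T'_0$ with eigenvalue $1$ and is therefore shortened by the convolution, a block of $T_0$ with eigenvalue $1$ becomes one with eigenvalue $\lambda^{-1}$ and is lengthened, while every other block only has its eigenvalue rescaled; the situation at $\infty$ is analogous, with the finite-point clause replaced by the $\infty$-clause and the roles of the distinguished eigenvalues shifted accordingly. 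Sorting these cases against the four summands of the convolution formula is exactly what turns the generic shape into the forms recorded for $T_0$ and $T_{r+1}$ in the proposition. Beyond this bookkeeping there is no obstacle: the substantive input, namely the numerology of the middle convolution, is already available as Proposition~\ref{propF}.
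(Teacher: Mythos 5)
Your argument is correct and is exactly the paper's route: the paper obtains Proposition \ref{propH} by nothing more than combining the definition $\MH_\lambda(\T)=\MC_\lambda(\MT(\T,\La))$ with $\La=(\lambda^{-1},\lambda)$ supported at $\{0,\infty\}$ with the convolution numerology of Proposition \ref{propF}, which is precisely your two-step derivation, including the rank identity. One caveat concerning your last paragraph: the bookkeeping you describe at $0$ (blocks of $T_0$ with eigenvalue $\lambda$ are shortened, blocks with eigenvalue $1$ are lengthened, and every other eigenvalue is left unchanged, the twist by $\lambda^{-1}$ being cancelled by the factor $\lambda$ in the finite-point clause of Proposition \ref{propF}) is the correct outcome, but it is not what is literally printed for $T_0$ in the proposition: the printed $T_0$ clause is a verbatim copy of the generic clause for $i\neq 0,r+1$ (even the index $i$ is not replaced by $0$) and is evidently a typographical slip, since it contradicts the stated rank formula, which does single out $T_0$, as well as the paper's own applications, e.g.\ the table in the proof of Corollary \ref{P1sym} and the hypergeometric tuples of Example \ref{Hyp} and Theorem \ref{P11Op}, all of which agree with your version. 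So your claim that the case-sorting reproduces the form recorded for $T_0$ should be read as reproducing the corrected form; this is a defect of the statement as printed, not a gap in your proof.
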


\section{Classification of symplectically rigid tuples of rank four}

This section is devoted to the classification of
symplectically rigid tuples of rank four.
In particular we show.

\begin{thm}\label{cfg}
 Let $\T$ be a symplectically rigid tuple of rank four consisting
 of quasi-unipotent elements. 
  Then $\T$ is coming from geometry. i.e
  $\T$ is a monodromy tuple of a factor of a Picard-Fuchs equation.
  Moreover it can be constructed by a sequence
  of geometric operations starting with a rank one tuple.
  These geometric operations include  tensor products, rational pullbacks 
  and the middle convolution.
\end{thm}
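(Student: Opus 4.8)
The plan is to prove Theorem~\ref{cfg} by an explicit classification-and-reduction argument, exploiting the fact that symplectic rigidity in rank four is an extremely rigid constraint. First I would set up the combinatorial bookkeeping: for a putative $\Sp_4(\CC)$-rigid quasi-unipotent tuple $\T=(T_1,\dots,T_{r+1})$, encode each $T_i$ by the Jordan type data $\J(T_i)=\oplus_\rho\,\rho\J(j)^{v(i,\rho,j)}$ with $\rho$ a root of unity, subject to the symplectic constraint on eigenvalue multiplicities (eigenvalues come in pairs $\rho,\rho^{-1}$, with the usual parity conditions on the sizes of Jordan blocks attached to $\rho=\pm1$ forced by the invariant alternating form). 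The dimension formula from the definition of $G$-rigidity, namely $\sum_{i=1}^{r+1}\mathrm{codim}(C_{\Sp_4}(T_i))=2(\dim\Sp_4-\dim Z(\Sp_4))=20$, together with $\dim C_{\Sp_4}(T_i)\ge 4$ for each $i$ (the minimal centralizer dimension of a non-central element of $\Sp_4$), immediately bounds $r+1$; combined with Scott's formula (Lemma~\ref{Scott}) and irreducibility, this pins $r+1$ down to a very short list, essentially $r+1\in\{3,4,5\}$. For each such $r+1$ one enumerates the finitely many admissible tuples of $\Sp_4$-conjugacy classes satisfying the dimension formula.

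Next, for each entry in that finite list, I would exhibit an explicit reduction to a rank-one tuple via the geometric operations catalogued in Section~2: middle convolutions $\MC_\lambda$, middle tensor products $\MT_\La$ with rank-one tuples, middle Hadamard products $\MH_\lambda$, and rational pullbacks. The engine here is Proposition~\ref{eigen2}(ii): the composite $\MT_{\La_1^{-1}}\circ\MC_{\lambda_1\lambda_2}\circ\MT_{\La_2}\circ\MC_{(\lambda_1\lambda_2)^{-1}}\circ\MT_{\La_1}$ sends symplectic (or orthogonal) tuples to symplectic (or orthogonal) tuples while, by the numerology of Propositions~\ref{propF} and~\ref{propH}, it strictly lowers the rank whenever the local data are not already minimal. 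One checks case by case that an appropriate choice of the scalars $\lambda_i$ and the rank-one tuples $\La_j$ (matched to the eigenvalues appearing in the $T_i$) reduces any rank-four symplectically rigid quasi-unipotent tuple to either a rank-two or rank-one symplectically (orthogonally) rigid tuple; rank two is handled by the analogous classical reduction (a rank-two symplectic tuple is just an $\SL_2$-tuple, hence linearly rigid and reducible by Katz, Theorem~\ref{linrig}), and rank-one tuples are by fiat of geometric origin. The hypergeometric family of Example~\ref{Hyp} is the prototype of one such reduction chain; the other chains involve genuine rational pullbacks (e.g. pulling back along $z\mapsto z^2$ or a degree-three map) to realise tuples whose local exponents are not reachable by convolutions alone.

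Since each operation in the chain — middle convolution, middle tensor/Hadamard product with a rank-one tuple of geometric origin, and rational pullback — preserves geometric origin by the results of André \cite{Andre}, Katz \cite{katz96} and Dettweiler--Reiter \cite{DR07} recalled in the introduction, and since each such operation is invertible up to tensoring with a trivial rank-one tuple ($\La\ten\check\La$ trivial), running the reduction chains backwards from the rank-one endpoint reconstructs $\T$ itself as geometrically constructible, hence of geometric origin; the identification of $\T$ with a factor of a Picard--Fuchs equation then follows from the fact that geometric local systems on $\PP^1$ minus points arise as subquotients of Gauss--Manin connections. I expect the main obstacle to be the completeness of the classification: one must be sure that the finite enumeration of admissible $\Sp_4$-conjugacy-class tuples under the dimension formula genuinely captures \emph{all} cases, including the ones with eigenvalues that are genuine (non-$\pm1$) roots of unity, and then that \emph{every} item on that list admits a reduction chain of the stated form — in particular that no sporadic rank-four symplectically rigid tuple survives as "irreducible" under all available geometric operations. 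Verifying the rank-drop at each step via Propositions~\ref{propF}--\ref{propH}, and choosing the correct pullback maps for the few cases where convolution alone stalls, is the technical heart of Section~3.
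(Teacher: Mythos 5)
Your overall frame does match the paper's: use the dimension formula $\sum_i{\rm codim}\,C_{\Sp_4(\CC)}(T_i)=20$ together with Scott's lemma to enumerate the finitely many admissible tuples of conjugacy classes (the paper's Tables~1 and~2, cases $P_1$--$P_5$ with $r+1\in\{3,4,5\}$), then reduce each case to a rank-one tuple by invertible operations preserving geometric origin and run the chain backwards. (A small slip in the bookkeeping: the minimal centralizer dimension of a noncentral element of $\Sp_4(\CC)$ is $2$, attained e.g. by $\pm\J(4)$, not $4$; the bound $r+1\le 5$ comes from the fact that every noncentral class has \emph{codimension} at least $4$.)

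The genuine gap is in your reduction engine. Proposition~\ref{eigen2}(ii) only guarantees that the composite of convolutions and tensor products preserves the orthogonal/symplectic alternative; it does not ``strictly lower the rank,'' and for tuples that are symplectically but not linearly rigid a middle convolution alone generally cannot push the rank below four -- which is precisely why the paper does something different. Its mechanism is: apply $\MC_{-1}$ (or a suitable composite) to land in an orthogonal group of rank $3$ to $6$, and then exploit the exceptional isomorphisms $\Sym^2\Sp_2(\CC)=\SO_3(\CC)$, $\Sp_2(\CC)\ten\Sp_2(\CC)=\SO_4(\CC)$, $\Lambda^2\Sp_4(\CC)=\SO_5(\CC)$, $\Lambda^2\SL_4(\CC)=\SO_6(\CC)$ to split the resulting tuple as a tensor product of rank-two tuples or to transfer it to a linearly rigid tuple of smaller rank (Corollaries~\ref{la2} and~\ref{Zar} control when this splitting exists); the residual cases land in $\GO_2(\CC)$, where quasi-unipotence forces finite (dihedral) monodromy and Klein's theorem, i.e.\ an explicit quadratic pullback, splits the tuple into rank-one pieces. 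Your substitute claims do not hold: a rank-two tuple reached in this process is \emph{not} automatically linearly rigid -- the rank-two tuples that actually occur (e.g.\ in the $P_3(4,8,10,10)$ and $P_4(6,6,10,10)$ cases) are quadruples in $\GO_2(\CC)$ with four singular points, and it is the pullback step, not Katz's algorithm, that handles them. Without the $\Lambda^2$/tensor-decomposition idea and the pullback argument, the case-by-case verification you defer as ``the technical heart'' cannot be carried out, so the proposal as written establishes the classification step but not the reduction that constitutes the actual proof of Theorem~\ref{cfg}.
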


Roughly speaking the proof of Thm.~\ref{cfg}
is based on the following steps:\\

STEP one: Using Thm.~\ref{symrig} (ii)
   we classify in Table~2 all possible symplectically rigid irreducible tuples $\T$
   of rank $4$ via the tuples 
   \[P_i:=(\dim C_{\Sp_4(\CC)}(T_1),\ldots,\dim C_{\Sp_4(\CC)}(T_{r+1}))\]
    of the centralizer dimensions of their elements.
     We list these centralizer dimensions in
     Table~1.
   Via M\"obius transformations, which are
   sharply 3-transitive, and more generally
   the action of the Artin braid group $\B_r$ on  $\T$
    that permutes the local monodromies, we can order the entries according to
   increasing dimensions.
   Thus we get the finite list $P_1,\ldots, P_5$
   in Table ~2.
   Further, we refine these cases by the subcases   
    \[ P_i(\dim C_{\GL_4(\CC)}(T_1),\ldots,\dim C_{\GL_4(\CC)}(T_{r+1})).\]
   E.g.,
   the $P_3(4,8,10,10)$ case denotes  irreducible quadruples $\T$ with  
   \begin{eqnarray*}
   (\dim C_{\Sp_4(\CC)}(T_1),\ldots,\dim C_{\Sp_4(\CC)}(T_{4}))&=&(2,6,6,6)
   \end{eqnarray*}
   and
    \begin{eqnarray*} 
 (\dim C_{\GL_4(\CC)}(T_1),\ldots,\dim C_{\GL_4(\CC)}(T_{4}))&=&(4,8,10,10).\end{eqnarray*}
   Moreover Table~1  shows that 
$\J(T_1)\in \{ \pm \J(4), (-\J(2),\J(2)),(x\J(2),x^{-1}\J(2)),(x,y,y^{-1},x^{-1})\},$
   $\J(T_2)=(-1,-1,1,1)$ and $ \J(T_3)=\J(T_4)=(\J(2),1,1)$.

 STEP two: 
          The irreducibility condition  restricts the possible tuples of
          Jordan forms via the Scott formula or the dimension
          count in Lemma~\ref{Scott}.
          E.g. there is no rigid tuple of with Jordan forms
          $(\J(4),\J(4),(\J(2),1,1))$
          in the $P_1(4,4,10)$ case, as $7=\sum_i \rk(T_i-1)<2\cdot 4$.

 STEP three:  We check whether $\T$ is linearly rigid using the dimension count
 by Thm.~\ref{symrig} (i).
 In the positive case the claim follows from Katz' algorithm, see Thm.~\ref{linrig}.
 Moreover the algorithm 
 imposes  the conditions for the
 existence of such a $\T$ depending on the eigenvalues of the $T_i$.

 STEP four:  Using the operations in
 Prop.~\ref{eigen2}  we try to construct a tuple $\tilde{\T}$ in 
 an orthogonal group of dimension $3$, $4$, $5$ or $6$.
 Due to the exceptional isomorphisms we have
 \[ \Sym^2 \Sp_2(\CC) = \SO_3(\CC),\quad \Sp_2(\CC) \ten \Sp_2(\CC) =\SO_4(\CC), \]
 \[ \Lambda^2 \Sp_4(\CC) = \SO_5(\CC),\quad \Lambda^2 \SL_4(\CC) = \SO_6(\CC), \]
 which can again result  in linearly
 rigid tuples.
 E.g. an orthogonal triple $\T$ of rank $3$ with $\J(\T)=(\J(3),\J(3),\J(3))$ yields
 a linearly rigid triple $\tilde{\T}$ of rank $2$ with $\J(\tilde{\T})=(\J(2),\J(2),-\J(2)).$\\

It turns out that in all  $P_i$ cases we either get  contradictions
to the irreducibility or we end up with a rank one tuple.
In the latter case we obtain 
a suitable
sequence of operations
that allows us to construct this symplectically
rigid tuple $\T$  of rank four, since each operation is invertible.
Moreover if the symplectically
rigid tuple of rank four is quasi-unipotent it turns out that it can be constructed using only geometric operations cf. \cite[Chap. II]{Andre}.\\

We begin with  Step one and classify the Jordan forms
in $\Sp_4(\CC)$ and their centralizer dimensions.
Since  $\Lambda^2 \Sp_4(\CC)=\SO_5(\CC)$
we also determine the Jordan forms in $\SO_5(\CC).$

{{\small

\begin{equation*}\label{table0}
\begin{array}{|c|c|c|c|}
\hline
 \textrm{Jordan form in }  & \textrm{ Jordan form in}&\mbox{\rm centralizer dimension in} &  
\textrm{conditions}\\
  \Sp_4(\CC)             &\SO_5(\CC) & \Sp_4(\CC)  \quad\quad \GL_4(\CC)&  \\
\hline 
 \pm (1,1,1,1)&(1,1,1,1,1) &10 \quad\quad\quad  16&\\
\pm(\Jordan(2),1,1)& (\Jordan(2),\J(2),1)& 6\quad\quad\quad 10&\\
 \pm(\Jordan(2),\Jordan(2))&(\J(3),1,1)&4\quad\quad\quad 8&\\
\pm \Jordan(4) & \J(5)& 2\quad\quad\quad 4&\\
\hline
 (-1,-1,1,1)& (-1,-1,-1,-1,1)&6\quad\quad\quad  8 & \\
\pm (-\Jordan(2),1,1)& (-\J(2),-\J(2),1)& 4\quad\quad\quad 6&\\
 (-\Jordan(2),\J(2))&(-\J(3),-1,1)&2\quad\quad\quad 4&\\
\hline
(x,x,x^{-1},x^{-1})& (x^2,1,1,1,x^{-2})&4 \quad\quad\quad 8& x^2 \neq 1 \\
(x,1,1,x^{-1})&(x,x,1,x^{-1},x^{-1})&4\quad\quad\quad 6& x^2 \neq 1  \\
(x\Jordan(2),x^{-1}\Jordan(2))&(\J(3),x^2,x^{-2})& 2\quad\quad\quad 4 & x^2 \neq 1 \\
(x ,x^{-1},\Jordan(2))&(x\J(2),x^{-1}\J(2),1)&2 \quad\quad\quad 4& x^2 \neq 1\\
\hline
(x,y,y^{-1},x^{-1})&(xy,xy^{-1},1,x^{-1}y,x^{-1}y^{-1})&2\quad\quad\quad 4 &x^2,y^2 \neq 1 \\
&& & x\neq y^{\pm 1}\\
\hline
\end{array}
\end{equation*}}
\nopagebreak
\begin{center} {\bf Table~1:} The Jordan forms of  elements in $\Sp_4(\CC)$ and $\SO_5(\CC) =\Lambda^2\Sp_4(\CC)$.
\end{center}}

\begin{center}
\begin{tabular}{|c|c|c|c|} 
\hline 

{\rm case}& $ {\rm subcases}$& \mbox{ remarks}\\
\hline
$P_1$&  (4,4,10) & \mbox{ lin. rigid} \\
 (2,2,6)    &  (4,4,8) & $\Lambda^2$ \mbox{ lin. rigid} \\
\hline 
$P_2$&   (4,6,6) &    \\
(2,4,4)     &   (4,6,8) & \mbox{ lin. rigid} \   \\
     &   (4,8,8) &\mbox{ red. (dimension count)} \    \\
\hline 
$P_3$& (4,10,10,10) & \mbox{red. (Scott)} \\
(2,6,6,6)     & (4,8,10,10) & \mbox{} \\ 
     & (4,8,8,10) & \mbox{$\Lambda^2$ red. } \\ 
     & (4,8,8,8) & \mbox{$\Lambda^2$ red. } \\ 
\hline 
$P_4$&(8,8,10,10) & \mbox{red. (dimension count) } \\
 (4,4,6,6)    & (6,8,10,10) & \mbox{lin. rigid} \\ 
     & (6,6,10,10) & \\
     & (8,8,8,10) & \mbox{lin. rigid} \\
     & (6,8,8,10) &  \\ 
     & (6,6,8,10) & \mbox{ $\Lambda^2$ red.} \\
     & (8,8,8,8) & \mbox{$\Lambda^2$ red.} \\
     & (6,8,8,8) & \mbox{$\Lambda^2$ red. } \\ 
     & (6,6,8,8) & \mbox{$\Lambda^2$  lin. rig.}\\  
\hline 
$P_5$&(10,10,10,10,10) & \mbox{lin. rig.} \\
 (6,6,6,6,6)    & (8,10,10,10,10) & \mbox{red. (Scott)} \\ 
     & (8,8,10,10,10) & \mbox{red. (Scott)} \\ 
     &  (8,8,8,10,10) & $\Lambda^2$ \mbox{lin. rig.} \\ 
     & (8,8,8,8,10) & $\Lambda^2$ \mbox{red.} \\ 
     & (8,8,8,8,8) & $\Lambda^2$ \mbox{red.} \\ 
\hline 
\end{tabular}

\vspace{.5cm}
{{\bf Table~2:} {The centralizer conditions for symplectically rigid tuples}}
\end{center}

 In the following sections we  rearrange the order of the centralizer
 dimensions in Table~2 via M\"obius transformations to simplify the proofs.
 If $\T$ is a triple we can assume that 
 $\su_\T=\{0,1,\infty\}$.
 Thus we also index $\T=(T_0,T_1,T_\infty).$
 E.g., a linearly rigid tuple in the $P_1(4,10,4)$ case such that
      $T_0$ is unipotent, can be written as a sequence of 3 Hadamard products starting from a rank $1$ tuple, see
 Ex.~\ref{Hyp}. 
 However in the $P_1(4,4,10)$ case the Katz algorithm requires 
 additional tensor products with rank $1$ tuples.

  To abbreviate the notations we denote
  by $\J(\T)$ the tuple of Jordan forms.
  Further we write  
  $\J_s(\T)$ for 
 $(\J_s(T_1),\ldots,\J_s(T_{r+1}))$, where
  $\J_s(T_i)$  denotes  the semisimple part of $\J(T_i)$.

\subsection{The $P_1$ case}

\subsubsection{The $P_1(4,10,4)$  case}
\begin{rem}
 We omit the  linearly rigid $P_1(4,10,4)$ case.
 This well studied 
 case corresponds to  monodromy tuples of  generalized hypergeometric
 differentials equation of order $4$ and is settled by Katz' algorithm.
 For an example, where $T_0$ is maximally unipotent,  see Ex.~\ref{Hyp}.
\end{rem}

\subsubsection{The $P_1(4,8,4)$  case}
\begin{thm}\label{SymP1}
  A symplectically
  rigid  tuple $\T$ in the case $P_1(4,8,4)$ 
 can be obtained from a rank one tuple
 using the middle Hadamard product and tensor products.
 Moreover  the tuple $\T$ can be written  
  \[ \T=\MH_{-1}(\Lambda^2(\S )), \]
   where $\S$ is linearly rigid rank $4$  triple 
   containing a transvection.
 \end{thm}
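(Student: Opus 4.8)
The plan is to analyze the $P_1(4,8,4)$ case by its defining data. By Table~1 and Table~2, a symplectically rigid triple $\T=(T_0,T_1,T_\infty)$ in the $P_1(4,8,4)$ case has $(\dim C_{\Sp_4}(T_0),\dim C_{\Sp_4}(T_1),\dim C_{\Sp_4}(T_\infty))=(2,2,6)$ and, after reordering so that the big centralizer sits at $T_1$, the $\GL_4$-dimensions $(4,8,4)$. So $T_1$ is conjugate in $\Sp_4(\CC)$ to $(-1,-1,1,1)$, while $T_0$ and $T_\infty$ each have $\GL_4$-centralizer dimension $4$: from Table~1 these are among $\pm\J(4)$, $(x\J(2),x^{-1}\J(2))$, $(x,y,y^{-1},x^{-1})$ with the stated genericity conditions. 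First I would check linear rigidity via the dimension count of Theorem~\ref{symrig}(i), i.e.\ $\sum_i \dim C_{\GL_4}(T_i)=4+8+4=16=(r-1)^2n^2+2=16$ with $r=2$, $n=4$; since equality in the dimension count together with irreducibility gives $\GL_4(\CC)$-rigidity, $\T$ is already linearly rigid, so by Theorem~\ref{linrig} (Katz' algorithm) it can be built from a rank one tuple by middle convolutions and middle tensor products with rank one tuples. Because $\T$ is quasi-unipotent, all the scalars occurring are roots of unity, and the relevant operations are geometric in the sense of \cite[Chap.~II]{Andre}.

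Next I would establish the more precise factorization $\T=\MH_{-1}(\Lambda^2(\S))$. Here I would use the exceptional isomorphism $\Lambda^2\Sp_4(\CC)=\SO_5(\CC)$ from Step four together with Proposition~\ref{eigen2}(i): taking the second exterior power of $\T$ produces an orthogonal tuple of rank $5$, and $\MC_{-1}$ of an orthogonal tuple is symplectic. The idea is to reverse-engineer $\S$: starting from $\T$, one applies $\MH_{-1}^{-1}$ (a middle Hadamard product with the inverse rank one tuple, using the invertibility of $\MH$ noted after Theorem~\ref{linrig}) to obtain a tuple which, by the Jordan-form bookkeeping of Proposition~\ref{propH} and the $\SO_5$ column of Table~1, has the Jordan forms of $\Lambda^2$ of a linearly rigid rank $4$ triple $\S$ containing a transvection (a unipotent element with $\rk(T-1)=1$, i.e.\ Jordan type $(\J(2),1,1)$). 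Concretely I would compute $\J(\S)$ from $\J(\T)$: the entry with $\SO_5$-form $(-1,-1,-1,-1,1)$ must come from the $\Lambda^2$ of the transvection's partner, and the genericity of the eigenvalues at $0$ and $\infty$ forces $\S$ to be the hypergeometric-type triple of Example~\ref{Hyp} (up to the scalar twists tracked by the rank one tuples). Then one verifies directly that $\Lambda^2$ of that $\S$, followed by $\MH_{-1}$, reproduces $\T$ — this is the computation that Example~\ref{Hyp} already essentially carries out in coordinates.

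The main obstacle I expect is the bookkeeping in Step four: showing that the candidate $\S$ really is linearly rigid and really contains a transvection, and that its $\Lambda^2$ lands in exactly the $\SO_5$-conjugacy classes dictated by $\J(\T)$ via Proposition~\ref{propH}. This requires matching up eigenvalues carefully — the map $(x,y,y^{-1},x^{-1})\mapsto(xy,xy^{-1},1,x^{-1}y,x^{-1}y^{-1})$ in Table~1 has to be inverted compatibly with the quasi-unipotence constraint and with the shift of eigenvalue $1$ to $-1$ induced by $\MH_{-1}$, and one must rule out the degenerate sub-possibilities (e.g.\ $\S$ having a larger centralizer than the transvection case allows, which would contradict $\dim C_{\GL_4}(T_1)=8$ for $\T$). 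Once the Jordan data is pinned down, linear rigidity of $\S$ follows from the dimension count of Lemma~\ref{Scott} and Theorem~\ref{symrig}(i) exactly as for $\T$, and Katz' Theorem~\ref{linrig} gives the explicit sequence of middle Hadamard products (as in Example~\ref{Hyp}) building $\S$ from a rank one tuple; composing with $\Lambda^2$ and $\MH_{-1}$ then yields the asserted construction of $\T$, with all steps geometric by \cite[Chap.~II]{Andre}.
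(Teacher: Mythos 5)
Your second paragraph has the right architecture (apply $\MH_{-1}$ to $\T$, recognize the resulting orthogonal triple as an exterior square via the exceptional isomorphisms, reduce to a linearly rigid rank $4$ triple $\S$ with a transvection, and finish with Katz), but your first paragraph contains a genuine error that undercuts the whole setup: $\T$ is \emph{not} linearly rigid. The rigidity threshold for a triple of rank $4$ is $\sum_i\dim C_{\GL_4(\CC)}(T_i)=(r-1)n^2+2=18$, whereas here the sum is $4+8+4=16$; your claimed equality ``$16=(r-1)^2n^2+2=16$'' is simply wrong arithmetic ($(r-1)^2n^2+2=18$ for $r=2$, $n=4$). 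The value $16$ only verifies the dimension-count inequality needed for irreducibility; it shows the tuple is rigid as a symplectic tuple but has a positive-dimensional deformation space in $\GL_4(\CC)$. This is exactly why Table~2 marks the $(4,4,8)$ subcase ``$\Lambda^2$ lin.\ rigid'' rather than ``lin.\ rigid'', and why the theorem needs the detour $\T=\MH_{-1}(\Lambda^2(\S))$ at all: Katz' algorithm cannot be applied to $\T$ directly, so the conclusion of your first paragraph (build $\T$ itself from a rank one tuple by Theorem~\ref{linrig}) is unavailable.

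Two further gaps in the reduction step. First, the rank of the orthogonal triple $\MH_{-1}(\T)$ is not automatically $5$: it is $m=\rk(-T_0-1)+\rk(T_1-1)+\rk(T_\infty-1)-4\in\{4,5,6\}$, depending on which of the centralizer-dimension-$4$ Jordan types $T_0,T_\infty$ carry (e.g.\ $\pm\J(4)$ versus $(x,y,y^{-1},x^{-1})$), and the paper must treat all three cases, using $\Lambda^2\Sp_4(\CC)=\SO_5(\CC)$ for $m=5$, $\Lambda^2\SL_4(\CC)=\SO_6(\CC)$ for $m=6$, and the embedding $\GO_4(\CC)\subset\SO_5(\CC)$ for $m=4$; your sketch only covers the generic $\SO_5$ situation. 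Second, the identification of $\S$ with ``the hypergeometric-type triple of Example~\ref{Hyp}'' is incorrect: that example is the $P_1(4,10,4)$ tuple with a maximally unipotent element, whereas the relevant $\S$ (made explicit in Corollary~\ref{P1sym}) has, after a twist, Jordan data $((i\J(2),-i\J(2)),(\J(2),1,1),(x,y,y^{-1},x^{-1}))$; its linear rigidity follows from the count $10+4+4=18$ for \emph{that} triple, not ``exactly as for $\T$''. The key Jordan-form fact you should be quoting is that $\J(\MH_{-1}(T_1))=(\J(2)^2,\J(1)^{m-4})$ by Proposition~\ref{propH}, which is what forces the preimage under $\Lambda^2$ to contain a transvection; your phrasing about the $\SO_5$-form $(-1,-1,-1,-1,1)$ conflates $\Lambda^2(\T)$ with $\MH_{-1}(\T)$.
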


\begin{proof}
   By Thm.~\ref{eigen} and  Cor~\ref{eigen2}  the Hadamard product 
   $ \MH_{-1}(\T)$
   yields an irreducible 
   orthogonal triple of     rank $m,$ where 
    \[ m=\rk(-T_0-1)+ \rk(T_1-1)+\rk(T_{\infty}-1) -4 \in \{4,5,6 \}.\]
   Hence we can apply one of the identities
   \begin{eqnarray*}
    \Lambda^2 \Sp_4(\CC)  = \SO_5(\CC) ,&&  \Lambda^2 \SL_4(\CC)  = \SO_6(\CC) 
   \end{eqnarray*}
    to obtain a triple of rank $4$ containing a transvection, since by Prop.~\ref{propH}
     \begin{eqnarray*} \J(\MH_{-1}(T_1)) & =&(\J(2)^2,\J(1)^{m-4}).\end{eqnarray*}
    For $m=4$ we use the natural embedding of $\GO_4(\CC)$ in $\SO_5(\CC).$ 
    Thus the triple is linearly rigid  the claim follows from Katz' algorithm. 
\qed\end{proof}

\begin{rem}
 The construction of $\T$ is in general not unique.
In the above case one could also get $\T$ by using
 that $\Lambda^2(\T)$ yields a linearly rigid tuple and
 then apply Katz' algorithm. However in this construction
 the computation of the matrix representation of $\T$ is more
 complicated.  
\end{rem}

\begin{cor}\label{P1sym}
 Let $\T$ be as in   Thm.~\ref{SymP1}
 such that  $T_0$ is maximally unipotent and
  $\J_s(T_\infty)= (xy,xy^{-1},x^{-1}y,(xy)^{-1})$.
Then
\[ T_0=\left(\begin{array}{cccc}
    1 &  ab &  0 &  (a+b)^2 \\ 
      0 &  1 &  1 &  0 \\ 
      0 &  0 &  1 &  -ab \\
      0 &  0 &  0 &  1    
   \end{array}\right), \quad
 T_1=\left(\begin{array}{cccc}
      -1 &  -2ab &  0 &  0 \\ 
      0 &  1 &  0 &  0 \\ 
      1 &  ab &  1 &  0 \\ 
      0 &  -1 &  0 &  -1  
    \end{array}\right),\]
where $ a=x+\frac{1}{x},\quad  b=y+\frac{1}{y},\quad x,y \in \CC^{\ast}$
and $ab \neq 0$.
The tuple $\T$ can be obtained as follows.
\begin{eqnarray*} \T&=&\MH_{-1}\circ \MT_{\La_1}(\Lambda^2 \S),\mbox{ where}\\
 \S&=&\MH_{(ix)} \circ \MH_{-(ix)^{-1}}  \circ \MT_{\La_1} \circ \MH_{-iy}(\La_0)
\end{eqnarray*}
with $\La_0=(1,(iy)^{-1},iy)$ and $\La_1=(-1,1,-1)$ are  a  rank $1$ triples.
Further,
 $\MT_{(i,1,i^{-1})}\S$ is symplectic and linearly rigid of rank $4$ with
\[(i\J(S_0),\J(S_1),-i\J_s(S_\infty))=((i\J(2),-i\J(2)),(\J(2),1,1),(x,y,y^{-1},x^{-1})).\] 
\end{cor}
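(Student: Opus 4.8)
The corollary is a concrete instantiation of Theorem~\ref{SymP1} together with its companion construction in the maximally unipotent case, so the plan is to trace the sequence of operations explicitly and verify that the matrices displayed are the output. First I would start from the rank one triple $\La_0=(1,(iy)^{-1},iy)$ with $\su_{\La_0}=\{0,1,\infty\}$ and compute $\MH_{-iy}(\La_0)$ using Proposition~\ref{propH}; the choice $\lambda=-iy$ is designed so that the eigenvalue data of the result matches a linearly rigid rank $2$ triple. Then I would apply $\MT_{\La_1}$ with $\La_1=(-1,1,-1)$ (a quadratic twist at $0$ and $\infty$), followed by the two middle Hadamard products $\MH_{-(ix)^{-1}}$ and $\MH_{(ix)}$, tracking the Jordan forms at each step via Propositions~\ref{propF} and~\ref{propH}. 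By Theorem~\ref{linrig} each such step is invertible and the eigenvalue bookkeeping is forced, so at the end $\S$ is the unique linearly rigid rank $4$ triple with the stated Jordan data $(i\J(2),-i\J(2))$, $(\J(2),1,1)$, $(x,y,y^{-1},x^{-1})$ — after the correcting twist $\MT_{(i,1,i^{-1})}$, which makes $\S$ symplectic by Proposition~\ref{eigen2}(i) applied to $\Lambda^2$ considerations. I would record the explicit $4\times 4$ matrices of $\S$ that come out of Katz' algorithm (as in Example~\ref{Hyp}).

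The second half is to pass to $\Lambda^2\S$, which by the exceptional isomorphism $\Lambda^2\Sp_4(\CC)=\SO_5(\CC)$ is an orthogonal rank $5$ triple; one twists by $\MT_{\La_1}$ and then applies $\MH_{-1}$, which by Proposition~\ref{eigen2}(i) converts orthogonal to symplectic and by Proposition~\ref{propH} produces an element with Jordan form $(\J(2)^2,\J(1))$ at the appropriate place, i.e.\ a rank $4$ symplectic tuple in the $P_1(4,8,4)$ shape — exactly the situation of Theorem~\ref{SymP1}. The substitution $a=x+x^{-1}$, $b=y+y^{-1}$ is precisely what turns the eigenvalue parameters into the entries appearing in $T_0,T_1$; I would check that $\J_s(T_\infty)=(xy,xy^{-1},x^{-1}y,(xy)^{-1})$ by computing the characteristic polynomial of $\Lambda^2$ of a diagonal matrix $\diag(x,y,y^{-1},x^{-1})$, whose eigenvalues are $\{xy,xy^{-1},x^{-1}y,(xy)^{-1},1\}$, matching Table~1, and that after $\MH_{-1}$ the trivial eigenvalue is absorbed so that $\J_s(T_\infty)$ is as claimed. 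Finally I would verify directly that $T_0$ is maximally unipotent (a single Jordan block $\J(4)$, visible from the displayed upper-triangular matrix with nonzero superdiagonal-type entries once $(a+b)^2\neq 0$ and $ab\neq 0$) and that $T_0T_1$ has the prescribed Jordan form at $\infty$, which pins down $T_\infty=(T_0T_1)^{-1}$.

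The main obstacle, as with all results of this type, is the explicit matrix bookkeeping: the middle convolution and middle Hadamard product are defined via the block matrices $B_k$ on $V^r$ followed by a quotient by $\K+\L$, so writing down the $4\times 4$ representative of each intermediate tuple requires choosing bases for the quotient spaces compatibly across the four operations, and then composing with $\Lambda^2$ and one more $\MH_{-1}$. Keeping the normalizations consistent so that the final answer is literally the pair of matrices in the statement (rather than an arbitrary conjugate) is the delicate point; the substitution $a=x+1/x$, $b=y+1/y$ has to be introduced at exactly the right moment. I expect no conceptual difficulty — irreducibility is preserved by Theorem~\ref{eigen}(iii) and rigidity by Theorems~\ref{symrig} and~\ref{linrig}, and the geometric-origin conclusion is inherited from Theorem~\ref{cfg} — so the proof is essentially a guided computation, and I would present it by stating the intermediate Jordan forms at each of the four steps and then exhibiting the final matrices, leaving the routine linear algebra to the reader.
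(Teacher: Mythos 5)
Your proposal is correct and follows essentially the same route as the paper: start from $\La_0$, track the Jordan forms through $\MH_{-iy}$, $\MT_{\La_1}$, $\MH_{-(ix)^{-1}}$, $\MH_{(ix)}$ via Proposition~\ref{propH}, use $\Lambda^2\Sp_4(\CC)=\SO_5(\CC)$ and Proposition~\ref{eigen2} for symplecticity, and obtain the explicit matrices from the block-matrix realization of Section 2.2, exactly as the paper does with its tabulated Jordan forms. The only minor difference is that the paper derives $ab\neq 0$ from the requirement that each middle Hadamard step be nontrivial (i.e.\ $i\neq\pm x,\pm y$, via Theorem~\ref{eigen}), whereas you attach it to the maximal unipotency of $T_0$; these amount to the same constraint.
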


\begin{proof}
 The tuple $\T$ can be constructed 
 using the matrices in Section~\ref{katz} according
 to the given sequence of Hadamard products and tensor products. 
 Prop.~\ref{propH} allows to keep track of the change of Jordan forms under Hadamard product.
 We demonstrate this for the case,
 where $x,x^{-1},y,y^{-1}$ are pairwise different:
  We start with a rank $1$ triple $ \La_0=(1,(iy)^{-1},iy)$ and apply $\MH_{-iy}$.
  This yields a rank $2$ triple with Jordan forms
  $(\J(2), (-1,1), (-iy^{-1},-iy))$. Then we proceed with the tensor product
  $\MT_{\La_1}$ and so on. 
  Tabulating the operations and the change of the Jordan forms we get
 \[  \begin{array}{c||c||c|c|c|}
           \rk & \mbox{operation} &  \mbox{Jordan} &  \mbox{forms}  &  \\ \hline
       1 &   & (1) &(iy) &((iy)^{-1}) \\ \hline
       2 & \MH_{-iy} & \J(2) & (-1,1)& (-iy^{-1},-iy) \\\hline
       2  &  \MT_{\La_1}&  -\J(2) & (-1,1)&  (iy^{-1},iy) \\\hline
       3 &    \MH_{-(ix)^{-1}}&  (-\J(2),1) & ((ix)^{-1},1,1) &  (iy^{-1},iy,ix^{-1}) \\ \hline
      4 &    \MH_{(ix)}&  (-\J(2),\J(2)) & ((\J(2),1,1) &  (iy^{-1},iy,ix^{-1},ix) \\\hline
     5 &  \Lambda^2 & (-\J(3),1,1) & (\J(2),\J(2),1) &  -(xy^{-1},xy,1,(xy)^{-1},x^{-1}y) \\\hline
    5 &  \MT_{\La_1}&   (\J(3),-1,-1) & (\J(2),\J(2),1) &  (xy^{-1},xy,1,x^{-1}y^{-1},x^{-1}y) \\\hline
   4&\MH_{-1}&  \J(4) & (-1,-1,1,1) &  (xy^{-1},xy,x^{-1}y^{-1},x^{-1}y) \\
 \end{array}.\]
 By Prop.~\ref{eigen2} iii) we know that $\MT_{(i,1,i^{-1})}(\S)$ is symplectic and
  we use that $\Lambda^2 \Sp_4(\CC) = \SO_5(\CC)$.
  In the general case the Jordan form of the third element (in each step)
  is obtained by
  replacing $k$ equal eigenvalues $z$ by $z\J(k)$.

  The conditions for the irreducibility follow from the fact that
 the middle Hadamard product  has to be non trivial in each step,
 i.e. $i\neq \pm x, \pm y$ by Thm.~\ref{eigen}.
 Thus $ab\neq 0$.
\qed\end{proof}

\begin{cor}\label{bspZ}
  Let $\T$ be as in  in Cor.~\ref{P1sym}.
  Then the Zariski closure of $\langle \T\rangle $ is $\Sp_4(\CC).$
  Moreover if $ab, a^2+b^2\in \ZZ$ then $\langle \T\rangle $ 
  is contained up to conjugation
  in $\Sp_4(\ZZ)$.
  Further, if $\T$ is quasi-unipotent
   then the conditions are also necessary.
\end{cor}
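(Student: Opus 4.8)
The plan is to exploit the explicit matrix description of $\T=(T_0,T_1,T_\infty)$ given in Cor.~\ref{P1sym}. First I would establish that the Zariski closure $G$ of $\langle\T\rangle$ equals $\Sp_4(\CC)$. Since $\T$ is irreducible by construction, $G$ is a reductive subgroup of $\Sp_4(\CC)$ acting irreducibly on $\CC^4$; moreover $T_0$ is maximally unipotent, i.e.\ contains a regular unipotent element $\J(4)$ in its image, so $G$ contains a regular unipotent element of $\Sp_4(\CC)$. The connected reductive subgroups of $\Sp_4(\CC)$ acting irreducibly on the standard representation are classified (up to conjugacy) as $\Sp_4(\CC)$, $\SL_2(\CC)$ embedded via $\Sym^3$, and $\SL_2(\CC)\times\SL_2(\CC)$ (or $\SO_4$) — and one checks that neither of the proper ones can contain a regular unipotent of $\Sp_4(\CC)$ whose local Jordan type is $\J(4)$ while simultaneously containing an element $T_1$ with $\J(T_1)=(-1,-1,1,1)$, whose fixed space has dimension $2$. (Alternatively, one invokes that a $\Sp_4$-rigid irreducible tuple with these Jordan data cannot be realised inside a proper reductive subgroup, since the dimension formula would fail there.) This forces $G^\circ=\Sp_4(\CC)$, hence $G=\Sp_4(\CC)$.

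Next, for the integrality statement, suppose $ab,\,a^2+b^2\in\ZZ$. Reading off the matrices in Cor.~\ref{P1sym}, the entries of $T_0$ are $1$, $ab$, $(a+b)^2=a^2+b^2+2ab$, and those of $T_1$ are $\pm1$ and $\pm ab$, $\pm 2ab$; under the hypothesis all of these lie in $\ZZ$. Since $T_\infty=(T_0T_1)^{-1}$ and $\Sp_4(\ZZ)$ is a group, it suffices to observe $T_0,T_1\in\Sp_4(\ZZ)$ — one checks directly from the displayed matrices that each preserves the standard symplectic form used throughout (this is guaranteed by the construction, but can be verified by a finite matrix computation), and that each has integer entries and determinant $1$, hence lies in $\Sp_4(\ZZ)$. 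Therefore $\langle\T\rangle\le\Sp_4(\ZZ)$, and after the (already implicit) choice of symplectic basis this is the claimed containment up to conjugation.

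For the converse, assume in addition that $\T$ is quasi-unipotent, so by definition the eigenvalues of $T_\infty$ — namely $xy^{-1},xy,x^{-1}y^{-1},x^{-1}y$ — are roots of unity. If $\langle\T\rangle$ is conjugate into $\Sp_4(\ZZ)$, then in particular $T_\infty$ is conjugate to an integral symplectic matrix, so its characteristic polynomial has integer coefficients. That characteristic polynomial is $\prod(t-x^{\pm1}y^{\pm1})$; expanding, its coefficients are symmetric functions of $a=x+x^{-1}$ and $b=y+y^{-1}$, and a short computation identifies the two nontrivial coefficients (up to sign) as $ab$ and $a^2+b^2-2$ (equivalently $(a+b)^2$ and $(a-b)^2$). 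Integrality of the characteristic polynomial of $T_\infty$ thus yields $ab\in\ZZ$ and $a^2+b^2\in\ZZ$, which are exactly the stated conditions. I expect the main obstacle to be the first part — pinning down that $G=\Sp_4(\CC)$ rather than a proper irreducible reductive subgroup; the cleanest route is the regular-unipotent-plus-Jordan-type argument sketched above, but one must be careful to rule out the $\Sym^3\SL_2$ case, where the image of $T_1$ has the wrong Jordan form (in $\Sym^3\SL_2$ an involution acts with eigenvalues $\{1,-1,-1,1\}$ only for a specific conjugacy class, and the regular unipotent constraint then over-determines the tuple, contradicting $\Sp_4$-rigidity). The remaining steps are finite matrix computations and elementary symmetric-function bookkeeping.
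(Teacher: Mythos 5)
Your overall strategy coincides with the paper's: Zariski density via the subgroup structure of $\Sp_4(\CC)$ combined with the Jordan forms of $T_0$ and $T_1$, sufficiency of $ab,a^2+b^2\in\ZZ$ by inspecting the explicit matrices, and necessity from integrality of invariants of $T_\infty$. Your necessity step is a mild (and arguably cleaner) variant: you use the characteristic polynomial $t^4-ab\,t^3+(a^2+b^2-2)t^2-ab\,t+1$ of $T_\infty$, so $a^2+b^2\in\ZZ$ drops out of the coefficient of $t^2$ directly, whereas the paper uses $\tr(T_\infty)=ab$ and $\tr(T_\infty^2)=(ab)^2+4-2(a^2+b^2)$ and then needs the observation that an algebraic integer in $\tfrac12\ZZ$ lies in $\ZZ$ (this is where quasi-unipotence enters there); your route does not even need that hypothesis.

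Two details in your density argument need repair. First, your list of irreducible connected reductive subgroups is off: the tensor embedding $\SL_2\times\SL_2$ preserves a \emph{symmetric} form, so it lands in $\SO_4(\CC)$, not in $\Sp_4(\CC)$; moreover the Zariski closure need not be connected, so one must also exclude the imprimitive maximal subgroups $(\Sp_2(\CC)\times\Sp_2(\CC)).2$ and $\Sp_2(\CC)\ten\GO_2(\CC)$ (this is exactly what Lemma~\ref{subgr} and Cor.~\ref{Zar} of the paper do: an irreducible proper subgroup containing a unipotent element of type $\J(4)$ lies in $\Sym^3\Sp_2(\CC)$; alternatively, note that a regular unipotent element must lie in $G^\circ$ and forces any $G^\circ$-invariant subspace to be one of its flag subspaces, hence $G$-invariant). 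Second, your parenthetical on the $\Sym^3\SL_2$ case is wrong in both directions: every element of $\Sym^3\SL_2(\CC)$ has eigenvalues $\lambda^{\pm3},\lambda^{\pm1}$, so \emph{no} element at all has eigenvalues $(-1,-1,1,1)$ --- that one-line computation is the clean exclusion (and is exactly the paper's argument, since $\J(T_1)=(-1,-1,1,1)$) --- while your fallback that ``$\Sp_4$-rigidity would be contradicted if the tuple sat in a proper subgroup'' is not a valid argument: rigidity is a dimension count relative to $\Sp_4(\CC)$ and does not by itself prevent the monodromy from lying in a smaller group (indeed the paper's $P_5(8,8,8,8,8)$ case has finite monodromy). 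With the $\Sym^3$ exclusion done by the eigenvalue computation, your proof is complete and matches the paper's.
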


\begin{proof}
  Since $\J(T_1)=(-1,-1,1,1)$ 
  the Zariski closure of $\langle \T\rangle $ is not $\Sym^3(\SL_2(\CC))$
 and the first statement follows from
 Cor.~\ref{Zar}.
 The  matrix representation shows that the conditions are sufficient.
 The necessary condition for the group $\langle \T\rangle$ to be 
 contained 
 in $\Sp_4(\ZZ)$ is that all traces of all elements are integers.
 Hence 
 \[\tr(T_\infty)=ab, \quad \tr(T_\infty^2)=(a^2-2)(b^2-2)=(ab)^2+4-2(a^2+b^2) \in \ZZ.\]
 Hence $ab,2(a^2+b^2)\in \ZZ$.
 But if  $a,b$ are sums of roots of unity then $2(a^2+b^2)\in \ZZ$ implies
 $(a^2+b^2)\in \ZZ$.
\qed\end{proof}

\subsection{The $P_2$  case}

\subsubsection{The $P_2(4,6,6)$  case}
\begin{thm}\label{P2466}
  Let $\T$ be a symplectically
  rigid  tuple 
  in the case $P_2(4,6,6),$
  where 
\begin{eqnarray*} 
\J_s(\T)=((z_1z_2,z_1z_2^{-1},z_1^{-1}z_2,(z_1z_2)^{-1}), &(1,-x^2,-x^{-2},1),& (y^2,-1,-1,y^{-2})),
 \end{eqnarray*}
with $ x,y,z_1,z_2 \in \CC^{\ast}$.
 Then $\T$ can be written 
  \begin{eqnarray*}  \T=\MH_{-1}(\MT(\S_1,\S_2)),&\mbox{where }&
                     \S_i=\MT_{\La_{2i}} (\MH_{z_ixy^{-1}}  \La_{1i})
 \end{eqnarray*}
 with
 $\La_{2i}=(z_i^{-1},x^{-1},z_ix),\; 
 \La_{1i}=(z_i^2,z_i^{-1}xy,(z_ixy)^{-1})$,  $i=1,2$.
 \end{thm}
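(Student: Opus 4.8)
The plan is to follow the same template that proved Theorem~\ref{SymP1} and Corollary~\ref{P1sym}: exhibit $\T$ as the output of an explicit chain of middle Hadamard products, middle tensor products and a single $\MH_{-1}$, and then invoke Theorems~\ref{eigen}, \ref{linrig} and Proposition~\ref{eigen2} to guarantee that the intermediate objects are irreducible of the expected rank and that each operation is invertible. Concretely, I would start from the two rank~$1$ triples $\La_{1i}=(z_i^2,z_i^{-1}xy,(z_ixy)^{-1})$, $i=1,2$, apply $\MH_{z_ixy^{-1}}$ to each to produce rank~$2$ triples $\MH_{z_ixy^{-1}}\La_{1i}$, twist by $\MT_{\La_{2i}}$ to obtain $\S_i$, then form the middle tensor product $\MT(\S_1,\S_2)$ and finish with $\MH_{-1}$. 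The bulk of the proof is the bookkeeping: using Proposition~\ref{propH} one tracks the Jordan forms through each step and checks that the eigenvalue combinations at the end reproduce exactly $\J_s(\T)=((z_1z_2,z_1z_2^{-1},z_1^{-1}z_2,(z_1z_2)^{-1}),(1,-x^2,-x^{-2},1),(y^2,-1,-1,y^{-2}))$ together with the correct block structure (transvections coming from the $\MH_{-1}$ step, in the spirit of $\J(\MH_{-1}(T_1))=(\J(2)^2,\J(1)^{m-4})$).

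The key points I would verify, in order, are: (1) each $\MH_{z_ixy^{-1}}$ is a nontrivial Hadamard product, i.e. $z_ixy^{-1}\notin\{1,z_i^{-2},z_ixy\}$ up to the relevant exclusions, so that by Theorem~\ref{eigen} the result stays irreducible and has rank $2$ with Jordan data $(\J(2),\ast,\ast)$; (2) the twists $\MT_{\La_{2i}}$ only relabel eigenvalues, preserving irreducibility and rank; (3) the middle tensor product $\MT(\S_1,\S_2)$ of two rank~$2$ tuples on the appropriately merged puncture set $Set(\su)$ is irreducible of rank $4$ — here I would use Scott's formula (Lemma~\ref{Scott}) or a direct dimension count to rule out reducibility, exactly as STEP two of the proof of Theorem~\ref{cfg} does; (4) $\MH_{-1}$ applied to this rank~$4$ orthogonal-ish tuple lands back in a symplectic tuple of rank~$4$ with the prescribed Jordan forms, using Proposition~\ref{eigen2}(i) for the symplectic/orthogonal swap and Proposition~\ref{propH} for the numerology; (5) invertibility of every step (middle convolution is multiplicative and $\La\otimes\check\La$ is trivial, as recalled after Theorem~\ref{linrig}), so that the chain genuinely \emph{constructs} $\T$ rather than merely being compatible with it. Since all the building blocks are rank~$1$ tuples and all operations are geometric, coming-from-geometry follows as in Theorem~\ref{cfg}.

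I would present the computation as a table analogous to the one in the proof of Corollary~\ref{P1sym}, with columns for the rank, the operation applied, and the three Jordan forms after that operation, treating first the generic case where $z_1,z_2,x,y$ are in "general position" (all the relevant eigenvalues pairwise distinct) and then remarking that the degenerate cases are obtained by the usual rule of replacing $k$ coincident eigenvalues $z$ by $z\J(k)$. The irreducibility constraints on the parameters ($x,y,z_1,z_2\in\CC^{\ast}$ with the nondegeneracy conditions forcing each Hadamard step to be nontrivial) drop out of requiring $\MH$ nontrivial at each stage, just as $ab\neq 0$ did in Corollary~\ref{P1sym}.

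The main obstacle I anticipate is step (3): verifying that $\MT(\S_1,\S_2)$ is irreducible rather than decomposing, since a tensor product of two irreducible rank~$2$ tuples is \emph{not} automatically irreducible. The safe route is the numerical one — compute $\sum_i \rk(B_i-1)$ for the tensor tuple from the known Jordan forms of $\S_1$ and $\S_2$ and check it meets the Scott bound $2n=8$ with the fixed spaces of $\T$ and $\check\T$ both zero; if the bound is met with equality one still has to argue irreducibility separately, which is where I would lean on the fact that $\MH_{-1}$ of the result has the centralizer dimensions of the $P_2(4,6,6)$ case (placing it among the finitely many symplectically rigid tuples of Theorem~\ref{symrig}(ii)) and that the eigenvalues are chosen so no proper invariant subspace is compatible with the prescribed local monodromies. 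A secondary nuisance is getting the merged ordering of the singular points $\su$ right so that the middle tensor product is the intended one and the final $\MH_{-1}$ produces $T_0$ unipotent-type behaviour at the correct puncture; this is purely combinatorial but must be stated carefully.
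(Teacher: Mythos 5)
Your plan runs the construction forwards (rank one tuples $\to$ rank two triples $\to$ tensor $\to$ $\MH_{-1}$) and then matches Jordan data, whereas the paper's proof runs backwards from the given tuple: it forms $\S=\MT_{\La}\circ\MC_{-1}(\T)$ with $\La=(-1,1,-1)$, which by Thm.~\ref{eigen} and Prop.~\ref{eigen2} is an \emph{irreducible orthogonal} triple of rank $m=\rk(T_0-1)+\rk(T_1-1)+\rk(-T_\infty-1)-4\in\{3,4\}$, decomposes it as $\S=\S_1\ten\S_2$ via $\SO_4(\CC)=\Sp_2(\CC)\ten\Sp_2(\CC)$ (resp. $\SO_3(\CC)=\Sym^2\Sp_2(\CC)$ when $m=3$), reads off the rank-two Jordan data $((z_i,z_i^{-1}),(x,x^{-1}),\pm(y,y^{-1}))$, and then uses that each $\S_i$ is \emph{linearly rigid}, so Katz' algorithm yields exactly $\S_i=\MT_{\La_{2i}}(\MH_{z_ixy^{-1}}\La_{1i})$; invertibility of every step then forces the given $\T$ to equal $\MH_{-1}(\MT(\S_1,\S_2))$.

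The genuine gap in your version is precisely this identification step. The forward bookkeeping via Prop.~\ref{propH} shows at best that your chain produces \emph{some} tuple with the prescribed $\J_s$, not that the \emph{given} $\T$ arises this way. Your fallback argument -- that the output has the $P_2(4,6,6)$ centralizer dimensions and Thm.~\ref{symrig}(ii) places it ``among the finitely many symplectically rigid tuples'' -- misreads that theorem: it states that finiteness of tuples with given conjugacy classes implies the dimension formula, not that $\Sp_4(\CC)$-rigidity implies uniqueness (or even finiteness) for given Jordan forms; and uniqueness actually fails in this family, as the corollary following Cor.~\ref{symP2} notes that there are in general two symplectically rigid tuples with the same Jordan forms because the tensor (resp. $\Sym^2$) decomposition is not injective on Jordan data. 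Relatedly, the irreducibility of $\MT(\S_1,\S_2)$, which you rightly flag, is not automatic in the forward direction (if $\S_1$ and $\S_2$ are projectively equivalent the tensor product splits as $\Sym^2\oplus\Lambda^2$), and the hypotheses of the theorem do not exclude such degenerations; in the paper's direction this issue never arises, since $\S$ is obtained from the irreducible $\T$ by operations preserving irreducibility. To close your argument you would in effect have to add the paper's reduction: convolve the given $\T$, decompose the resulting orthogonal triple, and use linear rigidity of the rank-two factors to pin them (and hence $\T$) down.
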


\begin{proof}
 The tuple
 \[ \S= \MT_\La \circ \MC_{-1}(\T),\quad \La=(-1,1,-1), \]
 is an orthogonal triple of rank 
\[ m=\rk(T_0-1)+\rk(T_1-1)+\rk(-T_\infty-1)-4 \in \{3,4\} \]
 by Thm~\ref{eigen} and Prop.~\ref{eigen2} (ii).
 Using that
 \begin{eqnarray*} \SO_4(\CC) = \Sp_2(\CC) \ten \Sp_2(\CC),&&\SO_3(\CC)= \Sym^2\Sp_2(\CC)
   \end{eqnarray*} 
 we can write $\S$ as $\S=\S_1\ten \S_2$ with
 \begin{eqnarray*} (\J(S_{i0}),\J(S_{i1}),\J(S_{i\infty}))&=&((z_i,z_i^{-1}),(x,x^{-1}),\pm (y,y^{-1})),\;i=1,2.
   \end{eqnarray*}  
 Since $\S_1$ and $\S_2$ are linearly rigid the claim follows from Katz' algorithm.
\qed\end{proof}

\begin{cor}\label{symP2}
 Let $\T$ be  as in Thm.~\ref{P2466},
 such that  $T_0$ is maximally unipotent.
 Then
\[T_0= \left(\begin{array}{cccc}
1& -a+b& a& -2 \\ 
0& 1&      -2&  b\\  
0& 0&      1&   a-b\\
0& 0&      0&   1     
\end{array}\right),\quad
 T_1=\left(\begin{array}{cccc}
1& 0&     0&   0 \\     
0& 1&     0&   0\\      
0& 2&     1&   -a \\  
2& a+b& a& -a^2+1
\end{array}\right),\quad \]
where
$a=x+\frac{1}{x},\; b=y+\frac{1}{y}$ and $a\neq b$.

The tuple $\T$ can be written as
\[ \T=\MH_{-1}(\Sym^2 \S),\quad  \S=\MT_{\La} \circ \MH_{{xy^{-1}}} (\La_0),\]
where $\La=(1,x^{-1},x)$ and $\La_0=(1,xy,(xy)^{-1})$ are rank $1$ triples with
 $\su_\La=\su_{\La_0}=(0,1,\infty)$.
\end{cor}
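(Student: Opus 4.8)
The plan is to specialize Theorem~\ref{P2466} to the case where $T_0$ is maximally unipotent, which forces $z_1z_2 = z_1z_2^{-1} = z_1^{-1}z_2 = (z_1z_2)^{-1} = 1$ in the sense that $T_0$ has a single Jordan block $\J(4)$; equivalently the parameters $z_1,z_2$ that appeared in the semisimple data of $T_\infty$ and $T_1$ must degenerate so that $S_{i0}$ becomes unipotent, i.e.\ $z_i \to 1$. Concretely, I would set $z_1 = z_2 = 1$ in the construction $\S_i = \MT_{\La_{2i}}(\MH_{z_ixy^{-1}}\La_{1i})$ of Theorem~\ref{P2466}, so that $\La_{2i} = (1,x^{-1},x)$ and $\La_{1i} = (1,xy,(xy)^{-1})$ become independent of $i$; both factors $\S_1,\S_2$ then coincide with the single rank $2$ triple $\S = \MT_{\La}\circ\MH_{xy^{-1}}(\La_0)$ with $\La = (1,x^{-1},x)$, $\La_0 = (1,xy,(xy)^{-1})$. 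Hence $\MT(\S_1,\S_2) = \S\otimes\S$, and the symmetric part $\Sym^2\S$ is the relevant orthogonal rank $3$ summand (the relevant exceptional isomorphism being $\SO_3(\CC) = \Sym^2\Sp_2(\CC)$), so $\T = \MH_{-1}(\Sym^2\S)$ as claimed. This also shows $a \neq b$ is exactly the nontriviality condition $xy^{-1} \neq \pm 1$ needed for $\MH_{xy^{-1}}$ to act nontrivially, via Thm.~\ref{eigen}, matching the argument used in Cor.~\ref{P1sym}.

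Next I would carry out the explicit matrix computation. Starting from the rank $1$ triple $\La_0 = (1,xy,(xy)^{-1})$ I apply $\MH_{xy^{-1}}$ using the $B_k$-matrices from Section~\ref{katz}; by Prop.~\ref{propH} this produces a rank $2$ triple $\S$ with $\J_s(\S) = ((1,1),(xy^{-1},x^{-1}y),(xy,x^{-1}y^{-1}))$ — more precisely $\J(S_0) = \J(2)$ since $S_0$ is unipotent — and then the tensor twist $\MT_{\La}$ by $\La = (1,x^{-1},x)$ adjusts eigenvalues to $(\J(S_0),\J(S_1),\J_s(S_\infty)) = (\J(2),(1,x^{-2}),(y^2\cdot\text{something}))$; I would tabulate this exactly as in the proof of Cor.~\ref{P1sym}. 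Then $\Sym^2\S$ gives an orthogonal rank $3$ triple, and the final middle Hadamard product $\MH_{-1}$ raises rank to $4$ and lands $\T$ in $\Sp_4(\CC)$ by Prop.~\ref{eigen2}(i). The entries of the displayed matrices $T_0, T_1$ will then be expressed in $a = x + x^{-1}$, $b = y + y^{-1}$; a direct verification that these matrices have the right Jordan forms ($\J(T_0) = \J(4)$, $\J(T_1) = (1,-x^2,-x^{-2},1)$ in semisimple form) and that $T_0 T_1 T_\infty = 1$ defines the correct $T_\infty$ with $\J_s(T_\infty) = (y^2,-1,-1,y^{-2})$ completes the identification.

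The routine-but-delicate part is bookkeeping the change of Jordan forms and, especially, tracking which concrete basis (i.e.\ which cyclic/coordinate choice) makes the convolution matrices come out in the clean upper/lower-triangular shape displayed; the $B_k$ formulas of Section~\ref{katz} produce a representation that must be conjugated into the stated normal form, and getting the symplectic form to be the standard one simultaneously requires care. I expect the main obstacle to be precisely this normalization: verifying that after $\Sym^2$ and $\MH_{-1}$ the resulting $4\times 4$ matrices can be simultaneously brought (by one explicit conjugation) to the exhibited $T_0, T_1$ while preserving a fixed symplectic form, rather than merely to \emph{some} pair with the correct Jordan data. Since $\T$ is symplectically rigid (Thm.~\ref{P2466} places it in the $P_2(4,6,6)$ list of Table~2, hence rigid by Thm.~\ref{symrig}(ii)) and the factors $\S_i$ are each linearly rigid, uniqueness of the tuple with given Jordan forms — guaranteed by Katz' algorithm, Thm.~\ref{linrig}, for the $\GL$-rigid building blocks and by rigidity for $\T$ itself — lets us conclude the displayed $\T$ is the correct one once the Jordan forms and the product relation are checked; so the argument reduces to the matrix verification together with citing rigidity, exactly parallel to the proof of Cor.~\ref{P1sym}.
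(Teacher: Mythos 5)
Your overall route is the paper's: specialize Thm.~\ref{P2466} at $z_1=z_2=1$ so that $\S_1=\S_2=\S$ and the relevant orthogonal summand is $\Sym^2\S$ (via $\SO_3(\CC)=\Sym^2\Sp_2(\CC)$), then build the matrices from the $B_k$'s of Section~\ref{katz} and track Jordan forms as in Cor.~\ref{P1sym} -- the paper's proof is literally ``analogous to Cor.~\ref{P1sym}''. The genuine gap is your proposed shortcut for the final identification. You want to check only the Jordan forms and the relation $T_0T_1T_\infty=1$ for the displayed matrices and then invoke ``uniqueness of the symplectically rigid tuple with given Jordan forms''. Symplectic rigidity (the dimension formula) gives no such uniqueness, and in this very family it fails: the Jordan forms of $\T$ only see $a^2$ and $b^2$ (since $\J_s(T_1)=(1,-x^2,-x^{-2},1)$ and $\J_s(T_\infty)=(y^2,-1,-1,y^{-2})$), whereas the conjugacy class of $\MH_{-1}(\Sym^2\S)$ depends on $(a,b)$ up to the simultaneous sign change $(a,b)\mapsto(-a,-b)$ coming from the $\MT_{(1,-1,-1)}$-twist of $\S$, which $\Sym^2$ kills. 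So for generic $a,b$ there are two non-conjugate symplectically rigid tuples with the given Jordan data, the $(a,b)$- and the $(a,-b)$-tuple; the paper states exactly this in the proof of the corollary following Cor.~\ref{symP2} (``there are at most two symplectically rigid tuples with given Jordan forms since $\Sym^2$ does not act bijectively on the Jordan forms''). Hence Jordan forms plus the product relation cannot distinguish the displayed matrices from the $(a,-b)$-candidate: you must either push the explicit construction (including the base changes you flag as the delicate normalization) all the way to the exhibited $T_0,T_1$, as the paper does, or supplement the check by a finer invariant, e.g.\ a trace such as $\tr(T_1T_\infty)$, which involves $ab$ and separates the two classes.

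A smaller slip: the nondegeneracy condition is not ``$xy^{-1}\neq\pm1$''. What the construction needs is $x\neq y$ (so that $\lambda=xy^{-1}\neq1$ and the convolution is nontrivial) together with $xy\neq1$ (so that $\La_0$ is a nontrivial triple and $\MH_{xy^{-1}}(\La_0)$ has rank $2$ by Thm.~\ref{eigen}(i)); these two conditions together are exactly $a\neq b$. The case $xy^{-1}=-1$, i.e.\ $a=-b$, is allowed -- indeed in the following corollary it is precisely the case in which the Jordan forms do determine $\T$ uniquely.
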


\begin{proof}
 The proof is analogous to the proof of Cor~\ref{P1sym}.
\qed\end{proof}

\begin{cor}
  Let $\T$ be  as in Cor~\ref{symP2}.
 Then  the Zariski closure of $\langle \T \rangle$
  is $\Sp_4(\CC)$ if and only if
 $a^2\neq 1$ and $b^2 \neq 1$.
 The generated group
 is up to conjugation contained 
 in $\Sp_4(\ZZ)$ if and only if 
 $a^2,b^2,ab\in \ZZ$.
\end{cor}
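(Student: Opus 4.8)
The plan is to analyze the explicit monodromy tuple $\T=(T_0,T_1,T_\infty)$ given in Corollary~\ref{symP2} and compute directly. First I would write down $T_\infty = (T_0 T_1)^{-1}$ from the two displayed matrices, express everything in terms of $a=x+x^{-1}$ and $b=y+y^{-1}$, and record the Jordan/eigenvalue data: by the theorem $\J_s(T_\infty)$ has semisimple part essentially $(y^2,-1,-1,y^{-2})$ after the appropriate normalization, and the construction $\T=\MH_{-1}(\Sym^2\S)$ with $\S$ a rank-two triple whose third element has eigenvalues $\pm(y,y^{-1})$ makes the relevant parameters transparent.

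For the Zariski-closure statement, I would first invoke Corollary~\ref{bspZ} (or rather the general classification result \ref{Zar} it quotes): since $\J(T_1)=(-1,-1,1,1)$ is not the Jordan type forced in the $\Sym^3\SL_2$ case, the Zariski closure is either all of $\Sp_4(\CC)$ or else the tuple fails to be irreducible, and irreducibility is exactly what breaks when a middle Hadamard step becomes trivial. Tracing through the construction $\S=\MT_\La\circ\MH_{xy^{-1}}(\La_0)$ and then $\MH_{-1}\circ\Sym^2$, Theorem~\ref{eigen} tells us the Hadamard products are non-trivial precisely when the relevant eigenvalue products are $\neq 1$; chasing these conditions down yields $a^2\neq 1$ and $b^2\neq 1$ as the exact irreducibility (hence full Zariski closure) criterion. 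Conversely if $a^2=1$ or $b^2=1$ one of the $\S_i$-type factors degenerates and $\T$ becomes reducible, so the closure is strictly smaller than $\Sp_4(\CC)$.

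For the integrality statement, sufficiency is immediate from the explicit matrices: every entry of $T_0$ and $T_1$ (and hence of $T_\infty=(T_0T_1)^{-1}$, whose entries are polynomials in $a,b$) lies in $\ZZ[a,b]$, with the only denominators coming from $a^2$ and $b^2$ and $ab$ appearing in $T_\infty$; so $a,b$ being such that $a^2,b^2,ab\in\ZZ$ puts $\langle\T\rangle$ inside $\Sp_4(\ZZ)$ up to the conjugation that brings the symplectic form to standard shape. For necessity, as in the proof of Corollary~\ref{bspZ} I would argue that if $\langle\T\rangle$ is conjugate into $\Sp_4(\ZZ)$ then all traces of all elements are rational integers; computing $\tr(T_\infty)$, $\tr(T_\infty^2)$, $\tr(T_1T_\infty)$ etc. produces symmetric polynomial expressions in $a,b$ — something like $\tr(T_\infty)$ giving an $ab$-type quantity and the quadratic traces giving $a^2,b^2$-type quantities — which forces $a^2,b^2,ab\in\ZZ$, again using that $a,b$ are sums of roots of unity so a factor of $2$ can be removed as in the previous corollary.

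The main obstacle, I expect, is not any single step but the bookkeeping: getting the precise trace identities right so that the three conditions $a^2,b^2,ab\in\ZZ$ emerge cleanly (and no redundant or missing condition), and being careful about the normalizing scalar (the $\MT_\La$ twists and the $\Sym^2$) so that the eigenvalue conditions for irreducibility translate exactly to $a^2\neq 1,\ b^2\neq1$ rather than to some shifted condition. Because the matrices are already displayed, I would keep the computation terse — cite \ref{Zar} and \ref{eigen}, display only $\tr(T_\infty)$ and $\tr(T_\infty^2)$ and perhaps one mixed trace, and invoke the ``sum of roots of unity $\Rightarrow$ removing the $2$'' lemma verbatim from the proof of Corollary~\ref{bspZ}.

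\Endproof
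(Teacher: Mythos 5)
Your treatment of the first claim does not go through. You have imported the data of the wrong corollary: in the present $P_2(4,6,6)$ situation $\J_s(T_1)=(1,1,-x^2,-x^{-2})$ and $\J_s(T_\infty)=(y^2,-1,-1,y^{-2})$; the Jordan form $(-1,-1,1,1)$ belongs to Cor.~\ref{bspZ}, i.e.\ to the $P_1(4,8,4)$ case. More seriously, your dichotomy ``either the Zariski closure is $\Sp_4(\CC)$ or $\T$ is reducible'' is false and misses the actual mechanism. The tuple of Cor.~\ref{symP2} is irreducible for all admissible parameters (the degeneration condition coming from the construction is $a\neq b$, not $a^2\neq 1$ or $b^2\neq 1$; no middle Hadamard step becomes trivial at $a^2=1$), and since $T_0$ is a regular unipotent element, Cor.~\ref{Zar}(i) forces a \emph{proper} closure to lie in the proper \emph{irreducible} subgroup $\Sym^3\Sp_2(\CC)$. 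The paper's proof accordingly argues via the essential uniqueness of the rigid tuple with the given Jordan forms (there are at most two, because $\Sym^2$ is two-to-one on Jordan data) and identifies the exceptional parameters as those for which $\T$ can also be written as $\Sym^3$ of a rank-two tuple, i.e.\ $x,y$ sixth roots of unity; reducibility plays no role anywhere, so the criterion $a^2\neq1$, $b^2\neq1$ cannot be obtained the way you propose.

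The integrality part also has gaps. Sufficiency is \emph{not} immediate from the displayed matrices: their entries are linear in $a$ and $b$ (entries $a$, $b$, $a\pm b$, $a+b$), so in the typical case $a=n_1\sqrt d$, $b=n_2\sqrt d$ (exactly the case $a^2,b^2,ab\in\ZZ$ with $a,b\notin\ZZ$) the given representation is not integral; the paper's proof conjugates by $\diag(\sqrt d,1,1,\sqrt d)$ to reach $\Sp_4(\ZZ)$, and your ``conjugation bringing the form to standard shape'' does not address this. For the necessity of $ab\in\ZZ$ the paper does not use traces at all, but a Galois-equivariance argument: a $\sigma$ with $\sigma(a)=a$, $\sigma(b)=-b$ would fix $\T$ while moving $\S$, contradicting the compatibility of $\MC_{-1}$ and $\Sym^2$ with $\sigma$. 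Your trace plan is not hopeless, but the traces you name contain no $ab$ at all: $\tr(T_\infty)=b^2-4$, $\tr(T_\infty^2)=(b^2-2)^2$, and $\tr(T_1T_\infty)=\tr(T_0^{-1})=4$. One must exhibit a word whose trace genuinely involves $ab$ (for instance $\tr(T_0^{-1}T_1)=12-6a^2-b^2+4ab$) and then still remove the integer coefficient in front of $ab$, e.g.\ by noting that once $a^2,b^2\in\ZZ$ the numbers $a,b$ are algebraic integers, so $ab\in\QQ$ implies $ab\in\ZZ$; as written, this step of your argument is asserted rather than proved, and the specific identities you rely on are incorrect.
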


\begin{proof}
 By construction there are at most two symplectically rigid
 tuples with given Jordan forms since $\Sym^2$ does not act  bijectively
 on the Jordan forms.
  However if  $a=-b$
 then the Jordan forms determine
 the tuple $\T$ uniquely since a rank $2$ triple with Jordan forms
 $(\J(2),(x,x^{-1}),(x,x^{-1}))$ is reducible.

 Further if $a^2=b^2=1$ then $x,y$ are sixth roots of unity
 and $\T$ can be also written as $\Sym^3$ of a rank $2$ tuple.
 By uniqueness and  Cor.~\ref{Zar} the first claim follows.
 
 If the generated group is up to conjugation contained 
 in $\Sp_4(\ZZ)$ then
 the trace condition implies $a^2,b^2\in \ZZ$.
 By construction  
 the middle convolution $\MC_{-1}$ and taking $\Sym^2$ are compatible with
 the action of a field automorphism.
 Thus if  $ab \not \in \ZZ$ then there exists a
 $\sigma \in \Gal(\QQ(a,b)/\QQ)$
 such that $\si(a)=a$ and $\sigma(b)=-b$.
 But then we get $\T^{\si}= \T$ and $\S^{\si} \neq \S,$ a contradiction.
 The matrix representation shows that these conditions are also sufficient.
 Namely, if $a,b \not \in \ZZ$, but $ab \in \ZZ$  then $a=n_1 \sqrt{d}$
 and $b=n_2 \sqrt{d}$. 
 Thus if we conjugate the matrices in Cor~\ref{symP2} 
 by $\diag(\sqrt{d},1,1,\sqrt{d})$
 we get a representation in $\Sp_4(\ZZ).$
\qed\end{proof}\\

\subsubsection{The $P_2(4,6,8)$  case}
Since the proofs of the statements in the linearly rigid $P_2(4,6,8)$ case
are analogous to the proofs before we omit them.

\begin{thm}\label{linrigP2}
 A linearly rigid  tuple $\T$ in the case $P_2(4,6,8)$, where 
\[\J_s(\T)=((z_1,z_2,z_2^{-1},z_1^{-1}),(1,1,y,y^{-1}), (x,x,x^{-1},x^{-1})),\] 
 can be obtained 
 as
\[ \T= \MT_{\La_3}\circ\MH_{xz_1} \circ \MT_{\La_2} \circ \MH_{(xz_1)^{-1}}\circ \MT_{\La_1} \circ \MH_{yz_1z_2} (\La_0),\]
where $\La_3=(z_1^{-1},1,z_1),\; \La_2=(z_1^2,1,z_1^{-2}),\;
\La_1=((z_1z_2)^{-1},y^{-1},yz_1z_2)$ and\\ $\La_0=(z_2^2,y(z_1z_2)^{-1},z_1z_2^{-1}y^{-1})$.
\end{thm}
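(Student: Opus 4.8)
The plan is to argue exactly as in the proofs of Theorem~\ref{P2466} and Corollary~\ref{P1sym}, with the simplification that here no passage to an orthogonal group is needed: in the row $P_2(4,6,8)$ of Table~2 the tuple $\T$ is already linearly rigid, so Katz' Existence Theorem (Theorem~\ref{linrig}) applies directly. Thus $\T$ can be reduced to a rank-one tuple by at most three middle convolutions and rank-one middle tensor products, and since $\MC$ is multiplicative and $\La\otimes\check{\La}$ is a trivial rank-one tuple, every step is invertible; hence $\T$ can be reconstructed from a rank-one tuple. What remains is to identify the concrete inverse sequence with the one displayed in the statement. By Theorem~\ref{symrig}(i), $\T$ is the unique irreducible tuple with its Jordan forms, so it suffices to show that the displayed composition, applied to $\La_0$, yields an irreducible tuple whose Jordan forms are those prescribed in the $P_2(4,6,8)$ case, i.e.\ with $\J_s(\T)=((z_1,z_2,z_2^{-1},z_1^{-1}),(1,1,y,y^{-1}),(x,x,x^{-1},x^{-1}))$.

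I would carry out the verification as a bookkeeping of Jordan data, reading off the effect of each middle Hadamard step from Proposition~\ref{propH} and noting that a rank-one middle tensor product $\MT_{\La_j}$ merely multiplies the eigenvalues of the relevant local monodromy by the corresponding entry of $\La_j$. Starting from $\La_0=(z_2^2,\,y(z_1z_2)^{-1},\,z_1z_2^{-1}y^{-1})$ one first applies $\MH_{yz_1z_2}$ to obtain a rank-two triple, then tensors by $\La_1=((z_1z_2)^{-1},y^{-1},yz_1z_2)$, then applies $\MH_{(xz_1)^{-1}}$ to reach rank three, tensors by $\La_2=(z_1^2,1,z_1^{-2})$, applies $\MH_{xz_1}$ to reach rank four, and finally tensors by $\La_3=(z_1^{-1},1,z_1)$. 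In each Hadamard step a Jordan block at eigenvalue $\lambda^{-1}$ grows by one, a unipotent block shrinks by one and the new rank is governed by the rank identity in Proposition~\ref{propH}, while every other eigenvalue $\rho$ is rescaled to $\lambda\rho$; tracking this through the six steps — with the usual convention that in the non-generic parameter ranges $k$ coinciding eigenvalues $z$ are read as a block $z\J(k)$ — one obtains precisely the local shapes of the $P_2(4,6,8)$ case and the displayed $\J_s(\T)$. The explicit matrix representation of $\T$ then follows by composing the matrices $B_k$ and $\tilde B_k$ of Section~\ref{katz} along this sequence.

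For irreducibility and for the admissibility of the parameters, the point is that each middle Hadamard product in the chain must be non-trivial, which by Theorem~\ref{eigen}(i) amounts to non-vanishing conditions on $yz_1z_2$, $(xz_1)^{-1}$ and $xz_1$ relative to the eigenvalues present at that stage; when these hold, Theorem~\ref{eigen}(iii) guarantees that irreducibility is preserved at every step, so the output is irreducible, linearly rigid, and carries the prescribed Jordan forms, hence equals $\T$. The main place where care is needed is this Jordan-form bookkeeping in the degenerate cases where some of $x,y,z_1,z_2$ or their products coincide: there blocks can merge, and one must check that the degenerate configurations still realize the $P_2(4,6,8)$ local types rather than collapsing into a smaller subcase — this is handled in the same way as the degenerate analysis at the end of the proof of Corollary~\ref{P1sym}.
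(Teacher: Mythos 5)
Your proposal is correct and matches what the paper intends: the paper omits this proof as ``analogous to the proofs before,'' and the analogous argument is exactly yours --- since the $P_2(4,6,8)$ case is linearly rigid, Theorem~\ref{symrig}(i) and Katz' algorithm (Theorem~\ref{linrig}) reduce the claim to checking that the displayed sequence, traced through Proposition~\ref{propH} and the eigenvalue-rescaling effect of the rank-one tensor products, produces an irreducible tuple with the prescribed Jordan data, with irreducibility and parameter conditions controlled by Theorem~\ref{eigen}, just as in the bookkeeping proof of Corollary~\ref{P1sym}. No gap beyond the (routine, paper-style) omission of the explicit Jordan-form table.
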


\begin{cor}\label{linrigP2MUM}
 Let $\T$ be as in Thm.~\ref{linrigP2}
 such that  $T_0$ is maximally unipotent.
Then
\[ T_0=\left(\begin{array}{cccc}
1&-1&0&            a-2 \\
0&1& a-2&0 \\
0&0& 1&            -b+2 \\
0&0& 0&            1
\end{array}\right),\quad
 T_1=
\left(\begin{array}{cccc}
1&0&0&0 \\
0&1&0&0 \\
0&1&1& b-2 \\
1&0&1& b-1 \\
\end{array}\right),\]
where $a=x+\frac{1}{x}, \quad  b=y+\frac{1}{y},\quad x,y \in \CC^{\ast}\setminus \{1\}.$
The tuple $\T$ can be obtained via
\[ \T=\MH_{x}  \circ \MH_{x^{-1}}  \circ \MT_{\La_1}\circ \MH_y (\La_0),\]
where $\La_1=(1,y^{-1},y) $ and $\La_0=(1,y,y^{-1})$ are rank $1$ triples.
\end{cor}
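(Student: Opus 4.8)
The plan is to verify the two displayed monodromy matrices directly by running the construction stated in the Corollary, and then to read off the eigenvalue/irreducibility conditions from the numerology of the middle Hadamard product. Concretely, I would start from the rank one triple $\La_0=(1,y,y^{-1})$ with $\su_{\La_0}=(0,1,\infty)$, apply $\MH_y$ to obtain a rank two triple, record its Jordan data via Prop.~\ref{propH}, then form the middle tensor product with $\La_1=(1,y^{-1},y)$, and finally apply $\MH_{x^{-1}}$ and $\MH_x$ in turn. At each step the explicit matrices $B_k$ from Section~\ref{katz} (together with the quotient by $\K+\L$) give an honest matrix representative; the bookkeeping of how the local Jordan forms evolve is exactly the table produced in the proof of Cor.~\ref{P1sym}, specialized to the present eigenvalues. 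After the last step one should land, up to the freedom of simultaneous conjugation and the choice of cyclic vector implicit in the construction, on a tuple whose $T_0$ is a single unipotent Jordan block of the stated shape and whose $T_1$ has Jordan form $(\J(2),1,1)$; choosing the basis compatibly with these normal forms pins down the representatives to the two matrices written in the statement, with $a=x+x^{-1}$, $b=y+y^{-1}$.

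Next I would justify that this construction indeed produces the linearly rigid $P_2(4,6,8)$ tuple with the prescribed semisimple parts. By Thm.~\ref{eigen} each $\MH_\lambda$ is irreducible provided the relevant rank condition is nontrivial, and by Prop.~\ref{propH} the successive ranks are $1,2,2,3,4$, matching the $P_2(4,6,8)$ centralizer pattern; linear rigidity then follows from the dimension count in Thm.~\ref{symrig}(i), or simply from the fact that the whole construction is a composition of Hadamard and tensor operations starting from rank one, so Katz' algorithm (Thm.~\ref{linrig}) applies and the resulting tuple is the unique one with those Jordan forms. One has to check that the eigenvalues are tracked correctly: the $\MH_y$ step turns $(1,(y),(y^{-1}))$ into Jordan forms $(\J(2),(y,1)\text{ or }(1,y),\dots)$ type data, the tensor with $\La_1$ shifts the third-point eigenvalues by $(1,y^{-1},y)$, and the two $\MH_{x^{\pm1}}$ steps introduce the eigenvalue pair $x,x^{-1}$ at the middle point; collating gives precisely $\J_s(\T)=((z_1,z_2,z_2^{-1},z_1^{-1}),(1,1,y,y^{-1}),(x,x,x^{-1},x^{-1}))$ in the notation of Thm.~\ref{linrigP2}, where the $z_i$ are determined by $x,y$ in the maximally-unipotent specialization, so $T_0$ becomes a full Jordan block. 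The condition $x,y\in\CC^\times\setminus\{1\}$, equivalently $a,b$ generic, is exactly what makes each Hadamard step non-trivial and hence keeps the tuple irreducible.

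The main obstacle I expect is the last paragraph's normalization step: the construction naturally outputs matrices in whatever basis the $B_k$'s and the quotient $V^r/(\K+\L)$ hand you, and these are not literally the clean $4\times 4$ matrices displayed. One must either carry out the conjugation that puts $T_0$ into the given unipotent Jordan form and $T_1$ into companion-type form simultaneously (a finite but slightly delicate linear-algebra computation, since after fixing $T_0$ the residual conjugation freedom is the unipotent centralizer of a regular element, of dimension $3$, which is just enough to reach the displayed $T_1$), or instead verify a posteriori that the displayed pair $(T_0,T_1)$ has the correct Jordan forms, satisfies $\det$ and symplectic-type constraints (here only linear rigidity is claimed, so one checks the three Jordan forms and irreducibility via the Scott formula in Lemma~\ref{Scott}), and then invoke uniqueness from Thm.~\ref{symrig}(i) to conclude it must be the tuple our construction produces. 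The latter route is cleaner and is presumably what is intended, mirroring the proof of Cor.~\ref{P1sym}; since the excerpt explicitly says the proofs in the $P_2(4,6,8)$ case are analogous to the earlier ones, I would simply run the same table-of-Jordan-forms argument and remark that the matrix representation is obtained by composing the matrices of Section~\ref{katz}, with the irreducibility conditions $x,y\neq 1$ coming from Thm.~\ref{eigen} as before.
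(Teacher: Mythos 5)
Your proposal is correct and matches what the paper intends: the paper omits this proof as ``analogous to the proofs before,'' i.e.\ exactly your argument of running the sequence $\MH_{x}\circ\MH_{x^{-1}}\circ\MT_{\La_1}\circ\MH_y(\La_0)$ with the explicit matrices of Section~\ref{katz}, tracking Jordan forms via Prop.~\ref{propH} as in the table in Cor.~\ref{P1sym}, and reading off the non-triviality conditions $x,y\neq 1$ from Thm.~\ref{eigen}. Your alternative a posteriori check (verify the displayed pair has the right Jordan data and invoke uniqueness from linear rigidity, Thm.~\ref{symrig}(i)) is a harmless variant of the same idea.
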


\begin{cor}
 Let $\T$ be as in  Cor.~\ref{linrigP2MUM}.
 Then $\langle \T \rangle $ 
is contained up to conjugation
 in $\Sp_4(\ZZ)$ if and only if 
 $a,b\in \ZZ$.
 The  Zariski closure of $\langle \T \rangle $ is $\Sp_4(\CC)$
 if and only if $a\neq 0$ and $b\neq -1$.
 \end{cor}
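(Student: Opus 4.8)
The plan is to handle the two assertions separately, in each case leaning on the explicit matrices of Cor.~\ref{linrigP2MUM} and on the structural input behind Cor.~\ref{Zar}. For the integrality statement the ``if'' direction is immediate: the entries of $T_0$ and $T_1$ in Cor.~\ref{linrigP2MUM}, hence also those of $T_\infty=(T_0T_1)^{-1}$, lie in $\ZZ[a,b]$, so $a,b\in\ZZ$ forces $\langle\T\rangle$ into the group of integral matrices preserving the (integral) invariant symplectic form, i.e.\ $\langle\T\rangle\le\Sp_4(\ZZ)$ up to conjugation. For the converse one uses that being conjugate into $\Sp_4(\ZZ)$ makes every trace of every element of $\langle\T\rangle$ a rational integer. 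Since $T_1$ is semisimple with $\J_s(T_1)=(1,1,y,y^{-1})$, one has $\tr(T_1)=b+2$, so $b\in\ZZ$; since $T_\infty$ is semisimple with $\J_s(T_\infty)=(x,x,x^{-1},x^{-1})$, one has $\tr(T_\infty)=2a$ and $\tr(T_\infty^2)=2a^2-4$, so $2a\in\ZZ$ and $2a^2\in\ZZ$; writing $2a=m\in\ZZ$ and using that $2a^2=m^2/2$ is an integer forces $m$ even, hence $a\in\ZZ$. This is the same trace bookkeeping used in Cor.~\ref{bspZ}.

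For the Zariski closure put $G=\overline{\langle\T\rangle}\le\Sp_4(\CC)$. The tuple is irreducible, so $G$ acts irreducibly on $\CC^4$, and $T_0$ is a \emph{regular} unipotent element of $\Sp_4(\CC)$: one checks $\rk(T_0-1)=3$ for all admissible parameters, since $x,y\ne 1$ keeps the entries $a-2$ and $b-2$ in the matrix of $T_0$ nonzero. A regular unipotent element has no non-trivial semisimple power, so $G$ cannot be imprimitive, nor contained in a tensor-decomposable subgroup such as $\Sp_2(\CC)\otimes\GO_2(\CC)$; together with the list of admissible Zariski closures in Cor.~\ref{Zar}, this leaves only two possibilities: either $G=\Sp_4(\CC)$, or, up to conjugation, $\langle\T\rangle\le\Sym^3\SL_2(\CC)$, i.e.\ $\T=\Sym^3(\mathbf g)$ for some (then necessarily linearly rigid) rank-two triple $\mathbf g=(g_0,g_1,g_\infty)$. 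The task is therefore reduced to deciding for which parameters such a $\Sym^3$-descent exists.

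To carry that out I would match local Jordan data. As $\Sym^3$ is injective on conjugacy classes, $\T=\Sym^3(\mathbf g)$ would force $\J(T_i)=\Sym^3(\J(g_i))$; and for $g\in\SL_2(\CC)$ with eigenvalues $s,s^{-1}$ the element $\Sym^3(g)$ has eigenvalue multiset $\{s^3,s,s^{-1},s^{-3}\}$ (with $\Sym^3$ of a regular unipotent again a regular unipotent). Comparing with $\J(T_0)=\J(4)$ imposes nothing; comparing with $\J_s(T_\infty)=(x,x,x^{-1},x^{-1})$, where $x^2\ne 1$, forces $s^4=1$, i.e.\ $x\in\{i,-i\}$ and $a=0$; comparing with $\J_s(T_1)=(1,1,y,y^{-1})$, where $y^2\ne 1$, forces $s^3=1$ with $s\ne 1$, i.e.\ $b=-1$. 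Conversely, at the exceptional values one produces the required rank-two triple $\mathbf g$ with $g_0$ regular unipotent, $\tr(g_1)=-1$, $\tr(g_\infty)=0$ and $g_0g_1g_\infty=\pm\id$ --- a linearly rigid triple with prescribed Jordan forms, hence existing --- and verifies, via Prop.~\ref{propH} along the reduction of Cor.~\ref{linrigP2MUM}, that $\Sym^3(\mathbf g)$ has precisely the Jordan forms of $\T$, so that $\T=\Sym^3(\mathbf g)$ by rigidity. Reading off the resulting constraints on $a$ and $b$ then gives the asserted characterization of when $G=\Sp_4(\CC)$.

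The genuine obstacle is the closure statement, and within it the implication ``$T_0$ regular unipotent $\Rightarrow$ $G\in\{\Sp_4(\CC),\Sym^3\SL_2(\CC)\}$'': one must make sure regular unipotency really excludes every imprimitive and tensor-type proper reductive subgroup of $\Sp_4(\CC)$ (this is exactly where Cor.~\ref{Zar} is used), and that among the parameter degenerations only the values $a=0$ and $b=-1$ make the local eigenvalue pattern of $\T$ compatible with a $\Sym^3$. The integrality statement, by contrast, is routine once the explicit matrices and the semisimplicity of $T_1$ and $T_\infty$ are available.
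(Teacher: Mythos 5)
Your handling of the integrality statement is correct and is precisely the argument the paper intends (compare Cor.~\ref{bspZ}): integrality of the explicit matrices for the ``if'' direction, and the traces $\tr(T_1)=b+2$, $\tr(T_\infty)=2a$, $\tr(T_\infty^2)=2a^2-4$ together with the parity argument for the ``only if'' direction. The closure part also starts along the intended route: irreducibility plus the regular unipotent $T_0$ reduce the question, via Cor.~\ref{Zar}, to whether $\langle\T\rangle$ lies in $\Sym^3\SL_2(\CC)$, and your eigenvalue matching is right (two eigenvalues $1$ in $T_1$ force $s^3=1$, hence $b=-1$; the pattern $(x,x,x^{-1},x^{-1})$ in $T_\infty$ forces $s^4=1$, hence $a=0$). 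Two small points in the converse construction at $(a,b)=(0,-1)$: the product of the rank-two triple must be $\id$, not $\pm\id$, since $\Sym^3$ sends $-1$ to $-1$; and to invoke rigidity you should check that the explicit $T_\infty$ is still semisimple there (it is: one computes $(T_0T_1)^2=-\id$ at these values).

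The genuine gap is your final sentence. What your argument proves is that the closure is proper if and only if $a=0$ \emph{and} $b=-1$ hold simultaneously, i.e.\ the closure is $\Sp_4(\CC)$ iff $(a,b)\neq(0,-1)$. That is not what the corollary asserts: read literally it claims the closure is $\Sp_4(\CC)$ iff $a\neq0$ \emph{and} $b\neq-1$, i.e.\ properness already when only one degeneration occurs, and this cannot simply be ``read off'' from your constraints. Worse, within your own (and the paper's) framework the missing implications fail: for $a=0$, $b\neq-1$ the tuple is still irreducible (the only $T_0$-invariant subspaces are $\langle e_1\rangle\subset\langle e_1,e_2\rangle\subset\langle e_1,e_2,e_3\rangle$, and $T_1e_1=e_1+e_4$ lies in none of them), it contains the regular unipotent $T_0$, and $T_1$ has eigenvalues $(1,1,y,y^{-1})$ with $y^3\neq1$, which is not a $\Sym^3$ eigenvalue pattern; so Cor.~\ref{Zar} forces the closure to be all of $\Sp_4(\CC)$ (take $a=0$, $b=0$, say). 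So you must either exhibit a different degeneration occurring at $a=0$ alone and at $b=-1$ alone --- which Cor.~\ref{Zar} rules out --- or state explicitly that your method establishes the ``not both'' version. The paper omits this proof (``analogous to the proofs before''), and its analogous $P_2(4,6,6)$ corollary shows the same tension (the proof there only treats $a^2=b^2=1$), so the discrepancy is plausibly a slip in the statement; but as a proof of the statement as written, your concluding step is a non sequitur and needs to be flagged or repaired.
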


\subsection{The $P_3,\;P_4$ and $P_5$ cases}

In this section we show that in the cases $P_3,\;P_4$ and $P_5$  all symplectically rigid
tuples $\T$
can be reduced  via geometric operations to rank $1$ tuples.
Since we prefer to work with the convolution
we index $\T=(T_1,\ldots,T_r,T_{r+1}=T_\infty)$.
In order to shortcut the following proofs we use without citing
  that the application of $\MC_{-1}$ changes a symplectical
tuple into an orthogonal one by Prop.~\ref{eigen2} (ii) whose rank is given
by Thm.~\ref{eigen}.
Moreover, due to Katz' algorithm it suffices to relate $\T$ to a linearly rigid tuple.

\subsubsection{The $P_3$   case}

\begin{thm}\label{P3}
 In all the $P_3$ cases
 a symplectically rigid  tuple $\T$ can be reduced via middle convolution operations,
taking  tensor products and rational pullbacks to a rank $1$ tuple.
 Further there exists no $\T$ with a maximally unipotent element.
\end{thm}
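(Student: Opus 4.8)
The plan is to follow the general four-step strategy outlined after Theorem~\ref{cfg}, specialized to the three subcases of $P_3$ listed in Table~2, namely $P_3(4,10,10,10)$, $P_3(4,8,10,10)$, $P_3(4,8,8,10)$ and $P_3(4,8,8,8)$. In all of them the symplectic centralizer data forces $\J(T_1)\in\{\pm\J(4),(-\J(2),\J(2)),(x\J(2),x^{-1}\J(2)),(x,y,y^{-1},x^{-1})\}$ while $T_2,T_3,T_4$ each have a Jordan form of the shape $(\J(2),1,1)$ or $(-1,-1,1,1)$ or $(-\J(2),1,1)$ or $(x,1,1,x^{-1})$ according to which $\GL_4$-dimension column is prescribed. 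First I would dispose of $P_3(4,10,10,10)$ immediately by the Scott formula from Lemma~\ref{Scott}: here $\rk(T_1-1)\le 2$ and $\rk(T_i-1)=1$ for $i=2,3,4$ (transvections or pseudo-reflections), so $\sum_i\rk(T_i-1)\le 5<8=2n$, contradicting irreducibility. This is exactly the "red.\ (Scott)" entry in Table~2.

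For the remaining three subcases, the strategy is to reduce to a linearly rigid tuple and then invoke Katz' algorithm (Thm.~\ref{linrig}), together with Prop.~\ref{eigen2}~(ii) to keep the orthogonal/symplectic character under control, and Prop.~\ref{propF} / Prop.~\ref{propH} to track Jordan forms. In the $P_3(4,8,10,10)$ case the $\GL_4$-dimensions $(4,8,10,10)$ mean two of the non-special elements are transvections and one has Jordan form $(\J(2),\J(2))$ or $(x,x,x^{-1},x^{-1})$-type; applying $\MC_\lambda$ for a suitable $\lambda$ (in the quasi-unipotent setting, a root of unity) and then a middle tensor product with rank one tuples $\MT_\La$, I expect to arrive at a linearly rigid rank $4$ or — after possibly passing through $\Lambda^2\Sp_4(\CC)=\SO_5(\CC)$ or $\Lambda^2\SL_4(\CC)=\SO_6(\CC)$ and down through $\SO_4(\CC)=\Sp_2(\CC)\ten\Sp_2(\CC)$ or $\SO_3(\CC)=\Sym^2\Sp_2(\CC)$ — a linearly rigid tuple of smaller rank, which Katz' algorithm reduces to rank one. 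The entries "$\Lambda^2$ red." for $P_3(4,8,8,10)$ and $P_3(4,8,8,8)$ signal that in those two subcases $\Lambda^2\T$ is \emph{reducible}, so one cannot directly pass to $\SO_5$; instead I would argue that reducibility of $\Lambda^2\T$ forces $\T$ itself to be imprimitive or a symmetric power (e.g.\ $\Sym^3$ of a rank $2$ tuple), and then either reach a contradiction with the assumed centralizer dimensions or reduce the rank $2$ factor by Katz. Concretely, I would use the fact that for an irreducible symplectic $\T$ of rank $4$, $\Lambda^2\T$ decomposes iff the Zariski closure is contained in $\Sym^3\SL_2(\CC)$ or in a product $\SL_2\ten\SL_2$, which pins down the possible Jordan forms sharply enough to run through the finitely many cases by hand.

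For the last assertion — that no $\T$ in the $P_3$ cases has a maximally unipotent element — the plan is a direct Jordan-form argument. A maximally unipotent element means some $T_i$ has Jordan form $\J(4)$, i.e.\ $\rk(T_i-1)=3$. In every surviving $P_3$ subcase the element of type $\J(T_1)$ is the only candidate with $\rk\ge 2$; but Table~1 shows that a $\J(4)$ block has $\dim C_{\Sp_4(\CC)}=2$, not the value $2$ recorded in the first column for $P_3$... so I must check case by case: in $P_3$ the symplectic centralizer tuple is $(2,6,6,6)$, so the element with $\dim C_{\Sp_4}=2$ could indeed be $\pm\J(4)$. The point is rather that once $\J(T_1)=\J(4)$ is fixed and the other three elements are transvections / pseudo-reflections of the prescribed type, one checks via the Scott formula that $\sum\rk(T_i-1)=3+1+1+1=6<8$, again contradicting irreducibility — and when one of $T_2,T_3,T_4$ is allowed a larger rank (to push the sum to $8$), the resulting centralizer dimensions no longer match the $P_3$ column. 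I expect the main obstacle to be organizing this finite but somewhat fiddly case analysis cleanly: several subcases split further according to whether the eigenvalue parameters $x,y,z_i$ are roots of unity satisfying coincidence relations (e.g.\ $x=y^{\pm1}$, $x^2=1$), and one must verify in each that the irreducibility inequality in Lemma~\ref{Scott} is violated or that the middle convolution produces a genuinely smaller rigid object. The bookkeeping of Jordan forms through $\MC_\lambda$ and $\Lambda^2$ via Prop.~\ref{propF}, Prop.~\ref{propH} and Table~1, rather than any single hard idea, is where the real work lies.
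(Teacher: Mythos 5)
Your first subcase ($P_3(4,10,10,10)$ via Scott) and your general toolkit agree with the paper, but the rest of the proposal has concrete gaps. First, the local data are misidentified: in the $P_3=(2,6,6,6)$ column an element with $\GL_4$-centralizer dimension $8$ must have Jordan form $(-1,-1,1,1)$ --- the forms $(\J(2),\J(2))$ and $(x,x,x^{-1},x^{-1})$ you allow have symplectic centralizer dimension $4$ and belong to other $P_i$ --- while the dimension-$10$ elements are $\pm(\J(2),1,1)$; the paper fixes this right after Table~2. Second, your plan ``reach a linearly rigid tuple and finish by Katz'' does not carry the two surviving subcases: in $P_3(4,8,10,10)$ the convolution/tensor steps of Prop.~\ref{eigen2} land in a rank-two \emph{quadruple} in $\GO_2(\CC)$, and in $P_3(4,8,8,10)$ in a rank-four quadruple in $\GO_4(\CC)$; with four singular points these are not linearly rigid, and the paper finishes them with a \emph{quadratic pullback} (Klein's theorem in the quasi-unipotent $\GO_2$ case; in the $\GO_4$ case a pullback followed by a tensor decomposition into two linearly rigid rank-two tuples). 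This is precisely why ``rational pullbacks'' occur in the statement, and your proposal never explains how they enter. Moreover your criterion ``$\Lambda^2\T$ reducible iff the closure lies in $\Sym^3\SL_2(\CC)$ or $\SL_2\ten\SL_2$'' is wrong: by Cor.~\ref{la2} the classes with reducible $\Lambda^2$ are $\Sp_2(\CC)\ten\GO_2(\CC)$ and $(\Sp_2(\CC)\times\Sp_2(\CC)).2$, whereas the five-dimensional part of $\Lambda^2\Sym^3$ is $\Sym^4$, which is generically irreducible. Finally, for $P_3(4,8,8,8)$ the paper does not argue via imprimitivity at all: it applies $\MC_{-1}$ and the $\Lambda^2$-identities to produce an irreducible rank-four quadruple with three transvections, contradicting Scott.

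The last assertion (no maximally unipotent element) is where your argument genuinely fails. With the correct local data, taking $\J(T_1)=\J(4)$ in the subcase $P_3(4,8,8,10)$ gives $\sum_i\rk(T_i-1)=3+2+2+1=8=2n$, so the Scott bound is satisfied and the centralizer dimensions match the $P_3$ column; your count ``$3+1+1+1=6<8$'' forgets the rank-two involutions, and your fallback (``larger rank no longer matches the column'') does not apply since these ranks are exactly the prescribed ones. The paper excludes $\J(4)$ there by a structural argument: Scott shows $\Lambda^2\T$ is reducible in $\SO_5(\CC)$, while $\J(T_1)=\J(4)$ would force $\langle\T\rangle\subseteq\Sym^3\Sp_2(\CC)$ by Cor.~\ref{Zar}, which is incompatible (for instance, $(-1,-1,1,1)$ is not the $\Sym^3$ of any rank-two Jordan form). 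In $P_3(4,8,10,10)$ the exclusion comes from Scott applied after sign-normalizing the transvections by rank-one twists, which forces $\rk(T_1-1)=\rk(T_1+1)=4$. Without these ingredients the second claim of the theorem is not established by your proposal.
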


\begin{proof}
\begin{enumerate}
 \item The case $P_3(4,10,10,10)$ is ruled out by the Scott formula.
 
\item In the case $P_3(4,8,10,10)$ 
      the Scott formula implies that $\rk(T_1-1)=\rk(T_1+1)=4.$
      Let $\La_1=(\la,1,1,\la^{-1})$ such that $\rk(T_1\la-1)=3.$
      Then
      \begin{eqnarray*}      \T_1&=&\MC_{\la^{-1}}\MT_{\La_1}(\T) \end{eqnarray*}
      is a rank $3$ tuple.
      Taking
      $\La_2=(\la^{-1},-\la,1,-1)$ and  $\La_3=(-1,\la^{-1},1,-\la)$ 
       we obtain a rank $2$ quadruple 
      \begin{eqnarray*}  \S&=& \MT_{\La_3} \circ   \MC_{-\la} \circ \MT_{\La_2}(\T_1) \end{eqnarray*}
      in $\GO_2(\CC)$ by  Prop.~\ref{eigen2} (iii). 
      If $\T$ is quasi-unipotent the generated group
      is finite and therefore a pullback of  a linearly rigid monodromy tuple of
      a Gauss hypergeometric differential equation by a well known
      result of Klein (cf. \cite[Thm. 3.4]{BalDwo79}).
      In any case a quadratic pullback yields 
      a direct sum of two rank $1$ tuples.
\item Taking $\Lambda^2$ in the case $P_3(4,8,8,10)$ 
      we obtain a reducible tuple in $\SO_5(\CC)$ by the Scott formula.
      This excludes $\J(T_1)=\J(4)$ by
      Cor.~\ref{Zar}.
      Let $\La_1=(\la,1,1,\la^{-1})$ such that $\rk(T_1\la-1)=3.$
      Then
      \begin{eqnarray*}      \T_1&=&\MC_{\la^{-1}}\MT_{\La_1}(\T) \end{eqnarray*}
      is a rank $4$ tuple.
      Taking
      $\La_2=(\la^{-1},-\la,1,-1)$ and  $\La_3=(-1,\la^{-1},1,-\la)$ 
       we obtain a rank $4$ quadruple 
      \begin{eqnarray*}  \S&=& \MT_{\La_3} \circ   \MC_{-\la} \circ \MT_{\La_2}(\T_1) \end{eqnarray*}
      in $\GO_4(\CC)$ by Prop.~\ref{eigen2} (iii).
      A quadratic pullback yields a $5$-tuple
      $\T_2$ with
      Jordan forms 
      \[ ((\J(2),\J(2)), (\J(2),\J(2)),(\la,\la,\la^{-1},\la^{-1}),(\la,\la,\la^{-1},\la^{-1}),(\la_2^2,1,1,\la_2^{-2})), \]
      where $\rk(S_1-\la_2)=3.$       
      Hence 
      $\T_2$ can be written as a tensor product of two 
      $5$-tuples 
      $\S_1$ and $\S_2$ of rank $2$ having two trivial entries.
      Since the $\S_i$ are linearly rigid the claim follows.
\item We can exclude the case $P_3(4,8,8,8)$.
      Since 
      $ \MC_{-1}(\T)$ 
      yields an orthogonal tuple of rank $m$, where $m=2+\rk(T_1-1)\in \{5,6\},$
       we obtain an irreducible quadruple of rank 4 with 3 transvections, 
        using the identities
      \begin{eqnarray*} \Lambda^2(\Sp_4(\CC)) = \SO_5(\CC),&&  \Lambda^2(\SL_4(\CC)) = \SO_6(\CC).\end{eqnarray*} 
      But this contradicts the Scott formula.
\end{enumerate}

\qed\end{proof}

\subsubsection{The $P_4$ case}

\begin{thm}
 In all the $P_4$ cases
  a  symplectically rigid  tuple $\T$ can be reduced via middle convolution operations and
taking  tensor products and rational pullbacks to a rank $1$ tuple.
\end{thm}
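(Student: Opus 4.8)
The plan is to proceed exactly as in the $P_3$ case, handling each of the nine subcases of $P_4(4,4,6,6)$ listed in Table~2 by a sequence of middle convolutions, middle tensor products with rank-one tuples and (where the generated group turns out to be finite) rational pullbacks. The common strategy for each subcase is: first apply an $\MT_{\La_1}$ with $\La_1=(\lambda,1,1,\lambda^{-1})$ chosen so that one entry becomes a reflection/transvection, then an $\MC_{\lambda^{-1}}$ to drop the rank; iterate this (using the first two entries, whose Jordan forms are fixed by Table~1 to be among $\pm\J(4)$, $\pm(\J(2),1,1)$, $(x\J(2),x^{-1}\J(2))$, etc., for $T_1$ and $T_2$) until either the Scott formula is violated — giving a contradiction to irreducibility — or the rank drops to $1$, $2$ or $3$. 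Subcases flagged ``red.'' in Table~2 (the $(8,8,10,10)$ case by the dimension count, the $(6,6,8,10)$ case by $\Lambda^2$) are disposed of immediately by Lemma~\ref{Scott}. The subcases flagged ``lin. rigid'' ($(6,8,10,10)$ and $(8,8,8,10)$) are handled directly by Katz' algorithm, Thm.~\ref{linrig}. The $\Lambda^2$-cases ($(8,8,8,8)$, $(6,8,8,8)$) are excluded by taking second exterior powers into $\SO_5(\CC)$ and invoking the Scott formula together with Cor.~\ref{Zar} to rule out $\J(T_1)=\J(4)$, exactly as in part (iv) of the proof of Thm.~\ref{P3}. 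The remaining cases $(6,6,10,10)$, $(6,8,8,10)$ and the $\Lambda^2$-linearly-rigid $(6,6,8,8)$ are the genuinely new work.

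For those I would use Prop.~\ref{eigen2}~(ii): a suitable composition $\MT_{\La_1^{-1}}\circ\MC_{\lambda_1\lambda_2}\circ\MT_{\La_2}\circ\MC_{(\lambda_1\lambda_2)^{-1}}\circ\MT_{\La_1}$ sends the symplectic tuple to an orthogonal one, and by tracking ranks via Thm.~\ref{eigen}~(i) and Jordan forms via Prop.~\ref{propF}/\ref{propH} one lands in $\SO_m(\CC)$ with $m\in\{3,4,5,6\}$. Then one exploits the exceptional isomorphisms $\Sym^2\Sp_2(\CC)=\SO_3(\CC)$, $\Sp_2(\CC)\otimes\Sp_2(\CC)=\SO_4(\CC)$, $\Lambda^2\Sp_4(\CC)=\SO_5(\CC)$, $\Lambda^2\SL_4(\CC)=\SO_6(\CC)$ to convert the orthogonal tuple into a tensor product of two small linearly rigid tuples (in the $\SO_3,\SO_4$ cases) or a single linearly rigid rank-$4$ tuple (in the $\SO_5,\SO_6$ cases). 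In every instance the end products are linearly rigid, so Katz' algorithm, Thm.~\ref{linrig}, finishes the reduction to rank one; and since all the operations used — middle convolution, middle tensor product, and (for the quasi-unipotent, hence finite, intermediate tuples, via Klein's theorem as in Thm.~\ref{P3}(ii)) rational pullback — are of geometric origin in the sense of \cite[Chap.~II]{Andre}, the conclusion follows. Where a step produces a finite monodromy group I would again cite \cite[Thm.~3.4]{BalDwo79} to realise it as a pullback of a Gauss hypergeometric tuple.

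The main obstacle I expect is the bookkeeping in the cases where the orthogonal reduction lands in $\SO_4(\CC)$ or $\SO_3(\CC)$: there one must verify that the rank-$2$ factors $\S_1,\S_2$ obtained from the splitting $\SO_4(\CC)=\Sp_2(\CC)\otimes\Sp_2(\CC)$ are both \emph{irreducible} (so that Katz' algorithm applies) rather than one of them degenerating, and one must pin down the precise eigenvalue constraints — the analogues of ``$ab\neq0$'' in Cor.~\ref{P1sym} — that guarantee non-triviality of each middle Hadamard/convolution step via Thm.~\ref{eigen}. A secondary subtlety is ensuring, in the $\Lambda^2$-reductions, that the possibility $\J(T_1)=\pm\J(4)$ is genuinely excluded in each relevant subcase before passing to $\SO_5(\CC)$, since otherwise the exterior square need not have the transvection structure the argument relies on; this is where Cor.~\ref{Zar} (the classification of Zariski closures) does the essential work. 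Once these non-degeneracy checks are in place, each subcase reduces, as asserted, to a rank-one tuple via the advertised geometric operations, proving the theorem.
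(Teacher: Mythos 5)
Your treatment of seven of the nine subcases tracks the paper: $(8,8,10,10)$ and $(6,6,8,10)$ are killed by the dimension count resp.\ Scott, $(6,8,10,10)$ and $(8,8,8,10)$ are linearly rigid and handled by Thm.~\ref{linrig}, and for $(6,6,10,10)$, $(6,8,8,10)$ and $(6,6,8,8)$ the route via $\MC_{-1}$ (or the sequence of Prop.~\ref{eigen2}) into an orthogonal group, the exceptional isomorphisms, and --- for the $\GO_2(\CC)$ endpoints --- the Klein/quadratic-pullback argument of Thm.~\ref{P3}~(ii), is exactly what the paper does. (Your blanket sentence that ``in every instance the end products are linearly rigid'' is slightly off for the two cases ending in $\GO_2(\CC)$, but you do separately invoke the pullback for finite groups, so that is only a wording issue.)

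The genuine gap is your claim that the cases $P_4(8,8,8,8)$ and $P_4(6,8,8,8)$ are \emph{excluded} ``exactly as in part (iv) of the proof of Thm.~\ref{P3}.'' The flag ``$\Lambda^2$ red.'' in Table~2 does not mean these cases are empty; it means $\Lambda^2(\T)$ is reducible, and the paper uses this \emph{constructively}: in the $(8,8,8,8)$ case Scott's lemma produces a three-dimensional orthogonal composition factor of $\Lambda^2(\T)$, and Cor.~\ref{la2} then forces $\langle\T\rangle$ into $\Sp_2(\CC)\ten\GO_2(\CC)$, i.e.\ $\T$ is a tensor product of two rank-two quadruples each containing a trivial element, which reduces via Katz; in the $(6,8,8,8)$ case the Scott formula and Cor.~\ref{la2} show that the sign-twisted tuple $(S_1,S_2,-S_3,-S_4)$ built from $\S=\Lambda^2(\T)$ splits off a trivial one-dimensional summand, and the complementary rank-four tuple is linearly rigid. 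Your proposed exclusion cannot succeed: in the $P_4(4,4,6,6)$ configuration the first two entries have $\Sp_4$-centralizer dimension $4$ with $\GL_4$-dimension $6$ or $8$, so no element has Jordan form $\pm\J(4)$, Cor.~\ref{Zar} has nothing to rule out, and the ``three transvections contradict Scott'' configuration of Thm.~\ref{P3}~(iv) never arises. Indeed such tuples exist --- take $\S_1\ten\S_2$ with $\S_i$ irreducible rank-two quadruples having two trivial entries each; every element then has eigenvalues $(x,x,x^{-1},x^{-1})$, giving an irreducible $P_4(8,8,8,8)$ tuple --- so no contradiction is derivable, and under your plan these two families would simply be left unreduced, which is precisely the content the theorem requires.
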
\newpage

\begin{proof}
\begin{enumerate}
 \item In the case $P_4(8,8,10,10)$ the dimension count contradicts the irreducibility.
 
\item A tuple $\T$ in the  $P_4(6,8,10,10)$ case is linearly rigid.

\item In the case $P_4(6,6,10,10)$ the irreducibility of $\T$ implies
      that $\rk(T_4+1)=1$.
      Hence $\S=\MC_{-1}(\T)$ is an orthogonal rank $2$ tuple having two involutions. The claim follows as in the proof (ii) of Thm.~\ref{P3}.

\item  A tuple $\T$ in the  $P_4(8,8,8,10)$ case is linearly rigid.

\item In the case $P_4(6,8,8,10)$
      the tuple $\S=\MC_{-1}(\T)$ is an orthogonal tuple of rank $5$.
      A suitable sequence as in Prop.~\ref{eigen2} (iii) yields
      an orthogonal  tuple of rank $2$ .
       The claim follows as in the proof (ii) of Thm.~\ref{P3}.

\item The case $P_4(6,6,8,10)$ is excluded by the Scott
      formula.

 \item In the case $P_4(8,8,8,8)$ Scott's lemma shows that
       $\Lambda^2 (\T)$ has a three dimensional orthogonal composition factor.
       By Cor~\ref{la2} we get that
        $\T$ is a tensor product of two quadruples of rank $2$  containing a trivial element.
       Hence we are in the linearly rigid case.

 \item In the case $P_4(6,8,8,8)$     
       we get that  $\S=\Lambda^2 (\T)$ is reducible.
       The Scott formula and Cor.~\ref{la2} imply that
       $(S_1,S_2,-S_3,-S_4) $ splits into a trivial $1$ dimensional
       component and a $4$ dimensional one. 
       Since the rank $4$ tuple is linearly rigid the claim follows.

\item In the case $P_4(6,6,8,8)$
      $\MC_{-1}(\T)$ 
      is an orthogonal rank $4$ tuple in $\SO_4(\CC)^4$, where
      $\J(T_3)=\J(T_4)=(\J(2),\J(2))$.
      Thus we can decompose it into a tensor product
      of two linearly rigid rank $2$ tuples.
\end{enumerate}

\qed\end{proof}

\subsubsection{The $P_5$ case}

\begin{thm}
 In all $P_5$ cases a symplectically rigid tuple $\T$ can be reduced via middle convolution operations,
taking  tensor products and rational pullbacks to a rank $1$ tuple.
\end{thm}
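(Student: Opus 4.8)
The plan is to handle each of the six $P_5$ subcases in Table~2 separately, following the same strategy used in the proofs of Theorems~\ref{P3} and the $P_4$ theorem: either derive a contradiction to irreducibility via Scott's lemma (Lemma~\ref{Scott}), or exhibit an explicit sequence of middle convolutions $\MC_\lambda$, middle tensor products $\MT_\La$ and rational (here quadratic) pullbacks that reduces $\T$ to a rank one tuple, possibly after passing to $\Lambda^2\T$ and using the exceptional isomorphism $\Lambda^2\Sp_4(\CC)=\SO_5(\CC)$. In all $P_5$ cases $\T$ is a quintuple with $\J(T_1)\in\{\pm\J(4),(-\J(2),\J(2)),(x\J(2),x^{-1}\J(2)),(x,y,y^{-1},x^{-1})\}$ and $\J(T_i)$, $i=2,\dots,5$, among the centralizer-dimension-$8$ or $10$ classes; the subcases are indexed by the $\GL_4$-centralizer dimensions as in Table~2.

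The concrete steps I would carry out are: \textbf{(i)} In the $P_5(8,10,10,10,10)$ and $P_5(8,8,10,10,10)$ cases, invoke the Scott formula directly: one computes $\sum_i\rk(T_i-1)$ from the Jordan data and finds it strictly less than $2\cdot 4=8$, contradicting irreducibility — exactly as in the $P_5$ rows marked ``red. (Scott)'' of Table~2. \textbf{(ii)} In the $P_5(10,10,10,10,10)$ case the tuple is already linearly rigid (five transvection-type elements, ``lin. rig.'' in Table~2), so Katz' algorithm (Thm.~\ref{linrig}) applies verbatim. \textbf{(iii)} In the $P_5(8,8,8,10,10)$ case pass to $\Lambda^2\T$: by Table~1 the five elements become orthogonal elements in $\SO_5(\CC)$ whose ranks of $(\cdot-1)$ add up to make $\Lambda^2\T$ linearly rigid, so again Katz' algorithm finishes; then invert $\Lambda^2$ via $\Lambda^2\Sp_4(\CC)=\SO_5(\CC)$ (choosing the symplectic preimage, using irreducibility to pin it down up to the sign ambiguity handled as in Cor.~\ref{Zar}). \textbf{(iv)} In the remaining cases $P_5(8,8,8,8,10)$ and $P_5(8,8,8,8,8)$, show that $\Lambda^2\T$ is \emph{reducible} by the Scott formula applied in $\SO_5(\CC)$, split off the orthogonal composition factor as in the $P_4(8,8,8,8)$ / $P_4(6,8,8,8)$ arguments (using a corollary of the type Cor.~\ref{la2}), and observe that the factor of rank $\leq 4$ is linearly rigid, so once more Katz' algorithm applies after reassembling. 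Throughout, when $\MC_{-1}$ is used to land in an orthogonal group of rank $3$, $4$ or $5$, I would follow the pattern of Prop.~\ref{eigen2}(iii) combined with a quadratic pullback (Klein's theorem, \cite[Thm.~3.4]{BalDwo79}, for the quasi-unipotent finite-image subcase) to descend further to rank $1$ or $2$.

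\textbf{The main obstacle} I expect is bookkeeping rather than conceptual: in the ``$\Lambda^2$ red.'' subcases one must verify that the reducible $\SO_5$-tuple splits as (trivial) $\oplus$ (rigid factor of rank $3$ or $4$) and \emph{not} in some other way, and that the lift back through $\Lambda^2$ exists and is irreducible — this requires knowing that $\T$ is not of the form $\Sym^3(\SL_2)$, which is exactly where the constraint $\J(T_1)\neq\pm\J(4)$ (or the relevant centralizer-dimension condition excluding $\Sym^3$) and Cor.~\ref{Zar} enter. A secondary subtlety is the sign/branch ambiguity in inverting the exceptional isomorphisms $\Lambda^2\Sp_4(\CC)=\SO_5(\CC)$ and $\Sym^2\Sp_2(\CC)=\SO_3(\CC)$, which one resolves by tensoring with a suitable rank one tuple as in Prop.~\ref{eigen2}(ii) and Cor.~\ref{P1sym}. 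Since each $P_5$ subcase matches the template already executed for $P_3$ and $P_4$, the proof should be short, essentially a case-by-case dispatch with the details parallel to the earlier theorems.
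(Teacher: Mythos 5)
Your handling of the first three subcases coincides with the paper's: $P_5(10,10,10,10,10)$ is the linearly rigid Jordan--Pochhammer case, and $P_5(8,10,10,10,10)$, $P_5(8,8,10,10,10)$ are killed by the Scott formula. For $P_5(8,8,8,10,10)$ you take the route suggested by the Table~2 annotation (show $\Lambda^2\T$ is linearly rigid in $\SO_5(\CC)$ and invert $\Lambda^2$); the paper instead applies $\MC_{-1}$ to obtain an orthogonal rank $4$ tuple and decomposes a quadratic pullback as a tensor product of two linearly rigid rank $2$ six-tuples. Your variant is plausible, but note it needs an extra argument that $\Lambda^2\T$ is actually irreducible (i.e.\ that $\langle\T\rangle$ does not lie in one of the subgroups of Cor.~\ref{la2}), which the paper's route avoids.

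The genuine gap is in your step (iv), covering $P_5(8,8,8,8,10)$ and $P_5(8,8,8,8,8)$. First, the Scott formula does not show that $\Lambda^2\T$ is reducible there: the relevant sums of ranks are $4\cdot 4+2=18$ and $5\cdot 4=20$, both $\geq 2\cdot 5$, so only the dimension count of Lemma~\ref{Scott} (in $\GL_5$) detects reducibility. Second, and more seriously, your ``split off a composition factor, observe the rank $\leq 4$ factor is linearly rigid, reassemble via Katz'' does not match what actually happens. In the case $P_5(8,8,8,8,10)$ there is nothing to construct: the paper applies $\MC_{-1}$ to get an orthogonal rank $5$ tuple, writes it as $\Lambda^2$ of a symplectic rank $4$ tuple with Jordan forms $((\J(2),1,1)^4,(-1,-1,1,1))$, and this violates the Scott formula, so the case is empty -- your argument presupposes existence and would never surface this contradiction. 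In the case $P_5(8,8,8,8,8)$ the reducibility of $\Lambda^2\T$ forces $\langle\T\rangle$ into the subgroups of Cor.~\ref{la2}, and the resulting rank $2$ tensor factors (five nontrivial elements; the group is the finite $2$-group of order $32$ noted in the paper's remark) are \emph{not} linearly rigid, so Katz' algorithm does not apply to them as you claim; the paper instead applies $\MC_{-1}$ to land in an orthogonal rank $6$ tuple and uses $\Lambda^2\SL_4(\CC)=\SO_6(\CC)$ to produce a linearly rigid rank $4$ tuple with Jordan forms $((\J(2),1,1)^4,\pm(i\J(2),i,i))$, after which Thm.~\ref{linrig} finishes. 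To repair your proposal you would need to replace step (iv) by these two arguments (or by Klein-type pullback arguments for the finite factors in the last case).
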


\begin{proof}
\begin{enumerate}
 \item In the case $P_5(10,10,10,10,10)$ the Scott formula implies
      that 
       \[\J(\T)=((\J(2),1,1),(\J(2),1,1), (\J(2),1,1),(\J(2),1,1),(-\J(2),-1,-1)).\]
 Thus the tuple is linearly rigid, a so called Jordan-Pochhammer tuple.

\item In the $P_5(8,10,10,10,10)$ case we get a contradiction to the Scott formula.

\item The   $P_5(8,8,10,10,10)$ case is  ruled out by
      the Scott formula.

\item In the case $P_5(8,8,8,10,10)$ the application of $\MC_{-1}$  yields
      an orthogonal rank $4$ tuple with Jordan forms
      \[ ((\J(2),\J(2)),\quad (\J(2),\J(2)),\quad (\J(2),\J(2)),\quad (-1,1,1,1),\quad (-1,1,1,1)).\]
      Hence a quadratic pullback can be written as a tensor product of two
       linearly rigid six tuples of  rank $2$  with non trivial Jordan forms
      $(\J(2),\J(2),-\J(2))$ each.

\item We can rule out the case $P_5(8,8,8,8,10)$.
      Otherwise $\S=\MC_{-1}(\T)$ yields  an orthogonal rank $5$ tuple with Jordan forms $\J(S_1)=\ldots =\J(S_4)=(\J(2),\J(2),1)$ and $\J(S_5)=(-1,-1,-1,-1,1)$.
      Using $\Lambda^2\Sp_4 =  \SO_5$ we get a symplectic rank $4$ tuple with
      Jordan forms
 \[ ((\J(2),1,1),\quad (\J(2),1,1),\quad (\J(2),1,1),\quad (\J(2),1,1),\quad (-1,-1,1,1)).\]
      But this contradicts the Scott formula.

\item In the case $P_5(8,8,8,8,8)$ we apply $\MC_{-1}$ and obtain
      an orthogonal tuple  $\S$ of rank $6$ with Jordan forms 
       $\J(S_1)=\ldots =\J(S_4)=-\J(S_5)=(\J(2),\J(2),1,1)$.
      Since $\Lambda^2\SL_4(\CC) = \SO_6(\CC)$ we get a tuple of  rank $4$  with Jordan forms
 \[ ((\J(2),1,1),\quad   (\J(2),1,1),\quad    
 (\J(2),1,1),\quad   (\J(2),1,1),\quad   \pm (i\J(2),i,i)).\]
     The linear rigidity yields the claim.
\end{enumerate}

\qed\end{proof}

\begin{rem}
 In the $P_5(8,8,8,8,8)$ case the monodromy group $G=\langle \T \rangle$ is a finite $2$-group of order $32$, where $Z(G)=G'$ and $G/G'\cong Z_2^4$.
\end{rem}

\section{Translation to differential operators}

Let as usual $\frac{d}{dz}$ be the derivation on $\mathbb{C}[z]$ defined by $\frac{d}{dz}(z)=1$ and $\mathbb{C}[z,\partial]:=\mathbb{C}[z][\partial]$ be the ring of differential operators with respect to $\frac{d}{dz}$. An element $P\in\mathbb{C}[z,\partial]$ with singular locus $S\subset \mathbb{C}\cup\{\infty\}$ can be regarded as a linear homogeneous differential equation on $\MP^1\setminus S$. Thus, we can investigate its induced local system $\mathbb{L}$ on $\mathbb{P}^1\setminus S$ with respect to the following conventions. 
\vspace{2ex}

\begin{conv}
We fix once and for all an orientation on $\mathbb{P}^1$ and denote the winding number of a closed path $\gamma$ around a point $p\in\mathbb{P}^1\setminus \im (\gamma)$ by $\nu_{\gamma}(p)$. Furthermore, we denote the singular locus of a differential operator $L\in\mathbb{C}[z,\partial]$ by $S$, if this leads to no confusion. Having chosen an arbitrary base point $x_0\in\mathbb{P}^1\setminus S$, we attach to each $p\in\mathbb{P}^1$ a loop $\gamma_p$ starting at $b$ with $\nu_{\gamma}(p)=1$ and $\nu_{\gamma}(s)=0$ for all $s\in S\setminus\{p\}$. Then $\{\gamma_s\}_{s\in S}$ is a set of generators of $\pi_1\left(\mathbb{P}^1\setminus S,x_0\right)$ and we equip $S$ with an ordering $S=\{s_1,\dots,s_{r+1}\}$ such that their composition $\prod_{i=1}^{r+1}\gamma_{s_i}$ is homotopic to the trivial loop. We set the \textbf{monodromy tuple} associated to $L$ to be \[\T:=\left(T_1,\dots,T_{r+1}\right):=\left(\rho_{\mathbb{L}}\left(\gamma_{s_1}\right),\dots,\rho_{\mathbb{L}}\left(\gamma_{s_{r+1}}\right)\right)\in \GL(\mathbb{L}_{x_0})^{r+1}.\] 
\end{conv}
\vspace{2ex}

We translate the constructions for monodromy tuples used before to the level of differential operators in an appropriate way.
Mainly for computational and aesthetical reasons we use the so called \textit{logarithmic derivation} $z\frac{d}{dz}$ on $\mathbb{C}[z]$ and the ring of differential operators $\mathbb{C}[z,\vartheta]:=\mathbb{C}[z][\vartheta]$ with respect to $z\frac{d}{dz}$, which can naturally be regarded as a subring of $\mathbb{C}[z,\partial]$. We call an operator $L=\sum_{i=0}^na_i\vartheta^i$ with $a_i\in\mathbb{C}[z]$ \textit{reduced}, if the greatest common divisor of all its coefficients $a_i$ is a unit. The \textit{degree} $\deg(L)$ of $L$ is the maximal $i$ for which $a_i\not=0$. Rearranging the coefficients, we also may write $L=\sum_{i=0}^mz^iP_i$, with $P_i\in\mathbb{C}[\vartheta]$. Recall, that $P_0$ is the \textit{indicial equation} of $L$ at $z=0$ and the roots of $P_0$ - considering $\vartheta$ as a formal variable - are the \textit{exponents} $E$ of $L$. For each exponent $e$, we have a formal solution $f\in z^{\mu}\mathbb{C}\llbracket z\rrbracket^{*}$ of $L$ at $z=0$, where $\mu\in \left(e+\mathbb{N}_0\right)\cap E$. We call $\mu$ the \textit{exponent} of the solution $f$. The indicial equation and the exponents of $L$ at the other points $p\in\mathbb{P}^1$ can be obtained in the same way after having performed the transformation $z\mapsto z+p$ or $z\mapsto \frac{1}{z}$. We call $L$ \textit{fuchsian}, if the degree of its indicial equation at each point $p\in\mathbb{P}^1$ equals $\deg(L)$. This agrees with the usual definition of a fuchsian operator as given in \cite[Section 6.2]{Put}. As by Deligne's investigations in \cite{Del} each operator of geometric origin has to be fuchsian, we will perform all constructions with operators of this type. 
\vspace{2ex}

All local systems in the constructions done before are built up from local systems of the form
\[\Lambda_{\alpha}=\left(1,\alpha^{-1}, \alpha\right)\] for $a\in\MQ$ and $\alpha=\exp(2\pi ia)$ with respect to the points $\{0,1,\infty\}$. Thus the basic operators we are dealing with are those of order one, which induce this monodromy tuple.

\begin{defin}
Let $a\in\MQ$. We set \[L_a:=\vartheta-z(\vartheta+a)\in\mathbb{C}[z,\vartheta].\]
\end{defin}

\begin{rem}
The solution space of $L_a$ is spanned by the formal expression  
\[f=\frac{1}{(1-z)^a},\] which is algebraic over $\mathbb{Q}(z)$. 
Thus $L_a$ is of geometric origin and its induced monodromy tuple is precisely $\Lambda_{\alpha}$. Two operators $L_a$ and $L_b$ induce the same monodromy tuple if and only if $a-b\in\MZ$. 
\end{rem}

\subsection{Tensor product}
We state the definition of the tensor product of differential operators as it is given in \cite[Chapter 2]{Put} and investigate some basic properties. Let us briefly recall that there is a universal Picard-Vessiot ring $\mathcal{F}$ of $\left(\mathbb{C}[z],z\frac{d}{dz}\right)$, i.e. for each $L\in\mathbb{C}[z,\vartheta]$ the set $\Sol_L:=\{y\in\mathcal{F}\mid L(y)=0\}$ can be regarded as a $\deg(L)-$dimensional $\mathbb{C}$ vectorspace. Therefore we call $\Sol_L$ the \textit{solution space} of $L$.

\begin{defin}
Let $L_1,L_2\in \mathbb{C}[z,\vartheta]$ be reduced. The \textbf{tensor product} $L_1\otimes L_2\in \mathbb{C}[z,\vartheta]$ of $L_1$ and $L_2$ over $\mathbb{C}[z]$ is the reduced operator of minimal degree, whose solution space contains the set $\{y_1y_2\mid L_1(y_1)=L_2(y_2)=0\}\subset\mathcal{F}$.
\end{defin}

\begin{rem}
\begin{enumerate}
 \item We always have $L_1\otimes L_2\in\mathbb{C}[z,\vartheta]$, as the vector space $V\subset \mathcal{F}$ spanned by $\{y_1y_2\mid L_1(y_1)=L_2(y_2)=0\}$ is set-wise invariant
under the natural action of the differential Galois group $G$ of $\mathcal{F}\supset\mathbb{C}[z]$. Thus by \cite[Lemma 2.17]{Put} the solution space of $L_1\otimes L_2$ is exactly $V$. 

 \item We have $\deg(L_1\otimes L_2)\leq \deg(L_1)\deg(L_2)$.

\item If $L_2$ has order one and its solution space is spanned by $g\in\mathcal{F}$, the solution space of the tensor product $L_1\otimes L_2$ is spanned by $\{gy\mid L_1(y)=0\}\subset \mathcal{F}$. Thus we write
\[L_1^{\frac{1}{g}}:=L_1\otimes L_2\in \mathbb{C}[z,\vartheta].\]

\item Symmetric and exterior powers of differential operators are defined similarly. For a reduced $L\in\mathbb{C}[z,\vartheta]$ we set $\Sym^n(L)$ to be the reduced operator of minimal degree whose solution space is spanned by the set
\[\{y_1\cdot\dots\cdot y_n\mid L(y_i)=0 \textrm{ for all } i=1,\dots,n\}\subset \mathcal{F}\] and $\Lambda^n(L)$ to be the reduced operator of minimal degree whose solution space is spanned by the set
\[\{\Wr(y_1,\dots,y_n)\mid  L(y_i)=0 \textrm{ for all } i=1,\dots,n\}\subset \mathcal{F},\] where $\Wr$ denotes the Wronskian \[\Wr(y_1,\dots,y_n):=\det\begin{pmatrix}y_1&\dots&y_n\\z\frac{d}{dz}y_1&\dots&z\frac{d}{dz}y_n\\\vdots&\vdots&\vdots\\\left(z\frac{d}{dz}\right)^{n-1}y_1&\dots&\left(z\frac{d}{dz}\right)^{n-1}y_n\end{pmatrix}\] with respect to the unique extension of $z\frac{d}{dz}$ to $\mathcal{F}$.

\end{enumerate}
\end{rem}

Since the solution space of $L_1\otimes L_2$ is locally isomorphic to a subspace of the tensor product of the solution spaces of $L_1$ and $L_2$, we have the following 

\begin{prop}
 Let $L_1,L_2\in\mathbb{C}[z,\vartheta]$ be irreducible with singular loci $S_1,S_2\in\mathbb{C}\cup\{\infty\}$ and induced monodromy tuples $\T_1$ and $\T_2$ with respect to $b\in\MP^1\setminus\{S_1\cup S_2\}$. Then the following hold. 
\begin{enumerate}
 \item The monodromy tuple induced by $L_1\otimes L_2$ is a direct summand of $\T_1\otimes \T_2$. 
\item The monodromy tuple induced by $\Sym^nL_1$ is a direct summand of $\Sym^n\T_1$. 
\item The monodromy tuple induced by $\Lambda^nL_1$ is a direct summand of $\Lambda^n\T_1$. 
\end{enumerate}
\end{prop}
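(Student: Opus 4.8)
The common mechanism in all three parts is that the operator-level product realizes the solution space of the new operator as a \emph{monodromy-equivariant image} of a functorial construction applied to $\Sol(L_1)_b\otimes\Sol(L_2)_b$ (resp.\ $\Sol(L_1)_b^{\otimes n}$); semisimplicity of the source then upgrades ``image'' to ``direct summand''. I would first set up (i). Since $b\in\MP^1\setminus(S_1\cup S_2)$ is an ordinary point for $L_1$, $L_2$ and $L_1\otimes L_2$, all their solutions are holomorphic there, and after identifying the relevant part of the universal Picard–Vessiot ring with germs in $\mathcal{O}_b$ the Remark following the definition of the tensor product gives that $\Sol(L_1\otimes L_2)_b$ is exactly the $\CC$-span of $\{y_1y_2\mid L_1(y_1)=L_2(y_2)=0\}$. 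Hence the multiplication map
\[ \mu\colon \Sol(L_1)_b\otimes_{\CC}\Sol(L_2)_b\longrightarrow\mathcal{O}_b,\qquad y_1\otimes y_2\longmapsto y_1y_2, \]
has image precisely $\Sol(L_1\otimes L_2)_b$.

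Next I would check equivariance. Analytic continuation of germs along a loop $\gamma_{s}$ is a ring automorphism, so the continuation of $y_1y_2$ is the product of the continuations of $y_1$ and $y_2$; therefore $\mu$ intertwines the representation $\T_1\otimes\T_2$ on the source with the monodromy tuple induced by $L_1\otimes L_2$ on the target. Consequently $\ker\mu$ is a subrepresentation and the monodromy tuple of $L_1\otimes L_2$ is isomorphic, as a monodromy module, to the quotient $(\Sol(L_1)_b\otimes\Sol(L_2)_b)/\ker\mu$.

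It remains to see that $\T_1\otimes\T_2$ is semisimple, so that this quotient is isomorphic to a subrepresentation, i.e.\ to a direct summand. Because $L_1$ is irreducible, $\T_1$ generates an irreducible subgroup of $\GL(\Sol(L_1)_b)$, so the Zariski closure $G_1$ of $\langle\T_1\rangle$ acts irreducibly and is therefore reductive; likewise $G_2$. The representation $\T_1\otimes\T_2$ factors through a rational representation of the reductive group $G_1\times G_2$, and rational representations of reductive groups in characteristic $0$ are completely reducible; hence $\Sol(L_1)_b\otimes\Sol(L_2)_b$ is a semisimple monodromy module and $(\Sol(L_1)_b\otimes\Sol(L_2)_b)/\ker\mu$ is a direct summand of $\T_1\otimes\T_2$. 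This proves (i).

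For (ii) and (iii) I would run the identical argument with $\Sym^nL_1$, resp.\ $\Lambda^nL_1$: by the corresponding Remark their solution spaces are the $\CC$-spans of $\{y_1\cdots y_n\}$, resp.\ of $\{\Wr(y_1,\dots,y_n)\}$, which are the images of monodromy-equivariant maps $\Sym^n\Sol(L_1)_b\to\mathcal{O}_b$, resp.\ $\Lambda^n\Sol(L_1)_b\to\mathcal{O}_b$ (the Wronskian being alternating and multilinear in the $y_i$, transforming by the determinant under a change of fundamental system). As $G_1$ is reductive, $\Sym^n\Sol(L_1)_b$ and $\Lambda^n\Sol(L_1)_b$ — being summands of $\Sol(L_1)_b^{\otimes n}$ — are again semisimple, so the same quotient argument gives the result. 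The only non-formal input is the semisimplicity of these tensor/symmetric/exterior powers, and I expect that step — passing to the reductive Zariski closures and invoking complete reducibility in characteristic zero — to be the real content; everything else is an unwinding of the definitions of the operator-level products together with the multiplicativity of analytic continuation.
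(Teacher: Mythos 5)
Your argument is correct in the setting the paper actually works in, and it is essentially the argument the paper leaves implicit: the paper gives no proof beyond the remark that the local solution space of $L_1\otimes L_2$ is ``locally isomorphic to a subspace'' of $\Sol(L_1)_b\otimes\Sol(L_2)_b$, and your write-up supplies exactly the missing content, namely that this solution space is a priori a monodromy-equivariant \emph{quotient} (the image of the multiplication map $\mu$) and that semisimplicity is what turns the quotient into a direct summand.

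Two steps should be sharpened. First, in the semisimplicity argument the representation of $\pi_1$ on $\Sol(L_1)_b\otimes\Sol(L_2)_b$ does not extend to all of $G_1\times G_2$; it only factors through the Zariski closure $H$ of the image of $\pi_1$ in $\GL(\Sol(L_1)_b)\times\GL(\Sol(L_2)_b)$, and complete reducibility of a $G_1\times G_2$-module does not automatically pass to a subgroup. The standard fix: $H$ surjects onto the reductive groups $G_1$ and $G_2$ under the two projections, so the unipotent radical of $H$ has trivial image in both factors and is therefore trivial; hence $H$ is reductive and the module is semisimple over $H$, hence over the monodromy group (this is Chevalley's theorem that tensor products of semisimple representations are semisimple in characteristic zero). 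Second, the inference ``$L_1$ irreducible $\Rightarrow$ $\T_1$ generates an irreducible group'' is false for arbitrary irreducible operators (irregular examples such as the Airy operator have trivial monodromy); it uses that the operators are fuchsian, so that the monodromy group is Zariski dense in the differential Galois group (Schlesinger). Since the paper restricts to fuchsian operators immediately before this proposition, your proof is fine in context, but you should say so; alternatively, run your quotient argument over the differential Galois group $G$ of a Picard--Vessiot extension containing both solution spaces: irreducibility of $L_i$ gives irreducibility of $\Sol(L_i)$ as a $G$-module with no regularity hypothesis, semisimplicity of the tensor product over $G$ follows as above, and a $G$-stable complement of $\ker\mu$ is in particular $\pi_1$-stable, which proves the statement in its literal generality. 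The same remarks apply verbatim to the $\Sym^n$ and $\Lambda^n$ cases.
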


We especially get

\begin{cor}
Let $L\in\mathbb{C}[z,\vartheta]$ be a monic differential operator with induced monodromy tuple $\T$, $a\in \MQ\setminus\MZ$ and $\alpha=\exp(2\pi ia)$. Then the monodromy tuple induced by $L^{(1-z)^a}=L\otimes L_a$ is precisely $\MT_{\Lambda_{\alpha}}(\T)$.  
\end{cor}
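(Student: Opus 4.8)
The plan is to exploit that $L_a$ has order one, so that tensoring by it is merely an invertible twist at the level of solution spaces, and then to read the monodromy off directly. First I record the solution space: by the Remark after the definition of $L_a$ we have $\Sol_{L_a}=\mathbb{C}\cdot(1-z)^{-a}$, and hence, by the item of the Remark following the definition of the tensor product that describes a tensor product with an order-one operator, $\Sol_{L\otimes L_a}=(1-z)^{-a}\cdot\Sol_L=\{(1-z)^{-a}y\mid L(y)=0\}\subset\mathcal{F}$. Since $(1-z)^{-a}$ is a unit of $\mathcal{F}$, multiplication by it is a $\mathbb{C}$-linear isomorphism $\Sol_L\to\Sol_{L\otimes L_a}$, so $\deg(L\otimes L_a)=\deg(L)=:n$ and the induced local system again has rank $n$. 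Since $(1-z)^{-a}$ is holomorphic and nowhere zero on $\mathbb{P}^1\setminus\{1,\infty\}$, the singular locus of $L\otimes L_a$ lies in $S\cup\{1,\infty\}$; I enlarge it to $\widehat{S}:=S\cup\{0,1,\infty\}=Set(\su_\T)\cup Set(\su_{\Lambda_\alpha})$ by admitting trivial-monodromy apparent singularities, and equip $\widehat{S}$ with the ordering used in the definition of the middle tensor product.

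Next I would compute the monodromy in the adapted basis. Fix a base point $b\in\mathbb{P}^1\setminus\widehat{S}$ and loops $\gamma_p$, $p\in\widehat{S}$, as in the Convention, and write $\mathrm{cont}_{\gamma}$ for analytic continuation along $\gamma$. If $y_1,\dots,y_n$ is a basis of solutions of $L$ near $b$, then $(1-z)^{-a}y_1,\dots,(1-z)^{-a}y_n$ is a basis of solutions of $L\otimes L_a$ near $b$, and $\mathrm{cont}_{\gamma_p}\bigl((1-z)^{-a}y_j\bigr)=\bigl(\mathrm{cont}_{\gamma_p}(1-z)^{-a}\bigr)\cdot\bigl(\mathrm{cont}_{\gamma_p}y_j\bigr)$. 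Here $\mathrm{cont}_{\gamma_p}y_j=\widetilde{T}_p\,y_j$ with $\widetilde{T}_p=T_p$ if $p\in S$ and $\widetilde{T}_p=\mathrm{id}$ otherwise, while $\mathrm{cont}_{\gamma_p}(1-z)^{-a}=\lambda_p(1-z)^{-a}$ with $(\lambda_0,\lambda_1,\lambda_\infty)=(1,\alpha^{-1},\alpha)$ and $\lambda_p=1$ for $p\in S\setminus\{1,\infty\}$, since $(1-z)^{-a}$ spans $\Sol_{L_a}$ whose induced monodromy tuple is $\Lambda_\alpha=(1,\alpha^{-1},\alpha)$ with respect to $\{0,1,\infty\}$. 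Hence, in this basis, the monodromy of $L\otimes L_a$ along $\gamma_p$ is the matrix $\lambda_p\,\widetilde{T}_p$.

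It then remains to match this with $\MT_{\Lambda_\alpha}(\T)$: after extending $\T$ and $\Lambda_\alpha$ by identity entries to the common ordered index set $\widehat{S}$ as prescribed in the definition of the middle tensor product, the $p$-th entry of $\MT_{\Lambda_\alpha}(\T)$ is $(\text{extension of }\T)_p\otimes(\text{extension of }\Lambda_\alpha)_p=\lambda_p\,\widetilde{T}_p$, because $\Lambda_\alpha$ is one-dimensional; this is precisely the tuple computed above. (Equivalently, the Proposition preceding the statement already gives that the monodromy tuple of $L\otimes L_a$ is a direct summand of $\T\otimes\Lambda_\alpha=\MT_{\Lambda_\alpha}(\T)$, and since $\dim\Lambda_\alpha=1$ and $\Sol_{L\otimes L_a}$ exhausts $(1-z)^{-a}\Sol_L$, the summand is the whole tuple.) I expect no substantive obstacle; the only delicate point is the bookkeeping of singular loci and their orderings — aligning the apparent singularity at $0$ (and at $1$ or $\infty$, should these fail to be genuine singularities of $L$) with the index set appearing in the definition of $\MT$ — which is a matter of matching conventions rather than a real difficulty.
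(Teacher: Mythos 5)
Your proposal is correct: the paper states this corollary without proof, as an immediate consequence of the preceding proposition on tensor products, and your parenthetical direct-summand-plus-dimension argument is exactly that intended justification, while your explicit computation of the monodromy in the basis $(1-z)^{-a}y_1,\dots,(1-z)^{-a}y_n$ (giving the matrices $\lambda_p\widetilde{T}_p$, which by rank-one-ness of $\Lambda_\alpha$ are the entries of $\MT_{\Lambda_\alpha}(\T)$ after extending both tuples to the common index set) just makes it explicit. The bookkeeping of apparent singularities you flag is indeed only a matter of conventions and is glossed over in the paper as well.
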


\subsection{Convolution and Hadamard product}

In this section we investigate the Hadamard product with local systems of type $\Lambda_{\alpha}$, where $\alpha\in S^1$, using relations to the convolution with certain local systems of rank one. We rather work with the Hadamard product than with the convolution on the level of differential operators.

We first define for $a\in\mathbb{Q}\setminus\mathbb{Z}$ the convolution of solutions of a fuchsian operator with $z^a$ and the Hadamard product with $(1-z)^{-a}$, which spans $\Sol_{L_a}$. 

\begin{defin}
Let $L\in\mathbb{C}[z,\vartheta]$ be fuchsian, $f$ a solution of $L$ and $a\in\MQ\setminus\MZ$.
\begin{enumerate}
 \item For two loops $\gamma_p, \gamma_q$ with $\nu_{\gamma_p}(q)=\nu_{\gamma_q}(p)=0$ we define the \textbf{Pochhammer contour} \[[\gamma_p,\gamma_q]:=\gamma_p^{-1}\gamma_q^{-1}\gamma_p\gamma_q.\]
\item  For $p\in\MP^1$, the expression \[C^{p}_a(f):=\int_{[\gamma_{p},\gamma_{z}]}f(x)(z-x)^a\frac{dx}{z-x}\] is called the \textbf{convolution} of $f$ and $z^a$ with respect to the Pochhammer contour $[\gamma_{p},\gamma_{z}]$.
 \item For $p\in\MP^1$, the expression \[H^{p}_{a}(f):=\int_{[\gamma_{p},\gamma_{z}]}f(x)\left(1-\frac{z}{x}\right)^{-a}\frac{dx}{x}\] is called the \textbf{Hadamard product} of $f$ and $(1-z)^{-a}$ with respect to the Pochhammer contour $[\gamma_{p},\gamma_{z}]$.
\end{enumerate}
 \end{defin}

\begin{rem}\label{ConvtoHada}

\begin{enumerate}
\item In the sequel, we will frequently use the following formulae for integrals involving Pochhammer contours for $z\not\in S$:
\begin{enumerate}
 \item  \[\int_{\gamma_p\gamma_q}f(x)dx=\int_{\gamma_q}f(x)dx+\int_{\gamma_p}\rho_{\mathcal{L}}(\gamma_q)\left(f\right)(x)dx.\]
\item \[\int_{[\gamma_p\gamma_q,\gamma_{z}]}f(x)(\lambda-x)^a\frac{dx}{\lambda-x}=C_a^q(f)+C_a^p\left(\rho_{\mathcal{L}}(\gamma_q)(f)\right).\]
\item \[\int_{\left[\gamma_p^{-1},\gamma_z\right]}f(x)(z-x)^a\frac{dx}{z-x}=-C_a^p\left(\rho_{\mathcal{L}}(\gamma_p)^{-1}(f)\right).\]
\end{enumerate}

\item If $f\in(z-p)^{\mu}\mathbb{C}\llbracket z-p\rrbracket$ near $z=p$, we get
\[C^{p}_a(f)=(1-\exp(2\pi i \mu))\int_{\gamma_z}f(x)(z-x)^a\frac{dx}{(z-x)}+(\exp(2\pi ia)-1)\int_{\gamma_p}f(x)(z-x)^a\frac{dx}{(z-x)}.\] 
In particular, we have
\[\int_{\gamma_z}f(x)(z-x)^a\frac{dx}{(z-x)}=(1-\exp(2\pi i a))\int_{x_0}^{z}f(x)(z-x)^a\frac{dx}{(z-x)}\] and 
\[\int_{\gamma_p}f(x)(z-x)^a\frac{dx}{(z-x)}=(1-\exp(2\pi i \mu))\int_{x_0}^pf(x)(z-x)^a\frac{dx}{(z-x)},\] if $\mu$ is not a negative integer. Thus we get 
\[C^{p}_a(f)=(1-\exp(2\pi i \mu))(1-\exp(2\pi i a))\int_{p}^zf(x)(z-x)^a\frac{dx}{(z-x)}.\] Note that the right hand side does not depend on the choice of the base point $x_0\in\mathbb{P}^1\setminus S$ and may be interpreted as a meromorphic function near $z=p$.

\item One checks that the convolution and the Hadamard product for a fixed Pochhammer contour $[\gamma_p,\gamma_z]$ are related by the following formulae
\begin{enumerate}
\item $C^{p}_a(f)=(-1)^{a-1}H^{p}_{1-a}(z^af)$.
\item $H^{p}_a(f)=(-1)^{-a}C^{p}_{1-a}\left(z^{a-1}f\right)$.
\end{enumerate} 
\end{enumerate}
\end{rem}

In order to find differential equations having solutions $C^p_a(f)$, we investigate some properties of the convolution. 

\begin{lemma}\label{rules}
Let $L\in\mathbb{C}[z,\vartheta]$ be fuchsian, $f$ a solution of $L$, $a\in\MQ\setminus\MZ$, $p\in\mathbb{P}^1$ and $[\gamma_p,\gamma_z]$ a fixed Pochhammer contour. We have the following relations
\begin{enumerate}
 \item $\frac{d}{dz}C^p_a(f)=C^p_{a}\left(\frac{d}{dz}f\right)=(a-1)C^p_{a-1}(f)$.
\item $C^p_a(zf)=zC^p_a(f)-C^p_{a+1}(f)$.
\item $C_a^p\left(z\frac{d}{dz}f\right)=(z\frac{d}{dz}-a)C_a^p(f)$.
\item $C_a^p\left(z^if\right)=\prod_{j=0}^{i-1}\left(\frac{z\frac{d}{dz}}{a+j}-1\right)C_{a+i}^p(f)$.
\end{enumerate}
\end{lemma}

\begin{proof}
Using Leibniz's rule for differentiating under the integral sign we get 
\begin{align*}
 \frac{d}{dz}\int_{[\gamma_p,\gamma_z]}f(x)(z-x)^{a-1}dx=\int_{[\gamma_p,\gamma_z]}f(x)\frac{d}{dz}(z-x)^{a-1}dx=-\int_{[\gamma_p,\gamma_z]}f(x)\frac{d}{dx}(z-x)^{a-1}dx.
\end{align*}
As the monodromy of $f(x)(z-x)^{a-1}$ along $[\gamma_p,\gamma_z]$ is trivial, integration by parts yields
\begin{align*}
-\int_{[\gamma_p,\gamma_z]}f(x)\frac{d}{dx}(z-x)^{a-1}dx=\int_{[\gamma_p,\gamma_z]}\left(\frac{d}{dx}f(x)\right)(z-x)^{a-1}dx
\end{align*}
and hence the first result. The other results are obtained by direct computation and the results established before.\qed
\end{proof}

Using those properties we get the following

\begin{prop}\label{Falt}
Let $L=\sum_{i=0}^mz^iP_i(\vartheta)\in\mathbb{C}[z,\vartheta]$ be fuchsian, $f$ a solution of $L$ and $a\in\MQ\setminus\MZ$. Then $C^p_a(f)$ is a solution of
\[\mathcal{C}_a(L):=\sum_{i=0}^mz^i\prod_{j=0}^{i-1}(\vartheta+i-a-j)\prod_{k=0}^{m-i-1}(\vartheta-k)P_i(\vartheta-a)\] for each $p\in\mathbb{P}^1$. 
\end{prop}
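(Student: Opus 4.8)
The plan is to verify directly that the proposed operator $\mathcal{C}_a(L)$ annihilates $C^p_a(f)$, using the computational rules for the convolution established in Lemma~\ref{rules}. The key observation is that $\mathcal{C}_a(L)$ is built so that applying it to $C^p_a(f)$ mirrors, term by term, the action of $L$ on $f$ after an appropriate shift of the parameter $a$ in the convolution.

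First I would rewrite $C^p_a(f)$ in terms of a convolution with exponent shifted down, since the natural relation in Lemma~\ref{rules}(i) relates $C^p_a$ and $C^p_{a-1}$. More precisely, I would work with $g := C^p_{a+1}(f)$ or directly track how $z^i P_i(\vartheta)$ acting on $f$ translates under the convolution. Using Lemma~\ref{rules}(iii), we have $C^p_a(P_i(\vartheta)f) = P_i(\vartheta - a)C^p_a(f)$, since $P_i$ is a polynomial in $\vartheta$ and the rule $C^p_a(\vartheta f) = (\vartheta - a)C^p_a(f)$ extends multiplicatively. Then using Lemma~\ref{rules}(iv), $C^p_a(z^i h) = \prod_{j=0}^{i-1}\left(\frac{\vartheta}{a+j} - 1\right)C^p_{a+i}(h)$ for any solution-type function $h$; combining this with the relation between $C^p_a$ and $C^p_{a+i}$ coming from iterating Lemma~\ref{rules}(i) (which introduces the factors $\prod_{k}(\vartheta - k)$ and rescales by constants), one obtains $C^p_a(z^i P_i(\vartheta) f)$ expressed as a differential operator in $\vartheta$ applied to $C^p_a(f)$.

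The main technical step is to reconcile the two ways of shifting: on one hand $0 = C^p_a(L f) = \sum_i C^p_a(z^i P_i(\vartheta) f)$, and on the other hand we want this to equal $\mathcal{C}_a(L)$ applied to $C^p_a(f)$ (up to a nonzero overall constant, which does not affect being a solution). Carrying out the bookkeeping, the factor $\prod_{j=0}^{i-1}(\vartheta + i - a - j)$ arises from commuting $z^i$ past $P_i(\vartheta - a)$ and from the $\frac{\vartheta}{a+j}-1$ factors of rule (iv) after clearing denominators, while the factor $\prod_{k=0}^{m-i-1}(\vartheta - k)$ arises from expressing $C^p_{a+i}$ back in terms of $C^p_{a+m}$ (a common normalization so all terms have the same convolution exponent), using rule (i) repeatedly. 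One must check that the constants $(a+j-1)(a+j-2)\cdots$ produced along the way are nonzero, which holds because $a \notin \MZ$, so that dividing through is legitimate and $\mathcal{C}_a(L)$ as written (with polynomial, not rational, coefficients) genuinely annihilates $C^p_a(f)$.

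The hard part will be getting the index ranges in the two products exactly right and confirming the overall nonzero constant; this is the kind of calculation where an off-by-one in the shift $\vartheta \mapsto \vartheta - a$ versus $\vartheta \mapsto \vartheta + i - a - j$ is easy to make. I would organize it by first treating a single monomial term $z^i P_i(\vartheta)$, proving the identity
\[
C^p_a\bigl(z^i P_i(\vartheta) f\bigr) = c_i \cdot z^i \prod_{j=0}^{i-1}(\vartheta + i - a - j)\prod_{k=0}^{m-i-1}(\vartheta - k)\, P_i(\vartheta - a)\, C^p_a(f)
\]
with $c_i$ an explicit nonzero constant independent of... actually one checks $c_i$ is the same for all $i$ (this is the point of normalizing to exponent $a+m$), so summing over $i$ and dividing by the common constant yields $\mathcal{C}_a(L)\bigl(C^p_a(f)\bigr) = 0$. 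Since $p$ played no role beyond fixing the Pochhammer contour, the statement holds for every $p \in \mathbb{P}^1$.
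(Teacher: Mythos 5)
Your overall strategy is the same as the paper's: start from $0=C^p_\bullet(Lf)$, use Lemma~\ref{rules}~(iii) to pull $P_i(\vartheta)$ out as $P_i(\vartheta-\text{parameter})$, rule (iv) for the powers of $z$, and rule (i) to normalize all terms to a common convolution exponent, after which the denominators from (i) and (iv) combine into an $i$-independent nonzero constant (this part of your plan is correct and is exactly how the paper's constants $\prod_{j}(b+j)$ come out). However, the central displayed identity in your plan is false as written, and the error is not a harmless off-by-one: for $i=0$ it would assert $C^p_a(P_0(\vartheta)f)\hg \vartheta\prod_{k}(\vartheta-k)\cdots P_0(\vartheta-a)C^p_a(f)$, contradicting rule (iii), and for $i=m$ it would require $C^p_{a+m}(f)\hg z^m C^p_a(f)$, which rule (ii) shows is wrong. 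The root cause is that rule (i) only \emph{lowers} the convolution parameter under differentiation, so the common-exponent normalization must be to the \emph{top} parameter: starting from $0=C^p_a(Lf)$ every term becomes a differential operator applied to $C^p_{a+m}(f)$, not to $C^p_a(f)$, and the shift in $P_i$ is $\vartheta\mapsto\vartheta-(a+m)$ with $z$-powers $z^{i-m}$. So your computation, done correctly, proves the proposition with $a$ replaced by $a+m$, not the stated one.

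The missing idea — which is precisely the paper's final step — is to run the whole computation with an auxiliary parameter $b$, i.e. apply $C^p_b$ to $Lf=0$, normalize to the exponent $b+m$, and only at the end set $b=a-m$ (legitimate since $a-m\in\MQ\setminus\MZ$), so that the common exponent is $a$, the shift becomes $P_i(\vartheta-a)$, commuting $z^{i-m}$ past the $\vartheta$-polynomials produces the factors $\prod_{j=0}^{i-1}(\vartheta+i-a-j)$, and the overall prefactor is $z^{-m}\prod_{l=1}^{m}(a-l)^{-1}\neq 0$. Equivalently, one can keep your version and conclude by re-indexing ($a\mapsto a-m$ in the hypothesis), but some such substitution must appear explicitly; without it the claimed identity and hence the conclusion $\mathcal{C}_a(L)\bigl(C^p_a(f)\bigr)=0$ do not follow. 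With that correction the remaining bookkeeping goes through as you outline.
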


\begin{proof}
For $0\leq i\leq m$ and $b\in\MQ\setminus\MZ$ we have 
\begin{align*}
 C^p_{b+i}(g)&=\frac{1}{\prod_{l=1}^{m-i}(b+m-l)}\left(C^p_{b+m}(g)\right)^{(m-i)}=z^{i-m}\frac{z^{m-i}}{\prod_{l=1}^{m-i}(b+m-l)}\left(C^p_{b+m}(g)\right)^{(m-i)}\\&=z^{i-m}\frac{\prod_{k=0}^{m-i-1}(\vartheta-k)}{\prod_{l=1}^{m-i}(b+m-l)}C^p_{b+m}(g). 
\end{align*}
for each $g$ which is a solution of some $R\in\mathbb{C}[z,\vartheta]$ by Lemma \ref{rules}. Thus 
\begin{align*}
0&=C^p_{b}\left(Lf\right)=\sum_{i=0}^m C^p_b\left(z^iP_i(\vartheta)f\right)=\sum_{i=0}^{m}\prod_{j=0}^{i-1}\left(\frac{\vartheta}{b+j}-1\right)C^p_{b+i}\left(P_i(\vartheta)f\right)\\&=
\sum_{i=0}^{m}\prod_{j=0}^{i-1}\left(\frac{\vartheta}{b+j}-1\right)z^{i-m}\frac{\prod_{k=0}^{m-i-1}(\vartheta-k)}{\prod_{l=1}^{m-i}(b+m-l)}C^p_{b+m}\left(P_i(\vartheta)f\right)\\&=\sum_{i=0}^{m}z^{i-m}\prod_{j=0}^{i-1}\left(\frac{\vartheta+i-m}{b+j}-1\right)\frac{\prod_{k=0}^{m-i-1}(\vartheta-k)}{\prod_{l=1}^{m-i}(b+m-l)}P_i(\vartheta-(b+m))C^p_{b+m}\left(f\right)\\&=\frac{1}{z^m\prod_{i=0}^{m-1}(b+i)}\sum_{i=0}^mz^i\prod_{j=0}^{i-1}(\vartheta+i-m-b-j)\prod_{k=0}^{m-i-1}(\vartheta-k)P_i(\vartheta-(b+m))C^p_{b+m}\left(f\right).
\end{align*}
Setting $b=a-m$, we get the desired result.\qed
\end{proof}

An approach via so called Euler-integrals can be found in \cite[Chapter II.3]{IKShY} and yields a similar operator in $\mathbb{C}[z,\partial]$. We use the relations between the convolution and the Hadamard product to obtain an operator having solutions of the form $H^p_a(f)$.

\begin{cor}\label{FHada}
Let $L=\sum_{i=0}^m z^iP_i\in\mathbb{C}[z,\vartheta]$ be fuchsian, $f$ a solution of $L$ and $a\in\mathbb{Q}\setminus \mathbb{Z}$. Then $H^p_a(f)$ is a solution of \[\mathcal{H}_a(L):=\sum_{i=0}^mz^i\prod_{j=0}^{i-1}(\vartheta+a+j)\prod_{k=0}^{m-i-1}(\vartheta-k)P_i\] for each $p\in\mathbb{P}^1$. 
\end{cor}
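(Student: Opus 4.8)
The plan is to derive $\mathcal{H}_a(L)$ directly from $\mathcal{C}_a(L)$ in Proposition \ref{Falt} by using the explicit relation between the Hadamard product and the convolution recorded in Remark \ref{ConvtoHada}(iii). Concretely, by part (iii)(b) of that remark we have $H^p_a(f) = (-1)^{-a}C^p_{1-a}(z^{a-1}f)$, so it suffices to understand which operator annihilates $C^p_{1-a}(z^{a-1}f)$ and then rewrite it. The natural first step is therefore to produce a fuchsian operator $\tilde L$ annihilating $g:=z^{a-1}f$; since $\vartheta(z^{a-1}f) = z^{a-1}(\vartheta + a - 1)f$ and more generally conjugation by $z^{a-1}$ sends $\vartheta$ to $\vartheta + a - 1$, if $L = \sum_i z^i P_i(\vartheta)$ then $g$ is annihilated by $\tilde L := \sum_i z^i P_i(\vartheta - (a-1))$, which is again fuchsian of the same degree.

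Next I would apply Proposition \ref{Falt} to $\tilde L$ with the convolution parameter $1-a$ in place of $a$. This yields that $C^p_{1-a}(g) = C^p_{1-a}(z^{a-1}f)$ is a solution of
\[
\mathcal{C}_{1-a}(\tilde L) = \sum_{i=0}^m z^i \prod_{j=0}^{i-1}(\vartheta + i - (1-a) - j)\prod_{k=0}^{m-i-1}(\vartheta - k)\,\tilde P_i(\vartheta - (1-a)),
\]
where $\tilde P_i(\vartheta) = P_i(\vartheta - a + 1)$, so that $\tilde P_i(\vartheta - (1-a)) = P_i(\vartheta - a + 1 - 1 + a) = P_i(\vartheta)$. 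The middle product telescopes cleanly: $\prod_{k=0}^{m-i-1}(\vartheta - k)$ is unchanged, and the first product becomes $\prod_{j=0}^{i-1}(\vartheta + i - 1 + a - j)$. Re-indexing $j \mapsto i - 1 - j$ in that product turns $\prod_{j=0}^{i-1}(\vartheta + a + (i-1-j))$ into $\prod_{j=0}^{i-1}(\vartheta + a + j)$, which is exactly the factor appearing in $\mathcal{H}_a(L)$. Since multiplying $f$ by the nonzero scalar $(-1)^{-a}$ and the operator by a unit does not change the annihilator, we conclude that $H^p_a(f)$ is a solution of $\mathcal{H}_a(L)$, as claimed.

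The only genuinely delicate point — and the step I expect to need the most care — is the bookkeeping of the shift parameters: one must keep straight that the conjugation $z^{a-1}\cdot$ shifts $\vartheta$ by $a-1$ in $L$, that Proposition \ref{Falt} itself shifts $P_i$ by its convolution parameter, and that the two shifts are designed to cancel precisely because the remark pairs the parameter $a$ of the Hadamard product with $1-a$ of the convolution and $z^{a-1}$ on the function side. A clean way to present this without index-chasing errors is to verify the conjugation identity $z^{c}\, Q(\vartheta)\, z^{-c} = Q(\vartheta + c)$ for $Q \in \mathbb{C}[\vartheta]$ as a preliminary remark, apply it with $c = a-1$ to rewrite $\tilde L$, and then simply quote Proposition \ref{Falt}. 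Alternatively, one can bypass Proposition \ref{Falt} and redo the computation from scratch using Lemma \ref{rules}, which also provides the differentiation and multiplication-by-$z$ rules for the Hadamard product via Remark \ref{ConvtoHada}; but the route through $\mathcal{C}_a$ is shorter and makes the structural parallel between $\mathcal{C}_a(L)$ and $\mathcal{H}_a(L)$ transparent.
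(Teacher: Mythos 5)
Your argument is correct and is exactly the route the paper intends: the corollary is stated as a consequence of Proposition \ref{Falt} via the convolution--Hadamard relation $H^p_a(f)\hg C^p_{1-a}(z^{a-1}f)$ of Remark \ref{ConvtoHada}(iii), and your bookkeeping (conjugating $L$ by $z^{a-1}$ to get $\sum_i z^iP_i(\vartheta-(a-1))$, applying $\mathcal{C}_{1-a}$, and re-indexing the product to recover $\prod_{j=0}^{i-1}(\vartheta+a+j)$) checks out. No gaps worth noting.
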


Note that for an arbitrary fuchsian operator $L$ the monodromy tuple induced by $\mathcal{C}_a(L)$, resp. $\mathcal{H}_a(L)$, is a subfactor of $\MC_{\alpha}(\T)$, resp. $\MH_{\alpha^{-1}}(\T)$. To induce the tuple $\MC_{\alpha}(\T)$ we will restrict ourselves to operators, for which the expression $f(z)(y-z)^{a-1}$ is free of residues with respect to every $y\in\mathbb{P}^1$. This is guaranteed, if the operator $L$ is positive in the following sense.

\begin{defin}
Let $a\in\mathbb{Q}\setminus\mathbb{Z}$. A differential operator $L\in\mathbb{C}[z,\vartheta]$ is called \textbf{a-positive}, if $L$ is fuchsian, has no exponents in $\mathbb{Z}_{<0}$ at each point $p\in\mathbb{C}$ and no exponents in $-a+\mathbb{Z}_{\leq 0}$ at $p=\infty$. 
\end{defin}

The next proposition justifies, that there is an operator in $\mathbb{C}[z,\vartheta]$, whose solution space is spanned by all $C^p_a(f)$, where $f$ is a solution of an $a-$positive operator $L$ and that this operator induces the desired monodromy tuple. As we have $C^p_a(f)=0$ if $f$ is holomorphic at $p$ by Remark \ref{ConvtoHada}, we can concentrate ourselves on the expressions $C^s_a(f)$ for $s\in S$.

\begin{prop}\label{OpundMon}
Let $a\in\MQ\setminus\MZ$, $L\in\mathbb{C}[z,\vartheta]$ be irreducible, $a$-positive with $\deg(L)=n$, $S=\{s_1,\dots,s_r,\infty\}$ and $\alpha=\exp(2\pi ia)$. Let furthermore $\{f_1,\ldots,f_n\}$ be a basis of $\Sol_L$, \[R=\left(C^{s_1}_a(f_1),\dots,C^{s_1}_a(f_n),\dots,C^{s_r}_a(f_1),\dots,C_a^{s_r}(f_n)\right)\] and
\[V:=\left\{R \cdot v \mid v \in \CC^{nr}\right\}. \] Then the action of $C_{\alpha}(\T)$ on $V$ as described in Section 2.2 is given by $\MC_{\alpha}(\T)$.
\end{prop}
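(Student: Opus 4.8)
The plan is to show that the vector space $V$ spanned by the convolved solutions $C^{s_i}_a(f_j)$ is exactly the middle convolution module $V^r/(\K+\L)$ of the excerpt, and that under this identification the operator-level action matches the matrices $B_k$ modulo $\K+\L$. First I would fix the local system $\locSys$ attached to $L$ and, for each generator loop $\gamma_{s_k}$, compute how analytic continuation of the integral $C^{s_k}_a(f)=\int_{[\gamma_{s_k},\gamma_z]}f(x)(z-x)^a\frac{dx}{z-x}$ along $\gamma_{s_m}$ transforms the Pochhammer contour. The key computational input is Remark \ref{ConvtoHada}(i): the formulae for $\int_{\gamma_p\gamma_q}$, for $\int_{[\gamma_p\gamma_q,\gamma_z]}$, and for $\int_{[\gamma_p^{-1},\gamma_z]}$ let me express continuation of $C^{s_k}_a(f)$ around $\gamma_{s_m}$ as a $\CC$-linear combination of the various $C^{s_i}_a(g)$ with $g$ ranging over $\rho_\locSys(\gamma)(f)$ for suitable $\gamma$. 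Writing these out for $m<k$, $m=k$, and $m>k$ reproduces precisely the three block-shapes in the matrix defining $B_k$ — the identity blocks, the row with entries $\lambda(T_i-1)$, $\lambda T_k$, $(T_j-1)$, and the trailing identity — with $\lambda=\alpha$.

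The second step is to pin down the spanning set and the relations. By Remark \ref{ConvtoHada}(ii), $C^{s_i}_a(f)$ vanishes when $f$ is holomorphic at $s_i$, so the only nonzero contributions come from the part of $f$ with nontrivial local monodromy at $s_i$; this is the statement that the $\K_i$ (built from $\ker(T_i-1)$) die. That $a$-positivity is assumed guarantees, via the residue discussion preceding the proposition, that $f(x)(z-x)^{a-1}$ has no residues at finite points or at $\infty$, so the integrand is genuinely a closed form and no spurious solutions or cancellations appear; this is what makes the induced tuple the \emph{middle} convolution $\MC_\alpha(\T)$ rather than merely a subfactor $\mathcal{C}_a(L)$. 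I would then argue that the span $V$ has dimension $\sum_k \rk(T_k-1) - (n-\rk(\alpha T_1\cdots T_r-1))$, matching $\dim\MC_\alpha(V)$ from Theorem \ref{eigen}(i): the upper bound comes from the $B_k$-description together with killing $\K$, and the matching lower bound comes from irreducibility of $L$ (hence of $\T$), which by Theorem \ref{eigen}(iii) forces $\MC_\alpha(\T)$ to be irreducible and in particular the quotient to have no further degeneration. Equivalently one can identify the relation space directly: a linear combination of the $C^{s_i}_a(f_j)$ vanishing identically corresponds, after unwinding contours, to a vector in $\K+\L$, where the $\L$-part accounts for the "contour at infinity" relation $\sum$ coming from $\prod \gamma_{s_i}$ being trivial.

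The third step is purely bookkeeping: having matched both the underlying module (quotient $V^r/(\K+\L)$) and the generators ($B_k \bmod \K+\L = \tilde B_k$), conclude that the monodromy tuple of the convolved operator acting on $V$ is conjugate to $\MC_\alpha(\T)=(\tilde B_1,\dots,\tilde B_{r+1})$. One should double-check the normalization conventions — the orientation fixed in the Convention, the relation between $\rho_\locSys(\gamma_{s_k})$ acting on $f$ and the matrix $T_k$, and the sign/branch choices in $(z-x)^a$ versus $(1-z/x)^{-a}$ — so that the $\lambda=\alpha$ (and not $\alpha^{-1}$) appears in the right place; Remark \ref{ConvtoHada}(iii) relating $C^p_a$ and $H^p_a$ is the tool for that consistency check.

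I expect the main obstacle to be the dimension/relation-count, i.e. proving that $V$ is not smaller than the middle-convolution quotient. The contour manipulations giving the matrices $B_k$ are essentially forced once one commits to the formulae of Remark \ref{ConvtoHada}(i), and $a$-positivity cleanly handles the residue issue; but showing that there are no \emph{extra} linear relations among the $C^{s_i}_a(f_j)$ beyond those encoded by $\K+\L$ requires genuinely using irreducibility of $L$ together with Theorem \ref{eigen}, rather than a direct manipulation of integrals. A careful way to organize this is to produce an explicit $\CC[F_r]$-module homomorphism from the convolution module $(C_\alpha(\T),V^r)$ onto the span of the $C^{s_i}_a(f_j)$, show its kernel contains $\K$ (by the vanishing remark) and $\L$ (by the infinity-contour relation), and then compare dimensions using Theorem \ref{eigen}(i) to force the kernel to be exactly $\K+\L$; this reduces the "hard part" to the dimension formula, which is available, plus the elementary observation that the map is surjective by construction of $V$.
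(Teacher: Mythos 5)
Your proposal follows essentially the same route as the paper's proof: kill the $\K_k$-blocks using $a$-positivity (holomorphy of $F\cdot v_k$ at $s_k$), kill $\L$ via the relation $\gamma_{s_1}\cdots\gamma_{s_r}\gamma_z\gamma_\infty=1$ applied to the contour at infinity, and then identify $V$ with $V^r/(\K+\L)$ by a dimension comparison against Theorem \ref{eigen}. The only differences are cosmetic: the paper imports the statement that $V$ is $C_\alpha(\T)$-invariant with the $B_k$-action from \cite[Section 4]{DR07} instead of re-deriving it from the contour formulae of Remark \ref{ConvtoHada}, and it compresses the lower-bound step (which you correctly note rests on irreducibility of $\MC_\alpha(\T)$, Theorem \ref{eigen}(iii)) into the phrase ``comparing the dimensions''.
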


\begin{proof}
 Due to \cite[Section 4]{DR07} the vector space $V$
 is invariant under the action of the monodromy $C_{\alpha}(\T)$. 
Let $F=(f_1,\dots,f_n)$, $\mathcal{K}_k$ and $\mathcal{L}$ as in Section \ref{katz} and $v=(v_1,\dots,v_r)^{tr}$, where $v_i\in\mathbb{C}^n$.
 Since $L$ is $a$-positive,
 $F\cdot v_k$ is holomorphic at $s_k$ for $v_k \in \ker(T_{s_k}-\id)$ and
 we get $R \cdot v =0$
 for $v \in \K_k$. Thus we have \[\dim_{\mathbb{C}}(V)\leq\sum_{s\in S\setminus\{\infty\}} \rank(T_s-\id).\]
 
We can choose for each $z\in\mathbb{P}^1\setminus S$ a path $\gamma_{z}$ fulfilling our conventions such that \[\gamma_{s_1}\cdots\gamma_{s_r}\gamma_{z}\gamma_{\infty}=1.\] With respect to the basis $F$ of $\Sol_L$ and letting $C_a^p$ operate on each component of $F$, the elements of the monodromy group of $L$ operate via
\[C_a^p\left(\rho_{\mathbb{L}}\left(\gamma_{s_i}\right)\left(F\cdot v\right)\right)=C_a^p(F)\cdot T_{s_i}v=C_a^p\left(F\cdot T_{s_i}v\right),\] for each $v\in\mathbb{C}^n$ and each $0\leq i\leq r$. Furthermore, by definition the induced monodromy action of the path $\gamma_{z}$ on the integrand of $C_a^p(f)$ is just given by multiplication with $\alpha$.
Using the rules established before, we have
\begin{align*}
\int_{[\gamma_{\infty}^{-1},\gamma_z]}F\cdot v(z-x)^a\frac{dx}{z-x}=\int_{[\gamma_{s_1}\cdots\gamma_{s_r}\gamma_z,\gamma_z]}F\cdot v(z-x)^a\frac{dx}{z-x}=\sum_{i=1}^rC_a^{s_i}\left(F\right)\cdot T_{s_{i+1}}\cdots T_{s_r}\alpha v 
\end{align*}
on the one hand and 
\begin{align*}
\int_{[\gamma_{\infty}^{-1},\gamma_z]}F\cdot v(z-x)^a\frac{dx}{z-x}=-C_a^{\infty}(F)\cdot \alpha T_{\infty}^{-1}v=-C_a^{\infty}(F)\cdot \alpha T_{s_1}\cdots T_{s_r} v
\end{align*}
on the other and thus the relation
\begin{align*}
C_a^{\infty}(F)\cdot \alpha T_{s_1}\cdots T_{s_r}v=-\sum_{i=1}^rC_a^{s_i}\left(F\right)\cdot\alpha T_{s_{i+1}}\cdots T_{s_r} v.  
\end{align*}
for each $v\in\mathbb{C}^n$. 
As the left hand side is zero for $v_{\infty}\in\ker\left(\alpha T_{s_1}\cdots T_{s_r}-\id\right)$, rewriting the right hand side yields
$R\cdot v=0$ for each $v\in \mathcal{L}$.

 Hence we get
 \[\dim_{\mathbb{C}}V\leq \sum_{s\in S\setminus\{\infty\}} \rank(T_s-\id)-(n-\rank(\alpha T_{\infty}^{-1}-\id)).\]
 By the definition of $\MC_{\alpha}(\T)$ and comparing the dimensions we get the result.\qed

\end{proof}

\begin{rem}
With the notations used in the proposition above and by the relations between the convolution and the Hadamard product, assuming that $L^{z^{1-a}}$ is $(1-a)$-positive and setting \[\tilde{R}=\left(H^{s_1}_a(f_1),\dots,H^{s_1}_a(f_n),\dots, H^{s_r}_a(f_1),\dots,H_a^{s_r}(f_n)\right)\] and
\[\tilde{V}=\left\{R \cdot v \mid v \in \CC^{nr}\right\}, \] the action of $H_{\alpha^{-1}}(\T)$ on $\tilde{V}$ is given by $\MH_{\alpha^{-1}}(\T)$. 
 \end{rem}

As $C_{\alpha}(\T)$ and $H_{\alpha^{-1}}(\T)$ are induced by a fuchsian systems, their Zariski closures over $\mathbb{C}$ are isomorphic to the differential Galois groups of the corresponding systems, see e.g. \cite[Corollary 5.2]{Put}. By the preceding proposition and \cite[Lemma 2.17]{Put}, there are non trivial differential operators in $\mathbb{C}[z,\vartheta]$ whose solution spaces are exactly $V$, resp. $\tilde{V}$. This justifies the following definition.

\begin{defin}
Let $a\in\mathbb{Q}\setminus\mathbb{Z}$ and $L\in\mathbb{C}[z,\vartheta]$ be irreducible. 
\begin{enumerate}
 \item If $L$ is $a$-positive, the \textbf{convolution} $L\star_C \left(\vartheta -a\right)$ of $L$ and $\vartheta-a$ is the non trivial reduced operator of minimal degree in $\mathbb{C}[z,\vartheta]$ whose solution space contains the set \[\bigcup_{p\in\mathbb{P}^1}\{C^{p}_a(f)\mid f \textrm{ is a solution of } L\}.\]
\item If $L^{z^{1-a}}$ is $(1-a)$-positive, the \textbf{Hadamard product} $L\star_H L_a$ of $L$ and $L_a$ is the non trivial reduced operator of minimal degree in $\mathbb{C}[z,\vartheta]$ whose solution space contains the set \[\bigcup_{p\in\mathbb{P}^1}\{H^{p}_{a}(f)\mid f \textrm{ is a solution of } L\}.\]
\end{enumerate}
\end{defin}

As a consequence of Proposition \ref{OpundMon}, we get

\begin{cor}\label{rankofconv}
Let $L\in\mathbb{C}[z,\vartheta]$ be irreducible with $\deg(L)=n$ and singular locus $S$. Let furthermore $S=\{0,s_2,\dots,s_r,\infty\}$, $a\in\MQ\setminus\MZ$ and $\alpha=\exp(2\pi ia)$.  
\begin{enumerate}
 \item If $L$ is $a$-positive, $L\star_C(\vartheta-a)\in\mathbb{C}[z,\vartheta]$ is an irreducible fuchsian right factor of $\mathcal{C}_a(L)$ of degree \[\deg\left(L\star_C(\vartheta-a)\right)=\sum_{s\in S\setminus\{\infty\}}\rank(T_s-\id)-\left(n-\rank\left(\alpha^{-1} T_{\infty}-\id\right)\right).\] Furthermore, its induced monodromy tuple is $\MC_{\alpha}(\T)$.
\item If $L^{z^{-a}}$ is $(1-a)$-positive, $L\star_H L_a\in\mathbb{C}[z,\vartheta]$ is an irreducible fuchsian right factor of $\mathcal{H}_a(L)$ of degree
\[\deg(L\star_HL_a)=\sum_{s\in S\setminus\{0\}}\rank(T_s-\id)-(n-\rank(\alpha T_0-\id)).\] Furthermore, its induced monodromy tuple is $\MH_{\alpha^{-1}}(\T)$.
\end{enumerate}

\end{cor}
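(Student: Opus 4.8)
The plan is to deduce both statements directly from Proposition \ref{OpundMon} together with the dimension bookkeeping already established there, reducing everything in (ii) to (i) via the conversion formulae of Remark \ref{ConvtoHada}. First I would treat (i). By Proposition \ref{Falt}, each $C^p_a(f)$ with $f\in\Sol_L$ is a solution of $\mathcal{C}_a(L)$, so the operator $L\star_C(\vartheta-a)$, being by definition the reduced operator of minimal degree whose solution space contains all these $C^p_a(f)$, is a right factor of $\mathcal{C}_a(L)$; since $\mathcal{C}_a(L)\in\mathbb{C}[z,\vartheta]$ and $L$ is fuchsian, $\mathcal{C}_a(L)$ is fuchsian and hence so is its right factor. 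Next I would identify the solution space of $L\star_C(\vartheta-a)$ with the space $V$ of Proposition \ref{OpundMon}: because $L$ is $a$-positive, $C^p_a(f)=0$ whenever $f$ is holomorphic at $p$ (Remark \ref{ConvtoHada}), so only $p\in S\setminus\{\infty\}$ contribute and, by the residue-freeness built into $a$-positivity, the analogous vanishing at $p=\infty$ holds; thus the span of $\bigcup_p\{C^p_a(f)\}$ equals $V=\{R\cdot v\}$. Proposition \ref{OpundMon} then gives both $\dim_{\mathbb{C}}V=\sum_{s\in S\setminus\{\infty\}}\rank(T_s-\id)-(n-\rank(\alpha T_\infty^{-1}-\id))$ — which is exactly the claimed degree once one notes $\rank(\alpha T_\infty^{-1}-\id)=\rank(\alpha^{-1}T_\infty-\id)$ — and the fact that the monodromy of this solution space is $\MC_\alpha(\T)$. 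Irreducibility of $L\star_C(\vartheta-a)$ follows from Theorem \ref{eigen}(iii), which asserts $\MC_\lambda(V)$ is irreducible, since a reducible operator would carry a reducible monodromy representation.

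For (ii) I would not repeat the argument but instead invoke the definition $L\star_H L_a=L\otimes L_a$-type relation is replaced by the explicit link $H^p_a(f)=(-1)^{-a}C^p_{1-a}(z^{a-1}f)$ of Remark \ref{ConvtoHada}(3)(b): applying it to solutions of the twisted operator $L^{z^{1-a}}$ (whose solutions are exactly $z^{a-1}f$ for $f\in\Sol_L$) shows that $H^p_a(f)$ ranges over a scalar multiple of the convolution space of $L^{z^{1-a}}$ with $1-a$. The hypothesis that $L^{z^{1-a}}$ is $(1-a)$-positive is precisely what is needed to apply part (i) to this twisted operator, and Corollary \ref{FHada} shows $H^p_a(f)$ solves $\mathcal{H}_a(L)$, giving the right-factor and fuchsian claims. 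The degree formula then comes from part (i) applied to $L^{z^{1-a}}$, after translating the ranks: twisting by $z^{1-a}$ leaves $\rank(T_s-\id)$ unchanged for $s\neq 0,\infty$, replaces the role of $\infty$ by $0$, and converts $\rank(\alpha^{-1}T_\infty-\id)$ for the twisted tuple into $\rank(\alpha T_0-\id)$ for $\T$; the Remark following Proposition \ref{OpundMon} already records that the monodromy obtained is $\MH_{\alpha^{-1}}(\T)$, and irreducibility again follows from Theorem \ref{eigen}(iii).

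The genuinely delicate point — and the step I expect to be the main obstacle — is the careful verification that the degree of $L\star_C(\vartheta-a)$, defined abstractly as a minimal-degree operator, really equals $\dim_{\mathbb{C}}V$ rather than merely being bounded by it, i.e.\ that $V$ is exactly the full solution space and not a proper subspace. This rests on combining the upper bound $\dim_{\mathbb{C}}V\le\sum_{s\in S\setminus\{\infty\}}\rank(T_s-\id)-(n-\rank(\alpha T_\infty^{-1}-\id))$ from Proposition \ref{OpundMon} with the matching lower bound coming from the definition of $\MC_\alpha(\T)$ and the irreducibility statement of Theorem \ref{eigen}, so that the two coincide; equivalently, one must know that the map $v\mapsto R\cdot v$ has kernel exactly $\K+\L$, which is where $a$-positivity (controlling the kernel contributions at the finite singularities and at $\infty$) is used in an essential way. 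Everything else — the right-factor property, the fuchsian property, the translation between the convolution and Hadamard normalizations, and the identification of the induced monodromy — is routine once this dimension count is pinned down.
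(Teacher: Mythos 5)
Your overall route is the one the paper itself takes: deduce the corollary from Proposition \ref{OpundMon} (and the remark following it for the Hadamard case), with Proposition \ref{Falt}, resp.\ Corollary \ref{FHada}, supplying the right-factor property, Theorem \ref{eigen} (iii) the irreducibility, and the observation $\rank(\alpha T_\infty^{-1}-\id)=\rank(\alpha^{-1}T_\infty-\id)$ together with the twist bookkeeping for part (ii) closing the gap between the two normalizations. Two of your justifications, however, do not hold as stated. First, $a$-positivity does \emph{not} make $C^\infty_a(f)$ vanish; what is true (and is established inside the proof of Proposition \ref{OpundMon}, not by Remark \ref{ConvtoHada}) is the relation $C_a^{\infty}(F)\cdot \alpha T_{s_1}\cdots T_{s_r}v=-\sum_{i=1}^rC_a^{s_i}(F)\cdot\alpha T_{s_{i+1}}\cdots T_{s_r} v$, which shows that the convolutions at $\infty$ lie in the span $V$ of those at the finite singularities. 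It is this containment, not a vanishing, that guarantees the minimal operator whose solution space contains $\bigcup_{p\in\mathbb{P}^1}\{C^p_a(f)\}$ has degree exactly $\dim_{\mathbb{C}}V$; as written, your argument for the degree formula has a hole at $p=\infty$.

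Second, the step ``since $\mathcal{C}_a(L)\in\mathbb{C}[z,\vartheta]$ and $L$ is fuchsian, $\mathcal{C}_a(L)$ is fuchsian'' is not an argument: membership in $\mathbb{C}[z,\vartheta]$ imposes nothing (e.g.\ $\vartheta-z$ lies in $\mathbb{C}[z,\vartheta]$ and has an irregular singularity at $\infty$), and regular singularity of the Euler-transformed operator at the finite non-zero singular points is not automatic --- its order jumps from $n$ to $m+n$ while its top $\vartheta$-coefficient is the same polynomial in $z$ as that of $L$, so the fuchsian conditions would have to be re-verified and you give no verification. The fuchsian-ness of $L\star_C(\vartheta-a)$ should instead be obtained directly: its solution space $V$ is spanned by Pochhammer-contour integrals of solutions of the fuchsian operator $L$, hence all of its local solutions have moderate growth at every point (equivalently, the monodromy is that of the middle convolution of a Fuchsian system, which is again Fuchsian by \cite{DR07}), and an operator whose full local solution spaces have moderate growth is fuchsian; the same remark applies to $L\star_H L_a$. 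With these two repairs your proof goes through and coincides with the paper's derivation from Proposition \ref{OpundMon}.
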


The degree of the operator $\mathcal{C}_a(L)$, resp. $\mathcal{H}_a(L)$, is possibly much higher than the degree of $L\star_C(\vartheta-a)$, resp. $L\star_HL_a$. As we know the degrees of $L\star_C(\vartheta-a)$, resp. $L\star_HL_a$, we can try to find those operators by a factorization of $\mathcal{C}_a(L)$, resp. $\mathcal{H}_a(L)$, into irreducible operators. Such a factorization is in general not unique, but yields a composition series of the solution space $W$ of the operator with respect to the action of its differential Galois group $G$, see e.g \cite[Proposition 2.11]{Singer}. It will turn out that in our cases we always have a factorization \[\mathcal{H}_a(L)=\prod_{i=0}^l(\vartheta+c_i)R,\] with $c_1,\dots,c_l\in\mathbb{C}$ and $\deg(R)=\deg(L\star_HL_a)>1$. As then the only $\deg(L\star_H L_a)$-dimensional $G$-invariant subspace of $W$ on which $G$ acts irreducibly is exactly the solution space of $R$, we have $R=L\star_HL_a$. In particular, we have the following quite technical

\begin{prop}
Let $a\in\mathbb{Q}\setminus\mathbb{Z}$, $L=\sum_{i=0}^mz^iP_i\in\mathbb{C}[z,\vartheta]$ be irreducible and $\{0,\infty\}\subset S$. Let furthermore $k_0\in\mathbb{N}$ maximal such that $\prod_{i=0}^{k_0-j-1}(\vartheta+a-1-i)$ divides $P_j$ for all $0\leq j\leq k_0-1$ and $k_{\infty}\in\mathbb{N}$ maximal such that $\prod_{i=0}^{k_{\infty}-j-1}(\vartheta+1+i)$ divides $P_{m-j}$ for all $0\leq j\leq k_{\infty}-1$. Then 
\begin{enumerate}
 \item $\mathcal{H}_a(L)=\prod_{i=0}^{k_0-1}(\vartheta+a-1-i)\prod_{j=0}^{k_{\infty}-1}(\vartheta-m+1+j)R,$ with $R\in\mathbb{C}[z,\vartheta]$.
\item If $L^{z^{-a}}$ is $(1-a)-$positive, the operator $L\star_H L_a$ is an irreducible right factor of $R$.
\item If $L^{z^{-a}}$ is $(1-a)-$positive, $m=\sum_{s\in S\setminus\{0,\infty\}}\rank(T_s-\id)$, $\rank(\exp(2\pi i a)T_0-\id)=n-k_0$ and $\rank(T_{\infty}-\id)=n-k_{\infty}$, we have $R=L\star_H L_a$.
\end{enumerate}
\end{prop}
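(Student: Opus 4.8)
The plan is to prove the three statements in sequence, each building on the previous, with the core computation being an explicit description of what the operator $\mathcal{H}_a(L)$ looks like as a left multiple of factors $(\vartheta+c)$.

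First I would establish (i) by examining the explicit formula for $\mathcal{H}_a(L)$ from Corollary \ref{FHada}, namely $\mathcal{H}_a(L)=\sum_{i=0}^mz^i\prod_{j=0}^{i-1}(\vartheta+a+j)\prod_{k=0}^{m-i-1}(\vartheta-k)P_i$. The idea is to observe that factoring an operator $Q\in\mathbb{C}[z,\vartheta]$ on the \emph{left} by $(\vartheta+c)$ is possible precisely when the rewriting $Q=\sum_i z^i Q_i(\vartheta)$ has the property that, after commuting $\vartheta$ past the $z^i$ (using $\vartheta z = z(\vartheta+1)$), every coefficient polynomial becomes divisible by an appropriately shifted linear factor; concretely $(\vartheta+c)\mid$ (left) an operator $\sum_i z^i Q_i(\vartheta)$ iff $(\vartheta+c-i)\mid Q_i(\vartheta)$ for each $i$. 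So I would compute, for the specific operator $\mathcal{H}_a(L)$, the coefficient of $z^i$, which is $\prod_{j=0}^{i-1}(\vartheta+a+j)\prod_{k=0}^{m-i-1}(\vartheta-k)P_i(\vartheta)$, and check that the hypotheses on $k_0$ (that $\prod_{i=0}^{k_0-j-1}(\vartheta+a-1-i)\mid P_j$) and on $k_\infty$ (that $\prod_{i=0}^{k_\infty-j-1}(\vartheta+1+i)\mid P_{m-j}$) force divisibility by the shifted linear factors coming from $\prod_{i=0}^{k_0-1}(\vartheta+a-1-i)$ and $\prod_{j=0}^{k_\infty-1}(\vartheta-m+1+j)$ respectively. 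The $k_0$-part of the factorization should interact with the $\prod_{j=0}^{i-1}(\vartheta+a+j)$ factors (which contribute exponents near $a$ at $z=0$), and the $k_\infty$-part with the $\prod_{k=0}^{m-i-1}(\vartheta-k)$ factors (exponents near $0$, which correspond after the transformation $z\mapsto 1/z$ to behaviour at $\infty$). This is the main computational obstacle: one has to carefully track the index shifts so that the product of the extracted linear factors, read in the correct noncommutative order, genuinely divides $\mathcal{H}_a(L)$ on the left, and that nothing more than $k_0+k_\infty$ factors can be extracted by a dimension/degree count using $\deg\mathcal{H}_a(L)=m+n$ (where $n=\deg L$).

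Next, statement (ii) follows almost immediately from the earlier theory: by Corollary \ref{FHada} and the definition of $L\star_H L_a$, the solution space of $L\star_H L_a$ is spanned by the convolutions $H^p_a(f)$, all of which are solutions of $\mathcal{H}_a(L)$; since $L^{z^{-a}}$ is $(1-a)$-positive, Corollary \ref{rankofconv} tells us $L\star_H L_a$ is irreducible and fuchsian. An irreducible right factor of $\mathcal{H}_a(L)$ whose solution space lies inside that of $R$ (because the extracted left factors $(\vartheta+c_i)$ have one-dimensional solution spaces $z^{-c_i}$, which are not among the solutions $H^p_a(f)$ of the irreducible higher-rank piece — here I would invoke positivity to rule out the relevant exponents appearing) must actually be a right factor of $R$. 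I would phrase this via the composition-series picture: a factorization gives a composition series of the solution space under the differential Galois group action, so $L\star_H L_a$, being the irreducible constituent of the correct dimension, sits inside the constituent $\Sol_R$.

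Finally, for (iii) I would apply Corollary \ref{rankofconv}(ii): under the stated rank hypotheses, $\deg(L\star_H L_a)=\sum_{s\in S\setminus\{0\}}\rank(T_s-\id)-(n-\rank(\alpha T_0-\id))$. Plugging in $m=\sum_{s\in S\setminus\{0,\infty\}}\rank(T_s-\id)$, $\rank(T_\infty-\id)=n-k_\infty$, and $\rank(\alpha T_0-\id)=n-k_0$ gives $\deg(L\star_H L_a)=m+(n-k_\infty)-k_0=\deg\mathcal{H}_a(L)-k_0-k_\infty=\deg R$. So the irreducible right factor $L\star_H L_a$ of $R$ has the same degree as $R$; since $R$ is monic (or reduced) of that degree and $L\star_H L_a$ is reduced, they must coincide. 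The only subtlety to check is that $R$ is itself irreducible — but this is forced: if $R$ had a proper factorization, then $L\star_H L_a$ being an irreducible right factor of $R$ of the same degree as $R$ is already a contradiction unless $R=L\star_H L_a$, so the conclusion holds regardless. I expect step (i), the bookkeeping of the left-factorization indices, to be where essentially all the work lies; (ii) and (iii) are formal consequences of the machinery already assembled in Corollary \ref{rankofconv} and the composition-series argument.
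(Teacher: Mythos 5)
Your proposal takes essentially the same route as the paper: part (i) from the explicit formula of Corollary \ref{FHada} together with the criterion for pulling out a left factor $\vartheta+c$ coefficient-by-coefficient, part (ii) from Corollary \ref{rankofconv} plus the composition-series picture for the Galois action, and part (iii) by the degree count $\deg R=n+m-k_0-k_\infty=\deg(L\star_H L_a)$ combined with (ii). One small correction to your bookkeeping rule: since $(\vartheta+c)z^i=z^i(\vartheta+c+i)$, the left-divisibility criterion is that $(\vartheta+c+i)$ divides the coefficient of $z^i$, not $(\vartheta+c-i)$; with this sign fixed, the hypotheses on $k_0$ and $k_\infty$ match the factors $\prod_{j=0}^{i-1}(\vartheta+a+j)$ and $\prod_{k=0}^{m-i-1}(\vartheta-k)$ exactly as in the paper's direct computation.
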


\begin{proof}
By Corollary \ref{FHada} we have
\[\mathcal{H}_a(L)=\sum_{i=0}^mz^i\prod_{j=0}^{i-1}(\vartheta+a+j)\prod_{k=0}^{m-i-1}(\vartheta-k)P_i.\] Since $\mathcal{H}_a(L)$ has a left factor of the form $\vartheta+c$ with $c\in\mathbb{C}$ if and only if $\vartheta+c+i$ divides $\prod_{j=0}^{i-1}(\vartheta+a+j)\prod_{k=0}^{m-i-1}(\vartheta-k)P_i$ for each $0\leq i\leq m$, we obtain the first part of the statement by a direct computation. The second part is a direct consequence of Corollary \ref{rankofconv}. To prove the third part, note that we have 
\begin{align*}
 \deg(R)&=n+m-k_0-k_{\infty}=\sum_{s\in S\setminus\{0,\infty\}}\rank(T_s-\id)+\rank(T_{\infty}-\id)-(n-\rank(\alpha T_0-\id))\\&=\deg(L\star_H L_a)
\end{align*} by Corollary \ref{rankofconv}. Now the action of the Galois group on the solution space as discussed above yields the result.\qed
\end{proof}

A more general treatment of the factorization of $\mathcal{H}_a(L)$ will be discussed in a subsequent article.

\begin{ex}\label{Hadaab}
Let $a,b\in\MQ\setminus\MZ$. Recall that the monodromy tuple induced by $L_a$, where the singular locus of $L_b$ is extended by the apparent singularity $z=0$, is given by $\T=(T_0,T_1,T_{\infty})=(1,\beta^{-1},\beta)$, where $\beta=\exp(2\pi ib)$. Thus we have $\deg(L_b\star_H L_a)=2$ and
\[\mathcal{H}_b(L_a)=\vartheta^2-z(\vartheta+b)(\vartheta+a)=L_b\star_H L_a.\] 
Inductively, one shows that \[L_{a_1}\star_HL_{a_2}\star_H\dots\star_H L_{a_n}=\vartheta^n-z\prod_{i=1}^n(\vartheta+a_i).\] In particular, each of those operators is of hypergeometric type.
\end{ex}

The situation on local systems suggests, that the operation $\mathcal{H}_a$ is invertible. As we will see in the next lemma, this is not exactly the case.
\begin{lemma}\label{HadInv}
Let $L=\sum_{i=0}^m z^i P_i\in\mathbb{C}[z,\vartheta]$ and $a\in\mathbb{Q}\setminus\mathbb{Z}$. Then
\[\mathcal{H}_{1-a}\left(\mathcal{H}_a(L)^{z^{-a}}\right)=\prod_{k=1}^{m-1}(\vartheta-k)\prod_{j=0}^{m-1}(\vartheta-a-j)L^{z^{-a}}\vartheta.\]
\end{lemma}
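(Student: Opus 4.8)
The strategy is to compute both applications of $\mathcal{H}$ explicitly using the closed formula from Corollary \ref{FHada}, namely $\mathcal{H}_a(L)=\sum_{i=0}^m z^i\prod_{j=0}^{i-1}(\vartheta+a+j)\prod_{k=0}^{m-i-1}(\vartheta-k)P_i$, and then simplify the resulting double product. First I would write $N:=\mathcal{H}_a(L)^{z^{-a}}$, i.e. the twist of $\mathcal{H}_a(L)$ by $z^{-a}$, which on the level of operators in $\mathbb{C}[z,\vartheta]$ amounts to the substitution $\vartheta\mapsto\vartheta+a$ in the $\vartheta$-polynomial coefficients of each power of $z$ (since the solution space of $L^{z^{-a}}$ is $\{z^{-a}y\mid L(y)=0\}$, and $z\frac{d}{dz}(z^{-a}y)=z^{-a}(\vartheta-a)y$; one must be careful about the sign convention, matching it to how $L^{1/g}$ is defined earlier). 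So $N=\sum_{i=0}^m z^i Q_i(\vartheta)$ with $Q_i(\vartheta)=\prod_{j=0}^{i-1}(\vartheta-a+a+j)\prod_{k=0}^{m-i-1}(\vartheta-a-k)P_i(\vartheta-a)=\prod_{j=0}^{i-1}(\vartheta+j)\prod_{k=0}^{m-i-1}(\vartheta-a-k)P_i(\vartheta-a)$, give or take the sign bookkeeping on $a$.

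Next I would apply $\mathcal{H}_{1-a}$ to $N$, again via Corollary \ref{FHada}: since $\deg(N)=\deg(\mathcal{H}_a(L))=m$ (the degree is unchanged by convolution/Hadamard in the generic situation, and in any case the formula just uses the index range $0,\dots,m$), we get
\[
\mathcal{H}_{1-a}(N)=\sum_{i=0}^m z^i \prod_{j=0}^{i-1}(\vartheta+1-a+j)\prod_{k=0}^{m-i-1}(\vartheta-k)\,Q_i(\vartheta).
\]
Substituting the expression for $Q_i$, the coefficient of $z^i$ becomes a product of four "ascending/descending factorial" blocks times $P_i(\vartheta-a)$. The key combinatorial observation is that $\prod_{j=0}^{i-1}(\vartheta+j)$ times the tail of $\prod_{k=0}^{m-i-1}(\vartheta-k)$ coming from the other factor, together with $\prod_{j=0}^{i-1}(\vartheta+1-a+j)=\prod_{j=0}^{i-1}(\vartheta-a-(-1-j))$ and $\prod_{k=0}^{m-i-1}(\vartheta-a-k)$, telescope into three $i$-independent products: $\prod_{k=1}^{m-1}(\vartheta-k)$, $\prod_{j=0}^{m-1}(\vartheta-a-j)$, and a leftover factor of $\vartheta$, with the remaining piece being exactly $z^i P_i(\vartheta-a)$, i.e. the coefficients of $L^{z^{-a}}$. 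I would verify this by checking that for each $i$, the four factor-blocks contain, as a multiset of linear factors in $\vartheta$, precisely $\{\vartheta-k: k=1,\dots,m-1\}\cup\{\vartheta-a-j: j=0,\dots,m-1\}\cup\{\vartheta\}$; both sides have the same number of linear factors ($i+(m-i)+i+(m-i)=2m$ versus $(m-1)+m+1=2m$), so it suffices to match them. Pulling these $i$-independent factors out of the sum over $i$ leaves $\sum_{i=0}^m z^i P_i(\vartheta-a)=L^{z^{-a}}$ (up to the twist convention), and collecting the leftover $\vartheta$ on the correct side gives the claimed identity $\prod_{k=1}^{m-1}(\vartheta-k)\prod_{j=0}^{m-1}(\vartheta-a-j)L^{z^{-a}}\vartheta$.

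The main obstacle I expect is the bookkeeping of the non-commutativity of $\mathbb{C}[z,\vartheta]$ together with the sign/shift conventions for the twist $z^{-a}$: one has to be scrupulous about whether a $\vartheta$-polynomial sits to the left or right of a power of $z$ when shifting $\vartheta\mapsto\vartheta\pm a$, and the identity $z^i\varphi(\vartheta)=\varphi(\vartheta-i)z^i$ must be used repeatedly and consistently. In particular, the fact that the leftover $\vartheta$ ends up on the \emph{right} of $L^{z^{-a}}$ (rather than absorbed into a coefficient) is precisely an artifact of this reordering, and getting that placement right is the delicate point; the telescoping of the products themselves is routine once the indices are aligned. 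A clean way to organize the computation is to prove it first for $L=z^iP_i$ a single monomial (where the products are shortest) and then invoke linearity of $\mathcal{H}_a$ and of the twist in the coefficients $P_i$, though one should double-check that $\mathcal{H}_a$ as defined via the formula in Corollary \ref{FHada} really is additive in $L$ when the degree $m$ is held fixed — which it is, since the formula is linear in the $P_i$ for fixed $m$.
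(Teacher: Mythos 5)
Your plan is correct and follows essentially the same route as the paper: write $\mathcal{H}_a(L)^{z^{-a}}=\sum_i z^i\prod_{j=0}^{i-1}(\vartheta+j)\prod_{k=0}^{m-i-1}(\vartheta-a-k)P_i(\vartheta-a)$, apply the formula of Corollary \ref{FHada} again with parameter $1-a$, and merge the four factor blocks in the coefficient of $z^i$ into $\vartheta\prod_{k=1}^{m-1}(\vartheta-m+k+i)\prod_{j=0}^{m-1}(\vartheta-a-j+i)$, which after commuting past $z^i$ gives the stated left factors and the right factor $\vartheta$. Your sign worry about the twist resolves exactly as in the paper, via $L^{z^{-a}}=\sum_i z^iP_i(\vartheta-a)$, which is what your $Q_i$ already uses.
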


\begin{proof}
As we have
\begin{align*}
L^{z^{-a}}=\sum_{i=0}^m z^i P_i(\vartheta-a)
\end{align*}
and 
\begin{align*}
\mathcal{H}_a(L)^{z^{-a}}=\sum_{i=0}^mz^i\prod_{j=0}^{i-1}(\vartheta+j)\prod_{k=0}^{m-i-1}(\vartheta-a-k)P_i(\vartheta-a),
\end{align*}
we obtain
\begin{align*}
\mathcal{H}_{1-a}\left(\mathcal{H}_a(L)^{z^{-a}}\right)&=\sum_{i=0}^mz^i\prod_{k=0}^{m-i-1}(\vartheta-k)\prod_{j=0}^{i-1}(\vartheta+j)\prod_{j=0}^{i-1}(\vartheta+1-a+j)\prod_{k=0}^{m-i-1}(\vartheta-a-k)P_i(\vartheta-a)\\&=\sum_{i=0}^mz^i\vartheta\prod_{k=1}^{m-1}(\vartheta-m+k+i)\prod_{j=0}^{m-1}(\vartheta-a-j+i)P_i(\vartheta-a)
\end{align*}
and hence the result.\qed
\end{proof}

Nevertheless, this lemma turns to be quite useful to determine solutions of $\mathcal{H}_a(L)$ involving logarithms as we will see in the next section.

\section{Special solutions}\label{Spsols}

The translation of the constructions appearing in Katz' algorithm to the level of differential operators enables us to compute certain local solutions of a differential operator produced by those constructions in an explicit way. To be more precise, given a fuchsian operator $L$ which is constructed by tensor and Hadamard products of differential operators of lower order, we are sometimes able to state closed formulae for the coefficients of a local solution of the form $f=(z-p)^{\mu}\sum_{m=0}^{\infty}A_m(z-p)^m\in (z-p)^{\mu}\mathbb{C}\llbracket z-p\rrbracket$ at $z=p\in\mathbb{C}$, resp. $f=t^{\mu}\sum_{m=0}^{\infty}A_mt^m\in t^{\mu}\mathbb{C}\llbracket t\rrbracket$ for $t=\frac{1}{z}$. Those solutions will be called \textit{special}. As stated in the preceding section, if $f=(z-p)^{\mu}\sum_{m=0}^{\infty}A_m(z-p)^m$ is a solution of the differential operator $L$ at $z=p$ and $g=(z-p)^{\nu}\sum_{m=0}^{\infty}B_m(z-p)^m$ is a solution of the differential operator $\tilde{L}$ at $z=p$, their Cauchy product
\[fg=(z-p)^{\mu+\nu}\sum_{m=0}^{\infty}\sum_{k=0}^mA_kB_{m-k}(z-p)^m\] is a solution of  $L\otimes\tilde{L}$ at $z=p$. Analogously, the self Cauchy product $f^2$ is a solution of $\Sym^2L$ at $z=p$ and setting $L=\tilde{L}$, the Wronskian \[\Wr(f,g)=z(z-p)^{\nu+\mu-1}\sum_{m=0}^{\infty}\sum_{k=0}^m(2k+\mu-\nu-m)A_{k}B_{m-k}(z-p)^m\] is a solution of $\Lambda^2 L$ at $z=p$. 
The situation turns out to be slightly more complicated for the middle Hadamard product $L\star_H L_a$
Classically one defines the Hadamard product $\star_H$ of two formal power series $\sum_{m=0}^{\infty}A_mz^m\in\mathbb{C}\llbracket z\rrbracket$ and $\sum_{m=0}^{\infty}B_mz^m\in\mathbb{C}\llbracket z\rrbracket$ by term-wise multiplication, i.e.
\[\sum_{m=0}^{\infty}A_mz^m\star_H\sum_{m=0}^{\infty}B_mz^m:=\sum_{m=0}^{\infty}A_mB_mz^m.\]
As the terminology suggests, given a holomorphic solution $f=\sum_{m=0}^{\infty}A_mz^m$ of $L$ near $z=0$, the expression
\[\sum_{m=0}^{\infty}(-1)^m\binom{-a}{m}A_mz^m\] should be a solution of $L\star_HL_a$ near $z=0$, as we have \[(1-z)^{-a}=\sum_{m=0}^{\infty}(-1)^m\binom{-a}{m}z^m.\] The following more general discussion will recover those solutions. 
\vspace{2ex}

At $z=p$ the eigenfunctions of the local monodromy of a fuchsian operator $L$ are elements of $(z-p)^{\mu}\mathbb{C}\llbracket z-p\rrbracket^{*}$, where $\exp(2\pi i \mu)$ is the corresponding eigenvalue. For notational convenience, we use the following
\begin{conv}
Given $E\subset \mathbb{C}$ and two functions $f,g\colon E\to \mathbb{C}$ we write 
\begin{enumerate}
\item $f \hg g$ if there is a $c\in\mathbb{C}^{*}$ such that $f(z)=cg(z)$ 
for all $z\in E$.
\item $\displaystyle{\int_{p}^zf(x) dx}$ for the integral of $f$ along the straight line $[0,1]\to E,\ t\mapsto (1-t)p+tz$, if it exists.
\end{enumerate}
\end{conv}

The relation of $C^p_a(f)$ to the line integral given in Remark \ref{ConvtoHada} yields the following

\begin{lemma}\label{ConvLinie}
 Let $f$ be an eigenfunction of the local monodromy of $L$ at $z=p\in\mathbb{C}\cup\{\infty\}$ and $\mu$ the exponent of $z^{a-1}f$ at $p$. Then we have
\[H^p_a(f)\hg\begin{cases}\displaystyle{\int_{p}^zx^{a-1}f(x)(z-x)^{-a}dx},\ \mu\not\in\MZ\\ 0,\ \mu\in\MN_0\end{cases}.\] 
\end{lemma}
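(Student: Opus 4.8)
The plan is to start from the explicit expression for the convolution $C^p_a$ given in part (2) of Remark \ref{ConvtoHada} and then transport the statement to the Hadamard product via the conversion formula in part (3) of the same remark. First I would treat the case $\mu\notin\MZ$. By the remark, if $g$ is an eigenfunction of the local monodromy at $z=p$ with $g\in (z-p)^{\nu}\CC\llbracket z-p\rrbracket$ and $\nu$ is not a negative integer, then
\[
C^p_{a'}(g)\hg \int_p^z g(x)(z-x)^{a'}\frac{dx}{z-x},
\]
the line integral being well defined and meromorphic near $z=p$ since $\nu\notin\MZ_{<0}$. I would apply this with $g=z^{a-1}f$, which is again an eigenfunction of the local monodromy at $p$ (multiplying by the single-valued-up-to-scalar function $z^{a-1}$ only changes the eigenvalue), and with exponent $\nu=\mu\notin\MZ$, in particular $\mu\notin\MZ_{<0}$. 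Setting $a'=1-a$ gives
\[
C^p_{1-a}(z^{a-1}f)\hg \int_p^z x^{a-1}f(x)(z-x)^{1-a}\frac{dx}{z-x}=\int_p^z x^{a-1}f(x)(z-x)^{-a}dx.
\]
Finally, the relation $H^p_a(f)=(-1)^{-a}C^p_{1-a}(z^{a-1}f)$ from Remark \ref{ConvtoHada}(3) identifies the left side, up to a nonzero scalar, with $H^p_a(f)$, which is exactly the claimed formula in the first case.

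For the case $\mu\in\MN_0$ I would argue that $x^{a-1}f(x)$ is then holomorphic at $x=p$ (it lies in $(z-p)^{\mu}\CC\llbracket z-p\rrbracket$ with $\mu\ge 0$), so the integrand $x^{a-1}f(x)(z-x)^{a-1}$ of $C^p_{1-a}(z^{a-1}f)$ has no residue at $x=p$; more precisely, by the residue computation in Remark \ref{ConvtoHada}(2), the factor $1-\exp(2\pi i\mu)$ attached to the $\gamma_z$-part vanishes and the $\gamma_p$-part is a genuine contour integral around a point where the integrand is holomorphic, hence zero. Thus $C^p_{1-a}(z^{a-1}f)=0$ and therefore $H^p_a(f)=0$, giving the second case. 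One should also note the boundary case where $p=\infty$: there the same reasoning applies after the substitution $z\mapsto 1/z$, which is already built into the conventions for the exponent $\mu$ and for $C^p_a$ used in Remark \ref{ConvtoHada}, so no separate treatment is needed beyond remarking that the formulae are stated for all $p\in\MP^1$.

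The only delicate point — and the step I expect to require the most care — is the bookkeeping of the scalar factors and of the branch choices: verifying that $g=z^{a-1}f$ really is an eigenfunction of the local monodromy at $p$ with exponent exactly $\mu$ (rather than $\mu$ shifted by an integer), that the hypothesis "$\nu$ not a negative integer" needed to pass from the Pochhammer-contour integral to the line integral is met in the case $\mu\notin\MZ$, and that the ambiguous constants $(-1)^{a-1}$, $(-1)^{-a}$, $1-\exp(2\pi i a)$, $1-\exp(2\pi i\mu)$ appearing in Remark \ref{ConvtoHada} are all nonzero under our hypotheses so that they may be absorbed into $\hg$. Once these are checked, the lemma follows by direct substitution with no further computation.
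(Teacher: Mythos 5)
Your proposal is correct and follows exactly the route the paper takes: the paper's proof is simply ``follows directly from Remark \ref{ConvtoHada}'', i.e.\ combine the relation $H^p_a(f)\hg C^p_{1-a}(z^{a-1}f)$ with the line-integral expression for the convolution and observe that the constant $(1-\exp(2\pi i\mu))$ kills the case $\mu\in\MN_0$ (the $\gamma_p$-contribution vanishing since the integrand is holomorphic there), while all constants are nonzero when $\mu\notin\MZ$ and $a\notin\MZ$. You have merely written out the scalar bookkeeping that the paper leaves implicit.
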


\begin{proof}
The statement follows directly from Remark \ref{ConvtoHada}.\qed
\end{proof}

Recalling the well-known \textit{Beta function}
\[\mathscr{B}(p,q):=\int_{0}^1x^{p-1}(1-x)^{q-1}dx=\frac{\ga(p)\ga(q)}{\ga(p+q)},\] which is assumed to be the analytic continuation of the expression on the very right on $\mathbb{C}\setminus\{p+q\in\mathbb{Z}_{<0}\},$ a direct computation shows

\begin{lemma}\label{ConvbeiNull}
\begin{enumerate}
\item Let $f=z^{\mu}\sum_{m=0}^{\infty} A_mz^m$ be an eigenfunction of the local monodromy of $L$ at $z=0$ with exponent $\mu\not\in\MZ_{-}$. Then
\[C^0_a(f)\hg z^{\mu+a}\sum_{m=0}^{\infty}\mathscr{B}(\mu+1+m,a)A_mz^m.\] 
\item  Let $t=\frac{1}{z}$ and $f=t^{\mu}\sum_{m=0}^{\infty} A_mt^m$ be an eigenfunction of the local monodromy of $L$ at $z=\infty$ with exponent $\mu\not\in a+\MZ_{-}$. Then
\[C^{\infty}_a(f)\hg t^{\mu-a}\sum_{m=0}^{\infty}\mathscr{B}(\mu-a+m,a)A_mt^m.\] 
\end{enumerate}
\end{lemma}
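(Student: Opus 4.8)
The plan is to compute the convolution $C^0_a(f)$ term by term using the closed expression for the Pochhammer integral obtained in Remark \ref{ConvtoHada}. First I would treat part (i). By Remark \ref{ConvtoHada}, since $f=z^{\mu}\sum_{m\geq 0}A_m z^m$ is an eigenfunction of the local monodromy at $z=0$ with exponent $\mu\notin\mathbb{Z}_{-}$, we have up to a nonzero constant
\[
C^0_a(f)\hg \int_{0}^z f(x)(z-x)^{a}\frac{dx}{z-x}=\int_0^z x^{\mu}\Bigl(\sum_{m=0}^{\infty}A_m x^m\Bigr)(z-x)^{a-1}\,dx,
\]
where the right hand side is interpreted as a meromorphic function near $z=0$. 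The key step is the substitution $x=zt$, which turns the $m$-th summand into $z^{\mu+m+a}A_m\int_0^1 t^{\mu+m}(1-t)^{a-1}\,dt=z^{\mu+m+a}A_m\mathscr{B}(\mu+m+1,a)$, by the very definition of the Beta function (with the understanding that $\mathscr{B}$ is the analytic continuation where the integral fails to converge). Summing over $m$ gives
\[
C^0_a(f)\hg z^{\mu+a}\sum_{m=0}^{\infty}\mathscr{B}(\mu+1+m,a)A_m z^m,
\]
which is the claim.

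For part (ii) I would proceed analogously after the coordinate change $t=\frac1z$, which sends $\infty$ to $0$; here $f=t^{\mu}\sum_{m\geq 0}A_m t^m$ and the exponent of $z^{a-1}f$ (equivalently, the relevant shifted exponent appearing in the convolution at $\infty$) is governed by the condition $\mu\notin a+\mathbb{Z}_{-}$, which is exactly the $a$-positivity-type hypothesis guaranteeing the Pochhammer integral reduces to a genuine line integral near $\infty$. The same Beta-integral computation, now in the variable $t$, yields the shift $\mu\mapsto\mu-a$ (rather than $\mu+a$) because the factor $(z-x)^{a-1}$ becomes, after clearing powers of $z$ and $t$, a factor contributing $t^{-a}$; collecting terms gives $C^{\infty}_a(f)\hg t^{\mu-a}\sum_{m\geq 0}\mathscr{B}(\mu-a+m,a)A_m t^m$.

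The only genuine subtlety — and what I expect to be the main point requiring care rather than the main obstacle — is the justification that the termwise integration is legitimate and that the resulting series represents the correct branch of $C^0_a(f)$ (resp. $C^{\infty}_a(f)$): one must check that under the stated exponent hypotheses the integrand's monodromy along the Pochhammer contour is trivial so that the contour integral collapses (up to the nonzero constant factor recorded in Remark \ref{ConvtoHada}) to the line integral, and that the Beta function values $\mathscr{B}(\mu+1+m,a)$ are all finite, i.e.\ $\mu+1+m$ is never a non-positive integer, which is precisely ensured by $\mu\notin\mathbb{Z}_{-}$ in (i) and by $\mu\notin a+\mathbb{Z}_{-}$ in (ii). Once these points are in place, everything else is the routine Beta-integral evaluation.
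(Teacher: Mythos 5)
Your proposal is correct and follows essentially the same route as the paper: reduce $C^0_a(f)$ to the line integral $\int_0^z f(x)(z-x)^{a-1}dx$ via Remark \ref{ConvtoHada}, integrate termwise with the substitution $x=zs$ to produce the Beta factors $\mathscr{B}(\mu+1+m,a)$, and treat $z=\infty$ analogously after the coordinate change $t=\frac{1}{z}$ (the paper starts from $C^{\infty}_a(f)\hg\int_0^t x^{-1-a}(1-xz)^{a-1}f(1/x)\,dx$). Your extra remarks on justifying termwise integration and the finiteness of the Beta values are consistent with, though not spelled out in, the paper's argument.
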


\begin{proof}
\begin{enumerate}
\item By Remark \ref{ConvtoHada}, we have 
\begin{align*}
C^0_a(f)&\hg\int_{0}^z(z-x)^{a-1}f(x)dx=\sum_{m=0}^{\infty}A_m\int_{0}^z(z-x)^{a-1}x^{\mu+m}dx\\&=\sum_{m=0}^{\infty}A_mz^{\mu+m+a-1}\int_{0}^z\left(1-\frac{x}{z}\right)^{a-1}\left(\frac{x}{z}\right)^{\mu+m}dx\\&=\sum_{m=0}^{\infty}A_mz^{\mu+m+a}\int_{0}^1\left(1-s\right)^{a-1}s^{\mu+m}ds
\end{align*}
 and thus the result.
\item We obtain the result similarly to the first part starting with
\[C^{\infty}_a(f)\hg \int_{0}^tx^{-1-a}(1-xz)^{a-1}f\left(\frac{1}{x}\right)dx.\] \qed
\end{enumerate}
\end{proof}

Combining those statements yields

\begin{prop}
\begin{enumerate}\label{Loslos}
\item Let $f$ be an eigenfunction of the local monodromy of $L$ at $z=p\in\mathbb{C}$. 
Let furthermore $z^{a-1}f=(z-p)^{\mu}\sum_{m=0}^{\infty}A_m(z-p)^m$. Then
\begin{align*}H^p_a(f)\hg\begin{cases}(z-p)^{\mu+1-a}\displaystyle{\sum_{m=0}^{\infty}\mathscr{B}(\mu+1+m,1-a)A_m(z-p)^m},\ \mu\not\in\mathbb{Z}\\ 0,\ \mu\in\mathbb{N}_0\end{cases}.\end{align*} 
In particular, if $L^{z^{1-a}}$ is $(1-a)-$positive each $\mathbb{C}$-multiple of the right hand side is a solution of $L\star_H L_a$ near $z=p$.

\item Let $t=\frac{1}{z}$ and $f$ be an eigenfunction of the local monodromy of $L$ at $z=\infty$.
Let furthermore $t^{1-a}f(t)=t^{\mu}\sum_{m=0}^{\infty}A_mt^m$. Then
\begin{align*}H^{\infty}_a(f)\hg\begin{cases}t^{\mu+a-1}\displaystyle{\sum_{m=0}^{\infty}\mathscr{B}(\mu-1+a+m,1-a)A_mt^m}, \mu\notin\mathbb{Z}\\0, \mu\in\mathbb{N}_0\end{cases}.\end{align*} 
In particular, if $L^{z^{1-a}}$ is $(1-a)-$positive each $\mathbb{C}$-multiple of the right hand side is a solution of $L\star_H L_a$ near $z=\infty$.
\end{enumerate}
\end{prop}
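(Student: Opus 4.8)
The plan is to reduce both parts of Proposition \ref{Loslos} to the two lemmas immediately preceding it, namely Lemma \ref{ConvLinie} and Lemma \ref{ConvbeiNull}, together with the conversion formulae between $C^p_a$ and $H^p_a$ recorded in Remark \ref{ConvtoHada}(iii). The key observation is that the middle Hadamard product is, up to a multiplicative constant and a twist by a power of $z$, the same operation as the convolution: concretely $H^p_a(f)\hg C^p_{1-a}(z^{a-1}f)$. So the strategy in each part is: first rewrite $H^p_a(f)$ as a convolution of the twisted eigenfunction $z^{a-1}f$ with parameter $1-a$, then apply the explicit series expansion for the convolution from Lemma \ref{ConvbeiNull}, and finally translate the resulting Beta-function coefficients back into the claimed formula.

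For part (i), I would start from $z^{a-1}f=(z-p)^{\mu}\sum_{m\ge 0}A_m(z-p)^m$, which is an eigenfunction of the local monodromy at $z=p$ with exponent $\mu$ of the twisted function. After the affine change of coordinates $z\mapsto z+p$ reducing to the case $p=0$, Lemma \ref{ConvbeiNull}(i) applied with parameter $1-a$ in place of $a$ and with $z^{a-1}f$ in place of $f$ gives directly
\[
C^0_{1-a}(z^{a-1}f)\hg (z-p)^{\mu+1-a}\sum_{m=0}^{\infty}\Be(\mu+1+m,1-a)A_m(z-p)^m,
\]
valid when $\mu\notin\MZ_{-}$; for $\mu\in\MN_0$ the integrand $z^{a-1}f(z)(z-x)^{-a}$ is single-valued near $z=p$ so the Pochhammer integral vanishes, matching Lemma \ref{ConvLinie}. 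Combining with $H^p_a(f)\hg C^p_{1-a}(z^{a-1}f)$ yields the stated formula. The final sentence about solutions of $L\star_H L_a$ then follows from Corollary \ref{rankofconv}(ii) (equivalently Proposition \ref{OpundMon} and the definition of $\star_H$): when $L^{z^{1-a}}$ is $(1-a)$-positive, $L\star_H L_a$ is precisely the operator whose solution space is spanned by the functions $H^p_a(f)$.

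For part (ii) the argument is formally identical but uses the local coordinate $t=\tfrac1z$ at $z=\infty$: writing $t^{1-a}f(t)=t^{\mu}\sum_{m\ge 0}A_m t^m$ as an eigenfunction at infinity and invoking Lemma \ref{ConvbeiNull}(ii) with parameter $1-a$ gives
\[
C^{\infty}_{1-a}(t^{1-a}f)\hg t^{\mu-(1-a)}\sum_{m=0}^{\infty}\Be(\mu-(1-a)+m,1-a)A_m t^m
= t^{\mu+a-1}\sum_{m=0}^{\infty}\Be(\mu-1+a+m,1-a)A_m t^m,
\]
with the exceptional case $\mu\in\MN_0$ handled as before; the conversion $H^{\infty}_a(f)\hg C^{\infty}_{1-a}(z^{a-1}f)$ finishes the proof, and the positivity hypothesis again identifies the result as a genuine solution of $L\star_H L_a$ near $\infty$.

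The only genuinely delicate point—the "main obstacle"—is bookkeeping of the exponent shifts and of the exceptional integer cases. One must check carefully that the hypothesis on $\mu$ in Lemma \ref{ConvbeiNull} ($\mu\notin\MZ_{-}$ at $0$, resp.\ $\mu\notin a+\MZ_{-}$ at $\infty$) is exactly what is guaranteed here once the parameter is $1-a$ and the twist $z^{a-1}$ (resp.\ $t^{1-a}$) has been absorbed, and that the boundary case $\mu\in\MN_0$ is consistently absorbed into the ``$0$'' branch via Lemma \ref{ConvLinie} rather than producing a spurious non-vanishing term. Everything else is the routine substitution $s=x/z$ turning the line integral into a Beta integral, which is already carried out in the proof of Lemma \ref{ConvbeiNull}.
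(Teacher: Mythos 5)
Your proposal is correct and follows essentially the same route as the paper: the paper likewise rewrites $H^p_a(f)$ via Lemma \ref{ConvLinie} (i.e.\ the Hadamard--convolution conversion of Remark \ref{ConvtoHada}) as a line integral of the twisted function, shifts $x\mapsto x+p$ to reduce to the situation of Lemma \ref{ConvbeiNull}, and reads off the Beta-function coefficients, treating $p=\infty$ analogously. Your explicit bookkeeping of the parameter change $a\mapsto 1-a$, the $\mu\in\MN_0$ branch, and the appeal to the definition of $L\star_H L_a$ (via Proposition \ref{OpundMon}/Corollary \ref{rankofconv}) for the final assertion only spells out what the paper leaves implicit.
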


\begin{proof}
As seen before, we have
\begin{align*}
 H^p_a(f)&\hg \int_{p}^zx^{a-1}f(x)(z-x)^{-a}dx\hg \int_{0}^{z-p}(x+p)^{a-1}f(x+p)(z-p-x)^{-a}dx
\end{align*} 
for $p\in\mathbb{C}$.
Thus the result follows from Lemma \ref{ConvbeiNull} and Lemma \ref{ConvLinie}. The case $p=\infty$ can be treated similarly.\qed
\end{proof}

\begin{rem}
If $f$ is holomorphic at $z=0$, one recovers the Hadamard product of formal power series mentioned in the introduction of the section. More generally, if $L$ has at $z=0$ a solution of the form $f=z^{\nu}\sum_{m=0}^{\infty}A_mz^m$ we get the solution 
\[g=z^{\nu}\sum_{m=0}^{\infty}\mathscr{B}(\nu+a+m,1-a)A_mz^m=z^{\nu}\sum_{m=0}^{\infty}R(m)\mathscr{B}(a+m,1-a)A_mz^m\] of $L\star_H L_a$ at $z=0$. Using Stirling's formula, one can show that $R(m)$ behaves asymptotically like $\left(\frac{\nu+a+m}{a+m}\right)^{a-1}$. 
\end{rem}

Proposition \ref{Loslos} implies that each special solution $f$ of $L$ for which $z^{a-1}f$ is not a meromorphic eigenfunction near $z=p$ induces a special solution of $L\star_H L_a$, while the solutions $g$ for which  $z^{a-1}g$ is holomorphic at $z=p$ do not contribute to the solution space of $L\star_H L_a$.
Nevertheless, the following proposition asserts that solutions of the form $\ln g+r$ with $r\in\mathbb{C}\llbracket z\rrbracket$ induce certain holomorphic solutions of $L\star_H L_a$. 

\begin{prop}\label{Convlog}
\begin{enumerate}
\item Let $L$ be irreducible and both functions \[z^{a-1}f=(z-p)^{\mu}\sum_{m=0}^{\infty}A_m(z-p)^m\] and $z^{a-1}g$ holomorphic at $z=p$. Let furthermore $\ln$ be a branch of the logarithm at $z=0$, $\ln(z-p)f+g$ a solution of $L$ at $z=p$ and $a\in\mathbb{Q}\setminus\mathbb{Z}$. Then 
\[H_a^p(f)\hg(z-p)^{\mu+1-a}\displaystyle{\sum_{m=0}^{\infty}\mathscr{B}(\mu+1+m,1-a)A_m(z-p)^m}.\] 

\item Let $L$ be irreducible $t=\frac{1}{z}$ and both functions \[t^{1-a}f=t^{\mu}\sum_{m=0}^{\infty}A_mt^m\] and $t^{1-a}g$ holomorphic at $t=0$.
 Let furthermore $\ln$ be a branch of the logarithm at $t=0$, $\ln f+g$ a solution of $L$ at $t=0$ and $a\in\mathbb{Q}\setminus\mathbb{Z}$. Then
\[H_a^{\infty}(f)=t^{\mu+a-1}\displaystyle{\sum_{m=0}^{\infty}\mathscr{B}(\mu-1+a+m,1-a)A_mt^m}.\] 
\end{enumerate}
\end{prop}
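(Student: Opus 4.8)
The plan is to reduce the asserted formula to the Beta--integral evaluation of Lemma~\ref{ConvbeiNull} and Proposition~\ref{Loslos}; the only genuinely new ingredient is a logarithmic variant of the Pochhammer bookkeeping in Remark~\ref{ConvtoHada}(2). I treat part~(i) in detail; part~(ii) follows from it by the substitution $z\mapsto 1/z$, which interchanges $z=\infty$ with $z=0$ and carries $L$, $L_a$ and the operation $\star_H$ into operators of the same shape, or, equivalently, one repeats the argument verbatim in the coordinate $t=1/z$. First I would note that $f$ is itself a solution of $L$: since $h:=\ln(z-p)f+g$ solves $L$, analytic continuation along $\gamma_p$ sends $h\mapsto h+2\pi i f$, so $f$ is a difference of solutions; as an eigenfunction of the local monodromy at $p$ with exponent $\mu\in\MN_0$, its bare Pochhammer integral $H^p_a(f)$ vanishes by Lemma~\ref{ConvLinie}. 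The point of the proposition is that, because $f$ is carried by the logarithmic solution $h$, the Hadamard transform relevant to $f$ is not this zero but $H^p_a(h)$ up to a nonzero scalar, and it is this function that equals the stated series (and, $L$ being fuchsian, is thereby a solution of $\mathcal H_a(L)$ by Corollary~\ref{FHada}).

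The core is the evaluation of $C^p_{1-a}(z^{a-1}h)$, to which $H^p_a(h)$ is proportional by Remark~\ref{ConvtoHada}(3). Write $z^{a-1}h=\ln(z-p)\,\phi+\psi$, where $\phi:=z^{a-1}f=(z-p)^\mu\sum_m A_m(z-p)^m$ and $\psi:=z^{a-1}g$ are holomorphic at $z=p$. In the Pochhammer integral $\int_{[\gamma_p,\gamma_z]}\bigl(\ln(x-p)\phi(x)+\psi(x)\bigr)(x-z)^{-a}\,dx$ the integrand has a logarithmic branch point at $x=p$ --- its local monodromy adds $2\pi i\,\phi(x)(x-z)^{-a}$, which is single-valued at $p$ --- and an ordinary power branch point at $x=z$, where the local monodromy is the scalar $\alpha^{\mp 1}$. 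Unwinding the commutator $[\gamma_p,\gamma_z]=\gamma_p^{-1}\gamma_z^{-1}\gamma_p\gamma_z$ as in the derivation of Remark~\ref{ConvtoHada}(2), but now observing that, because of the logarithm, the incoming and outgoing runs of the tails no longer cancel, one collects the surviving terms into $C^p_{1-a}(z^{a-1}h)\hg\int_p^z x^{a-1}f(x)(z-x)^{-a}\,dx$. Substituting $x=p+(z-p)s$ and $\phi(x)=(x-p)^\mu\sum_m A_m(x-p)^m$, and integrating termwise with $\int_0^1 s^{\mu+m}(1-s)^{-a}\,ds=\mathscr B(\mu+1+m,1-a)$, this becomes
\[
\int_p^z x^{a-1}f(x)(z-x)^{-a}\,dx=(z-p)^{\mu+1-a}\sum_{m=0}^\infty\mathscr B(\mu+1+m,1-a)\,A_m(z-p)^m,
\]
the Beta values $\mathscr B(\mu+1+m,1-a)=\ga(\mu+1+m)\ga(1-a)/\ga(\mu+2+m-a)$ being finite and nonzero because $a\notin\MZ$ and $\mu+1+m\in\MN$; this is exactly the computation proving Lemma~\ref{ConvbeiNull}, with $a$ replaced by $1-a$. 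Combining the two displays gives $H^p_a(f)\hg(z-p)^{\mu+1-a}\sum_m\mathscr B(\mu+1+m,1-a)A_m(z-p)^m$, and part~(ii) then follows as explained.

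The step I expect to be the main obstacle is this contour computation of $C^p_{1-a}(z^{a-1}h)$: in contrast with the eigenfunction case of Remark~\ref{ConvtoHada}(2), the small circle about $p$ contributes a radius-dependent amount $\tfrac{2\pi i}{k+1}\rho^{k+1}$ from each monomial $\ln(x-p)(x-p)^k$, and the circle about $z$ a radius-dependent amount coming from $(x-z)^{-a}$, so one must verify that these are absorbed by the non-cancelling tails and that the finite part surviving as both radii tend to $0$ is precisely the line integral above, up to the nonzero constant $2\pi i(\alpha^{\mp1}-1)$. A cleaner route that sidesteps this limit is to differentiate the $\mu\notin\MZ$ identity of Proposition~\ref{Loslos} with respect to the leading exponent: realising $h=\partial_s\big|_{s=\mu}F_s$ for the Frobenius family $F_s$ of $L$ at $z=p$ --- which is exactly what the passage from an eigenfunction to the logarithmic solution amounts to --- applying Proposition~\ref{Loslos} to the non-integral members, and differentiating at $s=\mu$, the factor $\partial_s(z-p)^{s+1-a}=\ln(z-p)(z-p)^{s+1-a}$ reproduces the logarithm carried by $h$, while its coefficient is the asserted series; the only point left to justify is that $H^p_a$ commutes with $\partial_s$ on this family, which follows from locally uniform convergence of the Beta integral near $s=\mu$.
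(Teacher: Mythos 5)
Your proposal is correct and follows essentially the same route as the paper: there, too, one evaluates $H^p_a\left(\ln(z-p)f+g\right)$ as a Pochhammer integral, uses the formal monodromy $\ln(x-p)\mapsto\ln(x-p)+2\pi i$ together with the formulae of Remark \ref{ConvtoHada} to reduce it to $-2\pi i\left(1-e^{-2\pi i a}\right)\int_{p}^{z}x^{a-1}f(x)(z-x)^{-a}dx$, and then concludes by the Beta evaluation of Lemmas \ref{ConvLinie} and \ref{ConvbeiNull}. In particular, your observation that the stated $H^p_a(f)$ must be read as the transform of the full logarithmic solution (the bare transform of $f$ being zero) is exactly how the paper's own proof proceeds.
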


\begin{proof}
Let $\tilde{f}=z^{a-1}f$ and $\tilde{g}=z^{a-1}g$. As the formal monodromy of $\ln(z-p)$ around $\gamma_p$ is given by $\ln(z-p)+2\pi i$, evaluating $H^p_a\left(\ln(z-p) f+g\right)$ yields
\begin{align*}
 &H^p_a\left(\ln(z-p) f+g\right)\hg C^p_{1-a}\left(\ln(z-p)\tilde{f}+\tilde{g}\right)\\&=\int_{[\gamma_p,\gamma_z]}\ln(x-p)\tilde{f}(x)(z-x)^{-a}dx\\&
=-2\pi i\int_{\gamma_z}\tilde{f}(x)(z-x)^{-a}dx+(\exp(-2\pi ia)-1)\int_{\gamma_p}\ln(x-p)\tilde{f}(x)(z-x)^{-a}dx\\&=-2\pi i (1-\exp(-2\pi ia))\int_{b}^z\tilde{f}(x)(z-x)^{-a}dx-2\pi i (\exp(-2\pi i a)-1)\int_{b}^{p}
\tilde{f}(x)(z-x)^{-a}dx\\&=-2\pi i (1-\exp(-2\pi ia))\int_{p}^z\tilde{f}(x)(z-x)^{-a}dx,
\end{align*}
hence the result by Lemma \ref{ConvLinie} and Lemma \ref{ConvbeiNull}.
The second case can be treated analogously.\qed
\end{proof}

Combining this result with Lemma \ref{HadInv}, we get

\begin{lemma}\label{coni}
Let $L\in\mathbb{C}[\vartheta,z]$, $a\in\mathbb{Q}\setminus\mathbb{Z}$, $p\not\in\{0,\infty\}$, $f=(z-p)^{\mu}\sum_{m=0}^{\infty}A_m(z-p)^m\in\mathbb{C}\llbracket z-p\rrbracket$, $r\in\mathbb{C}\llbracket z-p\rrbracket$ and $\ln(z-p)f+r$ a solution of $\mathcal{H}_a(L)$ at $z=p$. Then 
\begin{enumerate}
 \item \[h=z^{1-a}(z-p)^{a+\mu-1}\sum_{m=0}^{\infty}\mathscr{B}(\mu+1+m,a-1)A_m(z-p)^m\] is a solution of $L$ at $z=p$.
\item $H_a^p(h)\ \widehat{=}\ f$.
\end{enumerate}
\end{lemma}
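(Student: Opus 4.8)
The plan is to invert $\mathcal{H}_a$ at the level of solutions by means of Lemma~\ref{HadInv}, following the given logarithmic solution through the inversion. Write $L=\sum_{i=0}^m z^iP_i$ and put $P(\vartheta):=\prod_{k=1}^{m-1}(\vartheta-k)\prod_{j=0}^{m-1}(\vartheta-a-j)$, so that Lemma~\ref{HadInv} reads $\mathcal{H}_{1-a}\big(\mathcal{H}_a(L)^{z^{-a}}\big)=P(\vartheta)\,L^{z^{-a}}\,\vartheta$. We may assume $f\neq0$. Applying analytic continuation around $\gamma_p$ to $\ln(z-p)f+r\in\Sol_{\mathcal{H}_a(L)}$ shows that $f$ is itself a solution of $\mathcal{H}_a(L)$; and since the twist $R\mapsto R^{z^{-a}}$ multiplies solutions by $z^a$, the function $\ln(z-p)(z^af)+z^ar$ is a solution of $\mathcal{H}_a(L)^{z^{-a}}$ at $z=p$, with $z^af$ and $z^ar$ holomorphic there because $p\neq0$.

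Next I would apply the Pochhammer Hadamard product $H^p_{1-a}$ to this logarithmic solution. By Corollary~\ref{FHada} the outcome lies in $\Sol_{\mathcal{H}_{1-a}(\mathcal{H}_a(L)^{z^{-a}})}=\Sol_{P(\vartheta)\,L^{z^{-a}}\,\vartheta}$; and by Proposition~\ref{Convlog}(i), applied to $\mathcal{H}_a(L)^{z^{-a}}$ with parameter $1-a$ — for which $z^{(1-a)-1}(z^af)=f=(z-p)^\mu\sum_{m\ge0} A_m(z-p)^m$ and $z^{(1-a)-1}(z^ar)=r$ are holomorphic at $z=p$ — it equals, up to a nonzero scalar,
\[Y:=(z-p)^{\mu+a}\sum_{m\ge0}\mathscr{B}(\mu+1+m,a)\,A_m(z-p)^m.\]
Since $\mathscr{B}(\mu+1,a)A_0\neq0$, this $Y$ genuinely has local exponent $\mu+a\notin\mathbb{Z}$ at $z=p$, so in particular $\vartheta(Y)\neq0$.

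The crux — the step I expect to need the most care — is to show that $Y$ actually lies in the distinguished submodule $\Sol_{L^{z^{-a}}\vartheta}\subseteq\Sol_{P(\vartheta)\,L^{z^{-a}}\,\vartheta}$, equivalently that $W:=L^{z^{-a}}\vartheta(Y)$ vanishes. A priori one only knows $P(\vartheta)(W)=0$, i.e. $W$ is a $\mathbb{C}$-linear combination of $z^1,\dots,z^{m-1},z^a,\dots,z^{a+m-1}$ (the roots of $P$ are distinct because $a\notin\mathbb{Z}$); as $p\neq0$, each of these is single-valued near $z=p$, hence $T_p(W)=W$. But $L^{z^{-a}}\vartheta\in\mathbb{C}(z)[\tfrac{d}{dz}]$ commutes with analytic continuation, and $T_p(Y)=e^{2\pi i(\mu+a)}Y=e^{2\pi ia}Y$ because $\mu\in\mathbb{Z}$, so $T_p(W)=e^{2\pi ia}W$. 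Comparing the two, $(1-e^{2\pi ia})W=0$, and $e^{2\pi ia}\neq1$ forces $W=0$. Therefore $\vartheta(Y)\in\Sol_{L^{z^{-a}}}=z^a\Sol_L$, so $h:=z^{-a}\vartheta(Y)$ is a solution of $L$ at $z=p$; this is (i), once $h$ is identified.

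The identification and part (ii) are then routine. From $z^{a-1}h=z^{-1}\vartheta(Y)=\tfrac{d}{dz}Y$ together with the elementary identity $(\mu+a+m)\,\mathscr{B}(\mu+1+m,a)=(a-1)\,\mathscr{B}(\mu+1+m,a-1)$ one obtains $z^{a-1}h\hg(z-p)^{a+\mu-1}\sum_{m\ge0}\mathscr{B}(\mu+1+m,a-1)A_m(z-p)^m$, which is the asserted closed form for $h$. Finally $h\in\Sol_L$ is an eigenfunction of the local monodromy of $L$ at $z=p$ with non-integral exponent $a+\mu-1$, so Proposition~\ref{Loslos}(i) yields $H^p_a(h)\hg(z-p)^\mu\sum_{m\ge0}\mathscr{B}(\mu+a+m,1-a)\mathscr{B}(\mu+1+m,a-1)A_m(z-p)^m$, and since $\mathscr{B}(\mu+a+m,1-a)\mathscr{B}(\mu+1+m,a-1)=\Gamma(1-a)\Gamma(a-1)$ does not depend on $m$, this is $\hg f$.
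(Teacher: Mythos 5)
Your argument is correct and follows essentially the same route as the paper: invert $\mathcal{H}_a$ via Lemma~\ref{HadInv}, use Proposition~\ref{Convlog} (with parameter $1-a$ on the twisted operator) to produce $Y$, strip off the left factor $P(\vartheta)$, and conclude with Proposition~\ref{Loslos} and the Beta-function identity. Your monodromy-eigenvalue argument for $L^{z^{-a}}\vartheta(Y)=0$ is exactly the detailed justification of the paper's terse remark that $p$ is not a singularity of $P(\vartheta)$ and $\mu+a\notin\mathbb{Z}$.
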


\begin{proof}
By Proposition \ref{Convlog} and Lemma \ref{HadInv}, the expression
\[g=(z-p)^{\mu+a}\sum_{m=0}^{\infty}\mathscr{B}(\mu+1+m,a)A_m(z-p)^m\] is a solution of $\prod_{k=1}^{m-1}(\vartheta-k)\prod_{j=0}^{m-1}(\vartheta-a-j)L^{z^{-a}}\vartheta$ at $z=p$.\\ 
As $p$ is no singularity of $\prod_{k=1}^{m-1}(\vartheta-k)\prod_{j=0}^{m-1}(\vartheta-a-j)$ and $\mu+a\not\in \mathbb{Z}$, we have $L^{z^{-a}}\vartheta(g)=0$. 

Thus
\begin{align*}
z\frac{d}{dz}g=z(z-p)^{\mu+a-1}\sum_{m=0}^{\infty}\mathscr{B}(\mu+1+m,a-1)A_mz^m
\end{align*}
is a solution of $L^{z^{-a}}$ and we obtain the first part of the statement.
Setting $h=z^{1-a}\frac{d}{dz} g$ we get
\begin{align*}
z^{a-1}h=\frac{d}{dz}g=(z-p)^{a+\mu-1}\sum_{m=0}^{\infty}\mathscr{B}(\mu+1+m,a-1)A_mz^m.
\end{align*}
Thus Proposition \ref{Loslos} yields
\begin{align*}
H_a^p(h)&\ \widehat{=}\ (z-p)^{\mu}\sum_{m=0}^{\infty}\mathscr{B}(\mu+a+m,1-a)\mathscr{B}(\mu+1+m,a-1)A_m(z-p)^m\\&\hg(z-p)^{\mu}\sum_{m=0}^{\infty}A_m(z-p)^m=f.
\end{align*}
\qed
\end{proof}

Rephrasing the lemma above, at a singular point $p\not\in\{0,\infty\}$ the special holomorphic solutions $f$ are those, which induce solutions of the form $\ln(z-p)f+r$, where $r\in\mathbb{C}\llbracket z-p\rrbracket$. In the geometric context solutions of this type turn out to be interesting as indicated in \cite[Appendix B]{Can} or \cite[Chapter 6]{ES}.

\section{Construction of Calabi-Yau operators}

In this section, we combine the results of the preceding sections to construct families of irreducible fuchsian differential operators inducing monodromy tuples of type $P_1$ and $P_2$. We will also compute special solutions of those operators at some of the singular points explicitly. Next, we investigate which of the operators constructed in the first part seem to be Calabi-Yau in the sense of \cite{AESZ}. As recently uncovered in \cite{Geemen}, unlike the definition of a Calabi-Yau operator given in \cite{AESZ}, there are families of Calabi-Yau threefolds, hence also Calabi-Yau operators, having no point of maximally unipotent monodromy. However, we restrict ourselves to the classical case of having such a point. In particular, the families $P_i$ for $i\geq 3$ cannot be induced by an operator corresponding to such a classical family. All operators we find using this method are covered by \cite[Appendix A]{AESZ}, but in most of the cases we are unfortunately not able to show, whether the operators are Calabi-Yau.

In the sequel, we will use the notations introduced in the preceding sections. Let furthermore $t=\frac{1}{z}$. As we have seen before, the construction of monodromy tuples of type $P_1$ and $P_2$ splits into four cases, each of which we cover by the preceding theorems. Furthermore, we only construct those operators $L$ for which zero is the only exponent at $z=0$ and choose the singular locus of $L$ to be $S=\{0,1,\infty\}$. We collect the remaining exponents $\lambda_{1,1},\dots,\lambda_{4,1}$ at $z=1$ and $\lambda_{1,\infty},\dots,\lambda_{4,\infty}$ at $z=\infty$ in its Riemann-scheme
\[\mathcal{R}(L)=\begin{Bmatrix}
0&1&\infty\\[0.1cm] \hline\\[-0.25cm]\begin{array}{c} 0
\\\noalign{\medskip}0
\\\noalign{\medskip}0
\\\noalign{\medskip}0
\\\noalign{\medskip} \end{array}&\begin{array}{c} \lambda_{1,1}
\\\noalign{\medskip}\lambda_{2,1}
\\\noalign{\medskip}\lambda_{3,1}
\\\noalign{\medskip}\lambda_{4,1}
\\\noalign{\medskip} \end{array}&\begin{array}{c} \lambda_{1,\infty}
\\\noalign{\medskip}\lambda_{2,\infty}
\\\noalign{\medskip}\lambda_{3,\infty}
\\\noalign{\medskip}\lambda_{4,\infty}
\\\noalign{\medskip} \end{array}
\end{Bmatrix}.\] In all occurring cases, the Jordan forms of the local monodromies can be read off directly from the Riemann scheme, as only repeated exponents turn out to induce logarithms. 
Proofs of those statements which can be obtained directly using the methods established before are omitted. For the sake of clarity, we frequently use well known hypergeometric identities as stated in \cite{Bailey} without any further comment. 

\begin{thm}[\textbf{The $P_1(4,10,4)$ case}]\label{P11Op}
Let $a,b\in\MQ\setminus\MZ$. A two parameter family of operators inducing monodromy tuples of type $P_1(4,10,4)$ is given by
\begin{align*}
 \mathcal{P}^{(a,b)}_1(4,10,4)&:=L_a\star_H L_{1-a}\star_H L_b\star_H L_{1-b}\\&=\vartheta^4-z(\vartheta+a)(\vartheta+1-a)(\vartheta+b)(\vartheta+1-b).
\end{align*}
The Riemann scheme reads
\[\mathcal{R}\left(\mathcal{P}^{(a,b)}_1(4,10,4)\right)=
\begin{Bmatrix}
0&1&\infty\\[0.1cm] \hline\\[-0.25cm]\begin{array}{c} 0
\\\noalign{\medskip}0
\\\noalign{\medskip}0
\\\noalign{\medskip}0
\\\noalign{\medskip} \end{array}&\begin{array}{c} 0
\\\noalign{\medskip}1
\\\noalign{\medskip}1
\\\noalign{\medskip}2
\\\noalign{\medskip} \end{array}&\begin{array}{c} a
\\\noalign{\medskip}1-a
\\\noalign{\medskip}b
\\\noalign{\medskip}1-b
\\\noalign{\medskip} \end{array}
\end{Bmatrix}.\]
Special solutions of this operator are $f=\sum_{m=0}^{\infty} A_mz^m$ at $z=0$, where \[A_m=\binom{a+m-1}{m}\binom{m-a}{m}\binom{b+m-1}{m}\binom{m-b}{m},\] $g=(z-1)\sum_{m=0}^{\infty}B_m(z-1)^m$, where \[B_m=\frac{b-1}{m+1}\binom{1+m-b}{m}\sum_{l=0}^m(-1)^l\binom{-b}{m-l}\frac{_{3}F_2\left(\genfrac{}{}{0pt}{0}{a,\ -l,\ 1-a}{1,\ b-l}\Bigg{\vert}\ 1\right)}{b-l-1}\] and $h_{\gamma}=t^\gamma\sum_{m=0}^{\infty}C^{(\gamma)}_mt^m$ at $z=\infty$, where $\gamma\in E=\{a,1-a,b,1-b\}$ and
\[C^{(\gamma)}_m=\mathscr{B}(\gamma+m,1-a)\mathscr{B}(\gamma+m,a)\mathscr{B}(\gamma+m,1-b)\mathscr{B}(\gamma+m,b).\] 
Moreover, $g$ is the conifold-period of $\mathcal{P}^{(a,b)}_1(4,10,4)$ at $z=1$, i.e. there is an $r\in (z-1)\mathbb{C}\llbracket z-1\rrbracket$ such that $\ln(z-1)g+r$ is a solution of  $\mathcal{P}^{(a,b)}_1(4,10,4)$ at $z=1$.
\end{thm}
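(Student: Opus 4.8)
The plan is to read off the operator and its Riemann scheme from the Hadamard--product calculus of Section~\ref{katz}, and to obtain the three families of special solutions by transporting solutions through the defining chain of Hadamard products, using Propositions~\ref{Loslos} and~\ref{Convlog} together with Lemma~\ref{coni}. Throughout write $\mathcal{P}$ for $\mathcal{P}^{(a,b)}_1(4,10,4)$ and $L^{\sharp}:=L_a\star_H L_{1-a}\star_H L_b$.

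\emph{The operator, the Riemann scheme and the monodromy type.} By Example~\ref{Hadaab} one has $L_{a_1}\star_H\cdots\star_H L_{a_n}=\vartheta^n-z\prod_i(\vartheta+a_i)$, which gives the stated closed form of $\mathcal{P}$ at once. From $P_0=\vartheta^4$ the exponents at $z=0$ are $0,0,0,0$; the substitution $z\mapsto 1/z$ turns the indicial polynomial at $\infty$ into $(\vartheta-a)(\vartheta-1+a)(\vartheta-b)(\vartheta-1+b)$, so the exponents there are $a,1-a,b,1-b$; and the exponents $0,1,1,2$ at $z=1$ follow either from the local indicial computation or, more quickly, from the Fuchs relation $\sum(\text{exponents})=6$ together with the standard shape of the exponents of a hypergeometric operator of type ${}_4F_3$ (cf.\ \cite{Bailey}). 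As only the repeated exponent $1$ at $z=1$ produces a logarithm, the local monodromy at $z=1$ is $(\J(2),1,1)$, the one at $z=0$ is the single block $\J(4)$, and the one at $\infty$ is regular semisimple with eigenvalues $\exp(\pm 2\pi i a),\exp(\pm 2\pi i b)$; comparison with Table~1 identifies $\T$ as a quasi-unipotent tuple of type $P_1(4,10,4)$.

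\emph{Special solutions at $0$ and $\infty$.} Here I would run the four Hadamard steps $L_a\rightsquigarrow L_a\star_H L_{1-a}\rightsquigarrow\cdots\rightsquigarrow\mathcal{P}$ and transport solutions along them. At $z=0$: starting from the solution $(1-z)^{-a}=\sum_m\binom{a+m-1}{m}z^m$ of $L_a$ and applying Proposition~\ref{Loslos}(1) with holomorphic input, each further step $\star_H L_c$ multiplies the $m$-th coefficient by $\mathscr{B}(c+m,1-c)$, i.e.\ up to a nonzero constant by $(c)_m/m!$; after the three remaining steps, and rewriting Pochhammer symbols as binomial coefficients, this yields exactly $A_m=\binom{a+m-1}{m}\binom{m-a}{m}\binom{b+m-1}{m}\binom{m-b}{m}$ (equivalently $f={}_4F_3(a,1-a,b,1-b;1,1,1;z)$). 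At $z=\infty$: the classical connection formula for ${}_4F_3$ (equivalently, transporting through the chain with Proposition~\ref{Loslos}(2), a new local exponent being created at each step) shows that the eigenfunction of exponent $\gamma\in\{a,1-a,b,1-b\}$ has coefficients a ratio of Pochhammer symbols which, rewritten via the Beta function, equals $\mathscr{B}(\gamma+m,1-a)\mathscr{B}(\gamma+m,a)\mathscr{B}(\gamma+m,1-b)\mathscr{B}(\gamma+m,b)=C_m^{(\gamma)}$.

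\emph{The conifold period at $z=1$.} First note that $\mathcal{P}=\mathcal{H}_{1-b}(L^{\sharp})$ with $L^{\sharp}=\vartheta^3-z(\vartheta+a)(\vartheta+1-a)(\vartheta+b)$ (Example~\ref{Hadaab} and Corollary~\ref{FHada}), and a degree count via Corollary~\ref{rankofconv} gives $\mathcal{P}=L^{\sharp}\star_H L_{1-b}$, already irreducible. The operator $L^{\sharp}$ has exponents $0,1,1-b$ at $z=1$; let $h^{\sharp}=(1-z)^{1-b}\sum_l c_l(1-z)^l$ be its Frobenius solution of exponent $1-b$ there (well defined since $1-b\notin\MZ$). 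By Proposition~\ref{propH}, applying $\MH_{1-b}$ promotes the $\J(1)$ attached to the exponent $1-b$ of $L^{\sharp}$ at $z=1$ to a $\J(2)$; this is precisely the logarithm of $\mathcal{P}$ at $z=1$, so $\mathcal{P}$ admits a solution $\ln(z-1)g+r$ at $z=1$ in which $g$ is the — up to scalar unique — solution of $\mathcal{P}$ of exponent $1$ at $z=1$. On the other hand, Proposition~\ref{Loslos}(1) (applicable since $(L^{\sharp})^{z^{b}}$ is $b$-positive) produces such a solution explicitly: writing $z^{-b}h^{\sharp}=(z-1)^{1-b}\sum_m\widehat{A}_m(z-1)^m$ we obtain $H^{1}_{1-b}(h^{\sharp})\hg(z-1)\sum_m\mathscr{B}(2-b+m,b)\widehat{A}_m(z-1)^m$, which by uniqueness equals $g$. (Alternatively, Lemma~\ref{coni} with $L=L^{\sharp}$, $a=1-b$, $p=1$ gives the same identity and, after subtracting an exponent-$0$ solution from $r$, the normalization $r\in(z-1)\mathbb{C}\llbracket z-1\rrbracket$ demanded in the statement.) Expanding $z^{-b}=\sum_k\binom{-b}{k}(z-1)^k$ and using $(1-z)^{1-b}=(\text{const})(z-1)^{1-b}$, $(1-z)^l=(-1)^l(z-1)^l$, turns $\widehat{A}_m$ into $(\text{const})\sum_{l=0}^m(-1)^l\binom{-b}{m-l}c_l$, while $\mathscr{B}(2-b+m,b)$ is a constant times $\tfrac{b-1}{m+1}\binom{1+m-b}{m}$; comparing with the asserted $B_m$ it remains to identify $c_l\hg\dfrac{{}_3F_2(a,-l,1-a;1,b-l;1)}{b-l-1}$. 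I expect this last step to be the main obstacle: one has to express the $(1-z)$-Taylor coefficients of the anomalous Frobenius solution of the ${}_3F_2$-operator $L^{\sharp}$ at $z=1$ as terminating ${}_3F_2$-sums at argument $1$ — a classical but delicate manipulation with hypergeometric connection and transformation formulae (of Thomae / Saalsch\"utz type) as in \cite{Bailey}, the negative integer parameter $-l$ reflecting that the coefficient of $(1-z)^l$ is a finite sum of $l+1$ terms. Assembling the constants (matching, say, $B_0$, which the formula gives as ${}_3F_2(a,0,1-a;1,b;1)=1$) then yields the stated $B_m$ and, by the discussion above, the fact that $g$ is the conifold period of $\mathcal{P}$ at $z=1$.
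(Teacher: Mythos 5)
Your construction of the operator, the Riemann scheme, the solutions at $z=0$ and $z=\infty$, and the overall mechanism at $z=1$ (identify $\mathcal{P}=\mathcal{H}_{1-b}(L^{\sharp})$, use Lemma~\ref{coni} to see that the log-coefficient of the Frobenius solution of $\mathcal{P}$ at $z=1$ is $\hg H^{1}_{1-b}(h^{\sharp})$, where $h^{\sharp}$ is the unique solution of $L^{\sharp}$ at $z=1$ with exponent in $-b+\mathbb{Z}$) is exactly the paper's route; note only that your primary identification ``$g$ is the up to scalar unique solution of exponent $1$'' is not valid as stated, since $2$ is also an exponent at $z=1$ and the space of solutions of valuation $\geq 1$ there is two-dimensional — it is precisely the Lemma~\ref{coni} argument in your parenthesis (the paper's argument) that pins $g$ down.

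The genuine gap is the formula for $B_m$: you reduce it to identifying the $(z-1)$-Taylor coefficients $c_l$ of the exponent-$(1-b)$ Frobenius solution $h^{\sharp}$ of the ${}_3F_2$-operator $L^{\sharp}$ with $\tfrac{{}_3F_2(a,-l,1-a;1,b-l;1)}{b-l-1}$, declare this ``the main obstacle'', and leave it to unspecified Thomae/Saalsch\"utz-type connection manipulations; since the asserted closed form of $B_m$ is part of the statement, this step cannot be waved through, and the connection problem for a ${}_3F_2$ at its logarithmic point $z=1$ is not an off-the-shelf identity. The idea you are missing, and which the paper uses, is to push the transport one level further down the Hadamard chain instead of solving a connection problem: $L_a\star_H L_{1-a}$ has exponents $0,0$ at $z=1$, so it has a logarithmic solution there whose log-coefficient is $\hg H^{1}_{1-a}\bigl((1-z)^{-a}\bigr)$, explicitly computable by Proposition~\ref{Loslos} (it is $\hg{}_2F_1(a,1-a;1;1-z)$, with coefficients $\mathscr{B}(1-a+m,a)\binom{-a}{m}$ up to constants); Proposition~\ref{Convlog} (applied with parameter $b$ at $p=1$ to this logarithmic solution) then gives $h^{\sharp}\hg H^1_b\bigl(H^1_{1-a}((1-z)^{-a})\bigr)$ together with an explicit Beta-transformed series, so that $c_l$ comes out as a finite binomial--Beta convolution which rearranges (by the standard identities from \cite{Bailey} the paper invokes) into the stated terminating ${}_3F_2$. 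Without this extra descent — or an actual execution of the connection-formula computation you postpone — the expression for $B_m$, and hence part of the theorem, remains unproven in your write-up.
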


\begin{proof}
It is clear that $L_a\star_H L_{1-a}\star_H L_b\star_H L_{1-b}$ induces a monodromy tuple of type $P_1(4,10,4)$. As in Example \ref{Hadaab}, we get \[L_a\star_H L_{1-a}\star_H L_b\star L_{1-b}=\mathcal{H}_{1-b}(\mathcal{H}_{b}(\mathcal{H}_{1-a}(L_a)))=\vartheta^4-z(\vartheta+a)(\vartheta+1-a)(\vartheta+b)(\vartheta+1-b).\]
The formulae for $A_m$, $B_m$ and $C^{(\gamma)}_m$ can be obtained directly using Proposition \ref{Convlog} and exchanging the roles of $a,1-a,b$ and $1-b$ freely. It remains to show, that $g$ is the conifold-period at $z=1$. As $e=1$ is an exponent of multiplicity two at $z=1$, the method of Frobenius yields a solution $\ln(z-1)\tilde{g}+r$ of $ \mathcal{P}^{(a,b)}_1(4,10,4)$ at $z=1$, where $\tilde{g}\in (z-1)\mathbb{C}\llbracket z-1\rrbracket$ and $r\in (z-1)\mathbb{C}\llbracket z-1\rrbracket$. Applying the first statement of Lemma \ref{coni} yields a solution $\omega\in (z-1)^{1-b}\mathbb{C}\llbracket z-1\rrbracket$ of $L_a\star_H L_{1-a}\star_H L_b$. As $1-b$ is the only exponent of $L_a\star_H L_{1-a}\star_H L_b$ at $z=1$ lying in $-b+\mathbb{Z}$, we have $\omega\hg H_b^1\left(H_{1-a}^1\left((1-z)^{-a}\right)\right)$. Applying the second statement of Lemma \ref{coni} yields
\[\tilde{g}\hg H^1_{1-b}(\omega)\hg H^1_{1-b}\left(H_b^1\left(H_{1-a}^1\left((1-z)^{-a}\right)\right)\right)\hg g.\] \qed
\end{proof}

\begin{thm}[The $P_1(4,8,4)$ case]\label{P12Op}
Let $a\in\MQ\setminus\left(\frac{1}{4}+\MZ\cup\frac{3}{4}+\mathbb{Z}\right)$ and $b\in\MQ\setminus\left(\frac{1}{4}+\MZ\cup\frac{3}{4}+\mathbb{Z}\right)$. A two parameter family of operators inducing monodromy tuples of type $P_1(4,4,8)$ is given by
\begin{align*}
 \mathcal{P}^{(a,b)}_1(4,8,4)&:=\Lambda^2\left(\left(L_{a+\frac{1}{4}}\star_H L_{\frac{1}{4}-a}\right)^{z^{\frac{1}{2}}}\star_HL_{b+\frac{3}{4}}\star_HL_{\frac{3}{4}-b}\right)^{z^{-\frac{3}{2}}}\star_H L_{\frac{3}{2}}\\&=64\,{\vartheta}^{4}+z \left( -128\,{\vartheta}^{4}-256\,{\vartheta}^{3
}+{\vartheta}^{2}(128({a}^{2}+{b}^{2})-304)\right)\\&\quad +z\left(\vartheta\,(128({a}^{2}+b^2)-176)+48(a^2+b^2)+256\,{a}^{2}{b}^{2}-39 \right)\\&\quad +64{z
}^{2} \left( a+1+\vartheta-b \right)  \left( a+1+\vartheta+b \right) 
 \left( a-1-\vartheta-b \right)  \left( -1+a-\vartheta+b \right).
\end{align*}
The Riemann scheme reads
\[\mathcal{R}\left(\mathcal{P}^{(a,b)}_1(4,8,4)\right)=
\begin{Bmatrix}
0&1&\infty\\[0.1cm] \hline\\[-0.25cm]\begin{array}{c} 0
\\\noalign{\medskip}0
\\\noalign{\medskip}0
\\\noalign{\medskip}0
\\\noalign{\medskip} \end{array}&\begin{array}{c} -\frac{1}{2}
\\\noalign{\medskip}0
\\\noalign{\medskip}1
\\\noalign{\medskip}\frac{3}{2}
\\\noalign{\medskip} \end{array}&\begin{array}{c} 1-a-b
\\\noalign{\medskip}1+a-b
\\\noalign{\medskip}1-a+b
\\\noalign{\medskip}1+a+b
\\\noalign{\medskip} \end{array}
\end{Bmatrix}.\]
Special solutions of this operator are given by $f=\sum_{m=0}^{\infty} A_mz^m$ at $z=0$ with \[A_m=\binom{\frac{1}{2}+m}{m}\sum_{k=0}^m\left(2k-\frac{1}{2}-m\right)\alpha\left(-\frac{1}{2},k\right)\alpha(0,m-k),\]
where \begin{align*}\alpha(\nu,m):=&\mathscr{B}\left(\frac{3}{4}+a+\nu+m,\frac{1}{4}-a\right)\mathscr{B}\left(\frac{3}{4}-a+\nu+m,\frac{1}{4}+a\right)\\&\mathscr{B}\left(\frac{3}{4}+b+\nu+m,\frac{3}{4}-b\right)\mathscr{B}\left(\frac{3}{4}-b+\nu+m,\frac{3}{4}+b\right)\end{align*} and $h_{(\mu,\nu)}=t^{\mu+\nu}\sum_{m=0}^{\infty}C^{(\mu,\nu)}_mt^{m}$  at $z=\infty$, where
\[C^{(\mu,\nu)}_m=\mathscr{B}\left(\nu+\mu+m,-\frac{1}{2}\right)\sum_{k=0}^{m}(2k+\mu-\nu-m)\delta(\mu,k)\delta(\nu,m-k),\] with 
 \begin{align*}\delta(\mu,k):=&\mathscr{B}\left(\mu-\frac{1}{4}+k,\frac{3}{4}-a\right)\mathscr{B}\left(\mu-\frac{1}{4}+k,\frac{3}{4}+a\right)\\&\mathscr{B}\left(\mu+\frac{1}{4}+k,\frac{1}{4}-b\right)\mathscr{B}\left(\mu+\frac{1}{4}+k,\frac{1}{4}+b\right),\end{align*} for $\mu\in\left\{\frac{1}{2}+a,\frac{1}{2}-a\right\}$ and $\nu\in\left\{\frac{1}{2}+b,\frac{1}{2}-b\right\}$.
\end{thm}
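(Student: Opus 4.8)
The proof will follow the template of Theorem~\ref{P11Op}, upgraded by the $\Lambda^2$-construction supplied by Theorem~\ref{SymP1} and Corollary~\ref{P1sym}.

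First I would show that the displayed operator induces a monodromy tuple of type $P_1(4,8,4)$ by reading the construction of Corollary~\ref{P1sym} through the dictionary of Section~4: a middle Hadamard product $\MH_{\lambda}$ is realised by $\star_H L_c$ with $\lambda^{-1}=\exp(2\pi ic)$ (Corollary~\ref{rankofconv}), a middle tensor product $\MT_{\La}$ with a rank-one triple supported on $\{0,1,\infty\}$ is realised by an exponent twist $(\cdot)^{z^{c}}$ or $(\cdot)^{(1-z)^{c}}=\otimes\,L_c$, and $\Lambda^2$ on operators induces $\Lambda^2$ on monodromy up to the direct-summand statement of the proposition in Section~4.1. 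Concretely, $L_{a+1/4}\star_H L_{1/4-a}$ plays the role of the two inner Hadamard products building the rank-two hypergeometric factor, the twist $(\cdot)^{z^{1/2}}$ together with $\star_H L_{b+3/4}$ and $\star_H L_{3/4-b}$ completes the linearly rigid symplectic quadruple $\S$ of Corollary~\ref{P1sym}, and $\Lambda^2(\cdot)^{z^{-3/2}}\star_H L_{3/2}$ realises $\MH_{-1}\circ\Lambda^2$. The bookkeeping step is to verify, using the numerology of Propositions~\ref{propF} and~\ref{propH}, that every intermediate operator is $c$-positive with no forbidden exponents at $0,1,\infty$ — this is exactly where the hypotheses $a,b\notin\frac14+\ZZ\cup\frac34+\ZZ$ enter — so that by Corollary~\ref{rankofconv} the induced tuples are the actual middle products rather than proper subfactors, and the Riemann scheme comes out as stated. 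Once the intermediate monodromy is the symplectically rigid $P_1(4,8,4)$ tuple of Corollary~\ref{P1sym}, irreducibility and the Jordan data are automatic.

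Second I would compute the operator explicitly. Starting from $L_{a+1/4}\star_H L_{1/4-a}=\vartheta^2-z(\vartheta+a+\tfrac14)(\vartheta+\tfrac14-a)$ by Example~\ref{Hadaab}, one applies $\vartheta\mapsto\vartheta-\tfrac12$, then $\mathcal{H}_{b+3/4}$ and $\mathcal{H}_{3/4-b}$ via Corollary~\ref{FHada}, then $\vartheta\mapsto\vartheta+\tfrac32$, obtaining an order-four operator $N$. Since $N$ is symplectic, $\Lambda^2(N)$ (of order at most six) carries a rational solution and factors, leaving the irreducible five-dimensional ($\SO_5=\Lambda^2\Sp_4$) constituent; the displayed quartic is then the order-four irreducible right factor of $\mathcal{H}_{3/2}$ applied to that constituent after the $(\cdot)^{z^{-3/2}}$ twist, pinned down by the factorisation proposition of Section~4.2 (split off the left factors $\vartheta+c_i$ and keep the irreducible piece of the predicted degree). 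Matching the result with the claimed polynomial is a direct but lengthy computation, which I would present as such (it is routinely checked by computer algebra).

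Third, the special solutions are obtained by running the same construction on local expansions, exactly as in the proof of Theorem~\ref{P11Op}. A Hadamard product $\star_H L_c$ multiplies the $m$-th coefficient of a local solution with exponent $\mu$ by $\mathscr{B}(\mu+1+m,1-c)$ (Proposition~\ref{Loslos}); the seed solution $(1-z)^{-(a+1/4)}$ of $L_{a+1/4}$ has coefficients $\binom{a-3/4+m}{m}\hg\mathscr{B}(a+\tfrac14+m,\tfrac34-a)$; an exponent twist shifts $\mu$; and $\Lambda^2$ replaces a pair of solutions with exponents $\mu,\nu$ by their Wronskian, producing the convolution sum $z(z-p)^{\mu+\nu-1}\sum_m\sum_k(2k+\mu-\nu-m)(\cdots)(z-p)^m$ recalled at the start of Section~5. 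Carrying $(1-z)^{-(a+1/4)}$ through the construction produces the $\delta(\mu,k)$ at $z=\infty$ and, after the role-swaps $a\leftrightarrow-a$, $b\leftrightarrow-b$ that permute the exponents, the $\alpha(\nu,m)$ at $z=0$; the final $\Lambda^2$ gives the inner sums, and the concluding $\star_H L_{3/2}$ contributes the factor $\mathscr{B}(\mu+\nu+m,-\tfrac12)$ at $\infty$ and the factor $\binom{1/2+m}{m}=(-1)^{m}\binom{-3/2}{m}$ at $0$, the inner solution being holomorphic there. No logarithmic (conifold-type) solution is claimed because the exponents $-\tfrac12,0,1,\tfrac32$ at $z=1$ are pairwise distinct.

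The step I expect to be the main obstacle is the $\Lambda^2$: one must make sure that the reducible order-six operator $(\Lambda^2 N)^{z^{-3/2}}$ really does possess the desired irreducible order-four right factor whose monodromy is $\MH_{-1}$ of the correct five-dimensional constituent, i.e. that positivity is arranged so that the middle Hadamard product lands on that constituent and the rational summand of $\Lambda^2$ is annihilated. Once this Riemann-scheme bookkeeping is carried out, the polynomial identity is mechanical and the solution formulae fall out of the same tracking.
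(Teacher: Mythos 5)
Your proposal is correct and follows essentially the route the paper intends: the proof of Theorem \ref{P12Op} is omitted there precisely because it is "obtained directly using the methods established before," i.e.\ the translation dictionary of Section 4 applied to the construction of Corollary \ref{P1sym}, the positivity/degree bookkeeping of Corollary \ref{rankofconv} and the factorisation proposition, and the coefficient formulae of Proposition \ref{Loslos} together with the Wronskian expansion of Section \ref{Spsols}, exactly as carried out for Theorem \ref{P11Op}. Your identification of the $\Lambda^2$-step (isolating the five-dimensional constituent and arranging positivity so the middle Hadamard product lands on it) as the only delicate point matches where the actual care is needed.
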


\begin{thm}[The $P_2(4,6,6)$ case]\label{P22Op}
Let $a,b\in\MQ\setminus\MZ$. A two parameter family of operators inducing monodromy tuples of type $P_2(4,6,6)$ is given by
\begin{align*}
\mathcal{P}^{(a,b)}_2(4,6,6)&:=\Sym^2\left(\left(L_{a}\star L_{b}\right)^{(1-z)^{\frac{1-a-b}{2}}}\right)\star_HL_{\frac{1}{2}}\\&=4\,{\vartheta}^{4}-2\,z \left( 2\,\vartheta+1 \right) ^{2} \left( {
\vartheta}^{2}+\vartheta+2\,ab-a+1-b \right)\\&\quad-{z}^{2} \left( 2\,
\vartheta+3 \right)  \left( 2\,\vartheta+1 \right)  \left( b-1-a-
\vartheta \right)  \left( b+1-a+\vartheta \right).
\end{align*}
The Riemann scheme reads
\[\mathcal{R}\left(\mathcal{P}^{(a,b)}_2(4,6,6)\right)=\begin{Bmatrix}
0&1&\infty\\[0.1cm] \hline\\[-0.25cm]\begin{array}{c} 0
\\\noalign{\medskip}0
\\\noalign{\medskip}0
\\\noalign{\medskip}0
\\\noalign{\medskip} \end{array}&\begin{array}{c} 0
\\\noalign{\medskip}1
\\\noalign{\medskip}-\frac{1}{2}+a+b
\\\noalign{\medskip}\frac{3}{2}-a-b
\\\noalign{\medskip} \end{array}&\begin{array}{c} \frac{1}{2}
\\\noalign{\medskip}\frac{3}{2}
\\\noalign{\medskip}1+a-b
\\\noalign{\medskip}1-a+b
\\\noalign{\medskip} \end{array}
\end{Bmatrix}.\] Special solutions of this operator are
 $f=\sum_{m=0}^{\infty} A_mz^m$ at $z=0$, where \[A_m=\binom{m-\frac{1}{2}}{m}\sum_{k=0}^m\binom{a+k-1}{k}\binom{b+k-1}{k}\binom{m-k-a}{m-k}\binom{m-k-b}{m-k},\] $g_{(a,b)}$ and $g_{(1-a,1-b)}$, where $ g_{(a,b)}=(z-1)^{\frac{3}{2}-a-b}\sum_{m=0}^{\infty}B^{(a,b)}_m(z-1)^m$, with \[B^{(a,b)}_m=\mathscr{B}\left(2-a-b+m,\frac{1}{2}\right)\sum_{l=0}^m\mathscr{B}\left(1-b+l,1-a\right)\alpha(l)\binom{-\frac{1}{2}}{m-l}\binom{a-1}{l}\] and \[\alpha(l)=\,_4F_3\left(\genfrac{}{}{0pt}{}{-l,\ 1-b,\ 1-a,\ a-1-l+b}{b-l,\ a-l,\ 2-a-b}
\Bigg{\vert}\ 1
\right) \] and $h_{(a,b)}$ and $h_{(1-a,1-b)}$ at $z=\infty$, where $h_{(a,b)}=t^{1-a+b}\sum_{m=0}^{\infty}C^{(a,b)}_mt^m$ with \[C^{(a,b)}_m=\mathscr{B}\left(1-a+b+m,\frac{1}{2}\right)\sum_{l=0}^m \delta(l)\delta(m-l)\] and 
\[\delta(l)=\, _{3}F_2\left(\genfrac{}{}{0pt}{0}{b,\ b,\ -l}{\frac{1}{2}-a+b,\ \frac{1}{2}(1+a+b)-l}\Bigg{\vert}\ 1\right)\binom{-\frac {1}{2}(1+a+b)+l}{l}.\] 
\end{thm}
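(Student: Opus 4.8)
The plan is to imitate the proof of Theorem~\ref{P11Op}: read the monodromy off the tuple construction of Corollary~\ref{symP2}, make the intermediate operators and their local solutions explicit using Sections~4 and~5, and close with the classical hypergeometric identities of \cite{Bailey}.

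First I would pin down the building blocks. By Example~\ref{Hadaab}, $L_a\star L_b=\vartheta^{2}-z(\vartheta+a)(\vartheta+b)$, with holomorphic solution ${}_2F_1(a,b;1;z)=\sum_m\binom{a+m-1}{m}\binom{b+m-1}{m}z^{m}$ at $z=0$; the twist by $(1-z)^{\frac{1-a-b}{2}}$ is the operation $\otimes L_{\frac{1-a-b}{2}}$ of the corollary following the tensor-product proposition, and Euler's transformation ${}_2F_1(a,b;1;z)=(1-z)^{1-a-b}{}_2F_1(1-a,1-b;1;z)$ identifies the $z=0$ solution of the squared twisted operator, i.e.\ of $\Sym^2\bigl((L_a\star L_b)^{(1-z)^{\frac{1-a-b}{2}}}\bigr)$, with the Cauchy product ${}_2F_1(a,b;1;z)\cdot{}_2F_1(1-a,1-b;1;z)$, whose $m$-th coefficient is exactly $\sum_{k=0}^{m}\binom{a+k-1}{k}\binom{b+k-1}{k}\binom{m-k-a}{m-k}\binom{m-k-b}{m-k}$. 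For the displayed degree-four operator I would compute the symmetric square of the twisted second-order operator by the classical symmetric-square construction, apply $\mathcal H_{1/2}$ (Corollary~\ref{FHada}), and isolate the irreducible degree-four right factor via the factorisation proposition of Section~4; that this factor has degree exactly four, hence equals $\mathcal P^{(a,b)}_2(4,6,6)$, comes from Corollary~\ref{rankofconv} once the positivity hypotheses at each Hadamard step are checked (they hold under the implicit genericity $a,b,a\pm b\notin\mathbb Z$). The Riemann scheme is then read off from the indicial equations of the explicit operator, or tracked through the twist, $\Sym^2$ and $\star_H L_{1/2}$ by Proposition~\ref{propH}.

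Membership in $P_2(4,6,6)$ is immediate: by the monodromy statements of Section~4 — the corollary on $L^{(1-z)^a}=L\otimes L_a$, the proposition that the monodromy of $\Sym^{n}L$ is a summand of $\Sym^{n}\T$, and Corollary~\ref{rankofconv} — the operator construction realises, under the obvious dictionary between exponents and eigenvalues, the tuple construction $\T=\MH_{-1}(\Sym^{2}\S)$ of Corollary~\ref{symP2}, the summands being the full ones because the operators are irreducible of the predicted degree. For the special solutions I iterate Section~5. At $z=0$, Proposition~\ref{Loslos}(i) with $a\mapsto\frac12$ applied to the $\Sym^2$-solution above (so $z^{-1/2}f$ has exponent $-\frac12$) multiplies its $m$-th coefficient by $\mathscr B(m+\frac12,\frac12)\hg\binom{m-\frac12}{m}$, which is $A_m$. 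At $z=\infty$ and $z=1$ I run the same chain: express the two local solutions of $L_a\star L_b$ at that point as hypergeometric series with Beta-value coefficients (Lemma~\ref{ConvbeiNull}), shift their exponents by the twist, form the Cauchy products produced by $\Sym^2$, and multiply by the Beta factor $\mathscr B(\,\cdot\,,\frac12)$ coming from $\star_H L_{1/2}$ (Proposition~\ref{Loslos}); the closed ${}_3F_2$-form of $\delta(l)$ and the ${}_4F_3$-form of $\alpha(l)$ emerge by collapsing the resulting double Beta-sums to terminating hypergeometric series. Finally $g_{(1-a,1-b)}$ and $h_{(1-a,1-b)}$ follow from $g_{(a,b)}$ and $h_{(a,b)}$ by the symmetry $(a,b)\mapsto(1-a,1-b)$, under which the operator is invariant.

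I expect the real work to be twofold. The first part is the bookkeeping-heavy verification of the explicit degree-four operator: pushing $L_a\star L_b$ through the twist, the symmetric square and $\mathcal H_{1/2}$, and extracting the correct right factor, is where computational slips are most likely. The harder part is the chain of hypergeometric reductions bringing the iterated Cauchy products of Beta values into the claimed closed ${}_3F_2$ and ${}_4F_3$ shapes at $z=1$ and $z=\infty$; these must be done by hand for each singular point using classical transformation and summation formulae (cf.\ \cite{Bailey}), and plausibly each of $A_m$, $B^{(a,b)}_m$, $C^{(a,b)}_m$ calls for a different identity. By comparison the identification of the monodromy type is routine, being read straight off Corollary~\ref{symP2}.
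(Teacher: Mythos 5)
Your proposal is correct and is essentially the paper's intended argument: the paper omits this proof as a routine repetition of the methods established in Sections 4 and 5 (and of the model proof of Theorem \ref{P11Op}), namely realizing the tuple construction $\MH_{-1}(\Sym^2\S)$ of Corollary \ref{symP2} on the operator level via the twist, $\Sym^2$ and $\star_H L_{\frac{1}{2}}$, identifying degrees and monodromy through Corollary \ref{rankofconv}, and producing the special solutions from Proposition \ref{Loslos} together with the hypergeometric identities of \cite{Bailey}. Your outline — including the Euler-transformation identification of the $\Sym^2$ solution at $z=0$, the Beta-factor bookkeeping at $z=1$ and $z=\infty$, and the $(a,b)\mapsto(1-a,1-b)$ symmetry of the operator — is exactly that argument.
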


\begin{thm}[The $P_2(4,6,8)$ case]\label{P21Op}
Let $a,b\in\MQ\setminus\MZ$. A two parameter family of operators inducing monodromy tuples of type $P_2(4,6,8)$ is given by
\begin{align*}
\mathcal{P}^{(a,b)}_2(4,6,8)&:=\left(L_a\star_H L_a\right)^{(1-z)^{1-a}}\star_H L_b\star_H L_{1-b}\\&=\vartheta^4-z(\vartheta+b)(\vartheta+1-b)(2\vartheta^2+2\vartheta+a^2-a+1)\\&\quad+z^2(\vartheta+b)(\vartheta+1-b)(\vartheta+b+1)(\vartheta+2-b).
\end{align*}
The Riemann scheme reads
\[\mathcal{R}\left(\mathcal{P}^{(a,b)}_2(4,6,8)\right)=\begin{Bmatrix}
0&1&\infty\\[0.1cm] \hline\\[-0.25cm]\begin{array}{c} 0
\\\noalign{\medskip}0
\\\noalign{\medskip}0
\\\noalign{\medskip}0
\\\noalign{\medskip} \end{array}&\begin{array}{c} 0
\\\noalign{\medskip}1
\\\noalign{\medskip}a
\\\noalign{\medskip}1-a
\\\noalign{\medskip} \end{array}&\begin{array}{c} b
\\\noalign{\medskip}1-b
\\\noalign{\medskip}1+b
\\\noalign{\medskip}2-b
\\\noalign{\medskip} \end{array}
\end{Bmatrix}.\] Special solutions of this operator are $f=\sum_{m=0}^{\infty} A_mz^m$ at $z=0$, where \[A_m=\binom{b+m-1}{m}\binom{m-b}{m}\sum_{k=0}^m\binom{a+m-k-1}{m-k}^2\binom{k-a}{k}\] and $g_{\gamma}=(z-1)^{\gamma}\sum_{m=0}^{\infty}B^{(\gamma)}_m(z-1)^m$, where \begin{align*}B^{(\gamma)}_m=\mathscr{B}(1+\gamma-b+m,b)\sum_{l=0}^m(-1)^l\alpha(l)\mathscr{B}(1-b+l,-\gamma)\binom{-b}{m-l}\binom{l-1+\gamma}{\gamma-1},\end{align*} with $\gamma\in\{a,1-a\}$, where
\[\alpha(l)=\,_3F_2\left(\genfrac{}{}{0pt}{0}{-l,\ \gamma,\ \gamma}{1+\gamma,\ b-l}\Bigg{\vert}\ 1\right).\]
\end{thm}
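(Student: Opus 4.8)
The plan is to follow the pattern of the proof of Theorem~\ref{P11Op}, handling in turn the identification of the operator, the identification of its monodromy tuple, the Riemann scheme, and the two families of special solutions.

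First I would establish the closed form of $\mathcal{P}^{(a,b)}_2(4,6,8)$. By Example~\ref{Hadaab} one has $L_a\star_H L_a=\vartheta^2-z(\vartheta+a)^2$. Twisting by $(1-z)^{1-a}$ means tensoring with $L_{1-a}$, i.e.\ conjugating $L_a\star_H L_a$ by $(1-z)^{a-1}$ (equivalently substituting $\vartheta\mapsto\vartheta+\tfrac{(a-1)z}{1-z}$) and clearing the factor $(1-z)$; this produces an explicit order-$2$ operator now carrying a $z^2$-term. Applying $\mathcal{H}_b$ and then $\mathcal{H}_{1-b}$ to it via Corollary~\ref{FHada}, and then splitting off the apparent-singularity left factors $(\vartheta+c_i)$ exactly as in the factorization proposition preceding Example~\ref{Hadaab}, leaves a right factor whose degree, by the bookkeeping in Corollary~\ref{rankofconv}, is $4$, hence equals $(L_a\star_H L_a)^{(1-z)^{1-a}}\star_H L_b\star_H L_{1-b}$; comparing coefficients gives the displayed polynomial. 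The consistency checks that are available at this point — $\vartheta^4$ as indicial equation at $z=0$, the roots $b,1-b,1+b,2-b$ of $P_m(-\vartheta)$ at $\infty$, and the Fuchs relation forcing the exponents at $z=1$ to sum to $2$ — already pin down the Riemann scheme; alternatively one propagates exponents through each Hadamard product and the twist via Proposition~\ref{propH}, the repeated exponents producing the logarithms as recorded at the beginning of the present section.

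That the induced tuple is of type $P_2(4,6,8)$ is the operator-level translation of Theorem~\ref{linrigP2}/Corollary~\ref{linrigP2MUM}: at each step one checks the appropriate positivity hypothesis ($a$-positivity, resp.\ $(1-a)$-positivity of the relevant $z^{-a}$-twist) so that Corollary~\ref{rankofconv} applies and the induced tuple is the \emph{full} middle Hadamard product and not a proper subfactor; granting this, the tuple agrees with the one built by the monodromy construction in Corollary~\ref{linrigP2MUM}, which is $P_2(4,6,8)$ with a maximally unipotent $T_0$. For the special solution at $z=0$ I would iterate the remark following Proposition~\ref{Loslos}: if $L$ has holomorphic solution $\sum A_m z^m$ then $L\star_H L_c$ has solution $\sum\mathscr{B}(c+m,1-c)A_m z^m$, while the twist by $(1-z)^{1-a}$ Cauchy-multiplies by $(1-z)^{a-1}=\sum_k\binom{k-a}{k}z^k$. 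Starting from $(1-z)^{-a}=\sum_j\binom{a+j-1}{j}z^j$ and using $\mathscr{B}(p,q)=\ga(p)\ga(q)/\ga(p+q)$ to absorb constants, one obtains successively (up to scalar) $\binom{a+m-1}{m}^2$ for $L_a\star_H L_a$, then $\sum_{k=0}^m\binom{a+m-k-1}{m-k}^2\binom{k-a}{k}$ after the twist, then the extra factors $\binom{b+m-1}{m}$ and $\binom{m-b}{m}$ after $\star_H L_b$ and $\star_H L_{1-b}$, which is exactly $A_m$.

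Finally, the solutions $g_\gamma$ at $z=1$ for $\gamma\in\{a,1-a\}$ (the two exponents there not in $\MZ$) are produced by iterating part~(i) of Proposition~\ref{Loslos} with $p=1$. The hard part will be the bookkeeping: each $\star_H L_c$ applied at $z=1$ forces one to re-expand the factor $z^{c-1}$ (resp.\ $z^{-b}$) occurring in the integrand around $z=1$ as $\sum_j\binom{c-1}{j}(z-1)^j$ and to Cauchy-multiply, so the coefficients of $g_\gamma$ come out as nested sums of Beta functions times binomials. Collapsing the inner sum into the terminating ${}_3F_2$ at $1$ displayed in the statement, by means of the standard hypergeometric identities in \cite{Bailey}, is the main computational obstacle — one must verify that precisely those ${}_3F_2$'s and the prefactor $\mathscr{B}(1+\gamma-b+m,b)$ arise, and that no further contiguity relation collapses them, exactly as in the proof of Theorem~\ref{P11Op}.
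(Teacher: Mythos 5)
Your plan coincides with the paper's (omitted) proof: the paper works out only the $P_1(4,10,4)$ case in detail and states that the remaining cases, this one included, follow by the same machinery --- Example~\ref{Hadaab} for $L_a\star_H L_a$, the twist as a tensor product with $L_{1-a}$, Corollary~\ref{FHada} together with the factorization proposition and Corollary~\ref{rankofconv} to identify the degree-four right factor and its monodromy (matching Corollary~\ref{linrigP2MUM}), Proposition~\ref{propH} for the exponents, Proposition~\ref{Loslos} with the Beta-function formulas for the solutions at $z=0$ and $z=1$, and the hypergeometric identities of Bailey to collapse the nested sums into the stated terminating ${}_3F_2$'s --- which is exactly what you propose. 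The only small caveat is that the indicial data at $0$ and $\infty$ plus the Fuchs relation do not by themselves pin down the four exponents at $z=1$, but your fallback of propagating exponents through each Hadamard product and the twist does, so the plan stands.
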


Now we investigate which of the operators constructed before are differential Calabi-Yau operators in the spirit of \cite{AESZ}.
We first recall the definition of those objects, which still is quite conjectural and state some of their basic properties. From the geometric point of view, the solutions of a Calabi-Yau operator of order $n$ should correspond to periods of a family of Calabi-Yau manifolds of dimension $n-1$ with Picard number one. In this sense, Calabi-Yau operators should be special Picard-Fuchs operators, which can't be defined from the differential algebraic point of view in a proper way yet. According to our definition, Calabi-Yau operators respect common conjectures for a differential operator to be Picard-Fuchs, see e.g. \cite{KonZa}. Some of the arithmetic conditions for a differential operator to be Calabi-Yau are basically motivated by approaches of mirror symmetry as discussed in \cite{Can}, but still seem to be quite mysterious.

\begin{defin}
For $n\geq 2$, an irreducible operator $L=\partial^n+\sum_{i=0}^{n-1}a_i\partial^i\in\MQ(z)[\partial]$ is called \textbf{Calabi-Yau operator} if it satisfies the following conditions.
\begin{enumerate}
\item [(CY-1)] The point $z=0$ is a regular singularity of $L$ and zero is the only exponent at this point.
\item[(CY-2)]  $L$ has a solution $y_0$ which is $N$-integral at $z=0$, i.e. at $z=0$ it is of the form \[y_0=1+\sum_{m=1}^{\infty}A_mz^m\in\MQ\llbracket z\rrbracket,\] with $N^mA_m\in\MZ$ for each $m\geq 1$ and a fixed $N\in\MN$.
\item[(CY-3)] We have \[L\alpha=\alpha L^{*}\] for a non trivial solution $\alpha$ of the differential equation $\omega'=-\frac{2}{n}a_{n-1}\omega$. Here \[L^{*}=\partial^n+\sum_{i=0}^{n-1}(-1)^{n+i}\partial^ia_i\in\mathbb{C}(z)[\partial]\] denotes the dual operator of $L$.
\item [(CY-4)] There is a solution $y_1$ linearly independent of $y_0$ given in (CY-3), such that the differential equation \[\omega'=\left(\frac{y_1}{y_0}\right)'\omega\] has a non trivial solution $q\in z+z^2\mathbb{Q}\llbracket z\rrbracket$ at $z=0$ which is $N$-integral. Such a solution is often called the \textbf{q-coordinate} or special coordinate of $L$ at $z=0$. 
\end{enumerate}
\end{defin}

By the construction done in Theorems \ref{P11Op}-\ref{P22Op}, we get

\begin{lemma}
Each of the operators $\mathcal{P}^{(a,b)}_1(4,10,4)$, $\mathcal{P}^{(a,b)}_1(4,8,4)$, $\mathcal{P}^{(a,b)}_2(4,6,8)$ and $\mathcal{P}^{(a,b)}_2(4,6,6)$ constructed in Theorem \ref{P11Op}-\ref{P22Op} fulfills the properties (CY-1)-(CY-3).
\end{lemma}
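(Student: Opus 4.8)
The plan is to check (CY-1)--(CY-3) for each of the four families separately, reading everything off from the explicit normal forms and Riemann schemes of Theorems~\ref{P11Op}--\ref{P22Op} together with the fact, already established in Section~3, that the induced monodromy tuples are precisely the symplectically rigid rank four tuples. Condition (CY-1) is then immediate: each $\mathcal{P}$ is built from the first order operators $L_c$ by tensor products, $\Lambda^2$, $\Sym^2$ and middle Hadamard products with $a$-positive operators, and every one of these operations preserves fuchsianity, so $z=0$ is a regular singular point; the Riemann schemes displayed in Theorems~\ref{P11Op}--\ref{P22Op} carry the single exponent $0$ at $z=0$ (of multiplicity four, with $T_0$ maximally unipotent in the relevant sub-cases), so $z=0$ is a genuine regular singularity and zero is its only exponent.

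For (CY-2) I would argue directly from the explicit series for the holomorphic solution $y_0$ given in each theorem. Since $a,b\in\MQ$, the operator has coefficients in $\MQ(z)$ and the Frobenius solution attached to the exponent $0$, normalised to constant term $1$, lies in $\MQ\llbracket z\rrbracket$; it remains to verify $N$-integrality. The key inputs are: for $c\in\MQ$ the generalized binomial coefficient $\binom{c+m-1}{m}$ is the $m$-th Taylor coefficient at $z=0$ of the algebraic function $(1-z)^{-c}$, hence is $N$-integral by Eisenstein's theorem; and $N$-integrality is preserved under Hadamard products $\sum c_m z^m\star_H\sum d_m z^m=\sum c_m d_m z^m$, under Cauchy products, and under rescaling the $m$-th coefficient by a fixed polynomial in $m$ over $\MQ$ followed by normalisation. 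For $\mathcal{P}^{(a,b)}_1(4,10,4)$ the coefficient $A_m$ is visibly a product of four such binomial sequences; for $\mathcal{P}^{(a,b)}_2(4,6,8)$ and $\mathcal{P}^{(a,b)}_2(4,6,6)$ it is a Cauchy product of Hadamard products of binomial sequences times one further binomial sequence; for $\mathcal{P}^{(a,b)}_1(4,8,4)$ one uses that the Beta-values occurring in the formula are, up to a common constant, products of such binomial sequences (the $\mathscr{B}$-values whose two arguments sum to $1$) times the coefficient sequences of algebraic functions such as $(1-z)^{-1/2}$, so that after dividing out $A_0$ the half-integral weights $2k-\tfrac12-m$ and the half-integral exponent of $\binom{\frac12+m}{m}$ combine into integral ones. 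In each case $y_0$ is then $N$-integral.

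For (CY-3) I would use self-duality of the monodromy. Each $\mathcal{P}$ is irreducible by Corollary~\ref{rankofconv}, and by the classification of Section~3 its monodromy tuple $\T$ lies in $\Sp_4(\CC)$; since $\mathcal{P}$ is fuchsian, its differential Galois group over $\mathbb{C}(z)$ is the Zariski closure of $\langle\T\rangle$ and hence is contained in $\Sp_4(\CC)$, so the solution space carries a non-degenerate invariant skew-symmetric form. By the standard correspondence between invariant bilinear forms on the solution space of an irreducible operator and self-duality of that operator (cf.~\cite{Put}), this forces $\mathcal{P}^{*}$ and $\mathcal{P}$ to be gauge-equivalent, and matching the gauge to the normalisation $\omega'=-\tfrac{2}{n}a_{n-1}\omega$ singles out a non-trivial solution $\alpha$ of the latter equation with $\mathcal{P}\alpha=\alpha\mathcal{P}^{*}$. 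Alternatively, the antisymmetric self-duality can be propagated through the construction: tensoring with a rank one operator, taking $\Lambda^2$ or $\Sym^2$, and applying middle Hadamard products each respect the (anti)self-duality of the underlying operator, mirroring Proposition~\ref{eigen2} at the level of operators.

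The step I expect to be the main obstacle is the $N$-integrality bookkeeping in (CY-2), and specifically the $\mathcal{P}^{(a,b)}_1(4,8,4)$ case: one has to see through the Beta-function coefficients produced by the iterated middle Hadamard products and the $\Lambda^2$-step that the apparent denominators $2$ — and the central-binomial-type denominators hidden inside the individual $\mathscr{B}$-values — really do cancel in the normalised series. By contrast (CY-1) is purely formal, and (CY-3) is a direct consequence of the symplectic rigidity established in Section~3 together with a classical self-duality criterion.
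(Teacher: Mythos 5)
Your treatment of (CY-1) matches the paper (read it off the Riemann schemes), and your (CY-2) argument via Eisenstein's theorem plus closure of $N$-integrality under Hadamard and Cauchy products does settle three of the four families, where the displayed coefficients are manifestly termwise/convolution products of sequences $\binom{c+m-1}{m}$ with $c\in\MQ$. But for $\mathcal{P}^{(a,b)}_1(4,8,4)$ your argument is not complete, and you say so yourself: the factors $\mathscr{B}\left(\tfrac34\pm b+m,\tfrac34\mp b\right)$ normalise to $(\tfrac34\pm b)_m/(\tfrac32)_m$, and the sequence $m!/(\tfrac32)_m=4^m/\bigl((2m+1)\binom{2m}{m}\bigr)$ is \emph{not} $N$-integral (its denominators contain arbitrarily large primes), so the needed cancellation inside the double sum defining $A_m$ — across the shift $\nu=-\tfrac12$, the weight $2k-\tfrac12-m$ and the prefactor $\binom{\frac12+m}{m}$ — is a genuine claim that you assert but do not prove. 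The paper sidesteps exactly this bookkeeping by invoking the integrality criteria of Dwork--Gerotto--Sullivan (Theorem I.4.3 and Formula II.4.6 of that reference), which apply uniformly to all four normalised Frobenius solutions; if you want an elementary route for $\mathcal{P}^{(a,b)}_1(4,8,4)$ you must either carry out that cancellation or reduce to such a criterion.

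Your (CY-3) argument also has a logical gap. Knowing (from Section 3) that the monodromy, hence the differential Galois group, lies in $\Sp_4(\CC)$ gives self-duality of the underlying differential module; but an isomorphism between the modules of $L$ and $L^{*}$ for an irreducible $L$ is in general implemented by a differential operator of order $\leq 3$ acting on solutions, not by multiplication by a function. The identity $L\alpha=\alpha L^{*}$ with the \emph{specific} $\alpha$ satisfying $\alpha'/\alpha=-\tfrac{2}{n}a_{n-1}$ is a statement about the particular normal form (cyclic vector), and it is not a formal consequence of the monodromy being symplectic — note for contrast that in order $2$ the identity holds for every operator, while in order $4$ it singles out special representatives within a fixed module. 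So "matching the gauge" is precisely the step that needs proof, and your alternative of propagating self-adjointness through $\otimes$, $\Sym^2$, $\Lambda^2$ and $\star_H L_a$ is likewise only asserted (and for the Hadamard step it does not follow from Proposition 2.7, which preserves symplectic/orthogonal structure only for particular combinations of operations). The paper's proof here is simply a direct verification of $L\alpha=\alpha L^{*}$ on the four explicit two-parameter normal forms, which is the honest and shortest way; an abstract argument would need an extra ingredient, e.g.\ showing that $\alpha$ is annihilated by $\Lambda^2 L$ and using uniqueness of the invariant line in $\Lambda^2$ of the solution space.
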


\begin{proof}
Property (CY-1) can be read off the corresponding Riemann scheme directly. Using \cite[Theorem I.4.3]{Dwork} and \cite[Formula II.4.6]{Dwork}, one shows that the unique solution at $z=0$ lying in $1+\mathbb{Q}\llbracket z\rrbracket$ of each operator is $N$-integral. Finally, condition (CY-3) can be obtained by a direct computation.\qed
\end{proof}

It remains to investigate which of the operators fulfill property (CY-4). Although there have recently been many improvements in the technique of showing this property, see e.g. \cite{Kratt1} and \cite{Kratt2}, we are in most of the cases not able to decide whether condition (CY-4) holds or not. Let us point out that for each operator constructed here it is also possible to compute a solution of the form $\ln(z)y_0+y_1$ by taking $\frac{d}{d\mu}y_0\Bigg{\vert}_{\mu=0}$ of the holomorphic solution $y_0=\sum_{\nu=\mu}^{\infty}f(\mu)z^{\mu}\Bigg{\vert}_{\mu=0}$ as it is described in \cite[Chapter 16]{Ince} but that we are often not able to check, whether the criterion \cite[Proposition 4.1]{Kratt1} holds or the series can be treated by a specialization of \cite[Theorem 2]{Kratt2}.

Our investigations lead to the following

\begin{conj}
An $\Sp_4(\mathbb{C})$-rigid tuple consisting
 of quasi-unipotent elements and having a maximally unipotent element is induced by a differential Calabi-Yau operator if and only if the elements of its second exterior power lie up to simultaneous conjugation in $\SO_5(\mathbb{Z})$. Furthermore, the inducing operator is unique. 
\end{conj}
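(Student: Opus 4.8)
The plan is to reduce the statement to the four explicit two--parameter families of operators produced in Section~6 and then analyse each family separately. By Theorem~\ref{cfg} and Table~2, every $\Sp_4(\mathbb{C})$--rigid quasi--unipotent tuple admitting a maximally unipotent element lies in one of the cases $P_1(4,10,4)$, $P_1(4,8,4)$, $P_2(4,6,6)$ or $P_2(4,6,8)$; the cases $P_3$, $P_4$, $P_5$ are ruled out because, as recorded in Theorem~\ref{P3} and the companion theorems and as emphasised at the start of Section~6, none of them carries a maximally unipotent element. Consequently it suffices to work with $\mathcal{P}^{(a,b)}_1(4,10,4)$, $\mathcal{P}^{(a,b)}_1(4,8,4)$, $\mathcal{P}^{(a,b)}_2(4,6,6)$ and $\mathcal{P}^{(a,b)}_2(4,6,8)$ and, inside each family, to pin down the rational parameters $(a,b)$ for which the operator is Calabi--Yau.

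First I would settle uniqueness. Each tuple in question is rigid, hence determined up to simultaneous conjugation by its Jordan forms; since a maximally unipotent element forces $0$ to be the only exponent at $z=0$, any inducing Calabi--Yau operator is forced to be fuchsian with singular locus $\{0,1,\infty\}$ and a Riemann scheme that is completely determined by the Jordan data, with no apparent singularity at $0$. Because a rigid local system carries no accessory parameters, there is at most one monic fuchsian operator with the prescribed Riemann scheme inducing it; this is precisely the member of the corresponding family constructed in Section~6, which proves the uniqueness clause and allows one to work with fully explicit operators.

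Next I would convert the arithmetic hypothesis into explicit conditions on $(a,b)$. By the exceptional isomorphism $\Lambda^2\Sp_4(\mathbb{C})=\SO_5(\mathbb{C})$ used throughout Section~3, the requirement that $\Lambda^2\T$ be simultaneously conjugate into $\SO_5(\mathbb{Z})$ amounts to $\Lambda^2\T$ stabilising a full rank--five $\mathbb{Z}$--lattice. As the corresponding $\SO_5$--tuples are again rigid and generate Zariski--dense subgroups (cf.\ Corollary~\ref{bspZ} and the integrality corollaries following Corollary~\ref{symP2} and Corollary~\ref{linrigP2MUM}), lattice stabilisation is controlled by the integrality of the traces of $\Lambda^2\T$, which are explicit polynomial expressions in the local eigenvalues, i.e.\ in $a$, $b$ and the auxiliary exponents. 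Using the explicit matrices of Corollaries~\ref{P1sym}, \ref{symP2}, \ref{linrigP2MUM} together with the hypergeometric data of Example~\ref{Hadaab}, one obtains for each family a concrete set of integrality conditions on the exponent parameters; these turn out to be (possibly mild weakenings of) the $\Sp_4(\mathbb{Z})$--conditions already isolated in Corollary~\ref{bspZ} and its analogues, the weakening reflecting that only $\Lambda^2\T$, not $\T$ itself, need be integral.

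It then remains to match these conditions with (CY-1)--(CY-4). Conditions (CY-1), (CY-2) and (CY-3) hold for all $(a,b)$ in all four families by the Lemma asserting that each of the four operators satisfies (CY-1)--(CY-3), so the whole equivalence collapses to: (CY-4) holds for $(a,b)$ if and only if the integrality conditions found above hold. Here one uses the explicit holomorphic solution $y_0=f$ tabulated in Theorems~\ref{P11Op}--\ref{P21Op} together with the logarithmic solution $\ln(z)y_0+y_1$ obtained by Frobenius' method (as in the proof of Theorem~\ref{P11Op} and Lemma~\ref{coni}), forms the $q$--coordinate $q=z\exp(y_1/y_0)\in z+z^2\mathbb{Q}\llbracket z\rrbracket$, and has to decide its $N$--integrality. \textbf{This integrality equivalence is the main obstacle.} The ``if'' direction is an $N$--integrality statement for a (generalised) hypergeometric mirror map: for $\mathcal{P}^{(a,b)}_1(4,10,4)$ it essentially follows from the classical cases in \cite[Appendix~A]{AESZ} together with Dwork's congruences \cite{Dwork}, but the remaining three families genuinely require the recent techniques of Krattenthaler and Rivoal \cite{Kratt1,Kratt2} and, in general, new integrality input. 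The ``only if'' direction should follow from the geometric origin of these operators --- already established in Theorem~\ref{cfg} --- by tracing the integral structure on the underlying cohomology through the $\Lambda^2$--construction and showing that failure of the $\SO_5(\mathbb{Z})$--condition forces a denominator into $q$ at some prime. As long as these two integrality facts are not available for all four families, the statement has to remain conjectural.
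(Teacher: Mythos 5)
The statement you are trying to prove is stated in the paper only as a \emph{conjecture}: the authors explicitly say, after verifying (CY-1)--(CY-3) for the four families of Theorems~\ref{P11Op}--\ref{P21Op}, that ``we are in most of the cases not able to decide whether condition (CY-4) holds or not''. So there is no proof in the paper to compare against, and your proposal does not supply one either: after the reduction to the families $\mathcal{P}^{(a,b)}_1(4,10,4)$, $\mathcal{P}^{(a,b)}_1(4,8,4)$, $\mathcal{P}^{(a,b)}_2(4,6,6)$, $\mathcal{P}^{(a,b)}_2(4,6,8)$ and to the equivalence ``(CY-4) $\Leftrightarrow$ $\Lambda^2\T$ conjugate into $\SO_5(\ZZ)$'', you stop at exactly the obstacle the authors name (integrality of the $q$-coordinate, beyond what \cite{Kratt1,Kratt2} and Dwork's congruences cover, plus a converse ``denominator'' argument that is nowhere available). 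That part of your write-up is an accurate restatement of why the statement is conjectural, not a proof of it.

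One step you present as settled is in fact also part of the gap: the uniqueness clause. You argue that each tuple ``is rigid, hence determined up to simultaneous conjugation by its Jordan forms'' and that rigidity leaves ``no accessory parameters''. But $\Sp_4(\CC)$-rigidity is the dimension formula, not linear rigidity; uniqueness given the Jordan forms is precisely the definition of \emph{linear} rigidity (Theorem~\ref{symrig}(i)), and the families $P_1(4,8,4)$ and $P_2(4,6,6)$ are not linearly rigid --- indeed the corollary following Corollary~\ref{symP2} notes that in the $P_2(4,6,6)$ case there can be two symplectically rigid tuples with the same Jordan forms, since $\Sym^2$ is not injective on Jordan data. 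Moreover, even for a fixed local system, uniqueness of an inducing operator requires excluding apparent singularities and normalizing the rescalings $z\mapsto\lambda z$ that the paper itself uses to match the tables of \cite{AESZ}; none of this follows from an ``accessory parameter'' count for the non-hypergeometric cases. The paper deliberately leaves uniqueness inside the conjecture, and your argument for it does not close that gap.
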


In the sequel we state which of the cases in each of the families correspond to operators listed in \cite[Appendix A]{AESZ} and refer to the number of the operator stated there. Note that the operators constructed here have singular locus $\{0,1,\infty\}$, so we get the corresponding operators after having performed a transformation of the form $z\mapsto \lambda z$ with $\lambda\in\mathbb{Q}^{*}$, which leaves the properties (CY-1)-(CY-4) untouched and changes the singular locus to $\left\{0,\frac{1}{\lambda},\infty\right\}$. It is remarkable that after having performed the transformation the coefficients of the $q$-coordinate are minimal over $\MZ$, meaning that they are all lying in $\MZ$ and there is no $\alpha\in\MZ$ such that $\alpha^m$ divides the $m$-th coefficient for each $m\in\MN$. Furthermore for each series of operators the transformation can be done uniformly. Let therefore in the sequel for $a=\frac{r}{s}$, where $r\in\mathbb{Z}$ and $s\in\mathbb{N}$ are coprime, \[\beta\colon\MQ\setminus\{0\}\to\overline{\MZ},\ a\mapsto s\prod_{i=1}^ns_i^{\frac{1}{s_i-1}},\] where $s_1,\dots,s_n$ denote the distinct prime divisors of $s$. 

\begin{enumerate}
\item[(i)] \textbf{The $P_1(4,10,4)$ case:}\\
Having performed the transformation $z\mapsto\beta(a)^2\beta(b)^2z$, we get the following Calabi-Yau operators
\setlength{\extrarowheight}{0.5mm}
\begin{longtable}{|c||c|c|c|c|c|c|c|c|c|c|c|c|c|c|}\hline
\textbf{a} & $\frac{1}{2}$&$\frac{1}{2}$&$\frac{1}{2}$&$\frac{1}{2}$&$\frac{1}{3}$&$\frac{1}{3}$&$\frac{1}{3}$&$\frac{1}{4}$&$\frac{1}{4}$&$\frac{1}{6}$&$\frac{1}{5}$&$\frac{1}{8}$&$\frac{1}{10}$&$\frac{1}{12}$\\[0.5mm]\hline
\textbf{b}&$\frac{1}{2}$&$\frac{1}{3}$& $\frac{1}{4}$&$\frac{1}{6}$&$\frac{1}{3}$&$\frac{1}{4}$&$\frac{1}{6}$&$\frac{1}{4}$&$\frac{1}{6}$&$\frac{1}{6}$&$\frac{2}{5}$&$\frac{3}{8}$&$\frac{3}{10}$&$\frac{5}{12}$\\[0.5mm]\hline
\textbf{Number}&$3$&$5$&$6$&$14$&$4$&$11$&$8$&$10$&$12$&$13$&$1$&$7$&$2$&$9$\\[0.5mm]\hline
\end{longtable}

\item[(ii)] \textbf{The $P_1(4,8,4)$ case:}\\
To make our observations more transparent, we substitute $c=2a+\frac{1}{2}$ and $d=2b+\frac{1}{2}$. 
Having performed the transformation $z\mapsto 4\beta(c)^2\beta(d)^2z$, we get the following Calabi-Yau operators
\setlength{\extrarowheight}{0.5mm}
\begin{longtable}{|c||c|c|c|c|c|c|c|c|c|c|c|c|c|c|}\hline
\textbf{c}&$\frac{1}{2}$&$\frac{1}{2}$&$\frac{1}{2}$&$\frac{1}{2}$&$\frac{1}{3}$&$\frac{1}{3}$&$\frac{1}{3}$&$\frac{1}{4}$&$\frac{1}{4}$&$\frac{1}{6}$&$\frac{1}{5}$&$\frac{1}{8}$&$\frac{1}{10}$&$\frac{1}{12}$\\[0.5mm]\hline
\textbf{d}&$\frac{1}{2}$&$\frac{1}{3}$& $\frac{1}{4}$&$\frac{1}{6}$&$\frac{1}{3}$&$\frac{1}{4}$&$\frac{1}{6}$&$\frac{1}{4}$&$\frac{1}{6}$&$\frac{1}{6}$&$\frac{2}{5}$&$\frac{3}{8}$&$\frac{3}{10}$&$\frac{5}{12}$\\[0.5mm]\hline
\textbf{Number}&$\tilde{3}$&$\tilde{5}$&$\tilde{6}$&$\tilde{14}$&$\tilde{4}$&$\tilde{11}$&$\tilde{8}$&$\tilde{10}$&$\tilde{12}$&$\tilde{13}$&$\tilde{1}$&$\tilde{7}$&$\tilde{2}$&$\tilde{9}$\\[0.5mm]\hline
\end{longtable}
where the number $\widetilde{i}$ refers to the operators defined in \cite{Alm2}. As shown there, those operators are equivalent to $206-219$ in \cite{AESZ}
\newpage Note that the operator \[Q_2^{c,d}:=\left(\Lambda^2 \mathcal{P}_1^{c,d}(4,8,8)\right)^{z^{-1}(1-z)^{-\frac{3}{2}}}\] is of hypergeometric type. Its Riemann scheme reads
\[\mathcal{R}\left(Q_2^{c,d}\right)=
\begin{Bmatrix}
0&1&\infty\\[0.1cm] \hline\\[-0.25cm]\begin{array}{c} 0
\\\noalign{\medskip}0
\\\noalign{\medskip}0
\\\noalign{\medskip}0
\\\noalign{\medskip} 0
\\\noalign{\medskip}\end{array}&\begin{array}{c} 0
\\\noalign{\medskip}1
\\\noalign{\medskip}\frac{3}{2}
\\\noalign{\medskip}2
\\\noalign{\medskip}3
\\\noalign{\medskip} \end{array}&\begin{array}{c} \frac{1}{2}
\\\noalign{\medskip}c
\\\noalign{\medskip}d
\\\noalign{\medskip}1-c
\\\noalign{\medskip}1-d
\\\noalign{\medskip} \end{array}\end{Bmatrix}.\]

This family contains elements whose induced monodromy group is not a subgroup of $\Sp_4(\mathbb{Z})$. 

\item[(iii)] \textbf{The $P_2(4,6,6)$ case:}\\
Having performed the transformation $z\mapsto 4\beta(a)\beta(b)z$, we get the following Calabi-Yau operators
\setlength{\extrarowheight}{0.5mm}
\begin{longtable}{|c||c|c|c|c|c|c|c|c|}\hline
$\textbf{a}$&$\frac{1}{2}$&$\frac{1}{2}$&$\frac{1}{2}$&$\frac{1}{2}$&$\frac{1}{3}$&$\frac{1}{3}$&$\frac{1}{3}$&$\frac{1}{3}$\\[0.5mm]\hline
$\textbf{b}$&$\frac{1}{2}$&$\frac{1}{3}$&$\frac{1}{4}$&$\frac{1}{6}$&$\frac{1}{3}$&$\frac{2}{3}$&$\frac{1}{6}$&$\frac{5}{6}$\\[0.5mm]\hline
\textbf{Number}&$3^{*}$
&$-$
&$6^*$
&$14^{*}$
&$4^{*}$
&$4^{**}$
&$8^{*}$
&$8^{**}$
\\[0.5mm]\hline
\end{longtable}
\begin{longtable}{|c||c|c|c|c|c|c|c|c|}\hline
$\textbf{a}$&$\frac{1}{4}$&$\frac{1}{4}$&$\frac{1}{6}$&$\frac{1}{6}$&$\frac{1}{8}$&$\frac{1}{8}$&$\frac{1}{12}$&$\frac{1}{12}$\\[0.5mm]\hline
$\textbf{b}$&$\frac{1}{4}$&$\frac{3}{4}$&$\frac{1}{6}$&$\frac{5}{6}$&$\frac{3}{8}$&$\frac{5}{8}$&$\frac{5}{12}$&$\frac{7}{12}$\\[0.5mm]\hline
\textbf{Number}&$10^{*}$
&$10^{**}$
&$13^{*}$
&$13^{**}$
&$7^{*}$
&$7^{**}$
&$9^{*}$
&$9^{**}$
\\[0.5mm]\hline
\end{longtable}
The case $a=\frac{1}{2}$ and $b=\frac{1}{3}$ is not listed here, since the corresponding operator is $\Sym^3$ of a second order operator. 

\item[(iv)] \textbf{The $P_2(4,6,8)$ case:}\\
Having performed the transformation $z\mapsto\beta(a)^2\beta(b)^2z$, we get the following Calabi-Yau operators
\setlength{\extrarowheight}{0.5mm}
\begin{longtable}{|c||c|c|c|c|c|c|c|c|}\hline
$\textbf{a}$&$\frac{1}{2}$&$\frac{1}{2}$&$\frac{1}{2}$&$\frac{1}{2}$&$\frac{1}{3}$&$\frac{1}{3}$&$\frac{1}{3}$&$\frac{1}{3}$\\[0.5mm]\hline
$\textbf{b}$&$\frac{1}{2}$&$\frac{1}{3}$&$\frac{1}{4}$&$\frac{1}{6}$&$\frac{1}{2}$&$\frac{1}{3}$&$\frac{1}{4}$&$\frac{1}{6}$\\[0.5mm]\hline
\textbf{Number}&$111$&$110$&$30$&$112$&$141$&$142$&$196$&$143$\\[0.5mm]\hline
\end{longtable}
\begin{longtable}{|c||c|c|c|c|c|c|c|c|}\hline
$\textbf{a}$&$\frac{1}{4}$&$\frac{1}{4}$&$\frac{1}{4}$&$\frac{1}{4}$&$\frac{1}{6}$&$\frac{1}{6}$&$\frac{1}{6}$&$\frac{1}{6}$\\[0.5mm]\hline
$\textbf{b}$&$\frac{1}{2}$&$\frac{1}{3}$&$\frac{1}{4}$&$\frac{1}{6}$&$\frac{1}{2}$&$\frac{1}{3}$&$\frac{1}{4}$&$\frac{1}{6}$\\[0.5mm]\hline
\textbf{Number}&$189$&$194$&$197$&$199$&$190$&$195$&$198$&$61$\\[0.5mm]\hline
\end{longtable}
\end{enumerate}

\appendix


\begin{appendix}
\section{Appendix: Subgroup structure of the $\Sp_4(\CC)$}

We give an overview of the maximal irreducible subgroups in $\Sp_4(\CC)$
and their behaviour under taking the exterior product.

\begin{lemma}\label{subgr}
 The maximal semisimple connected 
 subgroups of $\Sp_4(\CC)$ are contained in one of the
 following classes.
 \begin{enumerate}
  \item $(\Sp_2(\CC) \times \Sp_2(\CC)).2,$
   \item $ \GL_2(\CC).2 \cong \Sp_2(\CC) \ten \GO_2(\CC)$.
   \item $\Sym^3\SL_2(\CC),$
 \end{enumerate}
 where $.2$ denotes a group extension of degree $2$.
\end{lemma}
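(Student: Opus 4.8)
The plan is to classify the maximal connected semisimple subgroups of $\Sp_4(\CC)$ via the representation theory of their (semi)simple factors, using the fact that $\Sp_4(\CC)$ acts on $V = \CC^4$ preserving a non-degenerate alternating form $\omega$. First I would observe that a connected semisimple subgroup $H \le \Sp_4(\CC)$ acts on $V$, and since $\dim V = 4$ is small, only a short list of semisimple groups can embed: the candidates for the simple factors are $\SL_2(\CC)$ (alias $\Sp_2(\CC)$) and $\Sp_4(\CC)$ itself, so a proper connected semisimple $H$ is (an isogenous image of) either $\SL_2$, or $\SL_2 \times \SL_2$. The case $H \cong \Sp_4(\CC)$ is excluded since we want proper subgroups; so it remains to see how $\SL_2$ and $\SL_2 \times \SL_2$ can sit inside $\Sp_4$.

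Next I would run through the possible $4$-dimensional representations in each case and check which preserve an alternating form. For $H \cong \SL_2(\CC)$: the irreducible $4$-dimensional representation is $\Sym^3(\mathrm{std})$, which is symplectic (odd symmetric power of the $2$-dimensional symplectic representation), giving class (iii), $\Sym^3 \SL_2(\CC)$. A reducible action on $V$ either preserves a proper subspace — contradicting maximality after noting the stabilizer of such a subspace is a larger (parabolic or Levi-type) subgroup — or decomposes $V$ as a sum of $\SL_2$-modules; the only way to get an invariant alternating form this way forces $V = \mathrm{std} \otimes \CC^2$ with the $\CC^2$ carrying no $\SL_2$-action, and the normalizer in $\Sp_4$ of this configuration is $\Sp_2(\CC) \otimes \GO_2(\CC) \cong \GL_2(\CC).2$, i.e. class (ii). For $H \cong \SL_2 \times \SL_2$: the faithful $4$-dimensional representations are $\mathrm{std} \boxtimes \mathrm{std}$, whose invariant form is the tensor product of the two alternating forms, hence symmetric — wait, more carefully: $\omega_1 \otimes \omega_2$ is symmetric, so $\mathrm{std}\boxtimes\mathrm{std}$ lands in $\SO_4(\CC) = \SL_2\otimes\SL_2$, not $\Sp_4$; instead the symplectic embedding of $\SL_2 \times \SL_2$ comes from $V = (\mathrm{std}\boxtimes \mathbf{1}) \oplus (\mathbf{1}\boxtimes\mathrm{std})$, an orthogonal sum of two symplectic planes, with normalizer $(\Sp_2(\CC)\times\Sp_2(\CC)).2$, class (i). (I would double-check whether $\mathrm{std}\boxtimes\mathrm{std}$ might after all give something inside $\Sp_4$; since an even power of a symplectic form on each factor gives orthogonal behaviour, it does not, and this confirms only class (i) survives.)

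The key steps in order: (1) bound the semisimple rank and identify the possible simple factors using $\dim V = 4$; (2) for each abstract candidate $H$, enumerate faithful $4$-dimensional representations up to isomorphism; (3) for each such representation, decide whether the $H$-invariant bilinear form is alternating (so $H$ lands in $\Sp_4$) or symmetric (so it does not); (4) for the surviving embeddings, compute the full normalizer in $\Sp_4(\CC)$ to see the extension by $.2$, and argue that this normalizer is exactly the maximal subgroup, since any larger connected subgroup would have to be all of $\Sp_4$ by the classification in step (1); (5) collect the three resulting classes.

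The main obstacle I expect is step (3)–(4): correctly tracking the symmetry type of the invariant form in the reducible and tensor-product cases (it is easy to slip between $\SO_4$ and $\Sp_4$ here), and then verifying that the component group of the normalizer is exactly $\ZZ/2$ rather than something larger — this requires identifying which outer automorphisms of $\Sp_2\times\Sp_2$ (the swap) and of the $\Sym^3$ or $\SL_2\otimes\GO_2$ configurations are actually realized by conjugation inside $\Sp_4(\CC)$. I would handle the swap in case (i) by exhibiting an explicit symplectic transformation interchanging the two planes, and the $.2$ in case (ii) by the element of $\GO_2(\CC)\setminus\SO_2(\CC)$; maximality then follows because there is no intermediate connected semisimple group strictly between these and $\Sp_4(\CC)$ by the rank/representation bound established at the start.
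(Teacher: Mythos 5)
Your proposal is correct in substance, and it is considerably more detailed than the paper's own argument --- the extra detail is precisely the part the paper delegates to a citation. The paper's proof is two lines: a maximal connected semisimple subgroup is a product of simple factors, the Lie rank of $\Sp_4(\CC)$ is two, so there are at most two factors and each must be of type $\Sp_2(\CC)$; the identification of the resulting classes (i)--(iii) is then referred to Carter. You perform the same initial reduction (only $\SL_2$ and $\SL_2\times\SL_2$ can occur), but then carry out explicitly what the reference covers: enumerating the faithful four-dimensional representations, checking the symmetry type of the invariant bilinear form in each case (in particular that $\mathrm{std}\boxtimes\mathrm{std}$ has a symmetric invariant form, so the only symplectic embedding of $\SL_2\times\SL_2$ is the perpendicular sum of two symplectic planes, while the irreducible $\SL_2$ gives $\Sym^3$ and the tensor-decomposable case gives $\Sp_2(\CC)\ten\GO_2(\CC)$), and computing normalizers to see the $.2$ extensions. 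Your route buys a self-contained proof; the paper's buys brevity at the cost of an external reference.

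Two points to tighten. First, in the reducible $\SL_2$ case the existence of an invariant alternating form does not force $V=\mathrm{std}\otimes\CC^2$: the decomposition $\mathrm{std}\oplus\mathbf{1}\oplus\mathbf{1}$ also carries an invariant symplectic form. Such a subgroup lies inside $\Sp_2(\CC)\times\Sp_2(\CC)$, hence is not maximal and the conclusion is unharmed, but the case should be listed rather than excluded. Second, your opening step (``only a short list of semisimple groups can embed'') should explicitly dispose of the remaining rank-two candidates: $\SL_3(\CC)$, whose four-dimensional representations $3\oplus 1$ and $\bar{3}\oplus 1$ are not self-dual and so preserve no nondegenerate bilinear form, and $G_2$ and $\mathrm{PSp}_4(\CC)$, which have no faithful four-dimensional representation at all. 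This check is exactly what the paper's rank argument together with the appeal to Carter is standing in for.
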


\begin{proof}
 A maximal connected semisimple subgroup $G$ of $\Sp_4(\CC)$
 can be written as a product $G=G_1\cdots G_r$ of simple
 groups $G_i$. Hence $G_i$ is either a torus or $\Sp_2(\CC)$. 
 Since the Lie-rank of $\Sp_4(\CC)$ is two we get $r\leq 2.$
 This gives the claim, cf. \cite[Chap. 1]{Ca85}.
\qed\end{proof}

\begin{cor}\label{la2}
 Two classes of the maximal irreducibles subgroups in $\Sp_4(\CC)$  become reducible in $\SO_5(\CC)$ taking their antisymmetric square.
 \begin{eqnarray*}
 \Lambda^2 (\Sp_2(\CC) \ten\GO_2(\CC) )&
 = & \langle (A,B) \in \GO_3(\CC)  \times  \GO_2(\CC)\mid \det(A)\det(B)=1\rangle\\
 \Lambda^2 (\Sp_2(\CC) \times \Sp_2(\CC)).2 ) & = &\GO_4(\CC),
\end{eqnarray*}
 where $ \GO_4(\CC)$ is naturally embedded into $\SO_5(\CC)$.   
\end{cor}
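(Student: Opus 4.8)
The approach is to use the exceptional isomorphism $\Lambda^2\Sp_4(\CC)=\SO_5(\CC)$ used repeatedly above. Write $V=\CC^4$ for the standard representation and $\omega\in\Lambda^2V$ for the (non-isotropic) vector corresponding to the symplectic form. Since $\Sp_4(\CC)\subset\SL_4(\CC)$ acts on $\Lambda^2V$ preserving both the $\SO_6$-form coming from $\Lambda^2\SL_4(\CC)=\SO_6(\CC)$ and the line $\CC\omega$, the $5$-dimensional representation realising $\SO_5(\CC)$ is $V_5:=\omega^{\perp}\cong\Lambda^2V/\CC\omega$. So for each of the two maximal subgroups $G$ the plan is to decompose $V$ as a $G$-module, read off the induced $G$-decomposition of $\Lambda^2V$, locate $\omega$ inside it, identify $V_5|_G$, and finally compute the image of $G$ in $\SO(V_5)=\SO_5(\CC)$. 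The orthogonality of the summands that appear will be automatic, since the relevant pieces are either pairwise non-isomorphic as $G$-modules or are separated by the wedge pairing for dimension reasons.

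For case (ii), take $G=\Sp_2(\CC)\ten\GO_2(\CC)$ acting on $V=V_2\ten W_2$, where $V_2$ carries the $\Sp_2$-invariant alternating form and $W_2$ carries the $\GO_2$-invariant symmetric form $\beta$, so that $\omega$ is the alternating tensor of the two. The standard decomposition $\Lambda^2(V_2\ten W_2)=(\Sym^2V_2\ten\Lambda^2W_2)\oplus(\Lambda^2V_2\ten\Sym^2W_2)$ has summands of dimension $3$ and $3$, and $\omega$ lies in the second summand because $\Lambda^2V_2$ is the trivial $\Sp_2$-module. Removing $\CC\omega$ gives $V_5=(\Sym^2V_2\ten\Lambda^2W_2)\oplus(\Lambda^2V_2\ten(\Sym^2W_2/\CC\beta))$, of dimension $3+2$. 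On the first piece $G$ acts through $\Sym^2\Sp_2(\CC)=\SO_3(\CC)$ twisted by the determinant character of $\GO_2(\CC)$, and since $-I\in\GO_3(\CC)\setminus\SO_3(\CC)$ the image is all of $\GO_3(\CC)$; on the second piece $\Sp_2(\CC)$ acts trivially while $\GO_2(\CC)$ acts on the weight $\pm2$ plane $\Sym^2W_2/\CC\beta$ with image $\GO_2(\CC)$, using that $t\mapsto t^2$ is onto $\SO_2(\CC)$. Hence the image lands in $\GO_3(\CC)\times\GO_2(\CC)$; the condition $\det(A)\det(B)=1$ is precisely the requirement that the resulting block matrix lie in $\SO_5(\CC)$, and it is met (the reflection in $\GO_2$ contributes $\det=-1$ to both blocks, every other generator contributes $\det=+1$). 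Surjectivity onto $\{(A,B):\det(A)\det(B)=1\}$ follows from surjectivity of $\Sym^2\colon\SL_2(\CC)\to\SO_3(\CC)$ and of squaring on $\SO_2(\CC)$ together with one reflection, which gives the first identity.

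For case (i), realise $G=(\Sp_2(\CC)\times\Sp_2(\CC)).2$ as the block-diagonal subgroup of $\Sp_4(\CC)$ for a decomposition $V=U\oplus U'$ into rank-$2$ symplectic subspaces, with the extra involution $\tau$ a symplectic isomorphism interchanging $U$ and $U'$. Then $\Lambda^2V=\Lambda^2U\oplus(U\ten U')\oplus\Lambda^2U'$ has dimensions $1+4+1$, and $\omega=\omega_U\oplus\omega_{U'}$ spans the ``diagonal'' line inside $\Lambda^2U\oplus\Lambda^2U'$, so $V_5=(U\ten U')\oplus L$ with $\dim L=1$, the line $L$ spanned by $\omega_U-\omega_{U'}$. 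On $U\ten U'$ the connected part acts through $\Sp_2(\CC)\ten\Sp_2(\CC)=\SO_4(\CC)$ and $\tau$ acts as the flip, which preserves $\omega_U\ten\omega_{U'}$ and has determinant $-1$, so the image is $\GO_4(\CC)$; on $L$ the connected part acts trivially and $\tau$ acts by $-1$. Therefore the image of $G$ in $\SO_5(\CC)$ is $\{\diag(M,\det M)\mid M\in\GO_4(\CC)\}\cong\GO_4(\CC)$, which is exactly the natural embedding of $\GO_4(\CC)$ in $\SO_5(\CC)$, giving the second identity.

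The computations above are elementary, so the real work is the bookkeeping of invariant forms and signs: verifying that the flip and the various reflections genuinely lie in $\GO\setminus\SO$ of the correct block, that $\omega$ sits in the claimed summand, and that each of the constructed maps onto $\GO_3(\CC)\times\GO_2(\CC)$, respectively $\GO_4(\CC)$, is onto rather than merely into --- this last point being where the disconnected ``$.2$'' components have to be handled with some care.
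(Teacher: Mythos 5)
Your proposal is correct and rests on exactly the two identities the paper's proof invokes, namely $\Lambda^2(V_1\ten V_2)=\Lambda^2V_1\ten\Sym^2V_2\oplus\Sym^2V_1\ten\Lambda^2V_2$ and $\Lambda^2(V_1\oplus V_2)=\Lambda^2V_1\oplus V_1\ten V_2\oplus\Lambda^2V_2$, so it is essentially the same approach. The only difference is that you carry out in full the bookkeeping (locating the symplectic form $\omega$, passing to $\omega^{\perp}$, and checking determinants and surjectivity onto the disconnected components) which the paper leaves implicit.
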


\begin{proof}
 The claims follow from the  identities.
 \begin{eqnarray*}\Lambda^2 (V_1\ten V_2) &= & \Lambda^2 V_1\ten \Sym^2 V_2\; \oplus \Sym^2 V_1\ten \Lambda^2 V_2, \\
  \Lambda^2(V_1 \oplus V_2)& = &\Lambda^2 (V_1) \;\oplus V_1 \ten V_2 \oplus \;\Lambda^2  V_2. 
 \end{eqnarray*}

\qed\end{proof}

\begin{cor}\label{Zar}
 Let $H$ be a irreducible proper subgroup
 of $\Sp_4(\CC)$.
 Then the following hold (up to conjugation of $H$).
 \begin{enumerate}
  \item  If $H$ contains a unipotent element with Jordan form $\J(4)$ then
         $H \subseteq \Sym^3 \Sp_2(\CC)$.

  \item  If $H$ contains a transvection then
         $H \subseteq (\Sp_2(\CC) \times \Sp_2(\CC)).2$.
  \item   If all non trivial unipotent elements in $H$ have the
          Jordan form $(\J(2),\J(2))$
         then
         $H \subseteq \SL_2\ten \GO_2(\CC)$.
 \end{enumerate}
\end{cor}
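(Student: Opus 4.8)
The plan is to obtain all three statements from the classification of maximal subgroups in Lemma~\ref{subgr}, by determining which unipotent Jordan forms can occur in each of the three classes and then arguing by elimination.

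First I would pass to the Zariski closure, so that $H$ is a closed subgroup (this is harmless for the way the corollary is used, where $H$ is a monodromy group and one cares about its closure). Since $H^\circ$ is normal in $H$ and $H$ acts irreducibly on $V=\CC^4$, Clifford's theorem gives that $V$ is semisimple over $H^\circ$, so $H^\circ$ is reductive; as $H\neq\Sp_4(\CC)$, Lemma~\ref{subgr} and the argument of its proof place $H^\circ$, up to conjugacy, inside one of $\Sp_2(\CC)\times\Sp_2(\CC)$, $\GL_2(\CC)$, or the principal $\Sym^3\SL_2(\CC)$, and passing to normalizers shows $H$ itself lies in one of the maximal groups $(\Sp_2(\CC)\times\Sp_2(\CC)).2$, $\Sp_2(\CC)\otimes\GO_2(\CC)$, or $\Sym^3\Sp_2(\CC)$. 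So it remains to list the unipotent Jordan types realized in each.

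These computations are short. In $\Sym^3\SL_2(\CC)$ each element is $\Sym^3$ of an element of $\SL_2(\CC)$; semisimple ones stay semisimple, and $\Sym^3$ of a nontrivial unipotent of $\SL_2(\CC)$ is a single Jordan block, so the only nontrivial unipotent type is $\J(4)$. In $\Sp_2(\CC)\otimes\GO_2(\CC)$ an element $g\otimes h$ is unipotent only if every product of an eigenvalue of $g$ with an eigenvalue of $h$ equals $1$; as $\GO_2(\CC)$ has no nontrivial unipotent, this forces the element to be the image of $u\otimes 1$ with $u$ unipotent in $\SL_2(\CC)$, hence conjugate to $\J(2)\oplus\J(2)$. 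Finally I would realize $(\Sp_2(\CC)\times\Sp_2(\CC)).2$ as the stabilizer of an unordered pair of complementary symplectic planes $V=V'\perp V''$, the outer ``$.2$'' interchanging them; an element of the identity component acts as $g'\oplus g''$, whose nontrivial unipotent types are the transvection $(\J(2),1,1)$ (exactly one of $g',g''$ nontrivial) and $(\J(2),\J(2))$, while an element $w$ of the outer coset is conjugated to $-w$ by $\diag(1_{V'},-1_{V''})\in\Sp_4(\CC)$, so $w$ is never unipotent. Thus a transvection occurs only in the first class, $\J(4)$ only in $\Sym^3\Sp_2(\CC)$, and $(\J(2),\J(2))$ in the first two classes but never in $\Sym^3\Sp_2(\CC)$.

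Then (i) and (ii) are immediate. For (iii) I would use that a unipotent element lies in the identity component, so $H^\circ$ already contains a $(\J(2),\J(2))$-unipotent; hence $H^\circ$ is a non-toral connected reductive subgroup not lying in $\Sym^3\SL_2(\CC)$ and containing no transvection (transvections are forbidden in $H$ by the hypothesis). A Goursat-type analysis of non-toral reductive subgroups of $\Sp_2(\CC)\times\Sp_2(\CC)$ that contain no transvection then forces $H^\circ$ to be, up to conjugacy, a diagonally embedded $\SL_2(\CC)$ (the full product and the one-sided $\Sp_2(\CC)\times 1$ each contain transvections), and since the $\Sp_4(\CC)$-normalizer of such a diagonal $\SL_2(\CC)$ is exactly $\Sp_2(\CC)\otimes\GO_2(\CC)$, we get $H\subseteq\Sp_2(\CC)\otimes\GO_2(\CC)$. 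I expect this last point — identifying $H^\circ$ as a diagonal $\SL_2(\CC)$ and computing its normalizer — to be the only real obstacle; the rest is the classification bookkeeping of Lemma~\ref{subgr} together with the elementary $\SL_2$-module (Clebsch--Gordan) identities already used in the classification and in Corollary~\ref{la2}.
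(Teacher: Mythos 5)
Your argument is correct and follows the paper's own route: the paper proves the corollary simply by citing the classification of maximal subgroups in Lemma~\ref{subgr}, and your write-up supplies exactly the details that citation leaves implicit (the unipotent Jordan types occurring in each of the three classes, the absence of unipotents in the outer cosets, and the identification of the normalizer of the diagonal $\SL_2(\CC)$ with $\Sp_2(\CC)\ten\GO_2(\CC)$). The one point to state explicitly is that placing $H$ inside one of the three maximal classes requires the presence of a nontrivial unipotent element, so that $H^\circ$ is a non-toral reductive group (otherwise the containment can fail, e.g.\ for finite irreducible subgroups of $\Sp_4(\CC)$); in each part of the corollary the hypothesis provides such an element, as you in effect use in your treatment of (iii).
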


\begin{proof}
 The claims follow from Lemma~\ref{subgr}.
\qed\end{proof}
\end{appendix}
\newpage

\providecommand{\bysame}{\leavevmode\hbox to3em{\hrulefill}\thinspace}
\providecommand{\MR}{\relax\ifhmode\unskip\space\fi MR }
\providecommand{\MRhref}[2]{%
  \href{http://www.ams.org/mathscinet-getitem?mr=#1}{#2}
}
\providecommand{\href}[2]{#2}


\begin{thebibliography}{CdlOGP98}

\bibitem[AESZ10]{AESZ}
Gert Almkvist, {Christian van} Enckevort, {Duco van} Straten, and Wadim
  Zudilin, \emph{Tables of {C}alabi--{Y}au equations}, 2010, preprint.
  http://arxiv.org/abs/math/0507430.

\bibitem[Alm06]{Alm2}
Gert Almkvist, \emph{Calabi--{Y}au differential equations of degree 2 and 3 and
  {Y}ifan {Y}ang's pullback}, 2006, preprint.
  http://arxiv.org/abs/math/0612215.

\bibitem[And89]{Andre}
Yves Andr\'e, \emph{{G}-functions and geometry}, Aspects of {M}athematics, vol.
  E13, Vieweg und Sohn, 1989.

\bibitem[Bai35]{Bailey}
Wilfrid~N. Bailey, \emph{Generalized {H}ypergeometric {S}eries}, Cambrigde
  {U}niversity {P}ress, 1935.

\bibitem[BD79]{BalDwo79}
Francesco Baldassarri and Bernard Dwork, \emph{On second order linear
  differential equations with algebraic solutions}, Amer. J. Math. \textbf{101}
  (1979), 42--76.

\bibitem[BH89]{BH90}
Frits Beukers and Gert Heckman, \emph{Monodromy for the hypergeometric function
  {$\sb nF\sb {n-1}$}}, Inventiones Mathematicae \textbf{95} (1989), no.~2,
  325--354.

\bibitem[Car85]{Ca85}
Roger~W. Carter, \emph{Finite groups of {L}ie type. {C}onjugacy classes and
  complex characters}, {Pure and Applied Mathematics}, John Wiley \& Sons,
  Inc., 1985.

\bibitem[CdlOGP98]{Can}
Philip Candelas, Xenia~C. de~la Ossa, Paul~S. Green, and Linda Parkes, \emph{A
  pair of {C}alabi--{Y}au manifolds as an exactly soluble superconformal field
  theory}, {M}irror {S}ymmetrie {I}, Studies in advanced mathematics, vol.~9,
  American Mathematical Society, 1998, pp.~31--95.

\bibitem[Del70]{Del}
Pierre Deligne, \emph{Equations diff\'erentielles a points singuliers
  r\'eguliers}, Lecture Notes in Mathematics, vol. 163, Springer-Verlag,
  Heidelberg, 1970.

\bibitem[DGS94]{Dwork}
Bernard Dwork, Giovanni Gerotto, and Francis~J. Sullivan, \emph{An introduction
  to {G-}functions}, Annals of mathematical studies, vol. 133, Princeton
  {University} {Press}, 1994.

\bibitem[DR00]{DR99}
Michael Dettweiler and Stefan Reiter, \emph{An algorithm of {Katz} and its
  application to the inverse {Galois} problem}, Journal of Symbolic Computation
  \textbf{30} (2000), no.~6, 761--798.

\bibitem[DR07]{DR07}
\bysame, \emph{Middle convolution of {Fuchsian} systems and the construction of
  rigid differential systems}, J. Algebra \textbf{318} (2007), no.~1, 1--24.

\bibitem[GvG10]{Geemen}
Alice Garbagnati and Bert van Geemen, \emph{Examples of {C}alabi-{Y}au
  threefolds parametrised by {S}himura varieties}, 2010, preprint.
  http://arxiv.org/abs/math/1005.0478.

\bibitem[IKSY91]{IKShY}
Katsunori Iwasaki, Hironobu Kimura, Shun Shimomura, and Masaaki Yoshida,
  \emph{From {G}auss to {Painlev\'e}--{A} modern theory of special functions},
  Aspects of Mathematics, Vieweg, Braunschweig, 1991.

\bibitem[Inc56]{Ince}
Edward~L. Ince, \emph{Ordinary differential equations}, Dover, London, 1956.

\bibitem[Kat96]{katz96}
Nicholas~M. Katz, \emph{Rigid local systems}, Annals of mathematical studies,
  vol. 139, Princeton {University} {Press}, 1996.

\bibitem[KR08]{Kratt2}
Christian Krattenthaler and Tanguy Rivoal, \emph{Multivariate p-adic formal
  congruences and integrality of {T}aylor coefficients of mirror maps}, 2008,
  preprint. http://arxiv.org/abs/math/0804.3049.

\bibitem[KR10]{Kratt1}
\bysame, \emph{On the integrality of {T}aylor coefficients of mirror maps},
  Duke Math. J. \textbf{151} (2010), 175--218.

\bibitem[KZ01]{KonZa}
Maxim Kontsevich and Don~B. Zagier, \emph{Periods}, Mathematics
  unlimited---2001 and beyond, Springer, Berlin, 2001, pp.~771--808.

\bibitem[Lev61]{Levelt}
Antonius~H.M. Levelt, \emph{Hypergeometric functions}, 1961, thesis,
  {U}niversity of {A}msterdam.

\bibitem[PS02]{Put}
{Marius van der} Put and Michael~F. Singer, \emph{Galois theory of linear
  differential equations}, 2nd ed., Grundlehren der Mathematischen
  Wissenschaften, vol. 328, Springer-Verlag, Berlin, 2002.

\bibitem[Sco77]{Scott}
Leonard~L. Scott, \emph{Matrices and cohomology}, Ann. {M}ath. (1977),
  473--492.

\bibitem[Sim90]{Simpson2}
Carlos Simpson, \emph{Transcendental aspects of the {Riemann-Hilbert}
  correspondence}, Illinois journal of mathematics \textbf{34} (1990), no.~2,
  368--391.

\bibitem[Sim92]{Simpson92}
\bysame, \emph{Higgs bundles and local systems}, Publ. Math. {IHES} \textbf{75}
  (1992), 5--95.

\bibitem[Sin96]{Singer}
Michael~F. Singer, \emph{Testing reducibility of linear differential operators:
  a group theoretic perspective}, Applicable Algebra in Engineering,
  Communication and Computing \textbf{7} (1996), no.~2, 77--104.

\bibitem[SV99]{SV}
Karl Strambach and Helmut V{\"o}lklein, \emph{On linearly rigid tuples}, {J.
  Reine Angew. Math.} (1999), no.~510, 57--62.

\bibitem[vEvS04]{ES}
Christian van {Enckevort} and Duco van {Straten}, \emph{{M}onodromy
  calculations of fourth order equations of {C}alabi--{Y}au type}, 2004,
  preprint. http://arxiv.org/abs/math/0412539.

\end{thebibliography}
\end{document}